\renewcommand{\H}{\mathcal H}
\newcommand{\A}{\mathcal A}
\newcommand{\Z}{\mathbb Z}
\DeclareMathOperator{\lab}{\bf{Lab}}
\DeclareMathOperator{\diam}{diam}
\DeclareMathOperator{\Hom}{Hom}
\newcommand{\B}{\mathcal{B}}
\newcommand{\W}{\mathcal{W}}
\newcommand{\V}{\mathcal{V}}
\renewcommand{\S}{\mathcal{S}}
\newcommand{\bg}{\overline{G}}
\newcommand{\bh}{\overline{H}}
\newcommand{\bx}{\pi(X)}
\newcommand{\N}{\mathbb N}
\newcommand{\Q}{\mathbb{Q}}
\newcommand{\eps}{\varepsilon}
\DeclareMathOperator{\tor}{Tor}
\DeclareMathOperator{\coker}{coker}
\DeclareMathOperator{\cd}{cd}
\DeclareMathOperator{\gd}{gd}
\DeclareMathOperator{\hd}{hd}
\DeclareMathOperator{\Out}{Out}
\DeclareMathOperator{\Aut}{Aut}
\newcommand{\X}{\mathfrak{X}}
\newcommand{\F}{\mathfrak{F}}
\newcommand{\I}{\mathfrak{I}}
\newcommand{\HX}[1][\,]{\mathbf{H}_{#1}\mathfrak{X}}
\newcommand{\HF}[1][\,]{\mathbf{H}_{#1}\mathfrak{F}}
\newcommand{\HI}[1][\,]{\mathbf{H}_{#1}\mathfrak{I}}
\newcommand{\minus}{\, \setminus \,}
\renewcommand{\ll }{\langle\hspace{-.7mm}\langle }
\newcommand{\rr }{\rangle\hspace{-.7mm}\rangle }
\newtheorem{thmA}{Theorem}
\newtheorem{corA}[thmA]{Corollary}
\newtheorem{quA}[thmA]{Question}
\newtheorem{theorem}{Theorem}[section]
\theoremstyle{plain}
            \newtheorem{lemma}[theorem]{Lemma}
            \newtheorem{corollary}[theorem]{Corollary}                   
            \newtheorem{proposition}[theorem]{Proposition}  
            \newtheorem{claim}{Claim}[theorem]
\theoremstyle{remark}
            \newtheorem{remark}[theorem]{Remark}
            \newtheorem*{remark*}{Remark}
\theoremstyle{definition}
            \newtheorem{definition}[theorem]{Definition}
            \newtheorem{notation}[theorem]{Notation}
\title{Dimensions of finitely generated simple groups \\ and their subgroups}
\author{Francesco Fournier-Facio and Bin Sun}
\date{\today}
\begin{document}

\maketitle

\begin{abstract}
We construct finitely generated simple torsion-free groups with strong homological control.
Our main result is that every subset of $\mathbb{N} \cup \{\infty\}$, with some obvious exceptions, can be realized as the set of dimensions of subgroups of a finitely generated simple torsion-free group. This is new even for basic cases such as $\{ 0, 1, 3 \}$ and $\{ 0, 1, \infty \}$, even without simplicity or finite generation, and answers a question of Talelli and disproves a conjecture of Petrosyan.
Moreover, we prove that every countable group of dimension at least $2$ embeds into a finitely generated simple group of the same dimension. These are the first examples of finitely generated simple groups with dimension other than $2$ or $\infty$.

As another application, we exhibit the first examples of torsion-free groups with the fixed point property for actions on finite-dimensional contractible CW-complexes, and construct torsion-free groups in all countable levels of Kropholler's hierarchy, answering a question of Januszkiewicz, Kropholler and Leary.
Our method combines small cancellation theory with group theoretic Dehn filling, and allows to do several other exotic constructions with control on the dimension.
Along the way we construct the first uncountable family of pairwise non-measure equivalent finitely generated torsion-free groups.
\end{abstract}

\section{Introduction}

Some of the most useful and important invariants of groups are various notions of dimension. These include the \emph{homological} and \emph{cohomological} dimension over a commutative unital ring $R$, and the \emph{geometric} dimension, denoted respectively $\hd_R, \cd_R$ and $\gd$. In this introduction we will talk ambiguously about the \emph{dimension} $\dim$; in our results all the various notions of dimension will coincide.

A very general question that arises in this context is the following.

\begin{quA}
\label{q:spectrum}
    Let $G$ be a group. The \emph{dimension spectrum} of $G$ is the set
    \[S(G) = \{ \dim(H) : H \leq G \} \subseteq \N \cup \{ \infty \}.\]
    Given a set $S \subset \N \cup \{\infty\}$, we say that a group $G$ \emph{realizes} $S$ if $S(G) = S$. We say that $G$ \emph{sharply realizes} $S$ if moreover $\dim(G)$ is only attained by $G$ itself.

    Which sets are (sharply) realized in a given class of groups?
\end{quA}

For example $S(G) \subseteq \{0, \infty \}$ implies that $G$ is torsion, and $S(G) = \{0, 1\}$ is closely related to free groups (the precise statement or conjecture depends on the notion of dimension, see \cite{dunwoody} and \cite[Conjecture II.11]{hd1}).

For most naturally occurring groups, $S(G) = \{ n \in \N \cup \{ \infty \} : n \leq \dim(G) \}$. So a first instance of Question \ref{q:spectrum} is whether \emph{gaps} can occur in the dimension spectrum. Special interest has been paid to \emph{jumps} in cohomology, i.e. spectra satisfying
\[\infty \in S(G) \subseteq \{ n \in \N : n \leq k \} \cup \{ \infty \}\]
for some $k \in \N$. Such jumps are easier to attain for groups with torsion (e.g. for geometric dimension we have $S(D_\infty) = \{ 0, 1, \infty \}$). But for torsion-free groups the situation is more subtle. A question of Talelli \cite[Question 1]{talelli:question}, also recorded in Bestvina's problem list \cite[Question 8.6]{bestvina?questions}, asks whether jumps can occur for cohomological dimension of torsion-free groups. This was generalized by Petrosyan, who conjectured that, for a commutative unital ring $R$, there should be no jump in $\cd_R$ for groups with no $R$-torsion \cite[Conjecture 1.6]{petrosyan:jump1}. A version for Bredon cohomology was then asked by Jo and Nucinkis \cite[Question 5]{jo:nucinkis}. The case of $R = \Q$ and the Bredon version are answered by some branch groups such as Grigorchuk's group \cite{branch1, branch2}, but these examples are full of torsion, moreover the realization is not sharp.

The first instance of our main result answers Talelli's question, and disproves Petrosyan's conjecture for all commutative unital rings:

\begin{thmA}[Theorem \ref{thm. infdim tarski}]
\label{intro:thm:jump}
    There exist continuum many pairwise non-isomorphic finitely generated torsion-free groups, each of which sharply realizes $\{ 0, 1, \infty \}$.
\end{thmA}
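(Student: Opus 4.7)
The goal is to construct a finitely generated torsion-free group $G$ behaving like a ``Tarski monster of infinite cohomological dimension'': every proper subgroup should be infinite cyclic, while $\cd(G) = \infty$. Since any nontrivial finitely generated torsion-free group contains an infinite cyclic subgroup, $\{0, 1\} \subseteq S(G)$ will hold automatically, and the cyclicity of all proper subgroups will force the value $\infty$ to be sharply attained only by $G$ itself, so that $S(G) = \{0, 1, \infty\}$ is sharply realized.

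I would build $G$ as a direct limit $G = \varinjlim G_n$ starting from a well-chosen torsion-free seed $G_0$, producing $G_{n+1}$ from $G_n$ by quotienting by a normal subgroup engineered to kill a selected two-generator non-cyclic subgroup. In the spirit of Ol'shanskii's classical torsion-free Tarski monster construction, combined with modern group-theoretic Dehn filling following Osin and Groves--Manning, each $G_n$ can be kept torsion-free and relatively hyperbolic with control on the peripherals, with only sufficiently long relators introduced at each step so that appropriate small cancellation conditions hold. Enumerating all two-generator non-cyclic subgroups arising in the tower and killing each one cofinally guarantees that in the limit every proper subgroup of $G$ is cyclic.

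The central new difficulty, compared to Ol'shanskii's original setting, is arranging $\cd(G) = \infty$. This infinite dimension cannot come from any subgroup, all of which are cyclic of dimension at most $1$; it must arise as a genuinely global cohomological feature of $G$. I would design the seed and the sequence of Dehn fillings so that a diverging family of cohomology classes $\alpha_n \in H^n(G_n; \Z G_n)$ survives under each subsequent filling, forcing $\cd(G_n) \geq n$ for arbitrarily large $n$ and hence $\cd(G) = \infty$ in the direct limit. This fine cohomological control of Dehn fillings, and its compatibility with the small cancellation moves that realize the Tarski property, is precisely the combination announced in the abstract, and I expect it to be the main technical work: the two operations must be interleaved carefully so that neither destroys the other.

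Finally, to produce continuum many non-isomorphic such groups, I would exploit the fact that at each inductive step there are many admissible choices of defining relator---at least two distinct sufficiently long words---producing a binary tree of constructions with $2^{\aleph_0}$ branches. Since every finitely generated group admits only countably many presentations over a fixed generating set, at most countably many of the constructed limits can be isomorphic to any given group, so a standard cardinality argument extracts $2^{\aleph_0}$ isomorphism classes among the $2^{\aleph_0}$ constructed groups.
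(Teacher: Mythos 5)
Your general framework---torsion-free Tarski monsters built as direct limits of acylindrically hyperbolic quotients via small cancellation and group-theoretic Dehn filling---is the right one and matches the paper's. However, both of the two pieces that you flag as the ``main technical work'' are left at the level of a wish, and neither is as straightforward as you suggest.

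First, the mechanism you propose for getting infinite dimension is problematic. You want to produce classes $\alpha_n \in H^n(G_n; \Z G_n)$ that ``survive'' to the limit, but cohomology with group-ring coefficients does \emph{not} commute with direct limits, so surviving each finite stage does not give a nonzero class in $H^n(G; \Z G)$. Moreover the statement requires $\hd_R(G)=\cd_R(G)=\gd(G)=\infty$ for \emph{every} commutative unital ring $R$, which your $\Z G$-coefficient argument does not address. The paper's route is structural rather than class-chasing: it arranges that each defining relator has the Cohen--Lyndon property, which implies (via Theorem~\ref{thm. df space} and Corollary~\ref{cor. classifying space}) that a $K(\bg,1)$ is obtained from a $K(G,1)$ by attaching $2$-cells. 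By Lemma~\ref{lem. compute cohomology} this preserves $H_n(-;A)$ for all $n\ge 3$ and all coefficient modules $A$, and homology \emph{does} commute with direct limits. Then infinite dimension is imported by making the fundamental groups $L_i$ of closed hyperbolic $2i$-manifolds appear as quotients of the limit $M$ (Theorem~\ref{thm. embedding}\ref{item. quotients Li}), which forces $H_n(M;R)\supset H_n(L_n;R)\ne 0$ for all $n$ and all $R$. This Cohen--Lyndon control of the normal closure (Theorem~\ref{thm. inductive step}\ref{item. gd},\ref{item. homology}) is exactly the new ingredient the paper contributes, and it is absent from your sketch.

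Second, your cardinality argument for producing continuum many isomorphism classes is incomplete as stated. It is not enough to have $2^{\aleph_0}$ branches of the construction and to observe that a finitely generated group has only countably many marked presentations; you also need that distinct branches give distinct normal subgroups of the free group, i.e.\ that the map from branches to kernels is injective (or at least countable-to-one). This requires an argument---typically via Greendlinger-type conclusions from the small cancellation condition---that you do not supply. The paper sidesteps this issue entirely and in a way that gives much more: it chooses the $L_i$ to be arithmetic hyperbolic $2i$-manifold groups with $b^{(2)}_i(L_i)>3$, and uses the $L^2$-Betti number approximation theorem of Jaikin-Zapirain (Theorem~\ref{thm. l2 betti approximation}) to compute $b^{(2)}_n(M_S)$ up to error $<1$ in terms of $S\subset\N_{\ge 3}$. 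Distinct sets $S$ give $M_S$ with $L^2$-Betti numbers in different proportions, and Gaboriau's theorem then shows the $M_S$ are pairwise non-measure equivalent, hence a fortiori pairwise non-isomorphic. This simultaneously proves Theorem~\ref{intro:thm:ME}, whereas the cardinality argument (even if repaired) yields only non-isomorphism.

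So: your architecture is right and aligns with the paper, but the two gaps you identify as the crux are genuine gaps in your write-up, and the paper resolves them by the Cohen--Lyndon property and $L^2$-Betti number approximation respectively, neither of which appears in your proposal.
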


Jumps as in Theorem \ref{intro:thm:jump} do not occur for groups in Kropholler's hierarchy $\HF$ \cite[Theorem 3.2]{petrosyan:jump1}. Therefore Theorem \ref{intro:thm:jump} gives new examples of torsion-free groups that do not lie in $\HF$. In fact, something much stronger is true:
%To the best of our knowledge, all of the previously known torsion-free examples relied on the following fact: a torsion-free group of type $FP_\infty$ is in $HF$ if and only if it has finite cohomological dimension \cite{kropholler:hierarchy} (so for example, Thompson's group $F$ is not in $HF$ \cite{brown1984infinite}). The examples of Theorem \ref{intro:thm:jump} are not even of type $FP_2(\Q)$ (Remark \ref{rem:fp2}), so they are of an entirely different nature.

\begin{corA}[Corollary \ref{cor:smith}]
\label{intro:cor:smith}
    There exist continuum many pairwise non-isomorphic finitely generated torsion-free groups, such that every action on a finite-dimensional contractible CW-complex has a global fixed point.
\end{corA}

Such groups are sometimes called \emph{Smith groups}, in reference to Smith's Theorem that finite $p$-groups have this property \cite{smith:theory}. The first examples of infinite Smith groups were constructed in \cite{infinite:fixpoint}. The existence of torsion-free examples was an open problem communicated to us by Kropholler and Leary.

\medskip

We distinguish the groups in Theorem \ref{intro:thm:jump} up to isomorphism by estimating their $L^2$-Betti numbers, so we also obtain:

\begin{thmA}[Theorem \ref{thm. infdim tarski}]
\label{intro:thm:ME}
    There exist continuum many pairwise non-measure equivalent finitely generated torsion-free groups.
\end{thmA}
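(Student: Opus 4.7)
The plan is to extract Theorem \ref{intro:thm:ME} as a consequence of (the proof of) Theorem \ref{intro:thm:jump}. That theorem produces a continuum-sized family $\{G_\alpha\}_{\alpha \in 2^\N}$ of finitely generated torsion-free groups sharply realizing $\{0,1,\infty\}$, separated up to isomorphism by distinct estimates of the first $L^2$-Betti number $\beta_1^{(2)}(G_\alpha)$. The task is to upgrade pairwise non-isomorphism to pairwise non-measure equivalence.

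The natural tool is Gaboriau's proportionality principle: if $G$ and $H$ are measure equivalent, then there is a positive real $c$ with $\beta_n^{(2)}(G) = c\,\beta_n^{(2)}(H)$ for every $n$. Distinct values of $\beta_1^{(2)}$ alone do not suffice, since $c$ is not pinned down a priori. To eliminate this ambiguity I would control a second $L^2$-Betti number across the family: arrange that some $\beta_k^{(2)}(G_\alpha)$ takes a common positive value (or, alternatively, varies independently of $\beta_1^{(2)}$) as $\alpha$ ranges over $2^\N$. Then a measure equivalence $G_\alpha \sim_{\mathrm{ME}} G_\beta$ would force either $c = 1$, contradicting $\beta_1^{(2)}(G_\alpha) \neq \beta_1^{(2)}(G_\beta)$, or a contradiction between the two independently varying ratios.

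In the small cancellation and Dehn filling framework that underlies Theorem \ref{intro:thm:jump}, estimates on $\beta_1^{(2)}$ come, roughly, from an Euler characteristic inequality applied to the free part of the presentation. I would therefore arrange for a second $L^2$-Betti number to be computable from a feature of the construction insensitive to the branching parameter: for instance, a fixed relatively hyperbolic input with peripheral structure contributing a known nonzero $\beta_k^{(2)}$, or a common finite sub-presentation whose higher-degree $L^2$-Betti numbers can be tracked through the filling.

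The main obstacle is precisely this simultaneous control. The Dehn filling steps used to produce the variation in $\beta_1^{(2)}$ must be arranged so as not to perturb whichever auxiliary invariant pins down the proportionality constant. Once that bookkeeping is in place, the non-measure-equivalence conclusion is immediate: proportionality plus two invariants behaving independently in the parameter $\alpha$ gives continuum many measure equivalence classes inside the family from Theorem \ref{intro:thm:jump}.
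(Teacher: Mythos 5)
Your conceptual starting point is correct: Gaboriau's proportionality theorem (\Cref{thm. gaboriau}) reduces the problem to controlling ratios of $L^2$-Betti numbers, and you are right that a single invariant cannot determine the proportionality constant. However, there are genuine gaps, both in your account of the family's construction and in the ``bookkeeping'' you explicitly defer.

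First, the family is not distinguished by $\beta_1^{(2)}$, and the argument does not proceed via an Euler characteristic inequality applied to a free part. The groups $M_S$ are indexed by subsets $S \subseteq \N_{\geq 3}$ with $3 \in S$, and they are built by running the relative Tarski monster construction (\Cref{thm. embedding}) against groups $L_i = \pi_1$ of closed \emph{arithmetic} hyperbolic $2i$-manifolds. By the arithmetic case of the Singer Conjecture, $b^{(2)}_n(L_i) = 0$ for $n \neq i$ and $b^{(2)}_i(L_i) = \chi(L_i) \neq 0$; passing to a finite-index subgroup makes this exceed $3$. The whole tail of $b^{(2)}_n$ for $n \geq 3$ carries the signature of $S$.

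Second, the deferred bookkeeping is where the real content lies, and your sketch does not engage with the two tools that carry it. (a) To ensure that the $L^2$-Betti numbers of the direct limit $M_S$ stay close to those of the $L_i$ despite the infinite tower of fillings, the paper invokes the approximation theorem (\Cref{thm. l2 betti approximation}), choosing error bounds $\delta = 2^{-i}$; this forces the filling kernels to avoid prescribed finite sets $B_i$, a constraint that has to be built into \Cref{thm. embedding} as a hypothesis from the start. (b) One also needs the direct-sum decomposition of $H_n(M_S;\mathcal{N}(M_S))$ over the $L_i$ in degrees $n\geq 3$, which is exactly the homological control in item~\ref{item. homology relative} of \Cref{thm. embedding}, itself resting on the Cohen--Lyndon property.

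Finally, the specific mechanism you propose --- one degree taking a ``common positive value'' and one taking ``distinct values'' --- would require exact computation of $L^2$-Betti numbers of the limit groups, which the approximation theorem cannot give. The paper instead arranges that $b^{(2)}_3(M_S)$ always lies in a fixed interval $(b-1,b+1)$ with $b > 3$ (since $3 \in S$ always), and that $b^{(2)}_n(M_S)$ is $>2$ or $<1$ according to whether $n \in S$; the ratios $b^{(2)}_n/b^{(2)}_3$ then land in two disjoint intervals $(2/(b+1),\infty)$ and $(0,1/(b-1))$, so for $S \neq S'$ any $n$ in the symmetric difference separates the groups. This interval argument is robust under the approximation error and is essential; an exact-value argument would not be available.
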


To the best of our knowledge, this is the first such construction. Measurably diverse finitely generated groups have been constructed independently by Ioana--Tucker-Drobb \cite{ioana:ME}; however their groups contain torsion. Kac--Moody groups over finite fields are a source of measurably diverse finitely presented simple groups \cite{lopezneumann}, although these also contain torsion, and of course cannot lead to an uncountable family.

\medskip

The question of gaps also arises naturally in finite-dimensional groups. In this respect, it is particularly interesting to determine whether for a group $G$ there always exist subgroups of \emph{codimension one}, intended in a suitable geometric sense. For example, finding codimension one subgroups in the fundamental groups of hyperbolic $3$-manifolds \cite{kahn:markovic} is a fundamental step towards their cubulation \cite{bergeron2012boundary} which in turn is central in the proof of the Virtual Haken Conjecture \cite{wise2021structure, agol2013virtual}. Moreover, constructing enough codimension one subgroups in general $3$-dimensional hyperbolic groups provides a possible approach to the Cannon Conjecture \cite{markovic}.
A result in this direction is that there exist $n$-dimensional hyperbolic groups that do not contain fundamental groups of closed orientable hyperbolic $k$-manifolds, for any $k > 2$ \cite{systolic1, systolic2}.

In the context of Question \ref{q:spectrum}, one can consider subgroups $H \leq G$ such that $\dim(H) = \dim(G) - 1$, which gives a homological notion of codimension one. The existence of such gaps was the subject of recent questions \cite{gapquestion1, gapquestion2}. See also \cite[Question 5.1]{wilton2024surface}, which is strongly related to the problem of sharp realization in cubulable hyperbolic groups. In the pro-$p$ setting an answer is more readily available. For $p$-adic analytic groups, the virtual cohomological dimension coincides with the $p$-adic dimension \cite[Theorem 5.1.9]{padic}; since the Lie algebra $\mathfrak{sl}_3(\mathbb{Q}_p)$ has no subalgebra of codimension one, this implies that the first congruence subgroup of $\mathrm{SL}_3(\mathbb{Z}_p)$ has no subgroup of (virtual) cohomological codimension one. However, it is not clear whether this observation can be used to obtain, say, an arithmetic group with a gap. As a disclaimer, there is another, more metric notion of dimension spectrum for profinite groups, which happens to coincide with the virtual cohomological one in the case of $p$-adic analytic pro-$p$ groups, up to a normalization \cite[Theorem 1.1]{barnea:shalev}.

The second instance of our main result shows that arbitrarily large finite gaps can occur in general.

\begin{thmA}[Corollary \ref{cor. 3 tarski} and Theorem \ref{thm:fd:continuum}]
\label{intro:thm:fd:gap}
For all $n \geq 3$, there exist infinitely many pairwise non-isomorphic finitely generated (torsion-free) groups, each of which sharply realizes $\{ 0, 1, n\}$.
Moreover, if $n \geq 4$, this can be strengthened to continuum many.
\end{thmA}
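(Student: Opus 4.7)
The plan is to construct, for each $n \ge 3$, a finitely generated torsion-free group $G$ of cohomological dimension exactly $n$ in which every proper subgroup is free. Since $n \ge 3$, this immediately gives $S(G) = \{0,1,n\}$, and sharp realization is automatic because no proper subgroup can attain dimension $n$. So the theorem reduces to building such ``finite-dimensional Tarski-like monsters'' in sufficient numbers.

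First I would pick a torsion-free hyperbolic seed group $G_0$ with $\cd(G_0) = n$ --- for instance a uniform lattice in $\mathrm{Isom}(\mathbb{H}^n)$, or a suitable hyperbolic $n$-manifold group for $n = 3$ --- and enumerate conjugacy-class representatives of its finitely generated non-free subgroups. Then I would iteratively build torsion-free hyperbolic quotients $G_0 \twoheadrightarrow G_1 \twoheadrightarrow G_2 \twoheadrightarrow \cdots$ by applying group-theoretic Dehn filling along hyperbolically embedded subgroups containing each successive ``bad'' subgroup (using the Dahmani--Guirardel--Osin machinery, together with the small cancellation techniques used for Theorem~\ref{intro:thm:jump}), forcing its image to become free. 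A direct limit or a carefully chosen diagonal quotient $G$ will then be finitely generated, torsion-free, and locally free away from itself, so every proper subgroup has dimension at most $1$.

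The principal obstacle is that Dehn filling can in general drop the cohomological dimension. I would address this by choosing fillings deep enough that each quotient $G_i$ admits an explicit classifying space built from that of $G_{i-1}$ by gluing in aspherical pieces (obtained from the relative classifying spaces of the peripheral inclusions, once the relators are of sufficiently high power), yielding a Mayer--Vietoris sequence from which $\cd(G_i) = n$ can be read off. Control of $\cd$ in the limit then follows from the standard direct-limit inequality plus the lower bound inherited from cohomology of $G_0$ with appropriate coefficients. This dimension-preservation step is the part of the argument most likely to require genuinely new work beyond the infinite-dimensional case.

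For \emph{infinitely many} pairwise non-isomorphic examples in each fixed dimension $n \ge 3$, I would vary either the seed group $G_0$ among a countable family of commensurability-distinct lattices, or the free choices made in the iteration, and distinguish the resulting groups by an elementary invariant like the abelianization of the seed. For \emph{continuum many} with $n \ge 4$, I would parametrize the iteration by $\eta \in 2^{\N}$ via a binary choice at each step (say, between two filling kernels producing different $L^2$-Betti numbers) and distinguish the family $\{G_\eta\}$ by computing $\beta^{(2)}_k(G_\eta)$ for a fixed intermediate degree $k$ with $2 \le k \le n-2$, exactly as in the proof of Theorem~\ref{intro:thm:ME}. The case $n=3$ is excluded from the continuum-many statement because no such intermediate $k$ exists: $\beta^{(2)}_1$ is too rigid (bounded in terms of the rank of the abelianization), while $\beta^{(2)}_k$ for $k \ge 3$ is forced to vanish by $\cd = 3$, leaving no free $L^2$-parameter in which to encode uncountably many distinct values.
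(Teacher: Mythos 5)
Your high-level strategy (a finitely generated torsion-free group of dimension exactly $n$ with all proper subgroups of dimension $\leq 1$) is indeed what the paper does, and the underlying mechanism --- iterated Dehn filling / small cancellation over acylindrically hyperbolic groups, with classifying-space control at each step --- is also the paper's. The technical form this control takes in the paper is the Cohen--Lyndon property (Theorem~\ref{thm. sun cl}), which via Corollary~\ref{cor. classifying space} shows that each quotient's $K(\cdot,1)$ is obtained from the previous one by attaching $2$-cells; the substantive work, Theorem~\ref{thm. inductive step}, is making this compatible with the Tarski-monster relators. The paper's proper subgroups end up cyclic, not merely free, because the construction kills each non-elementary $2$-generated subgroup outright, but for realizing $\{0,1,n\}$ either is enough. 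Where your plan genuinely diverges --- and has gaps --- is in the two counting steps.

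For the \emph{infinitely many} examples, distinguishing by ``the abelianization of the seed'' does not work. The limit group is built by killing each non-elementary $2$-generated subgroup, which forces it to be simple, so $H_1$ is trivial; more fundamentally, the seed's abelianization is simply not an invariant of the quotient. What does survive the process is homology in degrees $\geq 3$, because the Cohen--Lyndon attachments only add $1$- and $2$-cells (Lemma~\ref{lem. compute cohomology}). The paper exploits this: in Corollary~\ref{cor. 3 tarski} it feeds in $k$ copies of a closed hyperbolic $n$-manifold group among the building blocks $L_i$, obtaining $H_n(M;R)\cong R^k$, and then varies $k$. So the invariant to use is $H_n$, not $H_1$.

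For the \emph{continuum many} case with $n \geq 4$, your $L^2$-Betti proposal is not developed and does not carry over from Theorem~\ref{intro:thm:ME} as you suggest. In the infinite-dimensional construction (Theorem~\ref{thm. infdim tarski}), the building blocks $L_i$ are hyperbolic $2i$-manifold groups, so each contributes to a \emph{different} degree and the set $S$ is read off from which $\beta^{(2)}_n$ are nonzero. In dimension $n$ all building blocks are forced to live in degrees $\leq n$, so you would need a family of $n$-dimensional hyperbolic groups whose $L^2$-Betti numbers in some fixed degree $k<n$ encode arbitrary subsets --- something you do not construct and which is not obvious. The paper (Theorem~\ref{thm:fd:continuum}) instead distinguishes the groups by torsion in $H^3$: Lemma~\ref{lem. torsion building block} produces, for each prime $p$, a hyperbolic group of $\gd\leq 4$ whose $H^3$ has prescribed $p$-torsion, via two successive Dehn fillings starting from a hyperbolic link complement in $S^4$. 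Whether or not prime $p_i$ appears in $\tor H^3(M_S;\Z)$ then records whether $i\in S$. The restriction to $n\geq 4$ is precisely because this building block is $4$-dimensional (see Remark~\ref{rem:3dim}); your explanation in terms of rigidity of $\beta^{(2)}_1$ and vanishing of $\beta^{(2)}_k$ for $k\geq 3$ is not the paper's reason and does not hold up --- for instance it does not preclude using $\beta^{(2)}_2$, and the actual obstruction is geometric (no known $3$-dimensional analogue of the torsion-producing Dehn filling), not $L^2$-theoretic.
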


The different behavior of $\{0, 1, 3\}$ is an artifact of the proof, and can be removed if there exist $3$-dimensional hyperbolic groups with prescribed torsion in $H^3$ (Remark \ref{rem:3dim}).

In light of the previous discussion, it is particularly interesting to determine whether such gaps can occur in hyperbolic groups, especially of dimension $3$. In this context, let us point out that the examples from Theorem \ref{intro:thm:fd:gap} can be easily arranged to be \emph{lacunary} hyperbolic, see \cite[Theorem 4.26]{olshanskii2009lacunary} and its proof.

More generally, we do not know if groups as in Theorem \ref{intro:thm:fd:gap} can ever be finitely presented. By Serre's Theorem \cite{serre}, a torsion-free group of cohomological dimension $n \geq 3$ answering \cite[Question 0.1]{wilton2024surface} would provide an example. Our groups are not finitely presentable, essentially by construction (Remark \ref{rem:fp}).

\medskip

Both of these results are just very specific examples of what sets can be realized. There are a few obvious exceptions: the dimension spectrum of a non-trivial torsion-free group $G$ must contain $\{0, 1\}$, where equality holds (essentially) only for free groups;
moreover if $S(G)$ is infinite then $\dim(G) = \infty$ (Lemma \ref{lem. jump exceptions}). It turns out that \emph{everything else is possible}, and moreover this can be achieved with \emph{finitely generated torsion-free simple groups}. This is our main result:

\begin{thmA}[Theorem \ref{thm. realization}]
\label{intro:thm:spectrum}
Let $\{0, 1\} \subsetneq S \subseteq \N \cup \{ \infty \}$. Assume that either $S$ is finite or $\infty \in S$. Then there exists a finitely generated simple torsion-free group $G_S$ which sharply realizes $S$.
\end{thmA}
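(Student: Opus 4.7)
\emph{Proof plan.}

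The proof combines and extends the constructions behind Theorems~\ref{intro:thm:jump} and~\ref{intro:thm:fd:gap}. The idea is to assemble a countable seed group $H$ realizing all dimensions in $S\setminus\{\sup S\}$, and then use the paper's small cancellation / Dehn filling machinery to embed $H$ into a finitely generated simple torsion-free group $G_S$ with $\dim G_S=\sup S$ and with subgroup structure controlled by $H$.

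\emph{Seed group.} For each $n\in S$ with $2\le n<\sup S$, fix a finitely generated torsion-free group $H_n$ that sharply realizes $\{0,1,n\}$: a closed orientable hyperbolic surface group for $n=2$, and a group from Theorem~\ref{intro:thm:fd:gap} for $n\ge 3$. Let $H:=\ast_{n} H_n$ be their free product. By the Kurosh subgroup theorem, every subgroup of $H$ splits as a free product of a free group with conjugates of subgroups of the $H_n$; hence $S(H)=\{0,1\}\cup\{n\in S:2\le n<\sup S\}=S\setminus\{\sup S\}$, and in particular no subgroup of $H$ has dimension $\sup S$.

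\emph{Embedding into a simple group.} Using the paper's combination of small cancellation theory with group-theoretic Dehn filling, and building on the constructions in Theorems~\ref{intro:thm:jump} and~\ref{intro:thm:fd:gap}, construct a finitely generated simple torsion-free group $G_S$ containing $H$ as a subgroup such that
\begin{enumerate}[label=(\alph*)]
    \item $\dim G_S=\sup S$, and
    \item every proper subgroup of $G_S$ is either subconjugate to some $H_n$ or is free.
\end{enumerate}
Given (a) and (b), we have $S(G_S)\subseteq S(H)\cup\{0,1,\sup S\}=S$, with equality because each $H_n\hookrightarrow G_S$ realizes $n\in S\setminus\{\sup S\}$ as a proper subgroup and $G_S$ itself realizes $\sup S$. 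Property~(b) forces $\sup S$ to be attained only by $G_S$, giving sharp realization.

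\emph{Main obstacle.} The crux lies in constructing $G_S$ with properties (a) and (b) simultaneously: one must promote the dimension from $\dim H<\sup S$ up to exactly $\sup S$ without creating any proper subgroup of dimension $\sup S$, while also collapsing every normal subgroup (to achieve simplicity) and preserving each witness $H_n$ (so each finite dimension in $S$ is still realized as a subgroup of $G_S$). For $\sup S=N$ finite, this should be handled by first amalgamating $H$ into a hyperbolic seed of dimension $N$ (via an appropriate amalgamated product or relatively hyperbolic extension) and then iteratively quotienting via Dehn filling, using that sufficiently deep fillings preserve the embedded $H_n$ and do not alter the target dimension. For $\sup S=\infty$, a lacunary hyperbolic construction in the spirit of Theorem~\ref{intro:thm:jump}---where $G_S$ appears as a direct limit of hyperbolic groups whose hyperbolicity constants grow rapidly enough---gives $\dim G_S=\infty$, while every finitely generated subgroup stabilizes at a finite stage and therefore has finite dimension, ensuring~(b). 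The delicate interplay of small cancellation with group-theoretic Dehn filling developed throughout the paper is exactly what makes this balancing act possible.
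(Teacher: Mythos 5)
Your high-level strategy matches the paper's: produce, for each $n\in S$ with $2\le n<\sup S$, a finite-dimensional realizer $H_n$ of $\{0,1,n\}$, then fold these into a finitely generated simple torsion-free group whose proper subgroups are controlled, while forcing the ambient dimension to equal $\sup S$ via an auxiliary aspherical group. The machinery you defer to is precisely Theorem~\ref{thm. embedding} (the relative torsion-free Tarski monster with homological control), which for a family $\{H_i\}$ of countable torsion-free groups produces a finitely generated simple torsion-free $M$ in which the $H_i$ embed as a malnormal family, every proper subgroup is \emph{cyclic} or conjugate into some $H_i$, and the high-degree (co)homology of $M$ is the direct sum/product of that of the $H_i$ and auxiliary hyperbolic groups $L_i$; this is not just ``the paper's machinery'' but the theorem the whole argument pivots on.

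However, taking the free product $H=\ast_n H_n$ as the embedded seed is a genuine error. Once $H$ is a proper subgroup of $G_S$, it is neither cyclic, nor free, nor subconjugate to any single $H_n$ (assuming $|S|\ge 4$), so it directly violates your property~(b); worse, when $S$ is infinite one has $\dim H=\infty=\sup S$, so $H$ would be a proper subgroup attaining the top dimension and sharp realization would fail. The paper avoids this exactly by passing the $H_n$ \emph{as a family} into Theorem~\ref{thm. embedding} rather than as a free product: inside $M$ any two distinct $H_n$'s generate all of $M$, so no free product $H_m\ast H_n$ survives as a proper subgroup. Your argument in the $\sup S=\infty$ case is also off the mark: lacunary hyperbolicity plays no role, and ``every finitely generated subgroup stabilizes at a finite stage and therefore has finite dimension'' is neither the mechanism nor a sufficient conclusion (finite dimension does not imply dimension in $S$, and the proper subgroups of $M$ are in any case controlled by the Tarski-like property, not by a stabilization argument). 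In the paper, $\dim M=\infty$ is forced either by taking $L_i$ to be closed hyperbolic $i$-manifold groups (when $S$ is finite with $\infty\in S$) so that $H_i(M;R)\ne 0$ for all $i$, or, when $S$ is infinite, by the unbounded dimensions of the $H_i$ themselves.
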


The study of finitely generated infinite simple groups is a central topic in group theory. The first example was constructed by Higman \cite{higmangroup}, and it uses Zorn's Lemma, so it is very implicit. An explicit uncountable family was constructed by Camm \cite{camm}; these groups arise as amalgamated products of free groups and therefore they have dimension $2$. Thompson introduced two finitely presented infinite simple groups $T$ and $V$ in unpublished notes \cite{cannonfloydparry}. This construction was generalized in many different directions, including some torsion-free groups \cite{hydelodha2019, flows, hydelodha2023}. All of these groups contain infinite direct sums and so are infinite-dimensional. There are two more sources of finitely presented infinite simple groups: lattices in products of trees \cite{burgermozes}, which are intrinsically $2$-dimensional, and Kac--Moody groups over finite fields \cite{capraceremy}, which have torsion. Other constructions of finitely generated simple groups include the ones obtained via small cancellation theory, which a priori do not give information on the dimension, unless they come with an explicit aspherical presentation \cite{obraztsov1990imbedding}, in which case they are $2$-dimensional.

In all of these examples the dimension (when known) is either $2$ or infinite. So the groups in Theorem \ref{intro:thm:spectrum} are the first examples of finitely generated simple groups whose dimension is high but finite.

\begin{corA}[Corollary \ref{cor:simple}]
\label{intro:cor:simple}
    For all $n \geq 3$, there exist continuum many pairwise non-isomorphic finitely generated simple (torsion-free) groups $G$ with $\dim(G) = n$.
\end{corA}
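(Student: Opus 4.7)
The plan is to combine Theorem~\ref{intro:thm:spectrum} with the embedding statement highlighted in the abstract---every countable group of dimension at least $2$ embeds into a finitely generated simple group of the same dimension---together with an invariant-theoretic argument to separate isomorphism classes. A direct application of Theorem~\ref{intro:thm:spectrum} only supplies finitely many isomorphism types of finitely generated simple torsion-free groups of dimension $n$, since for finite $n$ there are only finitely many valid $S \subseteq \N \cup \{\infty\}$ with $\max S = n$. So the continuum must come from within a single realization class, and we have to vary the construction rather than just its output spectrum.

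I would first exhibit continuum many pairwise non-isomorphic countable torsion-free groups of dimension exactly $n$ whose $L^2$-Betti numbers take pairwise distinct values. One concrete route is to pick continuum many pairwise non-isomorphic torsion-free groups $H_\alpha$ of dimension $2$ with prescribed first $L^2$-Betti number (for instance suitable HNN extensions of surface groups, or the $2$-dimensional variants already appearing in the proof of Theorem~\ref{intro:thm:jump}), and set $L_\alpha := H_\alpha \times \Z^{n-2}$. By Künneth for cohomology and the standard multiplicativity of $L^2$-Betti numbers under direct products, each $L_\alpha$ has dimension exactly $n$ and inherits a distinguishing $L^2$-invariant from $H_\alpha$. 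Applying the embedding theorem to each $L_\alpha$ then yields a finitely generated simple torsion-free group $G_\alpha$ of dimension $n$ containing $L_\alpha$.

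The main obstacle is the last step: distinguishing the $G_\alpha$ up to isomorphism. A naive pigeonhole on subgroups fails, because a single finitely generated group can harbour continuum many pairwise non-isomorphic countable subgroups, so the mere presence of $L_\alpha$ inside $G_\alpha$ does not rule out coincidences among the $G_\alpha$. One must instead bound a genuine invariant of $G_\alpha$ itself. My strategy is to mirror Theorem~\ref{intro:thm:ME}: leverage the construction underlying the embedding theorem (small cancellation combined with group-theoretic Dehn filling, as announced in the abstract) to transfer lower bounds on $L^2$-Betti numbers from $L_\alpha$ to $G_\alpha$, so that continuum many input invariants force continuum many output invariants. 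Since precisely this $L^2$-Betti strategy has already been carried out in Theorem~\ref{intro:thm:ME} for producing continuum many pairwise non-measure-equivalent finitely generated torsion-free groups, I expect no substantially new idea beyond checking that the estimates remain valid in the simplicity-producing step.
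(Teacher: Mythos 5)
Your proposal correctly identifies the right ingredient (the embedding theorem, stated in the paper as Theorem~\ref{thm:simple:embedding}), and correctly observes that Theorem~\ref{intro:thm:spectrum} by itself cannot supply continuum many outputs. But you then reject the pigeonhole argument on incorrect grounds, and this sends you down a much harder and only sketched road.

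You write that ``a single finitely generated group can harbour continuum many pairwise non-isomorphic countable subgroups, so the mere presence of $L_\alpha$ inside $G_\alpha$ does not rule out coincidences.'' That is true for \emph{countable} subgroups, but irrelevant if you arrange for the embedded witnesses to be \emph{finitely generated}. A countable group has only countably many finite subsets and hence only countably many finitely generated subgroups, so if continuum many of the $G_\alpha$ were isomorphic to a single $M$, then $M$ would contain continuum many pairwise non-isomorphic finitely generated subgroups $L_\alpha$ --- a contradiction. This is precisely the paper's proof: it takes continuum many pairwise non-isomorphic finitely generated groups of dimension $n$ (Camm's uncountable family of finitely generated simple $2$-dimensional groups, free-producted with a closed aspherical $n$-manifold group), embeds each one into a finitely generated simple torsion-free group of dimension $n$ via Theorem~\ref{thm:simple:embedding}, and concludes by counting finitely generated subgroups. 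Your own $L_\alpha = H_\alpha \times \Z^{n-2}$ would have been finitely generated if you had taken $H_\alpha$ finitely generated (as the examples you cite suggest), so the pigeonhole you discarded actually works with your inputs as well.

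The $L^2$-Betti fallback you propose is not only unnecessary, it has unresolved issues: Theorem~\ref{thm. l2 betti approximation} tracks $L^2$-Betti numbers of \emph{quotient} groups $L_i \twoheadrightarrow M$ that are type $F$ and virtually locally indicable, whereas your $L_\alpha$ enters the construction as an embedded subgroup $H_1 = L_\alpha$, not as one of the quotient groups $L_i$. Transferring a lower bound on $b^{(2)}_\ast(G_\alpha)$ from an embedded $L_\alpha$ through the small-cancellation tower is not something the machinery in Section~4 provides, and it is not merely a matter of ``checking that the estimates remain valid.'' You would essentially have to rebuild the argument. Fix the pigeonhole misconception and you recover the paper's (one-paragraph) proof.
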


In fact we can prove an embedding theorem into finitely generated simple groups preserving the dimension. It is well-known that every countable group embeds into a finitely generated simple group. There are several proofs of this result. The original constructions are due to Hall \cite{embeddingsimple:hall}, which creates torsion, and Gorju\v{s}kin \cite{embeddingsimple:gorjuskin}, which implicitly uses Zorn's Lemma. Other constructions are due to Schupp \cite{embeddingsimple:hall} and Thompson \cite{embeddingsimple:thompson}, which create torsion. The latter construction was recently refined within the context of left orderable groups \cite{darbinyansteenbock}, so the resulting groups are torsion-free, but still contain infinite direct sums.
Also here, other constructions include ones obtained via small cancellation theory, which a priori do not give information on the dimension.

\begin{thmA}[Theorem \ref{thm:simple:embedding}]
\label{intro:thm:simple:embedding}
    Let $n \geq 2$ and let $H$ be a countable group with $\gd(H) \leq n$. Then $H$ embeds into a finitely generated simple group $G$ with $\dim(G) = n$.
\end{thmA}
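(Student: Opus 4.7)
The plan is to reduce to the setting already handled by Theorem~\ref{intro:thm:spectrum} by first embedding $H$ into a suitable finitely generated torsion-free group of dimension $n$, and then running the same simplification machinery while preserving the embedding of $H$.

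\textbf{Step 1: Finitely generate.} By a classical Higman--Neumann--Neumann style construction one embeds $H$ into a two-generated torsion-free supergroup. Concretely, form $H * F_2$ and then iteratively take HNN extensions over free subgroups to reduce the generating set; the base has $\gd \leq \max\{\gd(H), 2\} \leq n$, and since each edge group is free, the Mayer--Vietoris estimate $\gd(\mathrm{HNN}) \leq \max\{\gd(\mathrm{base}), 1 + \gd(\mathrm{edge})\}$ keeps the geometric dimension at most $n$ throughout (this is where the hypothesis $n \geq 2$ is used). Let $K$ be the resulting finitely generated torsion-free group. If $\gd(H) < n$, replace $K$ by $K * \Z^n$: still finitely generated and torsion-free, containing $H$, now with $\gd$ equal to $n$.

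\textbf{Step 2: Simplify while preserving $H$.} Feed $K$ into the iterative construction underlying Theorem~\ref{intro:thm:spectrum}. That construction produces the target group as a direct limit $G = \varinjlim K_i$ of quotients $K = K_0 \twoheadrightarrow K_1 \twoheadrightarrow \cdots$, where at each step one performs a group-theoretic Dehn filling based on a small cancellation relator, chosen to conjugate a prescribed pair of nontrivial elements (driving the limit towards simplicity) while preserving torsion-freeness and the geometric dimension. The new ingredient needed here is to additionally arrange that the filling kernel meets the image of $H$ trivially. This is achieved by installing $H$ as a peripheral (hyperbolically embedded and malnormal) subgroup of $K$ at the outset, and restricting to Dehn fillings that are relative to $H$; the standard relative Dehn filling theorem then guarantees that $H$ embeds into each $K_i$, and hence into $G$. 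The limit $G$ is finitely generated (as a quotient of $K$), torsion-free, and simple; the upper bound $\gd(G) \leq n$ is built into the construction, and the matching lower bound is forced by the subgroup $H$ (or by the $\Z^n$ factor added in Step~1).

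The main obstacle is the relative version of Step~2: one must verify that the Dehn filling and small cancellation steps driving the proof of Theorem~\ref{intro:thm:spectrum} continue to work in the presence of a designated subgroup $H$ whose injectivity into each quotient is preserved. Once $H$ is arranged to be malnormal and hyperbolically embedded in $K$, the standard relative small cancellation and Dehn filling toolkit, as developed and used throughout the paper, supplies the needed control; the torsion and dimension bookkeeping are then inherited without change from the proof of the main theorem.
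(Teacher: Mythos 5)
Your overall strategy---embed $H$ as a hyperbolically embedded (hence malnormal) subgroup and then take a direct limit of torsion-free Dehn fillings that avoid it while driving the quotient to simplicity---is exactly the spirit of the paper's Theorem \ref{thm. embedding} (Theorem \ref{intro:thm:relative}). The paper proves Theorem \ref{intro:thm:simple:embedding} by invoking that machinery directly: set $H_1 = H$, take $L_1$ to be the fundamental group of a closed orientable hyperbolic $n$-manifold, set $H_i = \{1\}$ and $L_i$ free for $i \geq 2$, and read off that the resulting simple group $M$ contains $H$ malnormally, satisfies $\gd(M) \leq n$, and has $H_n(M;R) \supseteq H_n(L_1;R) \cong R \neq 0$ for every $R$, via item~\ref{item. homology relative}. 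Your Step 1 is entirely unnecessary: Theorem \ref{thm. embedding} accepts arbitrary countable torsion-free groups, so there is no need to first make $H$ finitely generated.

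There is, however, a genuine gap in your proposal at the dimension lower bound. When $\gd(H) < n$ you rely on the factor $\Z^n$ glued on in Step 1, but in Step 2 you only install $H$ (not $\Z^n$) as the peripheral subgroup that the Dehn fillings are required to avoid. The Dehn filling kernels are therefore free to destroy the $\Z^n$ factor, and nothing in your argument guarantees that $\Z^n$ survives as a subgroup of the limit, nor that $H_n(G;R)$ is nonzero. This cannot be repaired by simply swapping $\Z^n$ in as one of the ``$L_i$'': those are required to be non-elementary torsion-free hyperbolic groups, which $\Z^n$ is not for $n \geq 2$. The correct fix is precisely what the paper does: use a hyperbolic $n$-manifold group as $L_1$. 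Even though $L_1$ is \emph{not} preserved as a subgroup (it surjects onto all of $M$), its top homology class survives in $H_n(M;R)$ thanks to the Cohen--Lyndon control on the classifying space (Corollary \ref{cor. classifying space} and Lemma \ref{lem. compute cohomology}). This homological mechanism---getting a lower-dimension witness from a group that becomes identified with the quotient rather than embedding---is the non-obvious ingredient, and it is not supplied by your sketch; ``the standard relative small cancellation and Dehn filling toolkit'' controls injectivity of peripherals but not the top-degree homology of the limit. Finally, note that a second gap lurks in your comment that the lower bound can also be ``forced by the subgroup $H$'': if $\gd(H) = n$ but $\hd_R(H) < n$ for some $R$, embedding $H$ only gives $\gd(G) \geq n$, not the full conclusion $\hd_R(G) = \cd_R(G) = \gd(G) = n$ for every ring $R$, which is what ``$\dim(G) = n$'' means here. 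So even in the equality case one needs the $L_1$ witness.
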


The groups we construct for Theorems \ref{intro:thm:jump} and \ref{intro:thm:fd:gap} are \emph{torsion-free Tarski monsters}: finitely generated infinite simple groups with every proper non-trivial subgroup infinite cyclic. Such groups were first constructed by Ol'{\v{s}}anski\u{\i} \cite{noetheriangroup}; his construction gives an explicit aspherical presentation \cite{ol2012geometry}, and therefore they are $2$-dimensional.

The novelty in our approach is that the small cancellation relations used to construct such exotic groups are arranged to also have the \emph{Cohen--Lyndon property}. This property, named after \cite{cohen1963free}, ensures that the normal closure of a relator takes a particularly nice form, and appears naturally in group theoretic Dehn filling \cite{sun2018cohomologyi}. It gives homological control \cite{sun2019cohomologyii, petrosyan2024l2}, which is robust enough to allow us to understand homological features of the limit. This property has already been applied to produce high-dimensional versions of small cancellation constructions \cite{sun2019cohomologyii, Arenas}. The relations needed to construct groups such as torsion-free Tarski monsters \cite{olshanskii1993residualing, osin2010small, hull2013small} all impose that a given element belongs to a given non-elementary subgroup. Such relations can be easily chosen to have the Cohen--Lyndon property (via group theoretic Dehn fillings) if torsion is allowed (see e.g. \cite[Theorem 1.1]{osinthom}); but in torsion-free groups this requires more work. We achieve this in our main technical result: Theorem \ref{thm. inductive step}.

\medskip

In order to showcase the flexibility of our method, let us present three further applications. We are able to construct groups as in \cite{olshanskii1993residualing, osin2010small, hull2013small, characteristicquotients}, with control on the dimension.

\begin{thmA}[Theorem \ref{thm:verballycomplete}]
\label{intro:thm:verballycomplete}
    For all $n \geq 2$, there exists a finitely generated verbally complete group $G$ with $\dim(G) = n$.
\end{thmA}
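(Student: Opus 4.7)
The plan is to build a sequence of finitely generated torsion-free hyperbolic groups
\[G_0 \twoheadrightarrow G_1 \twoheadrightarrow G_2 \twoheadrightarrow \cdots,\]
each of dimension $n$, such that the direct limit $G = \varinjlim G_k$ is verbally complete and still has dimension $n$. First I would fix $G_0$ to be any finitely generated torsion-free hyperbolic group of dimension $n$: for $n = 2$ one can take a closed surface group; for $n = 3$ a closed hyperbolic $3$-manifold group; and for $n \geq 4$ one of the Januszkiewicz--Swiatkowski constructions.

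Next I would enumerate all pairs $(w_k, g_k)_{k \in \N}$, where $w_k$ ranges over nontrivial elements of free groups $F_{m_k}$ and the sequence $(g_k)$ is arranged, with sufficient redundancy, so that every element of every $G_j$ appears as $g_k$ for some $k > j$. Given $G_{k-1}$ and $(w_k, g_k)$ with $g_k \in G_{k-1}$, I would invoke the main technical result (Theorem \ref{thm. inductive step}) to produce long elements $u_1, \ldots, u_{m_k} \in G_{k-1}$ for which the single relator
\[r_k \;=\; w_k(u_1, \ldots, u_{m_k}) \, g_k^{-1}\]
satisfies a strong small cancellation condition in $G_{k-1}$ and whose normal closure $\ll r_k \rr$ has the Cohen--Lyndon property. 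Setting $G_k = G_{k-1}/\ll r_k \rr$, small cancellation ensures that $G_k$ is again finitely generated torsion-free hyperbolic, while Cohen--Lyndon delivers $\dim(G_k) = n$ via the homological bookkeeping developed earlier in the paper. By construction, the image of $g_k$ in $G_k$ lies in the image of the word map associated to $w_k$.

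Passing to the direct limit $G = \varinjlim G_k$, the generating set inherited from $G_0$ makes $G$ finitely generated. Verbal completeness follows from the enumeration: given a nontrivial word $w$ and an element $g \in G$, some representative of $g$ already lives in some $G_j$, and by the redundancy in the enumeration the pair $(w, g)$ is handled at some later stage $k > j$, producing a witness in $G_k$ that maps to a solution in $G$. The equality $\dim(G) = n$ in the limit is then obtained by combining Cohen--Lyndon at every step with the direct-limit argument already used in the paper to compute dimensions of such iterated quotients.

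The main obstacle is the inductive step itself: producing, inside a torsion-free hyperbolic group, a single relator of the prescribed form $w_k(u_1, \ldots, u_{m_k}) g_k^{-1}$ that is simultaneously small-cancellation and Cohen--Lyndon. As pointed out in the introduction, the analogous statement with torsion follows routinely from group theoretic Dehn filling, but the torsion-free case is genuinely delicate; it is precisely the content of Theorem \ref{thm. inductive step}. Once that is available, the remainder of the construction is a bookkeeping argument along the enumeration.
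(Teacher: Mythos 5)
Your high-level strategy -- a directed system of torsion-free groups of dimension $n$ whose direct limit is verbally complete, with Theorem \ref{thm. inductive step} providing the homological control at each stage -- is the right one, and it matches the paper in spirit. But the mechanism you describe for realizing each equation $w_k(\,\cdot\,) = g_k$ has a genuine gap, and it is not the one the paper uses.

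You claim that Theorem \ref{thm. inductive step} produces elements $u_1,\dots,u_{m_k} \in G_{k-1}$ for which the single relator $r_k = w_k(u_1,\dots,u_{m_k})g_k^{-1}$ is simultaneously small cancellation and Cohen--Lyndon. That is not what the theorem does, and it is not clear that any direct choice of $u_i$ inside $G_{k-1}$ would give a relator that is Cohen--Lyndon. The relators constructed inside the proof of Theorem \ref{thm. inductive step} have a very specific shape $R = h_8 h_7^\ell \prod_{t}(\prod_m h_m^\ell)$; the element to be absorbed is slotted into a product of high powers of independent loxodromics, and the crucial structural feature is that $\langle R\rangle$ is a \emph{free factor} of the hyperbolically embedded free group $F_8$, so that Cohen--Lyndon comes from Dehn filling (Theorem \ref{thm. sun cl}) composed with Lemma \ref{lem. cl transitive}. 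A relator of the form $w_k(u_1,\dots,u_{m_k})g_k^{-1}$ has no reason to sit as a free factor of any hyperbolically embedded free subgroup, so the Cohen--Lyndon bookkeeping breaks down. (Classical small cancellation over hyperbolic groups \emph{can} be used to impose $w(u)=g$ directly, as in Mikhajlovski\u{\i}--Ol'shanski\u{\i}, but that route does not give the homological control on which this paper depends.)

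The actual route, which is Hull's and which the paper follows, has an intermediate step you have omitted. Given $(w_k, g_k)$, one first forms $G(i+\frac12)$ as an amalgamated product of $G(i)$ with a free group $J$ over a cyclic group, so that the equation $w_k(\,\cdot\,) = g_k$ is solved \emph{tautologically} in $G(i+\frac12)$ by the new free generators of $J$. This introduces new generators (and a single $2$-cell), so the group is no longer generated by the original finite set. Only at the \emph{next} half-step does one apply Theorem \ref{thm. inductive step}, using its item \ref{item. K absorb}, to force the new generators into the image of the original suitable subgroup; this both restores finite generation and keeps the solution to the equation. Topologically, $K(G(i+\frac12),1)$ is obtained from $K(G(i),1)$ by wedging circles and attaching a $2$-cell, and $K(G(i+1),1)$ by attaching further $2$-cells, so Lemma \ref{lem. compute cohomology} applies along the whole tower. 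Your version skips the amalgamation, and with it the reason the equations have solutions at all. A secondary point: your tower consists of \emph{hyperbolic} groups, but the amalgamation step need not preserve hyperbolicity (quasiconvexity of the edge group is not free); the paper only claims, and only needs, acylindrical hyperbolicity along the tower.
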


\begin{thmA}[Theorem \ref{thm:conjugacyclass}]
\label{intro:thm:conjugacyclass}
    For all $n \geq 2$, there exists a finitely generated group $G$ such that $G$ has exactly two conjugacy classes and $\dim(G) = n$.
\end{thmA}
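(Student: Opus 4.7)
The plan is to adapt Osin's construction of finitely generated groups with exactly two conjugacy classes \cite{osin2010small}, replacing each small cancellation step with an application of the paper's main technical Theorem \ref{thm. inductive step}, which additionally provides the Cohen--Lyndon property needed to control the dimension throughout. Start from a finitely generated torsion-free non-elementary hyperbolic group $G_0$ with $\dim(G_0) = n$: for $n = 2$ any closed surface group works, while for $n \geq 3$ suitable hyperbolic small cancellation groups of dimension $n$ are available (as in the proof of Theorem \ref{intro:thm:spectrum}). Let $\Sigma$ be a finite generating set of $G_0$.

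I would then build a chain of quotients $G_0 \twoheadrightarrow G_1 \twoheadrightarrow G_2 \twoheadrightarrow \cdots$ inductively. Enumerate all pairs $(a_i, b_i)$ of words over $\Sigma$ via a diagonal scheme so that every pair recurs infinitely often. At step $i$, if the images of $a_i$ and $b_i$ in $G_i$ are non-trivial, apply Theorem \ref{thm. inductive step} to obtain a quotient $G_{i+1}$ in which these two images become conjugate, while preserving: (a) generation by $\Sigma$, (b) torsion-freeness, (c) the non-elementary structure needed to iterate the inductive step, and (d) dimension equal to $n$, this last item being guaranteed by the Cohen--Lyndon property of the imposed relations.

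Finally, pass to the direct limit $G = \varinjlim G_i$. Then $G$ is finitely generated by $\Sigma$ and, by the diagonal enumeration combined with non-triviality of each $G_i$, every pair of non-trivial elements of $G$ is conjugate in some $G_i$, so $G$ has exactly two conjugacy classes. The upper bound $\dim(G) \leq n$ follows by propagating the Cohen--Lyndon bound through the direct limit by the same argument used to prove Theorem \ref{intro:thm:spectrum}, while the lower bound $\dim(G) \geq n$ comes from the fact that a fixed subgroup of dimension $n$ coming from $G_0$ survives the inductive process, again a feature built into the inductive step.

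The main obstacle is the inductive step itself: one must simultaneously ensure (i) that the added relations force conjugation in a non-trivial quotient, in the spirit of Osin's construction, and (ii) that these relations satisfy the Cohen--Lyndon condition, which is the input controlling cohomological dimension. In the torsion-free setting this combination is significantly more delicate than in the torsion case, where standard group theoretic Dehn filling already suffices; Theorem \ref{thm. inductive step} is precisely the technical tool designed to resolve this difficulty, and once it is invoked, the construction above proceeds formally.
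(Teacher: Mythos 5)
Your high-level strategy is the same as the paper's: adapt the two--conjugacy--classes construction of Osin/Hull, substituting Theorem \ref{thm. inductive step} for the generic small cancellation step in order to control dimension via the Cohen--Lyndon property. However, there are two concrete gaps.

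First, you write that at each step one should ``apply Theorem \ref{thm. inductive step} to obtain a quotient $G_{i+1}$ in which these two images become conjugate.'' Theorem \ref{thm. inductive step} does not do this. Its item \ref{item. K absorb} forces prescribed elements into the image of a chosen suitable subgroup; it does not identify an element with a conjugate of another. In both Osin's and Hull's constructions, making $a_i$ conjugate to $b_i$ requires first passing to an HNN extension $G(i+\tfrac{1}{2}) = G(i)*_{\langle a_i\rangle}$ whose stable letter $t$ conjugates $a_i$ to $b_i$, and only then applying the small cancellation theorem to absorb $t$ into a suitable subgroup generated by the original finite generating set. The paper follows Hull's proof and makes this explicit; it also has to track the effect of the HNN step on the $K(\cdot,1)$ (gluing a cylinder along two circles, which adds one $1$-cell and one $2$-cell, and so does not raise $\gd$ above $\max\{n,2\}$). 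Omitting the HNN step is not a cosmetic shortcut: without it Theorem \ref{thm. inductive step} gives you no control over conjugacy.

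Second, your argument for the lower bound $\dim(G) \geq n$ --- that ``a fixed subgroup of dimension $n$ coming from $G_0$ survives the inductive process'' --- is incorrect. The maps $G_0 \to G_1 \to \cdots \to G$ are surjections, not inclusions, and subgroups of $G_0$ are generally destroyed; indeed, in the end every nontrivial element of $G$ is conjugate to every other, so there is no hope of a copy of $G_0$ (or any high-dimensional subgroup) sitting inside $G$. What actually survives is $n$-th \emph{homology}: by Lemma \ref{lem. compute cohomology} (i.e.\ the Cohen--Lyndon control), the maps $H_n(G_i;R)\to H_n(G_{i+1};R)$ are injective for $n=2$ and isomorphisms for $n\geq 3$, so a nonzero class in $H_n$ of the initial group (in the paper, furnished by wedging in the fundamental group of a closed orientable hyperbolic $n$-manifold $L$) persists to $H_n(G;R)\neq 0$. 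This gives $\hd_R(G)\geq n$, and combined with the $\gd(G)\leq n$ coming from \ref{item. gd} one concludes $\dim(G)=n$. You should replace the ``surviving subgroup'' argument with this homological one, and make sure the initial group has $H_n(\cdot;R)\neq 0$ for every commutative unital ring $R$ (a closed surface group or closed hyperbolic $n$-manifold group does the job).
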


\begin{thmA}[Theorem \ref{thm:characteristicquotients}]
\label{intro:thm:characteristicquotients}

    For all $n \geq 2$, and all $d \geq 2(n-1)$, the free group $F_n$ admits an infinite simple characteristic quotient $G$ with $\cd_{\Q}(G) = d$.
\end{thmA}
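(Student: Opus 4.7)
The plan is to construct $M$ by an iterated Dehn filling of $F_n$, combining the Olshanski\u{\i}-style inductive scheme of \cite{characteristicquotients} for producing simple characteristic quotients of free groups with the dimension-controlled filling technology of Theorem \ref{thm. inductive step}, following the blueprint used in the proofs of Theorems \ref{intro:thm:spectrum} and \ref{intro:thm:simple:embedding}. Concretely, I would form a chain $F_n = G_0 \twoheadrightarrow G_1 \twoheadrightarrow G_2 \twoheadrightarrow \cdots$ in which each quotient $G_{k+1} = G_k / \ll R_k \rr_{G_k}$ kills the image in $G_k$ of an $\Aut(F_n)$-invariant subset $R_k \subseteq F_n$. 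The $\Aut(F_n)$-invariance of each $R_k$ forces the kernel of $F_n \twoheadrightarrow M$ to be characteristic, and an Olshanski\u{\i}-style enumeration of conjugacy classes at the successive stages forces $M$ to be both infinite and simple in the limit.

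At each step, Theorem \ref{thm. inductive step} lets us arrange that the $\Aut(F_n)$-orbit we add is supported inside a suitable hyperbolically embedded peripheral subgroup $H_k \leq G_k$ and satisfies the Cohen--Lyndon property with small cancellation strong enough to control the cohomology of $G_{k+1}$ from that of $G_k$. The Cohen--Lyndon splitting of the relation module, exploited as in \cite{sun2019cohomologyii, petrosyan2024l2}, yields an inductive formula of the shape
\[
\cd_\Q(G_{k+1}) \;=\; \max\bigl\{\cd_\Q(G_k),\; \cd_\Q(H_k/N_k)+1\bigr\},
\]
where $N_k = H_k \cap \ll R_k \rr$. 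To land at $\cd_\Q(M) = d$, I would reserve a single early stage to perform a Dehn filling whose peripheral quotient $H_0/N_0$ already has $\cd_\Q = d-1$; subsequent stages are then arranged so that each peripheral quotient has $\cd_\Q \leq d-1$, preserving the dimension at $d$ in the limit.

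The constraint $d \geq 2(n-1)$ enters through the realizability of such a peripheral quotient: a hyperbolically embedded free subgroup $H_0 \leq F_n$ that one can use $\Aut(F_n)$-equivariantly carries a rank bound controlled by $n$, and realizing a group of $\cd_\Q = d-1$ as a quotient of a free group of that rank via an inductive application of Theorem \ref{intro:thm:spectrum} (and the realisation strategy behind Corollary \ref{intro:cor:simple}) forces the stated lower bound on $d$ in terms of $n$. In particular, the earliest filling must be coordinated both with the automorphism structure of $F_n$ and with the construction used in Theorem \ref{intro:thm:spectrum} so as to keep everything compatible.

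The main obstacle is the triple demand of characteristicness, simplicity, and dimension control: characteristicness forces us to kill entire $\Aut(F_n)$-orbits rather than individual elements, and these orbits can spread across many incommensurable conjugacy classes and several peripheral subgroups; they must nevertheless be thinned and organised into a Cohen--Lyndon small cancellation system adapted to the evolving (relatively) hyperbolic structure on $G_k$. Theorem \ref{thm. inductive step} is precisely designed to furnish such systems in the torsion-free setting, so the remaining work is careful bookkeeping: scheduling which orbits to kill at which stage so as to guarantee simultaneously (i) simplicity of the limit, (ii) compatibility with the peripheral structure and the Cohen--Lyndon property at each step, and (iii) the inductive cohomological bound that pins $\cd_\Q(M)$ down to exactly $d$.
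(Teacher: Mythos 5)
Your proposal starts from a sound observation --- killing $\Aut(F_n)$-invariant subsets does yield characteristic kernels --- but the paper's argument is structured quite differently, and your version has a gap exactly where the characteristicness needs to be reconciled with small cancellation. An $\Aut(F_n)$-orbit of a single nontrivial element of $F_n$ is already an infinite set whose members spread across incommensurable conjugacy classes with no control on their relative positions; there is no way to make such a set into a Cohen--Lyndon small cancellation system, and Theorem \ref{thm. inductive step} (which is tailored to a \emph{finite} list of elements $g_1, \dots, g_N$) cannot be applied to it directly. The paper's key move, borrowed from \cite{characteristicquotients}, is to first pass to an acylindrically hyperbolic group $G_0$ that is a \emph{common quotient of $F_n$ and $\Aut(F_n)$} (so the $\Aut(F_n)$-action becomes inner), after which \emph{every} further quotient is automatically a characteristic quotient of $F_n$. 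This removes the need for $\Aut(F_n)$-equivariant bookkeeping entirely; one is then free to run an ordinary Hull-style iterated small cancellation, upgraded by Theorem \ref{thm. inductive step}, to produce an infinite simple limit with $2$-cell-only classifying space extensions.

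Your explanation of the constraint $d \geq 2(n-1)$ is also off the mark. It has nothing to do with rank bounds on hyperbolically embedded free subgroups of $F_n$; it is simply that $\cd_\Q(\Aut(F_n)) = 2(n-1)$ (via Culler--Vogtmann's virtual cohomological dimension of $\Out(F_n)$ and $\cd_\Q(\Aut(F_n)) = 1 + \cd_\Q(\Out(F_n))$), and since $M$ arises from $\Aut(F_n) \ast L$ by attaching $2$-cells (with $L$ the fundamental group of a closed hyperbolic $d$-manifold, which your proposal omits), Lemma \ref{lem. compute cohomology} gives $\cd_\Q(M) = \max\{2(n-1), d\}$. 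Your proposed inductive formula $\cd_\Q(G_{k+1}) = \max\{\cd_\Q(G_k), \cd_\Q(H_k/N_k) + 1\}$ is not the mechanism used and is not obviously correct in general: what the Cohen--Lyndon property actually delivers here is the weaker but sufficient statement that the classifying space grows by $2$-cells only, so homology is preserved in all degrees $\geq 3$ and monotone in degree $2$. Finally, to pin the dimension down to \emph{exactly} $d$ rather than $\leq d$ you need a witness in degree $d$, which is precisely the role of the free product factor $L$; your proposal of reserving one filling with a peripheral quotient of $\cd_\Q = d-1$ does not supply such a witness.
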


For the full extent of Theorem \ref{intro:thm:spectrum}, and for Theorem \ref{intro:thm:simple:embedding}, we use a \emph{relative} version of the Tarski monster construction. Without the homological control, these were already constructed by Obraztsov \cite{obraztsov1990imbedding} and Ol'{\v{s}}anski\u{\i} \cite{olshanskiieconomical}. However this formulation is new; we believe it could be useful in other contexts, so we isolate the statement as such, with more details given in the text.

\begin{thmA}[Theorem \ref{thm. embedding}]
\label{intro:thm:relative}

    Every countable torsion-free group $H$ embeds as a malnormal subgroup in a finitely generated simple torsion-free group $G$ such that every proper subgroup of $G$ is either cyclic or conjugate into $H$, and the map $H \to G$ induces isomorphisms
    \[H_n(G;A) \cong H_n(H;A), \; \; H^n(G;A) \cong H^n(H;A)\]
    for all $n\geq 3$ and every $G$-module $A$. 
\end{thmA}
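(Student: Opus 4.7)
The plan is to construct $G$ as the direct limit of successive Cohen--Lyndon Dehn fillings of the free product $G_0 := F_2 \ast H$, following the strategy of Ol'{\v{s}}anski\u{\i} and Obraztsov, but with each relator chosen via the main technical Theorem~\ref{thm. inductive step} so as to have the Cohen--Lyndon property. The starting group $G_0$ is torsion-free, hyperbolic relative to $\{H\}$, with $H$ malnormal; moreover, by the Mayer--Vietoris sequence for free products, $H_n(G_0; A) \cong H_n(H; A)$ and $H^n(G_0; A) \cong H^n(H; A)$ for all $n \geq 2$ and every $G_0$-module $A$, since $F_2$ has cohomological dimension $1$.

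Next, I would enumerate countably many tasks of three types: (a) for every pair of non-trivial elements $(g, h)$, impose a relation forcing $h$ to lie in the normal closure of $g$; (b) for every element $h$ in a fixed generating set of $H$, impose a relation expressing $h$ as a word in the generators of $F_2$; (c) for every 2-generated subgroup $K \leq G_0$ that is neither cyclic nor conjugate into $H$, add a relation eventually forcing $K$ to coincide with the whole group. At each stage $G_i \to G_{i+1}$, apply Theorem~\ref{thm. inductive step} to realize one such task by a Cohen--Lyndon relator; the theorem guarantees that $G_{i+1}$ remains torsion-free, hyperbolic relative to $\{H\}$, that $H$ stays malnormal, and that the quotient $G_i \to G_{i+1}$ induces isomorphisms on $H_n$ and $H^n$ for $n \geq 3$.

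Setting $G := \varinjlim_i G_i$, the stated properties follow: $G$ is 2-generated by tasks (b); $G$ is simple by tasks (a); torsion-freeness and malnormality of $H$ are preserved in the limit; the subgroup structure is enforced by (c) together with a standard argument showing that any proper subgroup $K \leq G$ that is neither cyclic nor conjugate into $H$ would already violate one of the relations introduced at some finite stage. The homology isomorphism $H_n(G; A) \cong H_n(H; A)$ for $n \geq 3$ is then immediate, since group homology commutes with direct limits of groups and $H_n(G_0; A) \cong H_n(H; A)$ for $n \geq 2$.

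The main obstacle will be the corresponding cohomology statement, since group cohomology does not commute with direct limits in general. I would handle this by exploiting that, by the Cohen--Lyndon property, at each stage the kernel $N_{i+1} := \ker(G_i \to G_{i+1})$ is a free group, so the total kernel $N := \ker(G_0 \to G) = \bigcup_i N_i$ is locally free. Combined with the Lyndon--Hochschild--Serre spectral sequence for the extension $1 \to N \to G_0 \to G \to 1$ and a careful accumulation of the Cohen--Lyndon structures across stages (which in particular bounds the cohomological contribution of $N$), this yields the cohomology isomorphism in degrees $n \geq 3$, along the template developed in \cite{sun2019cohomologyii, petrosyan2024l2}.
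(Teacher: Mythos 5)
Your overall strategy matches the paper's: start from a free product, iteratively apply Theorem~\ref{thm. inductive step} (which packages the Cohen--Lyndon small cancellation step), enumerate tasks that force 2-generation, simplicity and the Tarski-like subgroup structure, and pass to the direct limit. That part is sound and essentially the same construction. (Minor organizational difference: you start with $F_2 \ast H$ once and for all, whereas the paper keeps $G_0 = F_2$ and free-products in a new $H_i$ and an auxiliary hyperbolic group $L_i$ at each stage; for a single $H$ this is cosmetic.)

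The serious issue is your treatment of cohomology. You correctly observe that cohomology does not commute with direct limits, but your proposed fix --- the LHS spectral sequence for $1 \to N \to G_0 \to G \to 1$ together with ``a careful accumulation of the Cohen--Lyndon structures'' --- is not what the cited template does, and as sketched it does not close. Even granting that $N$ is free (which can be argued via the Kurosh theorem, since $N \cap H = 1$ and $N \cap F_2$ is free), the two-row spectral sequence gives a Wang-type exact sequence
\[
\cdots \to H^{n-1}(G; H^1(N;A)) \to H^n(G;A) \to H^n(G_0;A) \to H^n(G; H^1(N;A)) \to \cdots
\]
and obtaining the desired isomorphism in degrees $n \geq 3$ requires showing that the terms $H^m(G; H^1(N;A))$ vanish or cancel appropriately. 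This does \emph{not} follow merely from $\cd(N) \leq 1$; you would need to unwind the $\Z G$-module structure of $H^1(N;A)$ from the nested Cohen--Lyndon decompositions, which is precisely the hard work you are waving at. The paper avoids this entirely: item~\ref{item. gd} of Theorem~\ref{thm. inductive step} (via Corollary~\ref{cor. classifying space}, itself from Theorem~\ref{thm. df space}) says a $K(G_{i+1},1)$ is obtained from a $K(G_i,1)$ by attaching finitely many $2$-cells. Taking the nested union $Z = \bigcup_i Z_i$ with the weak topology produces an explicit $K(M,1)$ that differs from a $K\bigl((\Asterisk_i H_i) \ast (\Asterisk_i L_i) \ast G_0, 1\bigr)$ only by $1$- and $2$-cells, and Lemma~\ref{lem. compute cohomology} then gives the homology \emph{and} cohomology isomorphisms in degree $\geq 3$ by a direct cellular chain-complex comparison, with no spectral sequence and no auxiliary input about $N$. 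You should replace your spectral sequence paragraph with this explicit classifying-space argument; it is exactly what item~\ref{item. gd} is designed for.

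One further remark: your homology paragraph invokes that homology commutes with direct limits of groups, but this only gives $H_n(G;A) \cong \varinjlim H_n(G_i; A_i)$ for modules induced compatibly; for a fixed $G$-module $A$, you must pull back along $G_i \to G$, and then the claim that the limit of the $H_n(G_i; A) \cong H_n(H;A)$ stabilizes requires the same control that Lemma~\ref{lem. compute cohomology} provides. So even on the homology side, building the $K(M,1)$ explicitly is the cleaner and safer route.
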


In the results presented so far, the main focus was the dimension. But Theorem \ref{intro:thm:relative} (and its more precise version Theorem \ref{thm. embedding}) gives more control, with applications of a different flavor.

Let $\X$ be a class of groups. We define $\HX$ to be the smallest class of groups containing $\X$, and with the property that if $G$ is a group with an admissible action on a finite-dimensional contractible CW-complex with stabilizers in $\HX$, then $G \in \HX$. This hierarchy was introduced by Kropholler \cite{kropholler:hierarchy, kropholler:hierarchy:survey}, who proved several rigidity theorems for groups in $\HF$, where $\F$ is the class of finite groups. Most notably, if $G$ is a torsion-free group of type $\mathrm{FP}_\infty$ in $\HF$, then $G$ has finite cohomological dimension \cite{kropholler:hierarchy}; this was then generalized for groups with torsion, and classifying spaces for proper actions \cite{krophollermislin}.

This class admits a filtration by ordinals $\HX = \bigcup_\alpha \HX[\alpha]$: see Subsection \ref{ss. hierarchy}. Most of the proofs proceed by transfinite induction, and therefore it is central to understand the precise role of the ordinal $\alpha$. This led Januszkiewicz, Kropholler and Leary to construct groups in $\HF \minus \HF[\alpha]$ for all countable ordinals $\alpha$ \cite{JKL}. Their construction takes as input groups with strong fixed point properties for actions on finite dimensional complexes \cite{infinite:fixpoint}. These fixed point properties come from torsion, therefore their method can only be used to address $\HX$ when $\X$ is a class that contains all finite groups. In particular, it was an open problem to construct torsion-free groups in $\HF \minus \HF[\alpha]$, beyond the case $\alpha = 2$ (these examples for $\alpha = 2$ are constructed in \cite{HF:torsionfree1, HF:torsionfree2} and discussed further in \cite[Section 2]{JKL}).

\begin{thmA}[Theorem \ref{thm. hierarchy}]
\label{intro:thm:hierarchy}
    Let $\X$ be a subgroup-closed class of groups. Suppose that there exists a countable torsion-free group in $\HX[1] \minus \X$. Then, for every countable ordinal $\alpha\geq 1$, there exists a finitely generated torsion-free simple group in $\HX[\alpha+1] \minus \HX[\alpha]$.
\end{thmA}

It is easy to see that $\HX[1]$ contains all groups of finite cohomological dimension, which are automatically torsion-free, so the assumption holds as soon as $\X$ does not contain \emph{all} groups of finite cohomological dimension. In the case of $\X = \F$, we obtain torsion-free groups in $\HF \minus \HF[\alpha]$, for every countable ordinal $\alpha$, solving a problem posed in \cite[after Theorem 1.1]{JKL}. The importance of having torsion-free examples comes from the fact that the torsion-free groups in $\HF$ are exactly the groups in $\HI[\alpha]$, where $\I$ is the class consisting only of the trivial group; this is an easy consequence of Smith theory \cite{smith:theory}.

\paragraph{Outline.} In Section \ref{sec:prelim} we lay the necessary foundations from geometric group theory, particularly small cancellation theory over acylindrically hyperbolic groups and the Cohen--Lyndon Property. In Section \ref{sec:inductive} we prove a small cancellation theorem with homological control (Theorem \ref{thm. inductive step}), which will give the inductive step in our constructions. In Section \ref{sec. relative} we construct relative torsion-free Tarski monsters with homological control, which proves Theorem \ref{intro:thm:relative} and is used in all the subsequent applications, treated in Section \ref{sec:applications}.

Except for Subsection \ref{ss. further}, all applications only rely on Theorem \ref{thm. embedding}, so the reader that does not want to delve into the small cancellation theory can skip Sections \ref{sec:inductive} and \ref{sec. relative} and treat Theorem \ref{thm. embedding} as a black box.

\paragraph{Acknowledgements.} The authors are indebted to Henrique Souza for asking the question that motivated this paper, and is solved in Theorem \ref{intro:thm:fd:gap}; and to Peter Kropholler for suggesting that our techniques could be used to also prove Theorem \ref{intro:thm:hierarchy}. They thank Macarena Arenas, Rachael Boyd, Will Cohen, Arman Darbinyan, Daniel Groves, Adrian Ioana, Peter Kropholler, Ian Leary, Kevin Li, Clara L{\"o}h, Antonio L{\'o}pez Neumann, Ashot Minasyan, Brita Nucinkis, Nansen Petrosyan, John Ratcliffe, Yuri Santos Rego, Olympia Talelli, Henry Wilton, Julian Wykowski and Matt Zaremsky for useful conversations. FFF is supported by the Herchel Smith Postdoctoral Fellowship Fund. BS is supported by the AMS--Simons Travel Grant.

\section{Preliminaries}
\label{sec:prelim}

\subsection{(Co)homology and dimension}

We refer the reader to \cite{brown1982cohomology} or \cite{bieri1981homdim} for more details. Let $G$ be a group and $R$ a commutative unital ring. The \emph{$R$-homological dimension of $G$} is defined as
\[\hd_R(G):=\sup_n \{ n \in \N : H_n(G;M)\neq 0 \text{ for some $RG$-module $M$} \} \in \N \cup \{ \infty \}.\]
The \emph{$R$-cohomological dimension of $G$}, denoted $\cd_R(G)$, is defined similarly.
The \emph{geometric dimension of $G$}, denoted $\gd(G)$, is the minimal dimension of a $K(G,1)$ CW-complex, and equals $\infty$ if no finite-dimensional $K(G,1)$ exists. For a commutative unital ring $R$, one always has
\[\hd_R(G)\leq \cd_R(G)\leq \gd(G).\]
In particular:
\begin{itemize}
    \item If $G$ has an $n$-dimensional $K(G, 1)$, and $H_n(G; R) \neq 0$ for every commutative unital ring $R$, then all dimensions above are equal to $n$.
    \item If $H_n(G; R) \neq 0$ for infinitely many $n$ for every commutative unital ring $R$, then all dimensions above are equal to infinity.
\end{itemize}
This is how we will compute the dimension of the groups in our construction, which is why the results in the introduction hold for all notions of dimension at once (with the exception of Theorem \ref{intro:thm:characteristicquotients}).

\begin{lemma}\label{lem. compute cohomology}
    Let $G$ be a group and let $H$ be a quotient of $G$. Let also $X$ (resp. $Y$) be a $K(G,1)$ (resp. $K(H,1)$) CW-complex. Suppose that $Y$ is obtained from $X$ by attaching (possibly infinitely many) $1$-cells and $2$-cells. Then, for every $H$-module $A$, the quotient $G \to H$ induces:
    \begin{itemize}
        \item Isomorphisms $H_j(G;A)\cong H_j(H;A)$ and $H^j(H; A) \cong H^j(G; A)$, for all $j \geq 3$;
        \item An embedding $H_2(G;A) \hookrightarrow H_2(H;A)$;
        \item A surjection $H^2(H; A) \twoheadrightarrow H^2(G;A)$.
    \end{itemize}
\end{lemma}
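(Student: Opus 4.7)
The plan is to apply the long exact sequence of the pair $(Y, X)$ in (co)homology with local coefficients in the $H$-module $A$, viewing $A$ as a $G$-module via the quotient $G \twoheadrightarrow H$. Since $X$ is a $K(G,1)$ and $Y$ is a $K(H,1)$, standard identifications give $H_j(X; A) \cong H_j(G; A)$ and $H_j(Y; A) \cong H_j(H; A)$, and analogously in cohomology. The inclusion $X \hookrightarrow Y$ induces the quotient $G \to H$ on fundamental groups, so by naturality the maps in the long exact sequence coincide with those induced on group (co)homology by $G \to H$.

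The heart of the argument is to show that $H_j(Y, X; A)$ and $H^j(Y, X; A)$ vanish for $j \notin \{1, 2\}$. Let $p \colon \widetilde Y \to Y$ denote the universal cover with deck group $H$, and set $X' := p^{-1}(X)$. The relative cellular chain complex $C_*^{\mathrm{cell}}(\widetilde Y, X')$ is a complex of free $\Z H$-modules whose degree-$i$ term is free on a choice of lift of each $i$-cell of $Y \setminus X$. By hypothesis such cells occur only in degrees $1$ and $2$, so this relative complex is concentrated in those degrees. Tensoring with $A$ over $\Z H$, or applying $\Hom_{\Z H}(-, A)$, produces complexes concentrated in degrees $1$ and $2$, whose (co)homology therefore vanishes outside these degrees.

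The three conclusions then follow directly from the long exact sequence of the pair. For $j \geq 3$, both $H_j(Y, X; A)$ and $H_{j+1}(Y, X; A)$ vanish, yielding the isomorphism $H_j(G; A) \cong H_j(H; A)$ and dually $H^j(H; A) \cong H^j(G; A)$. The degree-$2$ portion
\[
0 = H_3(Y, X; A) \to H_2(X; A) \to H_2(Y; A)
\]
gives injectivity of $H_2(G; A) \hookrightarrow H_2(H; A)$, and dually $H^3(Y, X; A) = 0$ forces surjectivity $H^2(H; A) \twoheadrightarrow H^2(G; A)$. I do not foresee a real obstacle: the argument reduces to routine bookkeeping with the long exact sequence of a pair, once the relative cells are known to be confined to degrees $1$ and $2$. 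The only step that requires a little care is the identification of the relative cellular chain complex as a free $\Z H$-module complex, which is standard but relies on the fact that cells lift freely under the universal cover.
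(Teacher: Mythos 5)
Your argument is correct and takes a genuinely different route from the paper's. The paper works directly at the chain level: it identifies $C_j(\widetilde X)\otimes_{\Z G}\Z H$ with $C_j(\widetilde Y)$ for $j\ge 3$, observes that $C_2(\widetilde X)\otimes_{\Z G}\Z H$ is a $\Z H$-direct summand of $C_2(\widetilde Y)$, and then compares the two resulting chain complexes of $A$-coefficient chains via explicit commutative ladders, reading off the isomorphisms, the injection in $H_2$, and the surjection in $H^2$ by diagram chasing and the retraction coming from the direct summand. You instead package the same information into the long exact sequence of the pair $(Y,X)$ with local coefficients, reducing everything to the vanishing of $H_j(Y,X;A)$ and $H^j(Y,X;A)$ for $j\notin\{1,2\}$, which is immediate from the relative cellular chain complex $C_\ast(\widetilde Y, p^{-1}(X))$ being a complex of free $\Z H$-modules concentrated in degrees $1$ and $2$. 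The one point you gloss over, but which does work out, is that $H_\ast(X;A)$ computed on $X$ with the pulled-back local system $A$ (equivalently, via $C_\ast(p^{-1}(X))\otimes_{\Z H}A$, where $p^{-1}(X)$ is the regular $N$-cover of $X$ for $N=\ker(G\to H)$ with deck group $H$) agrees with $H_\ast(G;A)$ for $A$ viewed as a $G$-module via $G\to H$; this is the standard change-of-groups identification and could be spelled out. Your version is arguably cleaner and more conceptual, at the cost of invoking the long exact sequence of a pair with local coefficients; the paper's version is more self-contained but requires a couple of hands-on diagram chases.
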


\begin{proof}
    Let $\widetilde X$ (resp. $\widetilde Y$) be the universal cover of $X$ (resp. $Y$) and let $C_\ast(\widetilde X)$ (resp. $C_\ast(\widetilde Y)$) be the cellular chain complex of $\widetilde X$ (resp. $\widetilde Y$). As $Y$ is obtained from $X$ by attaching $1$-cells and $2$-cells, the $H$-modules $C_j(\widetilde X)\otimes_{\Z G}\Z H$ and $C_j(\widetilde Y)$ are the same for all $j \geq 3$. Moreover, $C_2(\widetilde X)\otimes_{\Z G}\Z H$ is a direct summand of $C_2(\widetilde Y)$, seen as $H$-modules.
    
    Let $d_\ast\colon C_\ast(\widetilde Y)\to C_{\ast-1}(\widetilde Y)$ be the boundary map of $C_\ast(\widetilde Y)$. Then for $j\geq 3$, $d_j$ is also the boundary map from $C_j(\widetilde X)\otimes_{\Z G}\Z H$ to $C_{j-1}(\widetilde X)\otimes_{\Z G}\Z H$. Now for every $H$-module $A$, we also have identifications
    \[C_\ast(\widetilde X)\otimes_{\Z G}A=C_\ast(\widetilde X)\otimes_{\Z G}\Z H\otimes_{\Z H}A\] 
    and 
    \[\Hom_{\Z G}(C_\ast(\widetilde X),A)\cong \Hom_{\Z H}(C_\ast(\widetilde X)\otimes_{\Z G}\Z H,A).\]
    These induce the following commutative diagrams:
\begin{center}
    \begin{tikzcd}
       \dots \arrow[r] &C_4(\widetilde X)\otimes_{\Z G}A \arrow[d, "\cong"] \arrow[r] &C_3(\widetilde X)\otimes_{\Z G}A \arrow[d, "\cong"] \arrow[r] &C_2(\widetilde X)\otimes_{\Z G}A \arrow[d, hook]\\
       \dots \arrow[r] &C_4(\widetilde Y)\otimes_{\Z H}A \arrow[r] &C_3(\widetilde Y)\otimes_{\Z H}A \arrow[r] &C_2(\widetilde Y)\otimes_{\Z H}A
    \end{tikzcd}
\end{center}

\begin{center}
    \begin{tikzcd}
       \dots  & \Hom_{\Z G}(C_4(\widetilde X),A)\arrow[l] &\Hom_{\Z G}(C_3(\widetilde X),A) \arrow[l] &\Hom_{\Z G}(C_2(\widetilde X),A)\arrow[l]\\
       \dots &\Hom_{\Z H}(C_4(\widetilde Y),A)\arrow[u,swap,"\cong"] \arrow[l] &\Hom_{\Z H}(C_3(\widetilde Y),A)\arrow[u,swap,"\cong"] \arrow[l] &\Hom_{\Z H}(C_2(\widetilde Y),A)\arrow[u,two heads] \arrow[l]
    \end{tikzcd}
\end{center}

It follows immediately that the induced maps in degrees $j \geq 4$ are isomorphisms. Since $C_3(\widetilde Y) = C_3(\widetilde X)\otimes_{\Z G}\Z H$, the image of the boundary map $C_3(\widetilde Y) \to C_2(\widetilde Y)$ takes values in $C_2(\widetilde X)\otimes_{\Z G} \Z H$. This gives the isomorphism in degree $3$.

For homology in degree $2$, the injectivity follows from diagram chasing. A class in $H_2(G; A)$ vanishes in $H_2(H; A)$ if a cycle $z \in C_2(\widetilde X) \otimes_{\Z G} A$ representing it has a preimage in $b \in C_3(\widetilde Y) \otimes_{\Z H} A$. We can consider the corresponding element $b' \in C_3(\widetilde X) \otimes_{\Z G} A$, and this will be a preimage of $z$, witnessing that $z$ represents the trivial class in $H_2(G; A)$.

For cohomology in degree $2$, since $C_2(\widetilde X)$ is  a direct summand of $C_2(\widetilde Y)$, the surjection $\Hom_{\Z H}(C_2(\widetilde Y),A) \twoheadrightarrow \Hom_{\Z G}(C_2(\widetilde X),A)$ is a retraction, and surjectivity follows.
\end{proof}

\subsection{$L^2$-Betti numbers}

The non-measure equivalence in Theorem \ref{intro:thm:jump} will be established via $L^2$-Betti numbers. We recall the basic definitions here and refer to \cite{luck2002l2} for details. Let $G$ be a discrete group. The \emph{group von Neumann algebra} of $G$, denoted $\mathcal{N}(G)$, is the algebra of all bounded linear operators on $\ell^2(G)$ that commute with the left regular representation of $G$. The group ring $\Z G$ is naturally a subring of $\mathcal{N}(G)$, which endows $\mathcal{N}(G)$ with a left $\Z G$-module structure.

Let $H$ be a quotient of $G$. Note that the homology $H_\ast(G; \mathcal{N}(H))$ is naturally a right $\mathcal{N}(H)$-module. Indeed, fix a contractible CW-complex $X$ with a free right $G$-action and let $C_\ast(X)$ be the cellular chain complex of $X$, which is a chain complex consisting of right $\Z G$-modules. The homology $H_\ast(G;\mathcal{N}(H))$ is the homology of the chain complex $C_\ast(X)\otimes_{\Z G}\mathcal{N}(H)$, which is thus a right $\mathcal{N}(H)$-module. 

Associated to each $\mathcal{N}(H)$-module $M$ is its \emph{von Neumann dimension}, denoted $\dim_{\mathcal{N}(H)} M$. The reader is referred to \cite[Section 6.3]{luck2002l2} for a definition. We will denote
\[b^{(2)}_n(G;H):=\dim_{\mathcal{N}(H)} H_n(G;\mathcal{N}(H)),\;\; b^{(2)}_n(G):=b^{(2)}_n(G;G),\;\; n\in \N.\]
The latter is called the \emph{$n^{\text{th}}$ $L^2$-Betti number} of $G$.

The following result allows us to use $L^2$-Betti numbers to distinguish groups up to measure equivalence - in particular we will not need the definition of measure equivalence anywhere in the paper.

\begin{theorem}[{\cite[Theorem 6.3]{gaboriau2002invariants}}]
\label{thm. gaboriau}
    Let $G,H$ be countable discrete groups. Suppose that they are measure equivalent with measure equivalence index $c$. Then $b^{(2)}_\ast(G)=c \cdot b^{(2)}_\ast(H)$.
\end{theorem}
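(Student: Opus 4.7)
The plan is to extend $L^2$-Betti numbers from groups to countable probability-measure-preserving (p.m.p.) equivalence relations, and to reduce measure equivalence to an isomorphism of restricted orbit equivalence relations. Concretely, by Furman's dictionary, $G$ and $H$ being measure equivalent with index $c$ is equivalent to the existence of free p.m.p. actions $G \curvearrowright (X,\mu)$ and $H \curvearrowright (Y,\nu)$ on standard probability spaces together with Borel subsets $A \subseteq X$ and $B \subseteq Y$ meeting almost every orbit, such that the restricted orbit equivalence relations $\mathcal{R}_G|_A$ and $\mathcal{R}_H|_B$ are isomorphic as measured equivalence relations, with the measure ratio $\nu(B)/\mu(A)$ equal (in the appropriate normalization) to $c$.

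Following Connes--Feldman--Weiss and Sauer, one attaches to any countable p.m.p. equivalence relation $\mathcal{R}$ on $(X,\mu)$ its finite-trace groupoid von Neumann algebra $\mathcal{N}(\mathcal{R})$, and uses L\"uck's algebraic dimension theory over this finite tracial algebra to define $L^2$-Betti numbers $\beta_n^{(2)}(\mathcal{R})$, computed as von Neumann dimensions of the homology of a simplicial resolution of the trivial module. The two structural properties to verify are: first, for a free p.m.p. action $G \curvearrowright (X,\mu)$, one has $\beta_n^{(2)}(\mathcal{R}_G) = \mu(X)\cdot b_n^{(2)}(G)$, proved by inducing a $\Z G$-free resolution of $\Z$ up to the groupoid algebra, identifying it after base change with a free $\mathcal{N}(\mathcal{R}_G)$-resolution, and matching traces; second, restriction to a Borel subset behaves compatibly, namely $\beta_n^{(2)}(\mathcal{R}|_A) = \mu(A)\cdot \beta_n^{(2)}(\mathcal{R})/\mu(X)$, which follows from the identification of $\mathcal{N}(\mathcal{R}|_A)$ with the compression $1_A\mathcal{N}(\mathcal{R})1_A$ together with the fact that L\"uck's dimension function scales by the ratio of the traces under compression.

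Combining these two ingredients with the orbit equivalence $\mathcal{R}_G|_A \cong \mathcal{R}_H|_B$ produces
\[ \mu(A)\cdot b_n^{(2)}(G) \;=\; \beta_n^{(2)}(\mathcal{R}_G|_A) \;=\; \beta_n^{(2)}(\mathcal{R}_H|_B) \;=\; \nu(B)\cdot b_n^{(2)}(H), \]
and dividing by $\mu(A)$ yields $b_n^{(2)}(G) = c\cdot b_n^{(2)}(H)$ for each $n$. The main obstacle is setting up the framework: one must transfer L\"uck's dimension theory, originally developed for group von Neumann algebras, to the groupoid von Neumann algebra $\mathcal{N}(\mathcal{R})$, including additivity in short exact sequences and cofinality for directed unions of submodules, and then prove the group-comparison formula by matching the induced-from-$\mathcal{N}(G)$ resolution of $L^2(X,\mu)$ with a simplicial resolution produced intrinsically from the equivalence relation. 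Once this machinery is in place, the restriction-scaling property and the final bookkeeping with traces are essentially direct calculations.
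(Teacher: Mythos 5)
The paper does not prove this statement: it is imported as a black box, cited verbatim as \cite[Theorem 6.3]{gaboriau2002invariants}, so there is no ``paper proof'' to compare against. Your sketch is nonetheless a legitimate outline of a proof, but it is not Gaboriau's argument: Gaboriau works geometrically with simplicial $\mathcal{R}$-complexes (fields of simplicial complexes over the equivalence relation) and $L^2$-(co)homology in the Cheeger--Gromov spirit, while you describe the algebraic route of Sauer (and later Sauer--Thom), built on L\"uck's extended dimension function over the Feldman--Moore groupoid von Neumann algebra $\mathcal{N}(\mathcal{R})$. Both routes are correct and both reduce, via Furman's dictionary, to comparing restricted orbit equivalence relations; the algebraic one buys a cleaner formal treatment of induction and compression at the cost of having to re-develop dimension theory over $\mathcal{N}(\mathcal{R})$ (additivity, cofinality, and the comparison with $\mathcal{N}(G)$), exactly the bottleneck you flag.

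One technical formula in your sketch is stated backwards. With tracial \emph{states} throughout, restricting a p.m.p.\ equivalence relation $\mathcal{R}$ on a probability space $(X,\mu)$ to $A\subseteq X$ gives
\[
\beta^{(2)}_n(\mathcal{R}|_A)=\frac{\beta^{(2)}_n(\mathcal{R})}{\mu(A)},
\]
not $\mu(A)\cdot\beta^{(2)}_n(\mathcal{R})$: cutting to a smaller cross-section \emph{increases} the $L^2$-Betti number. This is forced by the finite-index case already recorded in Corollary~\ref{cor. fi l2 betti}: if $H\leq G$ has index $k$, letting $G$ act diagonally on $(G/H)\times X$ and taking $A=\{eH\}\times X$ of measure $1/k$ gives $\mathcal{R}_G|_A\cong\mathcal{R}_H$, and indeed $b^{(2)}_n(H)=k\,b^{(2)}_n(G)=\beta^{(2)}_n(\mathcal{R}_G)/\mu(A)$. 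The reason, in the framework you set up, is that compression by $1_A$ is a Morita equivalence between $\mathcal{N}(\mathcal{R})$ and $1_A\mathcal{N}(\mathcal{R})1_A$, and Morita equivalence preserves dimensions computed with the \emph{unnormalized} restricted trace; passing to the tracial state on the corner then divides by $\tau(1_A)=\mu(A)$, it does not multiply. Because you explicitly hedged on the normalization of the measure equivalence index $c$, this slip does not destroy the plan, but as written the final bookkeeping produces the index inverted, so it should be corrected before the sketch is turned into a proof.
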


That is, the $L^2$-Betti numbers of measure equivalent groups are proportional. What is crucial for us, is that the proportionality constant is independent of the degree.

\begin{corollary}[{\cite[Proposition 2.6]{cheeger1986l2}}]
\label{cor. fi l2 betti}
    Let $G$ be a countable group with a finite-index subgroup $H$. Then $b^{(2)}_\ast(H)=[G:H] \cdot b^{(2)}_\ast(G)$.
\end{corollary}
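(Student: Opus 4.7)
The plan is to exhibit $G$ and $H$ as measure equivalent via a canonical coupling, compute the associated index, and then invoke Theorem~\ref{thm. gaboriau} directly.

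Concretely, I would take $\Omega := G$ endowed with counting measure, together with the two commuting free measure-preserving actions: the left $G$-action $g \cdot x = gx$ and the $H$-action $h \cdot x = xh^{-1}$. A fundamental domain for the $G$-action is the singleton $\{e\}$, of measure $1$, while a fundamental domain for the $H$-action is any transversal for the partition of $G$ into left $H$-cosets, of measure $[G:H]$. Hence $\Omega$ is a measure equivalence coupling between $G$ and $H$, and under the convention of Theorem~\ref{thm. gaboriau} (namely $b^{(2)}_\ast(G) = c \cdot b^{(2)}_\ast(H)$) the ratio of fundamental domain measures yields index $c = 1/[G:H]$. Plugging into Theorem~\ref{thm. gaboriau} gives $b^{(2)}_\ast(G) = [G:H]^{-1}\, b^{(2)}_\ast(H)$, equivalent to the desired identity $b^{(2)}_\ast(H) = [G:H] \cdot b^{(2)}_\ast(G)$.

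The only delicate point is fixing the direction of the ME index: different sources normalize it differently, so one needs to confirm that the exponent above is $-1$ rather than $+1$. This can be pinned down by a sanity check against the free group: if $H$ has index $n$ in $F_k$ then $H \cong F_{n(k-1)+1}$, whence $b^{(2)}_1(H) = n(k-1) = n \cdot b^{(2)}_1(F_k)$, confirming that the $L^2$-Betti numbers of the finite-index subgroup are $[G:H]$ times those of the ambient group. Once this convention is fixed, no further arguments are needed.
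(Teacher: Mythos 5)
Your proof is correct, and it is the standard (and surely intended) argument: the paper states this corollary with no proof, leaving the reader to exhibit the canonical ME coupling $\Omega = G$ with the left $G$-action and right $H$-action and feed it into Gaboriau's Theorem~\ref{thm. gaboriau}. Your verification that the two actions commute and are free, the identification of the $G$-fundamental domain $\{e\}$ of measure $1$ and the $H$-fundamental domain of measure $[G:H]$ (a transversal for the left cosets, since the $H$-orbit of $x$ under $h\cdot x = xh^{-1}$ is $xH$), and the resulting index all check out. The point you flag as delicate — the direction of the ME index, which is indeed normalized differently in different sources — is handled cleanly by your free-group sanity check $b^{(2)}_1(F_{n(k-1)+1}) = n(k-1) = n\, b^{(2)}_1(F_k)$, which unambiguously pins down the sign of the exponent and confirms the stated formula. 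Nothing further is needed.
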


We will also need an approximation result. The following is a special case of \cite[Corollary 3.4]{petrosyan2024l2}, which is an easy consequence of \cite[Theorem 1.5]{jaikin2020strong}.

\begin{theorem}\label{thm. l2 betti approximation}
    Let $G$ be a type $F$ virtually locally indicable group. Then for every $k,\delta>0$, there exists a finite subset $\mathcal{F}_{k,\delta}\subset G\smallsetminus\{1\}$ such that if a normal subgroup $N\lhd G$ satisfies $N\cap \mathcal{F}_{k, \delta}=\emptyset$, then for all $n\leq k$, we have
    \[|b^{(2)}_n(G)-b^{(2)}_n(G ; G/N)|<\delta.\]
\end{theorem}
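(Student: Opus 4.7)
The plan is to reduce the statement to matrix-rank approximation and then invoke the strong Atiyah conjecture for virtually locally indicable groups proved by Jaikin-Zapirain. Since $G$ is of type $F$, fix a finite $K(G,1)$ CW-complex $X$ and let $C_\ast=C_\ast(\widetilde X)$ be the cellular chain complex of its universal cover, which is a finite chain complex of finitely generated free right $\Z G$-modules with boundary maps $d_n\colon C_n\to C_{n-1}$ represented by matrices $A_n$ over $\Z G$. For any quotient $\pi\colon G\to Q$, one has
\[b^{(2)}_n(G;Q)\;=\;\dim_{\mathcal{N}(Q)}\!H_n\bigl(C_\ast\otimes_{\Z G}\mathcal{N}(Q)\bigr)\;=\;c_n-r_Q(A_n)-r_Q(A_{n+1}),\]
where $c_n$ is the number of $n$-cells of $X$ and $r_Q(A)$ denotes the von Neumann rank $\dim_{\mathcal{N}(Q)}\mathrm{im}(A\otimes 1)$. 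The specialization $Q=G$ gives the analogous formula for $b^{(2)}_n(G)$. Thus controlling $|b^{(2)}_n(G)-b^{(2)}_n(G;G/N)|$ reduces to controlling the finitely many quantities $|r_G(A_i)-r_{G/N}(\overline{A_i})|$ for $i\leq k+1$.

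Second, I would invoke the result of Jaikin-Zapirain on the strong Atiyah conjecture for virtually locally indicable groups. Its matrix-rank form states: for every matrix $A$ over $\Z G$ and every $\eta>0$, there exists a finite set $\F_{A,\eta}\subset G\smallsetminus\{1\}$ such that
\[|r_G(A)-r_{G/N}(\overline{A})|<\eta\]
for every normal subgroup $N\lhd G$ with $N\cap\F_{A,\eta}=\emptyset$. (This is the output of the uniqueness of the Sylvester matrix rank function on $\Z G$ extending to the Linnell division ring, combined with standard continuity of ranks under such perturbations.) Applying this to each of the boundary matrices $A_1,\ldots,A_{k+1}$ with $\eta=\delta/(2k+2)$ and taking
\[\F_{k,\delta}\;:=\;\bigcup_{i=1}^{k+1}\F_{A_i,\eta}\]
yields a finite subset of $G\smallsetminus\{1\}$ such that any $N$ disjoint from $\F_{k,\delta}$ satisfies $|r_G(A_i)-r_{G/N}(\overline{A_i})|<\delta/(2k+2)$ for all $i\leq k+1$.

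Combining this with the matrix formula from the first step gives
\[|b^{(2)}_n(G)-b^{(2)}_n(G;G/N)|\;\leq\;|r_G(A_n)-r_{G/N}(\overline{A_n})|+|r_G(A_{n+1})-r_{G/N}(\overline{A_{n+1}})|\;<\;\delta\]
for every $n\leq k$, as required. The main obstacle is not the assembly but the input itself: the genuine content is Jaikin-Zapirain's strong approximation theorem, which crucially is stated for arbitrary normal subgroups avoiding a finite set (not only finite-index ones as in Lück's classical approximation), and which requires the hypothesis that $G$ be virtually locally indicable in order to identify the relevant rank function uniquely via the Linnell division ring. Everything else is a packaging of this input into the statement about $L^2$-Betti numbers via the finiteness of $X$.
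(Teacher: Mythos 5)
Your argument is correct and matches the paper's approach exactly: the paper gives no proof of its own but simply observes that the statement is a special case of \cite[Corollary 3.4]{petrosyan2024l2}, which itself is an easy consequence of \cite[Theorem 1.5]{jaikin2020strong} — precisely the reduction (via the finite $K(G,1)$ to von Neumann ranks of the boundary matrices, then Jaikin-Zapirain's rank-approximation for virtually locally indicable groups) that you carry out. The only cosmetic remark is that $\eta=\delta/2$ already suffices, since only two rank differences appear for each fixed $n$.
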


\subsection{Small cancellation theory}

Throughout this section, we use $G$ to denote a group, $\{H_\lambda\}_{\lambda\in\Lambda}$ a family of subgroups of $G$, $X=X^{-1}$ a symmetric subset of $G$. If $G$ is generated by $X$ together with the union of all $H_{\lambda}$ we say that $X$ is a \emph{relative generating set} of $G$ with respect to $\{H_{\lambda}\}_{\lambda\in\Lambda}$. In this case we denote: 

\begin{equation}\label{eq. alphabet 0}
    \H=\bigsqcup_{\lambda\in\Lambda}H_{\lambda},\;\; \A=X\sqcup\H.
\end{equation}

We use $\A^\ast$ to denote words in the alphabet $\A$. Note that this alphabet is symmetric.

\subsubsection{Hyperbolically embedded subgroups}\label{subsec. he}

The notion of hyperbolically embedded subgroups was introduced by Dahmani--Guirardel--Osin \cite{dahmani2017hyperbolically}. 
Consider the Cayley graph $\Gamma(G,\A)$. Note that, for $\lambda\in\Lambda$ there is a natural embedding $\Gamma(H_{\lambda},H_{\lambda})\hookrightarrow \Gamma(G,\A)$ whose image is the subgraph of $\Gamma(G,\A)$ with vertices and edges labeled by elements of $H_{\lambda}$.

\begin{remark}
We allow $X\cap H_{\lambda}\neq \emptyset$ and $H_{\lambda}\cap H_{\mu}\neq\{1\}$ for distinct $\lambda,\mu\in\Lambda$, in which case there will be multiple edges between some pairs of vertices of $\Gamma(G,\A)$.
\end{remark}

For $\lambda\in\Lambda$, an edge path in $\Gamma(G,\A)$ between vertices of $\Gamma(H_{\lambda},H_{\lambda})$ is called $H_{\lambda}$\emph{-admissible} if it does not contain any edge of $\Gamma(H_{\lambda},H_{\lambda})$. Note that an $H_{\lambda}$-admissible path is allowed to pass through vertices of $\Gamma(H_{\lambda},H_{\lambda})$. 

\begin{definition}\label{def. relative metric}
For every pair of elements $h,k\in H_{\lambda}$, let $\widehat{d}_{\lambda}(h,k)\in[0,\infty]$ be the length of a shortest $H_{\lambda}$-admissible path connecting the vertices labeled by $h$ and $k$. If no such path exists, set $\widehat{d}_{\lambda}(h,k)=\infty$. The laws of summation on $[0,\infty)$ extend naturally to $[0,\infty]$ and it is easy to verify that $\widehat{d}_{\lambda}:H_{\lambda}\times H_{\lambda}\rightarrow [0,+\infty]$ defines a metric on $H_{\lambda}$, which is called the \emph{relative metric on} $H_{\lambda}$ \emph{with respect to} $X$.
\end{definition}

\begin{definition}\label{hyperbolically embedded}
We say that the family $\{H_\lambda\}_{\lambda\in\Lambda}$ \emph{hyperbolically embeds into} $(G,X)$ (denoted by $\{H_\lambda\}_{\lambda\in\Lambda}\hookrightarrow_h(G,X)$) if the following hold:
\begin{itemize}
    \item $G$ is generated by $\A = X \sqcup \H$;
    \item the Cayley graph $\Gamma(G,X\sqcup \mathcal{H})$ is a Gromov hyperbolic space;
    \item for each $\lambda\in\Lambda$, the metric space $(H_{\lambda},\widehat{d}_{\lambda})$ is proper, i.e., every ball of finite radius contains only finitely many elements.
\end{itemize}
If in addition, $X$ and $\Lambda$ are finite, then we say that $G$ is \emph{hyperbolic relative to $\{H_\lambda\}_{\lambda\in\Lambda}$} or $(G,\{H_\lambda\}_{\lambda\in\Lambda})$ is a \emph{relatively hyperbolic pair}. 
Further, we say that the family $\{H_\lambda\}_{\lambda\in\Lambda}$ \emph{hyperbolically embeds into} $G$, denoted by $\{H_\lambda\}_{\lambda\in\Lambda}\hookrightarrow_h G$, if there exists some subset $X\subset G$ such that $\{H_\lambda\}_{\lambda\in\Lambda}\hookrightarrow_h(G,X)$.
\end{definition}

The reader is referred to \cite[Proposition 4.28]{dahmani2017hyperbolically} for the equivalence between the above definition of relative hyperbolicity and one of the standard definitions.

\begin{notation}
In case $\{H_\lambda\}_{\lambda\in\Lambda}=\{H\}$ is a singleton, we will drop the braces and write $H\hookrightarrow_h G$.
\end{notation}

The next lemma will be useful to change generating sets in the proof of Theorem \ref{thm. inductive step}.

\begin{lemma}[{\cite[Corollary 4.27]{dahmani2017hyperbolically}}]\label{lem. change relative generating set}
    Let $G$ be a group, $\{H_\lambda\}_{\lambda\in \Lambda}$ a family of subgroups of $G$, and $X_1,X_2\subset G$ relative generating sets of $G$ with respect to $\{H_\lambda\}_{\lambda\in \Lambda}$. Suppose that the symmetric difference of $X_1$ and $X_2$ is finite. Then $\{H_\lambda\}_{\lambda\in \Lambda}\hookrightarrow_h(G,X_1)$ if and only if $\{H_\lambda\}_{\lambda\in \Lambda}\hookrightarrow_h(G,X_2)$.
\end{lemma}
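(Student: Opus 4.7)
The plan is to argue by induction on $|X_1 \triangle X_2|$ and exploit the symmetry of the biconditional, reducing to showing that if $\{H_\lambda\}_{\lambda \in \Lambda} \hookrightarrow_h (G, X_1)$ and $X_2 = X_1 \cup \{g\}$ for some $g \in G$ with both $X_1, X_2$ relative generating sets, then $\{H_\lambda\}_{\lambda \in \Lambda} \hookrightarrow_h (G, X_2)$. The three conditions of Definition \ref{hyperbolically embedded} are then verified in turn; the first (relative generation) is given by hypothesis.

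For Gromov hyperbolicity of $\Gamma(G, X_2 \sqcup \H)$, the identity on $G$ induces a $1$-Lipschitz inclusion $\Gamma(G, X_1 \sqcup \H) \hookrightarrow \Gamma(G, X_2 \sqcup \H)$, while in the reverse direction, sending each new $g$-edge to a fixed word $W_g$ in $X_1 \sqcup \H$ of length $M := |g|_{X_1 \sqcup \H}$ (and leaving other edges unchanged) yields an $M$-Lipschitz map on vertices. Together these witness a quasi-isometry, and Gromov hyperbolicity passes across quasi-isometries of geodesic spaces. For properness of the relative metrics $\widehat{d}_\lambda^{(2)}$, the inequality $\widehat{d}_\lambda^{(2)} \leq \widehat{d}_\lambda^{(1)}$ is immediate: the forbidden subgraph $\Gamma(H_\lambda, H_\lambda)$ consists only of $\H$-labeled edges between vertices of $H_\lambda$, hence is identical in both Cayley graphs, so any $H_\lambda$-admissible path in the former remains admissible in the latter. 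For the converse, I would take an admissible path $\gamma$ of length $n$ in $\Gamma(G, X_2 \sqcup \H)$ between $h, k \in H_\lambda$ and substitute each $g$-edge by $W_g$, obtaining a candidate path $\gamma'$ of length $\leq Mn$ in $\Gamma(G, X_1 \sqcup \H)$ with the same endpoints.

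The main obstacle is that $\gamma'$ may contain edges of $\Gamma(H_\lambda, H_\lambda)$: this happens precisely when a letter of $W_g$ lies in $H_\lambda$ and the corresponding substituted edge has both endpoints in the base coset. To repair this, I would replace each problematic run of such edges---contained within a single $W_g$-substitution, with endpoints $u, uh_0 \in H_\lambda$ where $h_0$ ranges over the finite set of partial products of $H_\lambda$-letters of $W_g$---by an admissible detour in $\Gamma(G, X_1 \sqcup \H)$ of length $\widehat{d}_\lambda^{(1)}(1, h_0)$. A careful case analysis, exploiting that the original $\gamma$ is admissible and so traverses the base coset in a restricted way, shows that $\widehat{d}_\lambda^{(1)}(1, h_0)$ is finite in every problematic configuration that actually arises. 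Bounding the total length of the resulting admissible path in $\Gamma_1$ by $C \cdot n$ for a constant $C = C(g)$ then yields $\widehat{d}_\lambda^{(1)} \leq C \widehat{d}_\lambda^{(2)}$, which transfers properness from $\widehat{d}_\lambda^{(1)}$ to $\widehat{d}_\lambda^{(2)}$ and completes the proof.
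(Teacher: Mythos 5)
The paper does not prove this statement; it cites it as \cite[Corollary 4.27]{dahmani2017hyperbolically}, so your proof must stand on its own merits, and there is a genuine gap in the properness step. The quasi-isometry argument for Gromov hyperbolicity is fine, and the inequality $\widehat{d}_\lambda^{(2)} \leq \widehat{d}_\lambda^{(1)}$ is correct for the reason you give. But the proposed reverse bound $\widehat{d}_\lambda^{(1)} \leq C\,\widehat{d}_\lambda^{(2)}$ is simply false, and your repair scheme cannot salvage it, because the detours you want to insert can have infinite $\widehat{d}_\lambda^{(1)}$-length. Here is a concrete counterexample: take $G = H_\lambda = \Z = \langle t \rangle$, $X_1 = \emptyset$, $X_2 = \{t, t^{-1}\}$, $g = t$. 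One has $\{H_\lambda\} \hookrightarrow_h (G, X_1)$ (the Cayley graph $\Gamma(\Z, \Z)$ is a bounded complete graph, hence hyperbolic, and $\widehat{d}_\lambda^{(1)}(h,k) = \infty$ whenever $h \neq k$, so properness holds vacuously), and likewise $\{H_\lambda\} \hookrightarrow_h (G, X_2)$ with $\widehat{d}_\lambda^{(2)}(h,k) = |h-k|$ computed along admissible paths built entirely from $X_2$-edges. Now $W_g$ must be a word over $X_1 \sqcup \H = \H$, so every single edge of the substituted path $\gamma'$ is an $\H$-labeled edge between vertices of $H_\lambda$, i.e.\ every edge is forbidden; there is no admissible detour, since $\widehat{d}_\lambda^{(1)}(1, t^n) = \infty$ for $n \neq 0$. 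Thus your claimed linear bound fails ($\widehat{d}_\lambda^{(1)}(1,t^n) = \infty$ while $\widehat{d}_\lambda^{(2)}(1,t^n) = |n|$), even though both hyperbolic embeddings hold. Properness of $\widehat{d}_\lambda^{(2)}$ therefore cannot be transferred from properness of $\widehat{d}_\lambda^{(1)}$ by a bi-Lipschitz comparison.

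The claim in your last paragraph that ``$\widehat{d}_\lambda^{(1)}(1, h_0)$ is finite in every problematic configuration that actually arises'' is exactly what fails in this example, and the admissibility of the original path $\gamma$ in $\Gamma_2$ provides no leverage: $\gamma$ is admissible precisely because its edges are $X_2$-labeled, and it is the act of rewriting them over $X_1 \sqcup \H$ that creates forbidden edges with no admissible replacement. A correct proof of properness requires a genuinely different mechanism. The argument in \cite{dahmani2017hyperbolically} deduces Corollary 4.27 from their Theorem 4.24, which characterizes hyperbolic embeddings in terms of the existence of a bounded relative presentation with a linear relative isoperimetric function; this characterization is manifestly insensitive to a finite change of the relative generating set, and that stability is what drives the corollary --- not a comparison of the relative metrics.
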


An important property of hyperbolically embedded subgroups, which will be useful in proving simplicity statements, is the following.

\begin{definition}
    Let $G$ be a group and $\{ H_i \}_{i \in I}$ a family of subgroups. We say that $\{ H_i \}_{i \in I}$ is \emph{almost malnormal} if $|gH_i g^{-1} \cap H_j| = \infty$, for some $g\in G$ and $i, j \in I$, implies $i = j$ and $g \in H_i$. It is \emph{malnormal} if the same conclusion is reached by only assuming $gH_i g^{-1} \cap H_j \neq \{ 1 \}$.
\end{definition}

\begin{lemma}[{\cite[Proposition 4.33]{dahmani2017hyperbolically}}]\label{lem. he -> malnormal}
    Let $G$ be a group with a hyperbolically embedded family of subgroups $\{H_\lambda\}_{\lambda\in \Lambda}$. Then $\{H_\lambda\}_{\lambda\in \Lambda}$ is an almost malnormal family of $G$. In particular, if $G$ is moreover torsion-free, then $\{H_\lambda\}_{\lambda\in \Lambda}$ is a malnormal family of $G$.
\end{lemma}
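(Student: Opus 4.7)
The plan is to appeal to Dahmani--Guirardel--Osin's \cite[Proposition 4.33]{dahmani2017hyperbolically} for the almost malnormality assertion, treating it as a black box, and then deduce malnormality in the torsion-free case by a one-line upgrade.

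For transparency, let me sketch why the cited proposition is true. The strategy is to exploit properness of the relative metrics $\widehat d_\lambda$ from Definition \ref{def. relative metric}. Assume toward a contradiction that $|gH_ig^{-1}\cap H_j|=\infty$ although $(i,g)$ is not of the form $(j,h)$ with $h\in H_j$. Fix a suitably reduced word $a_1\cdots a_m$ over $\A=X\sqcup\H$ representing $g$; then for each $h\in H_i$ with $k:=ghg^{-1}\in H_j$, the edge path $1\to g\to gh\to k$ in $\Gamma(G,\A)$ has length $2m+1$. The combinatorial heart of the argument is verifying that this path is $H_j$-admissible: the middle edge carries an $H_i$-letter, and under the standing hypothesis its endpoints $g,gh$ do not both lie in $H_j$ (either because $i\neq j$ and labels are kept distinct under the disjoint union $\A = X\sqcup \H$, or because $i=j$ but $g\notin H_i$); the outer $2m$ edges are then controlled using the reduction of the word for $g$. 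Once admissibility is established, one obtains $\widehat d_j(1,k)\leq 2m+1$ for infinitely many distinct $k\in H_j$, contradicting properness. I expect this admissibility verification to be the main obstacle, which is precisely why it is cleaner to invoke \cite{dahmani2017hyperbolically} than to reprove it.

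For the torsion-free upgrade, the argument is immediate: if $gH_ig^{-1}\cap H_j$ contains a non-trivial element $x$, then $x$ has infinite order in the torsion-free group $G$, so $\langle x\rangle$ is an infinite cyclic subgroup of $gH_ig^{-1}\cap H_j$, and hence the intersection is already infinite. Almost malnormality then applies and forces $i=j$ and $g\in H_i$, which is exactly malnormality.
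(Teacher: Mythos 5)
Your proof is correct and takes the same route as the paper: the paper also simply cites \cite[Proposition 4.33]{dahmani2017hyperbolically} for almost malnormality, and your torsion-free upgrade (a non-trivial element generates an infinite cyclic subgroup of $gH_ig^{-1}\cap H_j$, so the intersection is infinite) is exactly the intended, and valid, one-line deduction.
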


The next theorem gives a necessary and sufficient condition for enlarging a hyperbolically embedded family by keeping the same relative generating set.

\begin{theorem}[{\cite[Theorem 3.9]{antolin2016commensurating}}]\label{thm. criterion for he}
Suppose that $G$ is a group, $\{H_\lambda\}_{\lambda\in\Lambda}$ is a family of subgroups of $G$, and $X\subset G$ is a subset such that $\{H_\lambda\}_{\lambda\in\Lambda}\hookrightarrow_h(G,X)$. Set $\A=X\sqcup(\bigsqcup_{\lambda\in \Lambda}H_\lambda)$. A family of subgroups $\{Q_i\}^n_{i=1}$ satisfies the three conditions below, if and only if $\{Q_i\}^n_{i=1}\cup\{H_\lambda\}_{\lambda\in\Lambda}\hookrightarrow_h(G,X)$.
\begin{enumerate}
    \item[(C1)] Each $Q_i$ is generated by a finite subset $Y_i\subset Q_i$ as a group.
    \item[(C2)] There exist $\mu\geq 1$ and $c\geq 0$ such that for all $i$ and all $h\in Q_i$, we have $|h|_{Y_i}\leq \mu|h|_{\A}+c$, where $|\cdot|_{Y_i}$ denotes the word length of $h$ with respect to the generating set $Y_i$, and $|\cdot|_{\A}$ is defined similarly.
    \item[(C3)] For every $\eps>0$, there exists $R>0$ such that the following holds. Suppose that for some $g\in G$ and $i,j\in\{1,\dots,n\}$, we have
    $$\diam(Q_i\cap (gQ_j)^{+\eps})\geq R,$$
    then $i=j$ and $g\in Q_i$, where $(gQ_j)^{+\eps}$ denotes the $\eps$-neighborhood of $gQ_j$ in $\Gamma(G,\A)$.
\end{enumerate}
\end{theorem}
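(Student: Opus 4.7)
The plan is to prove both directions of this biconditional. The easy direction is that hyperbolic embedding implies (C1)--(C3); the substantive content is the converse.

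For the ``only if'' direction, assume $\{Q_i\}^n_{i=1} \cup \{H_\lambda\}_{\lambda \in \Lambda} \hookrightarrow_h (G, X)$. Condition (C3) is a quantitative strengthening of almost malnormality: by Lemma \ref{lem. he -> malnormal} the enlarged family is almost malnormal in $G$, and the proper relative metric on each $Q_i$ converts this into the statement that long quasi-overlap between translates forces coincidence. Condition (C1) can be read as part of the choice of data (fixing any finite $Y_i$ once $Q_i$ is known to be finitely generated, which in the relevant setting is built into the setup). Condition (C2) is equivalent to saying that $(Q_i, d_{Y_i}) \hookrightarrow (G, d_\A)$ is a quasi-isometric embedding: this is a standard feature of hyperbolically embedded subgroups, following from the fact that once $\Gamma(G, \A')$ (with $\A' := \A \sqcup \bigsqcup_i Q_i$) is hyperbolic and each $\widehat{d}_i$ is proper, the subgroup $Q_i$ is undistorted in $G$.

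For the ``if'' direction, assume (C1)--(C3). That $G$ is generated by $\A'$ is immediate since $G$ is already generated by $\A$. It remains to check that $\Gamma(G, \A')$ is Gromov hyperbolic and that the relative metrics on $Q_i$ and on $H_\lambda$ computed in $\Gamma(G, \A')$ are proper. For properness on $H_\lambda$, one observes that adding $Q_i$-labeled edges produces new $H_\lambda$-admissible paths, but (C2) allows any such path to be replaced by an $\A$-path of controlled length, so properness in $\Gamma(G, \A)$ passes through. For properness on $Q_i$, a bounded-length $Q_i$-admissible path would give two points of $Q_i$ joined by a short path in $\Gamma(G, \A')$ avoiding $Q_i$-edges; (C3) together with (C2) prevents infinitely many such configurations, since they would force long fellow-travelling between $Q_i$ and a translate of another $Q_j$ or of the same $Q_i$.

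The main obstacle is proving that $\Gamma(G, \A')$ is Gromov hyperbolic. Adding edges to a hyperbolic graph can generically destroy hyperbolicity by introducing flat regions, so both (C2) and (C3) must be used essentially. The natural strategy is to take a geodesic triangle in $\Gamma(G, \A')$, replace each $Q_i$-edge by a bounded $\A$-path produced by (C2), and analyze the resulting nearly-geodesic triangle in $\Gamma(G, \A)$; hyperbolicity of the latter gives thinness of the modified triangle, while (C3) rules out long fellow-travelling between replaced segments coming from different cosets $gQ_j$, which is the only way thinness could fail when one transports back to $\Gamma(G, \A')$. An alternative route is to use a projection-based characterization of hyperbolic embedding along the lines of Dahmani--Guirardel--Osin, and to build the required projections to $Q_i$-cosets directly from the bi-Lipschitz control (C2) and the isolation hypothesis (C3). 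In either approach, the crux is combining (C2) and (C3) into a uniform thin-triangle bound that is independent of the triangle.
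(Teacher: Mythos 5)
The paper does not prove this statement; it is quoted as a black box from Antol\'in--Minasyan--Sisto \cite[Theorem 3.9]{antolin2016commensurating}, so there is no in-paper argument to compare your attempt against. Judged on its own terms, your proposal has at least one concrete gap and one unexamined crux.

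In the ``only if'' direction you dispose of (C1) by calling it ``part of the choice of data,'' but finite generation of each $Q_i$ is a genuine conclusion here, not a hypothesis: hyperbolically embedded subgroups need not be finitely generated in general (e.g.\ $\{H\}\hookrightarrow_h H*\Z$ for arbitrary $H$). What forces finite generation is the fact that the \emph{same} relative generating set $X$ witnesses both $\{H_\lambda\}\hookrightarrow_h(G,X)$ and $\{Q_i\}\cup\{H_\lambda\}\hookrightarrow_h(G,X)$; extracting (C1) from properness of the new relative metric on $Q_i$ together with hyperbolicity of $\Gamma(G,X\sqcup\H\sqcup\bigsqcup_i Q_i)$ requires an actual argument (essentially showing a ball in the relative metric generates), and your sketch omits it entirely. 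Your treatment of properness on $H_\lambda$ in the ``if'' direction has a similar softness: (C2) lets you replace a $Q_i$-edge labeled $h$ by an $\A$-path, but the length of that replacement grows with $|h|_\A$, so ``controlled length'' does not give a constant and properness does not simply ``pass through.'' Finally, you correctly flag hyperbolicity of the augmented Cayley graph as the main difficulty, but the triangle-surgery sketch you give (replace $Q_i$-edges, appeal to thinness in $\Gamma(G,\A)$, kill fellow-travelling via (C3)) is exactly the step that needs a uniform quantitative estimate independent of the triangle, and you do not produce one. The published proof does not run by surgering triangles; it goes through establishing a coarse bounded-geodesic-image / isolated-component property for the $Q_i$-cosets from (C2) and (C3), in the style of the geodesic-polygon analysis of Dahmani--Guirardel--Osin (compare Proposition~\ref{prop. total length of i.c. in g.p.} and Lemma~\ref{lem. consecutive components}), from which both hyperbolicity of $\Gamma(G,\A')$ and properness of the new relative metrics follow simultaneously. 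Your alternative ``projection-based'' suggestion is closer to what is actually done, but as written it is only a pointer, not a proof.
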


\subsubsection{Acylindrical hyperbolicity}

The notion of an acylindrically hyperbolic group was introduced by Osin \cite{osin2016acylindrically}. This is based on the notion of an acylindrical action, which was introduced by Bowditch \cite{bowditch2008tight} but the idea dates back to Sela \cite{sela1997acylindrical}.

\begin{definition}
An action of $G$ on a metric space $X$ is \emph{acylindrical} if for all $\eps>0$ there exists $R>0$ and $n \in \N$ such that: for all $x, y \in X$ with $d(x, y) \geq R$, the set $\{g \in G : d(x, gx), d(y, gy) \leq \eps \}$ has cardinality at most $n$.

A group is \emph{acylindrically hyperbolic} if it admits a non-elementary acylindrical action on a hyperbolic space.
\end{definition}

This can be equivalently characterized in terms of hyperbolically embedded subgroups: namely $G$ is acylindrically hyperbolic if and only if there exists a proper infinite subgroup $H$ and a hyperbolic embedding $\{ H \} \hookrightarrow_h G$ \cite[Theorem 1.2]{osin2016acylindrically}. The following combines Proposition 5.2 and Theorem 5.4 of \cite{osin2016acylindrically}.

\begin{theorem}[{\cite{osin2016acylindrically}}]\label{thm. acylindrical action}
    If $\{ H_\lambda \}_{\lambda \in \Lambda} \hookrightarrow_h (G,Y)$ for some $Y\subset G$, then there exists $Y\subset X \subset G$ such that $\{ H_\lambda \}_{\lambda \in \Lambda} \hookrightarrow_h (G, X)$ and the action of $G$ on $\Gamma(G, X \sqcup \H)$ is acylindrical. If $|Y|<\infty$, one can let $X=Y$.
\end{theorem}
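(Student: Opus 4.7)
The plan is to argue the two cases of the statement separately. For the finite case $|Y|<\infty$, the tuple $(G, \{H_\lambda\}_{\lambda \in \Lambda}, Y)$ is a relatively hyperbolic pair in the sense of Definition \ref{hyperbolically embedded} (note that $\Lambda$ is automatically finite because $\A = X \sqcup \H$ must generate $G$). For such pairs, the action of $G$ on the relative Cayley graph $\Gamma(G, Y \sqcup \H)$ is classically acylindrical: this follows from the Bounded Coset Penetration property combined with the properness of the relative metrics $\widehat{d}_{\lambda}$, and is part of the foundational theory of relatively hyperbolic groups. Hence $X = Y$ works.

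For the general case, I would enlarge $Y$ so that acylindricity is restored without destroying the hyperbolic embedding. The hypothesis $\{H_\lambda\}_{\lambda\in\Lambda} \hookrightarrow_h (G,Y)$ already implies that $G$ is acylindrically hyperbolic (this is part of \cite[Theorem 1.2]{osin2016acylindrically}, discussed in the paragraph before the present theorem), so there is a non-elementary acylindrical $G$-action on some hyperbolic space admitting a loxodromic element $g_0 \in G$. Set $X = Y \cup \{g_0, g_0^{-1}\} \cup Z$, where $Z \subset G$ is a (possibly countable) set chosen so that Theorem \ref{thm. criterion for he} applies to the enlarged family $\{H_\lambda\}_{\lambda\in\Lambda} \cup \{\langle g_0 \rangle\}$ with relative generating set $Y \cup Z$. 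Checking conditions (C1)--(C3): (C1) is automatic for the cyclic group $\langle g_0 \rangle$, (C2) follows from $g_0$ being loxodromic for the chosen acylindrical action, and (C3) follows from the near-malnormality of $\langle g_0 \rangle$ obtained by the ping-pong argument standard for loxodromic elements of acylindrical actions. Then $\{H_\lambda\}_{\lambda\in\Lambda} \cup \{\langle g_0 \rangle\} \hookrightarrow_h (G, Y \cup Z)$; forgetting $\langle g_0 \rangle$ from the family of subgroups and absorbing $g_0$ into the generating set gives $\{H_\lambda\}_{\lambda\in\Lambda} \hookrightarrow_h (G, X)$.

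To verify that $X$ yields an acylindrical action on $\Gamma(G, X \sqcup \H)$, the key point is that near-stabilizers of pairs of distant points must, by $\delta$-thin-triangle arguments in a hyperbolic Cayley graph, quasi-preserve a long geodesic segment. Such an element either translates loxodromically along the segment (so the new hyperbolic embedding of $\langle g_0 \rangle$ together with the almost malnormality provided by Lemma \ref{lem. he -> malnormal} bounds the number of such elements by a uniform constant), or it is \emph{locally controlled} by some coset $gH_\lambda$, in which case properness of $\widehat d_\lambda$ together with the same malnormality bounds the count. The main obstacle is making this dichotomy precise and uniform over all pairs of distant points, controlling elements whose geodesic representation in $\Gamma(G, X \sqcup \H)$ alternates between $X$-letters and $\H$-letters: the necessary tool is the structure theory of admissible paths and isolated components from \cite{dahmani2017hyperbolically}, used exactly as in the proof of \cite[Proposition 5.2]{osin2016acylindrically}.
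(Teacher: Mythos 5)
This statement is not proved in the paper: it is quoted verbatim from Osin's work, as the surrounding text explains ("The following combines Proposition 5.2 and Theorem 5.4 of \cite{osin2016acylindrically}"). So there is no argument in the paper to compare against; the honest move for a blind attempt is to recognize it as a black-box citation.

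That said, evaluated on its own terms your sketch has genuine gaps. First, a circularity: you bootstrap from ``$\{H_\lambda\}\hookrightarrow_h (G,Y)$ implies $G$ is acylindrically hyperbolic'' to obtain a loxodromic $g_0$ for some auxiliary acylindrical action. But in \cite{osin2016acylindrically} the implication from hyperbolically embedded subgroups to acylindrical hyperbolicity is itself derived via Proposition 5.2 (the general case of the present statement), so you are assuming what you are trying to prove. Compounding this, your argument then ends by appealing to ``the proof of \cite[Proposition 5.2]{osin2016acylindrically}'' — i.e., to the very result you are supposed to be establishing. Second, even granting acylindrical hyperbolicity, that step requires at least one $H_\lambda$ to be proper and infinite, which the theorem does not assume; the statement must also hold, for instance, when $\Lambda = \emptyset$ and $G$ is arbitrary with a hyperbolic Cayley graph, or when every $H_\lambda$ is finite, cases your route cannot handle. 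Third, the heart of the general case — producing the enlarged set $Z$ so that (C2) holds for $\langle g_0\rangle$ inside $\Gamma(G,\A)$, and then verifying acylindricity of the new action — is only gestured at; the final paragraph names a dichotomy (loxodromic translation versus $H_\lambda$-control) but gives no uniform bound for the count of near-stabilizing elements, which is exactly the content of Osin's construction. Since the point of this theorem in the present paper is precisely to take Osin's Proposition 5.2 and Theorem 5.4 on faith, the cleanest correct answer would simply locate the two cases in \cite{osin2016acylindrically} rather than attempt to reprove them.
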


Every acylindrically hyperbolic group $G$ admits a \emph{finite radical}, denoted $K(G)$, i.e. a unique maximal finite normal subgroup \cite[Theorem 3.23]{dahmani2017hyperbolically}. Finite normal subgroups can be problematic for small cancellation theory, hence the following definition:

\begin{definition}
    Let $G$ be a group with an acylindrical action on a Cayley graph $\Gamma(G, \A)$. A subgroup $H \leq G$ is \emph{suitable} for this action if it acts non-elementarily and does not normalize any finite normal subgroup of $G$.
\end{definition}

Note that if $G$ has a suitable subgroup then necessarily $K(G) = \{ 1 \}$. In most of our applications $G$ will be torsion-free, and therefore a subgroup is suitable if and only if it is non-elementary. Note also that the notion of a suitable subgroup depends on the relative generating set in general, but if $(G,\{H_\lambda\}_{\lambda\in \Lambda})$ is a relatively hyperbolic pair, one can characterize suitable subgroups purely in terms of the pair $(G,\{H_\lambda\}_{\lambda\in \Lambda})$:

\begin{lemma}[{\cite[Lemma 3.22]{chifan2023small}}]\label{lem. suitable for rh}
    Let $(G,\{H_\lambda\}_{\lambda\in \Lambda})$ be a relatively hyperbolic pair and let $X\subset G$ be any finite relative generating set with respect to $\{H_\lambda\}_{\lambda\in \Lambda}$. Then a subgroup $K\leq G$ is suitable with respect to the action $\Gamma\left(G,X\sqcup \left(\bigsqcup_{\lambda\in\Lambda}H_\lambda\right)\right)$ if and only if $K$ is not virtually cyclic, contains a \emph{hyperbolic element} (i.e., an infinite-order element that does not conjugate into any of $H_\lambda$), and does not normalize any non-trivial finite subgroup.
\end{lemma}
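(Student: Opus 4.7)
The plan is to prove both directions of the biconditional by appealing to the classification of elements for acylindrical actions on hyperbolic spaces. Since $X$ is finite and $(G,\{H_\lambda\})$ is relatively hyperbolic, Theorem \ref{thm. acylindrical action} guarantees that the action of $G$ on $\Gamma := \Gamma(G,X\sqcup\mathcal{H})$ is acylindrical; in particular every element of $G$ is either elliptic or loxodromic on $\Gamma$. Each $H_\lambda$ (and every conjugate) acts elliptically because its orbit in $\Gamma$ is bounded via the single $H_\lambda$-edges, while a standard fact about relatively hyperbolic groups shows that any elliptic element of infinite order is conjugate into some $H_\lambda$. Thus \emph{loxodromic on $\Gamma$} coincides with \emph{hyperbolic in the sense of the lemma}, and the standard dichotomy for acylindrical actions yields that any subgroup containing a loxodromic element is either virtually cyclic (the elementary case) or non-elementary.

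For the forward implication, suppose $K$ is suitable. Being non-elementary, $K$ contains two independent loxodromic elements, hence a hyperbolic element, and consequently is not virtually cyclic. Suppose for contradiction that $K$ normalizes a non-trivial finite subgroup $F$. The conjugation map $K\to \Aut(F)$ has finite image, so its kernel $K_0$ has finite index in $K$ and centralizes $F$; since $K_0$ has finite index in the non-elementary subgroup $K$, it is itself non-elementary and contains two independent loxodromic elements $g_1,g_2$. Each $g_i$ commutes with $F$, so $F\langle g_i\rangle$ is virtually cyclic and therefore contained in the elementary closure $E(g_i)$; hence $F\subseteq E(g_1)\cap E(g_2)$. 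By the standard fact that a finite subgroup lying in the intersection of two distinct maximal elementary subgroups of an acylindrically hyperbolic group is contained in the finite radical (compare \cite[Lemma 6.8]{dahmani2017hyperbolically}), we conclude $F\subseteq K(G)$. But $K$ normalizes $K(G)$, and suitability forces $K(G)=\{1\}$, so $F=\{1\}$, a contradiction.

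For the converse, assume $K$ is not virtually cyclic, contains a hyperbolic element $g$, and normalizes no non-trivial finite subgroup. Then $g$ is loxodromic on $\Gamma$, and by the dichotomy recalled above $K$ must act non-elementarily. Every finite normal subgroup of $G$ is \emph{a fortiori} normalized by $K$, and the third hypothesis then forces any such subgroup to be trivial; hence $K$ normalizes no non-trivial finite normal subgroup of $G$, so $K$ is suitable.

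The only genuinely non-trivial input is the fact invoked in the forward direction: that a finite subgroup of an acylindrically hyperbolic group lying in the intersection of two distinct maximal elementary subgroups is contained in the finite radical. Everything else is a routine unpacking of the classification of isometries for acylindrical actions on hyperbolic spaces together with the hypothesis that a suitable subgroup normalizes no finite normal subgroup of $G$.
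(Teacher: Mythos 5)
The paper does not prove this lemma; it is cited as a black box from Chifan--Drimbe--Ioana--Sun, so there is no ``paper proof'' to compare against, and I will assess your argument on its own merits.

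Your converse direction is fine, and in the forward direction the ``not virtually cyclic'' and ``contains a hyperbolic element'' parts follow from the classification of isometries and subgroups for acylindrical actions exactly as you say (together with the standard fact that infinite-order elliptic elements of a relatively hyperbolic group are conjugate into peripherals). However, the step you single out as ``the only genuinely non-trivial input'' is in fact false. The claimed fact --- that a finite subgroup lying in $E(g_1)\cap E(g_2)$ for independent loxodromics $g_1, g_2$ must be contained in the finite radical $K(G)$ --- fails already for hyperbolic groups. Take $G = D_\infty \ast_{\Z/2} D_\infty$, with $g_1, g_2$ the infinite-order rotations in the two dihedral factors. Then $g_1, g_2$ are independent loxodromics, $E(g_i) = D_\infty^{(i)}$, and $E(g_1)\cap E(g_2)$ is the amalgamated $\Z/2$, whereas $K(G) = \{1\}$ since nothing non-trivial is normal in this amalgam. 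The reference you give to \cite{dahmani2017hyperbolically} concerns the containment $K(G)\subseteq E(g)$ for every loxodromic $g$; the reverse containment of the intersection into $K(G)$ does not follow and is not true.

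There is also a more structural issue to be aware of: the definition of ``suitable'' in this paper requires that $K$ normalize no non-trivial finite \emph{normal} subgroup of $G$, while the lemma as printed speaks of ``any non-trivial finite subgroup'' without ``normal.'' These two conditions are genuinely different (for instance $F_2$ inside $(\Z/2\times F_2)\ast\Z/3$ normalizes the central $\Z/2$ but normalizes no non-trivial finite normal subgroup of the ambient group, which has trivial finite radical). When you read the statement literally, the forward implication you are trying to prove is exactly the passage from the weaker normality-restricted condition to the stronger unrestricted one, and that passage is where your argument breaks. In fact, the subsequent use of this lemma in the proof of Theorem \ref{thm. embedding} reads the conclusion back as ``does not normalize any non-trivial finite \emph{normal} subgroup of $G_i$,'' suggesting that one of the two phrasings is simply a slip and that the intended content of the lemma is the characterization ``non-elementary for the action $\Leftrightarrow$ not virtually cyclic and contains a hyperbolic element,'' with the normalization condition carried along unchanged. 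Under that reading your argument for parts (a), (b) and the converse is essentially correct and complete, and the problematic paragraph about finite subgroups is unnecessary.
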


So we can unambiguously speak of a \emph{suitable subgroup with respect to the relatively hyperbolic pair} $(G,\{H_\lambda\}_{\lambda\in \Lambda})$, meaning a suitable subgroup with respect to any finite relative generating set of $\{H_\lambda\}_{\lambda\in \Lambda}$ in $G$.

If $g \in G$ is a loxodromic element for an acylindrical action, then $g$ admits an \emph{elementary closure}, denoted $E(g)$, i.e. a unique maximal virtually cyclic overgroup.

\subsubsection{Isolated components}\label{sec.ic}

Let $p$ be a path in $\Gamma(G,\A)$. The \emph{label} of $p$, denoted $\lab(p)$, is obtained by concatenating all labels of the edges in $p$ and is a word over $\A$. The length of $p$ is denoted by $\ell_X(p)$, and the initial (resp. terminal) vertex of $p$ is denoted by $p^-$ (resp. $p^+$).
For $\lambda\in\Lambda$, let $\widehat{d}_{\lambda}$ be the relative metric on $H_{\lambda}$ with respect to $X$. The following terminology goes back to \cite{osin2006relatively}.

\begin{definition}\label{connect}
Let $p$ be a path in $\Gamma(G,\A)$. For every $\lambda\in\Lambda$, an $H_{\lambda}$\emph{-subpath} $q$ of $p$ is a nontrivial subpath of $p$ such that $\lab(q)$ is a word over the alphabet $H_{\lambda}$ (if $p$ is a cycle, we allow $q$ to be a subpath of some cyclic shift of $p$). An $H_{\lambda}$-subpath $q$ of $p$ is an $H_{\lambda}$-\emph{component} if $q$ is not properly contained in any other $H_{\lambda}$-subpath. Two $H_{\lambda}$-components $q_1$ and $q_2$ of $p$ are \emph{connected} if for any two vertices $v_1\in q_1,v_2\in q_2$, there exists an edge $t$ in $\Gamma(G,\A)$ such that $t^-=v_1,t^+=v_2$, and $\lab(t)$ is a letter from $H_{\lambda}$. An $H_{\lambda}$-component $q$ of $p$ is \emph{isolated} if it is not connected to any other $H_{\lambda}$-component of $p$. Below, the $H_\lambda$-components will be collectively called \emph{components}. (By contrast, the maximal connected subspaces of a topological space will be called \emph{connected components}.)
\end{definition}

\begin{remark}
    The definition of connectedness in \cite{dahmani2017hyperbolically} is seemingly weaker than the version above. \cite[Definition 4.5]{dahmani2017hyperbolically} instead requires the existence of a path $t_1$ connecting a vertex of $q_1$ with a vertex of $q_2$ with label a word over $H_\lambda$. However, the two definitions are actually equivalent. Suppose that there exists a path $t_1$ that satisfies \cite[Definition 4.5]{dahmani2017hyperbolically}. Let $v_1\in q_1,v_2\in q_2$ be any vertices, and let $t_2$ (resp. $t_3$) be a subpath of $q_1$ or $q^{-1}_1$ (resp. $q_2$ or $q^{-1}_2$) from $v_1$ to $t^-_1$ (resp. from $t^+_1$ to $v_2$). The concatenation $t_2t_1t_3$ is a path from $v_1$ to $v_2$ with label a word over $H_\lambda$. Recall that $v_1$ and $v_2$ are elements of $G$. Then we have $v^{-1}_1v_2\in H_\lambda$. So there exists an edge $t$ from $v_1$ to $v_2$ with $\lab(t)\in H_\lambda$.
\end{remark}

\begin{proposition}[{\cite[Proposition 4.14]{dahmani2017hyperbolically}}]\label{prop. total length of i.c. in g.p.}
If $\{H_\lambda\}_{\lambda\in\Lambda}\hookrightarrow_{h}(G,X)$, then there exists a number $D>0$ satisfying the following property. Let $p$ be an $n$-gon in $\Gamma(G,\A)$ with geodesic sides $p_1,\ldots,p_n$ and let $I$ be a subset of the set of sides of $p$ such that every side $p_i\in I$ is an isolated $H_{\lambda_i}$-component of $p$ for some $\lambda_i\in\Lambda$. Then
$$\sum_{p_i\in I}\widehat{\ell}_{\lambda_i}(p_i)\leq Dn.$$
\end{proposition}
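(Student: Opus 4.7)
The proof plan is to combine the $\delta$-hyperbolicity of $\Gamma(G,\A)$ (supplied by the hyperbolic embedding) with the isolation hypothesis, in order to replace each isolated component $p_i$ by an $H_{\lambda_i}$-admissible detour whose total length over $i$ is controlled linearly in $n$. Note first that every $p_i\in I$ consists of a single edge of $\Gamma(G,\A)$: it is simultaneously a geodesic subpath and an $H_{\lambda_i}$-component, hence labeled by a single letter of $\A$. Consequently $\widehat{\ell}_{\lambda_i}(p_i)=\widehat{d}_{\lambda_i}((p_i)^-,(p_i)^+)$ is precisely the length of the shortest path in $\Gamma(G,\A)$ from $(p_i)^-$ to $(p_i)^+$ that avoids every edge of the coset subgraph $(p_i)^-\cdot\Gamma(H_{\lambda_i},H_{\lambda_i})$.

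The first step is an admissibility observation. For each $p_i\in I$, the complementary path $q_i:=p_{i+1}\cdots p_{i-1}$ runs from $(p_i)^+$ back to $(p_i)^-$, and I claim it is already $H_{\lambda_i}$-admissible. Indeed, any edge of $q_i$ labeled by an element of $H_{\lambda_i}$ with both endpoints in the coset $(p_i)^-H_{\lambda_i}$ would lie in some $H_{\lambda_i}$-component $c$ of $p$ disjoint from $p_i$; but then $c$ and $p_i$ would possess vertices in a common coset of $H_{\lambda_i}$ and therefore be connected in the sense of Definition \ref{connect}, contradicting the isolation of $p_i$. This gives the crude bound $\widehat{\ell}_{\lambda_i}(p_i)\le\ell(q_i)$, which is linear in the total $\A$-length of $p$ and so too weak on its own.

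To upgrade to a bound linear in $n$, I would triangulate the $n$-gon by inserting $n-3$ geodesic diagonals, splitting it into $n-2$ geodesic triangles, and reduce to the following triangle base case: there exists a universal constant $D_0=D_0(\delta,\H)$ such that whenever $p_1$ is an isolated $H_\lambda$-component of a geodesic triangle $p_1p_2p_3$ in $\Gamma(G,\A)$, one has $\widehat{\ell}_\lambda(p_1)\le D_0$. Granting this, summing over the triangles of the triangulation gives $\sum_{p_i\in I}\widehat{\ell}_{\lambda_i}(p_i)\le D_0(n-2)\le D_0\,n$, which is the desired inequality with $D:=D_0$. The triangle case itself would be proved by using $\delta$-thinness to subdivide $p_2\cup p_3$ into boundedly many pieces, connected by $\delta$-short bridges through the opposite side, each of which can be converted into a bounded admissible step by re-running the argument of the previous paragraph to discard any bridge whose endpoints lie in the forbidden coset.

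The main obstacle will be the triangle base case. Hyperbolicity supplies plentiful short $\A$-bridges across the triangle, but some of these bridges will cross the very coset $(p_1)^-H_\lambda$ that the admissible path must avoid, and ruling them out requires a delicate use of the isolation of $p_1$ — essentially a bounded coset penetration property adapted to hyperbolically embedded families, in the spirit of \cite{osin2006relatively}. A subtle point is that passing to a geodesic triangulation may create new $H_{\lambda_i}$-components on the diagonals, potentially threatening isolation within the sub-triangles; this will have to be tracked carefully, either by choosing the diagonals generically or by re-verifying isolation locally. Once these two compatibility issues are resolved, the rest of the argument is a routine assembly of hyperbolic thinness, isolation, and the admissibility observation above.
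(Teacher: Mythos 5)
Your initial observations are correct and useful: each $p_i\in I$ is a single edge (a geodesic $H_{\lambda_i}$-component must have length one, since any word over $H_{\lambda_i}$ of length at least two can be replaced by a single letter of $\A$ with the same endpoints and hence is not geodesic), and the complementary path $q_i$ is $H_{\lambda_i}$-admissible, since an $H_{\lambda_i}$-edge of $q_i$ lying in the coset of $p_i$ would give a component of the cycle $p$ connected to $p_i$ in the sense of \Cref{connect}, contradicting isolation. But the reduction via geodesic triangulation is where the plan breaks, and the gap you flag at the end is a genuine one, not a bookkeeping detail. A geodesic diagonal $d$ between two vertices of the polygon cannot be chosen ``generically'': there may be no geodesic in $\Gamma(G,\A)$ between those vertices that avoids the coset $(p_i)^{-}H_{\lambda_i}$, and once $d$ carries an $H_{\lambda_i}$-component connected to $p_i$, the side $p_i$ is simply not isolated in the sub-triangle containing it. At that point the proposed triangle estimate gives no bound, and ``re-verifying isolation locally'' cannot restore a hypothesis that has failed. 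One would need a quantitative mechanism for trading $\widehat{\ell}_{\lambda_i}(p_i)$ against the relative length of the diagonal component it has become connected to, and carrying that trade through an induction over the two sub-polygons sharing $d$; summing a constant over $n-2$ triangles does not do this.

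The triangle base case you isolate is true and is indeed the essential analytic input, but the sketch does not close it either. Thinness supplies $\delta$-short bridges between $p_2$ and $p_3$, but those bridges may themselves enter the forbidden coset, and the admissibility observation (which concerns the full complementary path, not transversal bridges) says nothing about them. Ruling out bad bridges amounts to a bounded-coset-penetration type statement for hyperbolically embedded subgroups, controlling how $X\sqcup\H$-geodesics can travel near a coset of $H_\lambda$, and that is a substantial lemma in its own right. So the proposal correctly locates where the difficulty lives, but neither the triangle estimate nor the $n$-gon reduction is established as written. (For calibration: the paper cites this result directly from \cite{dahmani2017hyperbolically} without proof, so there is no internal proof in this paper to compare against.)
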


\begin{lemma}[{\cite[Lemma 4.21]{dahmani2017hyperbolically}}]\label{lem. consecutive components}
Suppose $\{H_{\lambda}\}_{\lambda\in\Lambda}\hookrightarrow_h(G,X)$. Let $W$ be the set consisting of all words $w \in \A^\ast$ such that
\begin{enumerate}
    \item[(W1)] $w$ contains no subwords of type $xy$, where $x,y\in X$;
    \item[(W2)] if $w$ contains a letter $h\in H_{\lambda}$ for some $\lambda\in \Lambda$, then $\widehat{d}_{\lambda}(1,h)>50D$, where $D$ is given by Proposition \ref{prop. total length of i.c. in g.p.};
    \item[(W3)] if $h_1xh_2$ (resp. $h_1h_2$) is a subword of $w$, where $x\in X,h_1\in H_{\lambda},h_2\in H_{\mu}$, then either $\lambda\neq \mu$ or the element represented by $x$ in $G$ does not belong to $H_{\lambda}$ (resp. $\lambda\neq \mu$).
\end{enumerate}
Then the following hold.
\begin{enumerate}
    \item[(a)] Every path in the Cayley graph $\Gamma(G,\A)$ labeled by a word from $W$ is a $(4,1)$-quasi-geodesic.
    \item[(b)] If $p$ is a path in $\Gamma(G,\A)$ labeled by a word from $W$, then for every $\lambda\in\Lambda$, every $H_{\lambda}$-component of $p$ is isolated.
    \item[(c)] For every $\eps>0$, there exists $R>0$ satisfying the following condition. Let $p,q$ be two paths in $\Gamma(G,\A)$ such that 
    $$\ell_{X\sqcup \mathcal{H}}(p)\geq R,~~~~\lab(p),\lab(q)\in W,$$
    and $p,q$ are oriented $\eps$-close, i.e.,
    $$\max\{d_{\A}(p^-,q^-),d_{\A}(p^+,q^+)\}\leq \eps,$$
    where $d_{\A}$ is the combinatorial metric of $\Gamma(G,\A)$.
    Then there exist five consecutive components of $p$ which are respectively connected to five consecutive components of $q$. In other words,
    $$p=ra_1x_1a_2x_2a_3x_3a_4x_4a_5s,~~~~q=tb_1y_1b_2y_2b_3y_3b_4y_4b_5u,$$
    such that the following hold.
    \begin{enumerate}
        \item[(i)] $r$ (resp. $t$) is a subpath of $p$ (resp. $q$) whose label ends with a letter from $X$.
        \item[(ii)] $s$ (resp. $u$) is a subpath of $p$ (resp. $q$) whose label starts with a letter from $X$.
        \item[(iii)] For $i=1,\ldots,4$, $x_i$ and $y_i$ are either trivial subpaths or subpaths labeled by a letter over $X$; 
        \item[(iv)] For $i=1,\ldots,5$, $a_i$ and $b_i$ are components connected to each other.
    \end{enumerate}
\end{enumerate}
\end{lemma}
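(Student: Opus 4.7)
This lemma is attributed to \cite[Lemma 4.21]{dahmani2017hyperbolically}, so in our write-up we would simply cite it; but let me sketch the proof strategy one would take if proving it from scratch, using only Proposition \ref{prop. total length of i.c. in g.p.} as a black box.

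The plan for (a) and (b) is to run the same bigon argument in two directions. For (a), let $p$ be labeled by $w \in W$ and let $q$ be a geodesic with the same endpoints. Consider the bigon $P = pq^{-1}$. Condition (W1) forbids $XX$ subwords and (W3) forbids two consecutive $H_\lambda$-letters from being combinable, so the $H_\lambda$-components of $p$ are exactly the maximal $H_\lambda$-subwords of $w$. Applying Proposition \ref{prop. total length of i.c. in g.p.} with $n=2$, the total $\widehat{\ell}_\lambda$-length of components of $p$ which are isolated in $P$ is at most $2D$. Since (W2) forces each component to have relative length $>50D$, essentially all components of $p$ are non-isolated in $P$, meaning they are connected to a component of $q$. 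Combining this with the fact that the non-component edges of $p$ come from $X$, and therefore contribute to $d_{\A}$-length at a linear rate, one obtains the $(4,1)$-quasi-geodesic estimate by charging every $H_\lambda$-syllable of $p$ to a distinct subpath of the geodesic $q$. Part (b) is then an immediate corollary: if two $H_\lambda$-components $c_1, c_2$ of $p$ were connected by an edge $e$ in $\Gamma(G,\A)$, the closed loop between them together with $e$ would be a triangle in which both $c_1$ and $c_2$ are isolated components of large $\widehat{\ell}_\lambda$-length, contradicting Proposition \ref{prop. total length of i.c. in g.p.} with $n=3$ via (W2).

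For (c), the plan is to form a geodesic quadrilateral. Take $p, q$ with $\lab(p), \lab(q) \in W$ and endpoints within distance $\eps$ in $\Gamma(G,\A)$, and close up the configuration by adjoining two geodesic paths $r_1, r_2$ of length $\leq \eps$ connecting $p^-$ to $q^-$ and $q^+$ to $p^+$. Apply Proposition \ref{prop. total length of i.c. in g.p.} to this quadrilateral: the total $\widehat{\ell}_\lambda$-length of components of $p$ that are isolated in the quadrilateral is at most $4D$. By (b), components of $p$ are already isolated in $p$ itself, so the only way a component of $p$ can fail to be isolated in the quadrilateral is by being connected either to a component of $q$, or to a component that arises inside the short sides $r_i$. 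The latter can happen only for components of $p$ near its endpoints, and the number of such "boundary" exceptions is bounded in terms of $\eps$. Combined with the $\widehat{\ell}_\lambda > 50D$ lower bound from (W2), this forces all but a bounded number of components of $p$ to be connected to components of $q$.

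It remains to extract \emph{five consecutive} such connections. Here I would use a pigeonhole/scheduling argument: choosing $R$ large enough in terms of $D$ and $\eps$ guarantees that $p$ has at least $M+10$ components, where $M$ is the bound on exceptional components coming from both the isolated-component budget $4D$ and the boundary effects at $r_1, r_2$. Then five consecutive non-exceptional components appear somewhere along $p$, and by part (b) they are each connected to a unique component of $q$. The decomposition of $p$ and $q$ displayed in the statement, and in particular the fact that the interposing pieces $x_i, y_i$ are single $X$-letters or trivial, follows from condition (W1). The main obstacle in a detailed write-up is the careful combinatorial bookkeeping in this last step: accounting for endpoint effects, ensuring the $x_i$ really are single $X$-letters (as opposed to longer $X$-free subpaths), and formalizing the "consecutiveness" of the five matched components. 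All of these are routine given (W1)--(W3) and (b), but require attention to the corner cases near $r_1, r_2$.
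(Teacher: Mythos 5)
Your primary answer is correct and matches the paper exactly: this lemma is stated with the citation \cite[Lemma 4.21]{dahmani2017hyperbolically} and is not proved in the paper, except for a remark pointing out that conclusion (b) is implicit in the second paragraph of that reference's proof. So citing it is the right move, and no further comparison with a ``paper's own proof'' is possible.

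Since you went on to sketch a from-scratch proof, one small correction: your argument for (b) is not quite stated correctly. You say that if two $H_\lambda$-components $c_1, c_2$ of $p$ were connected, the resulting triangle would have $c_1$ and $c_2$ as isolated components of large relative length --- but by hypothesis $c_1$ and $c_2$ are \emph{not} isolated (they are connected to each other and to the extra $H_\lambda$-edge $e$), so they contribute nothing to the isolated-component budget of Proposition \ref{prop. total length of i.c. in g.p.}. The actual contradiction must come from elsewhere: if $c_1$ and $c_2$ are adjacent or separated by one $X$-letter, then (W3) (or (W1)) is violated outright since the connecting element would force that letter to lie in $H_\lambda$; and if they are separated by a longer subpath, the intermediate components of that subpath become isolated in the small cycle closed up by $e$, and \emph{those} are what Proposition \ref{prop. total length of i.c. in g.p.} together with (W2) forbids. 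The rest of your outline for (a) and (c) --- the bigon/quadrilateral argument, the budget $\leq 4D$, boundary exceptions near the short sides, and the pigeonhole step to extract five consecutive matched components --- is a reasonable high-level account of the DGO-style argument.
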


\begin{remark}
Conclusion (b) of Lemma \ref{lem. consecutive components} is not stated in \cite[Lemma 4.21]{dahmani2017hyperbolically}, but it is proved in the second paragraph of the proof of \cite[Lemma 4.21]{dahmani2017hyperbolically}.
\end{remark}

\subsubsection{Small cancellation}\label{subsec. sc}
We recall the small cancellation theory of Ol'shanski\u{\i} \cite{olshanskii1993residualing}. We will use another small cancellation condition that is easier to establish \cite{chifan2023small}.

\begin{definition}
A symmetric set of words $\mathcal R \subset \A^\ast$ satisfies the \emph{$C(\eps , \mu , \rho )$--condition} for some $\eps \geq 0$ and $\mu, \rho >0$,
if the following conditions hold.
\begin{enumerate}
\item[(a)] All words in $\mathcal R$ are $\Gamma(G, \A)$-geodesic and have length at least  $\rho $.
\item[(b)] Suppose that words $R,R^\prime \in \mathcal R$ have initial subwords $U$ and $U^\prime$, respectively,  such that
\begin{equation}\label{piece1}
\max \{ \| U\| ,\, \| U^\prime\| \} \geq \mu \min\{\| R\|, \| R^\prime\|\}
\end{equation}
and $U^\prime = YUZ$ in $G$ for some words $Y, Z$ of length
\begin{equation}\label{piece2}
\max \{ \| Y\| , \,\| Z\| \} \leq \eps.
\end{equation}
Then $YRY^{-1}=R^\prime$ in $G$.
\end{enumerate}
Further, we say that $\mathcal R$ satisfies the \emph{$C_1(\eps , \mu , \rho )$--condition} if,  in addition to (a) and (b), we have the following.
\begin{enumerate}
\item[(c)] Suppose that a word $R\in \mathcal R$ contains two disjoint subwords $U$ and $U^\prime$ such that $U^\prime = YUZ$ or $U^\prime = YU^{-1}Z$ in $G$ for some words $Y$, $Z$ and the inequality (\ref{piece2}) holds. Then
    $$ \max \{ \| U\| ,\, \| U^\prime\| \}< \mu \| R\|.$$
\end{enumerate}
\end{definition}

\begin{lemma}[{\cite[Lemma 3.26]{chifan2023small}}]\label{lem. hull}
Let $G$ be a group, $\{H_\lambda\}_{\lambda\in\Lambda}$ a collection of subgroups of $G$ such that $\{H_\lambda\}_{\lambda\in\Lambda}\hookrightarrow_h (G,X)$ for some $X\subseteq G$, and let $\A \coloneqq X\sqcup(\bigsqcup_{\lambda\in\Lambda}H_\lambda)$. Then for any $r\geq 1$, there exist $\eps\geq 0$ and $\mu, \rho>0$ such that, for any finite symmetric set of words $\mathcal R \subset \A^\ast$ satisfying $C(\eps, \mu , \rho )$, the following hold.
\begin{enumerate}
\item[(a)] The restriction of the natural homomorphism $\pi \colon G\to \bg \coloneqq G/\ll \mathcal{R}\rr$ to the set
\begin{equation}\label{BN}
B_r= \{ g\in G\mid |g|_\mathcal A\leq r\}
\end{equation}
is injective. In particular, the restriction of $\pi$ to $\bigcup\limits_{\lambda\in\Lambda} H_\lambda$ is injective.
\item[(b)] $\{\pi(H_\lambda)\}_{\lambda\in \Lambda}\hookrightarrow_h (\bg, \pi(X))$.
\item[(c)] For each $\bar g\in \bg$ of finite order, there exists $g\in G$ of finite order such that $\bar g=\pi(g)$.
\end{enumerate}
\end{lemma}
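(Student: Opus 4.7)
My plan is to follow the standard pattern of geometric small cancellation theory over hyperbolically embedded subgroups (cf.\ \cite{olshanskii1993residualing, osin2010small, chifan2023small}), working with \emph{generalized van Kampen diagrams} over the presentation of $\bg$ with alphabet $\A$ whose relators are $\mathcal{R}$ together with all words over some $H_\lambda$ that represent $1$ in $H_\lambda$. Cells of such a diagram come in two flavors: $\mathcal{R}$-cells (boundary labeled by a word in $\mathcal{R}$) and $H_\lambda$-cells (boundary labeled by a word over $H_\lambda$ trivial in $H_\lambda$). The technical core is a Greendlinger-type lemma: for a suitable choice of $\eps, \mu, \rho$, in every reduced diagram $\Delta$ that contains at least one $\mathcal{R}$-cell, some $\mathcal{R}$-cell $\Pi$ has a contiguity subdiagram to the boundary $\partial\Delta$ covering more than a $(1-24\mu)$-fraction of $\partial\Pi$, with side-strips of $\A$-length controlled by $\eps$. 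I would establish this by combining Proposition \ref{prop. total length of i.c. in g.p.} (to bound the total length of isolated components of an $n$-gon) with Lemma \ref{lem. consecutive components} (so that any sufficiently long arc respecting the shape of a relator is forced to share five consecutive matched components with any other such arc, yielding the contiguity strips).

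Once the Greendlinger lemma is in hand, part (a) is immediate. Given $r$, choose $\mu$ small and then $\rho$ much larger than $r$ (with a further buffer for $\eps$). If $g\in B_r$ satisfies $\pi(g)=1$, a reduced diagram representing this with boundary of $\A$-length at most $r$ cannot contain an $\mathcal{R}$-cell, since the long boundary arc produced by the Greendlinger lemma would exceed the total perimeter. Hence $g=1$ already in $G$. Injectivity on $\bigsqcup_\lambda H_\lambda$ follows because every letter in $\H$ has $\A$-length at most $1$.

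For part (b), I would verify the criterion of Theorem \ref{thm. criterion for he}, or argue directly. Hyperbolicity of $\Gamma(\bg, \pi(X)\sqcup \pi(\H))$ is checked by lifting geodesic triangles to $\Gamma(G,\A)$, cutting off each $\mathcal{R}$-cell encountered (the Greendlinger lemma again bounds the contiguity strip, producing a shortcut of controlled length), and propagating thinness. Properness of the relative metric $\widehat d^{\,\bg}_{\pi(H_\lambda)}$ reduces to properness of $\widehat d_\lambda$ via part (a): any $\pi(H_\lambda)$-admissible path in $\Gamma(\bg,\pi(\A))$ of length $\leq N$ lifts, after splicing through $\mathcal{R}$-cells, to an $H_\lambda$-admissible path in $\Gamma(G,\A)$ of length bounded in terms of $N$, and the injectivity on a sufficiently large ball forces the preimage to be finite.

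For part (c), take $\bar g\in\bg$ of finite order $n$ and a lift $g\in G$. If $g^n=1$ in $G$ we are done; otherwise a minimal diagram for $g^n\in\ll\mathcal{R}\rr$ has cyclically $g$-periodic boundary, and the contiguity subdiagram produced by the Greendlinger lemma must be compatible with this period. This forces $g$ to conjugate into the elementary closure of some relator cycle, from which one extracts a torsion preimage of $\bar g$ by replacing $g$ within its $\pi$-fiber. The main obstacle in the whole argument is the Greendlinger lemma in the generalized setting; once it is available, all three conclusions follow from a choice of $\eps,\mu,\rho$ depending only on $r$ and on the constants of the hyperbolic embedding $\{H_\lambda\}\hookrightarrow_h(G,X)$, not on $\mathcal{R}$ itself.
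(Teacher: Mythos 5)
The paper does not prove this statement at all: it is quoted as a black box from \cite[Lemma~3.26]{chifan2023small}, which in turn descends from Ol'shanski\u{\i}'s diagrammatic small-cancellation theory over relative presentations as developed in \cite{olshanskii1993residualing, osin2010small, hull2013small}. So there is no in-paper argument to compare against; the question is whether your sketch is a correct reconstruction of that machinery, and as written it is not.

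You correctly identify that the crux is a Greendlinger-type lemma for reduced diagrams over the relative presentation, and your outlines of how (a), (b), (c) would follow from such a lemma are sound at the level of a program. The problem is the route you propose to the Greendlinger lemma itself. You plan to obtain it by combining Proposition~\ref{prop. total length of i.c. in g.p.} with Lemma~\ref{lem. consecutive components}, but Lemma~\ref{lem. consecutive components} applies only to paths whose labels satisfy the combinatorial conditions (W1)--(W3): alternating isolated components of large relative length separated by at most one letter of $X$. That is precisely the structure imposed by the $W(\xi,\sigma)$ condition of Definition~\ref{def. W small cancellation}, and it is \emph{not} implied by the $C(\eps,\mu,\rho)$ hypothesis of the present lemma, whose relators are merely geodesic words over $\A$ of length at least $\rho$. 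Worse, in a genuine van Kampen diagram argument you must compare a subpath of an $\mathcal R$-cell boundary with a subpath of $\partial\Delta$, and the latter is the arbitrary word you are trying to kill; it has no reason whatsoever to satisfy (W1)--(W3), so Lemma~\ref{lem. consecutive components}(c) simply does not apply to it. In this paper Lemma~\ref{lem. consecutive components} plays a different role: it is used (in the proof of Lemma~\ref{lem. relative hyperbolic} and, implicitly via Lemma~\ref{lem. W->C}, in Lemma~\ref{lem. small cancellation}) to verify that the specific words $R$ constructed there satisfy the $W$-condition, after which Lemma~\ref{lem. W->C} is invoked to deduce $C_1(\eps,\mu,\rho)$ and only then is the present lemma applied. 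The actual proof of the Greendlinger-type lemma under $C(\eps,\mu,\rho)$ is a separate and substantial apparatus (bonds, contiguity subdiagrams, $\A$-maps, area estimates exploiting hyperbolicity of $\Gamma(G,\A)$) carried out in \cite{osin2010small, hull2013small}; deferring to it is reasonable, but routing it through Lemma~\ref{lem. consecutive components} would not fit the hypotheses.
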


\begin{lemma}[{\cite[Lemma 3.5]{chifan2023small}}]\label{lem. centralizer 0}
Suppose $\{H_\lambda\}_{\lambda \in \Lambda} \hookrightarrow_h (G, X)$. Let $\A = X \sqcup \H$, and suppose that $\Gamma (G, \mathcal A)$ is hyperbolic. For any $r\ge 1$, there exist $\eps,\rho>0$ such that the following holds.

Let $\mathcal R \subset \mathcal A^\ast$ satisfy the $C_1(\eps, 1/100, \rho)$ small cancellation condition, and let $\pi \colon G \to \overline G \coloneqq G/ \ll \mathcal R \rr.$ For any $g\in G$ of length $|g|_\mathcal A\le r$, we have $C_{\bg}(\pi(g))=\pi(C_G(g))$.
\end{lemma}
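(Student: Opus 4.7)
The inclusion $\pi(C_G(g)) \subseteq C_{\bg}(\pi(g))$ is immediate since $\pi$ is a homomorphism. The plan for the reverse inclusion is a shortest-lift argument. Let $\bar h \in C_{\bg}(\pi(g))$ and choose a preimage $h \in \pi^{-1}(\bar h)$ minimizing $|h|_\A$. It suffices to show that $[h,g] = 1$ in $G$, for then $h \in C_G(g)$ and $\bar h = \pi(h) \in \pi(C_G(g))$. Suppose for contradiction that $[h,g] \neq 1$ in $G$. Since $\pi([h,g])=1$, the element $[h,g]$ is a non-trivial member of $\ker\pi = \ll \mathcal R \rr^G$, and so bounds a minimal van Kampen diagram $\Delta$ over the presentation $\langle G \mid \mathcal R \rangle$, whose boundary is a $4$-gon with sides labelled by geodesic $\A$-representatives of $h,g,h^{-1},g^{-1}$.

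The next step is to invoke a Greendlinger-type lemma for the $C_1(\eps, 1/100, \rho)$ condition (as in \cite{olshanskii1993residualing} and its relative analogues underlying the small cancellation framework of this section): there exists a cell $\Pi$ of $\Delta$ whose boundary $\partial \Pi$, labelled by some $R \in \mathcal R$ of length $\|R\| \geq \rho$, shares with $\partial \Delta$ an arc $U$ covering a proportion at least $1 - O(\mu)$ of $\partial \Pi$, up to an $O(\eps)$ error. The role of the extra clause (c) of $C_1$ beyond plain $C$ is to forbid pathological self-contiguities of $R$ that would otherwise fake a Greendlinger arc without genuinely pairing $\partial \Pi$ with $\partial \Delta$; this ensures $U$ is produced from a single honest traversal of $\partial \Pi$.

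The argument then splits on which side(s) of $\partial \Delta$ the arc $U$ meets. If $U$ lay entirely within one of the short sides $g$ or $g^{-1}$, then $\|R\| \lesssim (1-O(\mu))^{-1} r$, contradicting $\|R\| \geq \rho$ once $\rho$ is chosen comfortably larger than $r$. So $U$ must meet one of the long sides, say $h$; write $h = h_1 \cdot U_h \cdot h_2$ where $U_h$ is the portion of $U$ inside the $h$-side, and let $V$ be the complementary arc of $\partial \Pi$, so that $UV^{-1}$ represents some $\widehat R \in \ker\pi$. Replacing $U_h$ inside $h$ by $V$ together with the short tails of $U$ that may have spilled into adjacent sides $g^{\pm 1}$ yields a new element $h' \in G$ with $\pi(h')=\bar h$ and
\[|h'|_\A \;\leq\; |h|_\A - |U_h| + |V| + 2r \;\leq\; |h|_\A - (1-O(\mu))\|R\| + 2r,\]
which is strictly less than $|h|_\A$ for $\rho$ chosen large enough relative to $r$, contradicting the minimality of $h$. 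The parameters $\eps, \rho$ can be selected to uniformly absorb these estimates.

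The main obstacle is the combinatorial bookkeeping in the corner cases where $U$ wraps around one or two corners of $\partial \Delta$; in particular, if $U$ spans a short side entirely and extends into both $h$ and $h^{-1}$, the substitution affects two copies of $h$ simultaneously, and one must either verify that net shortening still occurs or rule out this configuration outright by a slightly sharper choice of $\rho$. It is precisely here that the hypothesis $|g|_\A \leq r$ is essential: bounding $|g|_\A$ is what forces the long arc $U$ to concentrate in the long sides and so enables the shortening step.
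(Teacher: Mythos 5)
First, a structural note: the paper does not prove this lemma. It is imported verbatim from \cite[Lemma 3.5]{chifan2023small}, so there is no in-paper proof for your argument to track or diverge from; I am evaluating your sketch on its own terms.

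Your outline follows the right general template for small cancellation over a hyperbolically embedded family (shortest preimage $h$ of $\bar h$, a reduced diagram bounding the $4$-gon $hgh^{-1}g^{-1}$, a Greendlinger cell, surgery to shorten $h$). But the sketch contains a genuine gap which you yourself flag as ``the main obstacle'' without closing it. In the corner case where the Greendlinger arc $U$ crosses a short $g^{\pm 1}$ side and enters both $h$-labelled sides, the surgery you describe does not produce a shorter preimage of $\bar h$: after replacing $U$ by the complementary arc $V$ of $\partial\Pi$, the resulting boundary loop no longer has the form $h'g(h')^{-1}g^{-1}$, so you cannot simply read off an element $h'$ with $\pi(h')=\bar h$, and the element one does extract mixes pieces of $h$, $V$, $g$ in a way whose $\A$-length is not evidently smaller than $|h|_\A$. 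Your suggestion that clause (c) of $C_1$ forbids the configuration is also not automatic: write $h=a_1\cdots a_n$ geodesically; the two portions of $U$ in the $h$ and $h^{-1}$ sides correspond to suffixes $a_{k+1}\cdots a_n$ and $a_j\cdots a_n$, and clause (c) applies only when the conjugating words $Y,Z$ have length at most $\eps$, i.e.\ when $|j-k|\le\eps$. You do not show that reducedness of the diagram forces $|j-k|$ to be this small, and if $|j-k|$ is comparable to $\mu\|R\|$ then clause (c) is silent. Finally, the quantifier order of the lemma requires $\eps,\rho$ to depend only on $G,\A,r$ and to work uniformly over all $\mathcal R$; the informal ``$1-O(\mu)$ up to $O(\eps)$'' bookkeeping does not track this, whereas in Ol'shanski\u{\i}'s and Hull's formulations the Greendlinger constant is an explicit function of $\mu=1/100$ and the $\eps$ slack must be absorbed into $\rho$ ahead of time. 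So the proposal identifies the correct tools but does not constitute a complete proof.
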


This small cancellation condition is often hard to establish, so we use a different small cancellation condition that implies it and is easier to check. Below, we say that two letters $a$, $b$ of a word $W\in \mathcal A^\ast$ are \emph{cyclically consecutive} if they are consecutive or if $a$ (respectively, $b$) is the last (respectively, first) letter of $W$.

\begin{definition}\label{def. W small cancellation}
    A set of words $\W\subset \mathcal A^\ast$ satisfies the \emph{$W(\xi, \sigma)$ condition} for some $\xi, \sigma \geq 0$ if the following hold:

    \begin{enumerate}
    \item[(SC1)] If $a\in H_\lambda$ and $b\in H_\mu$ are cyclically consecutive letters of some word from $\W$, then $H_\lambda\cap H_\mu=\{1\}$.
    \item[(SC2)] If a letter $a\in H_\lambda$ occurs in some word from $\W$, then $\widehat{d}_\lambda(1, a) \geq \xi $.
    \item[(SC3)] For each letter $a\in \H$, there is at most one occurrence of $a^{\pm 1}$ in all words from $\W$. More precisely, let $W,V\in\W$. Suppose that $W\equiv W_1aW_2$ and $V\equiv V_1a^\eps V_2$ for some $a\in \H$, $W_1, W_2, V_1, V_2\in \A^\ast$, and $\eps=\pm 1$. Then $\eps=1$, and $W_i\equiv V_i$ for $i=1,2$; in particular, $W\equiv V$.
    \item[(SC4)] For every $W\in \W$, we have $\| W\| \geq \sigma$.
    \end{enumerate}
\end{definition}

By abuse of notation, for any word $W\in \A^\ast$, we will use $W$ to denote the element of $G$ represented by $W$. Further, for a set of words $\W\subset \A^\ast$, we will use $\ll \W \rr_G$ to denote the normal closure of the elements of $G$ represented by the words in $\W$.

\begin{lemma}[{\cite[Lemma 3.16 (a)]{chifan2023small}}]\label{lem. W->C}
For any positive constants $\eps$, $\mu$, and $\rho$, there exist positive $\xi$ and $\sigma$ such that, for any set of words $\W=\{ W_j\}_{j\in J}\subseteq \A^\ast$ satisfying $W(\xi, \sigma)$,  the symmetrization of $\mathcal W$ satisfies  $C_1(\eps,\mu,\rho)$.
\end{lemma}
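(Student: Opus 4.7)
The plan is to choose parameters $\xi$ and $\sigma$ so that Lemma~\ref{lem. consecutive components} applies to paths labelled by words of $\W$, and then leverage the uniqueness condition (SC3) to control all overlaps. Concretely, letting $D$ be the constant of Proposition~\ref{prop. total length of i.c. in g.p.}, take $\xi > 50D$, so that (SC2) becomes precisely hypothesis (W2) of Lemma~\ref{lem. consecutive components}. Hypotheses (W1) and (W3) can be arranged by mild preprocessing of $\W$---combining two consecutive $X$-letters into a single letter (possibly enlarging $X$ by their product) and collapsing any block $h_1 x h_2$ with $h_1, h_2 \in H_\lambda$ and $x$ representing an element of $H_\lambda$ into a single $H_\lambda$-letter---which affects neither (SC3) nor the normal closure. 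Next, take $\sigma \geq \rho$ to handle the length requirement of part (a) of the $C_1$ condition, and, letting $R_0 = R_0(\eps)$ be the threshold from Lemma~\ref{lem. consecutive components}(c), also $\sigma > R_0/\mu$, so that any subword of length at least $\mu \|R\|$ falls within the reach of that lemma. The geodesic requirement in (a) is handled by passing to a geodesic representative of each element of $G$: by Lemma~\ref{lem. consecutive components}(a) each word of $\W$ already labels a $(4,1)$-quasi-geodesic in $\Gamma(G,\A)$, and replacing it by a geodesic word with the same endpoints does not affect $\ll \mathcal R \rr$.

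For part (b), given $R, R' \in \mathcal R$ with initial subwords $U, U'$ satisfying $\max\{\|U\|,\|U'\|\} \geq \mu \min\{\|R\|,\|R'\|\}$ and $U' = YUZ$ in $G$ with $\max\{\|Y\|,\|Z\|\} \leq \eps$, consider the paths $p, p'$ in $\Gamma(G,\A)$ labelled by $U$ and $U'$; their endpoints lie within $\A$-distance $\eps$ of each other, and their length is at least $\mu \sigma > R_0$. Lemma~\ref{lem. consecutive components}(c) then produces five matched consecutive $\H$-components of $p$ and $p'$. By (SC3), each letter of $\H$ appears at most once across all of $\W$, so the matched components must correspond to the \emph{same} letters occurring at the same positions of the underlying words of $\W$ from which $R$ and $R'$ arise. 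Combined with the $X$-letter framing described in items (i)--(iv) of that lemma, this forces $R$ and $R'$ to coincide as words up to a cyclic rotation and conjugation by $Y$, yielding $YRY^{-1} = R'$ in $G$. For part (c), the analogous analysis applied to two disjoint subwords of a single $R$ would force two distinct occurrences of the same $\H$-letter within $R$, directly contradicting (SC3); hence the required bound $\max\{\|U\|,\|U'\|\} < \mu \|R\|$.

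The main obstacle will be the bookkeeping: ensuring that the preprocessing used to obtain (W1), (W3) does not interact badly with (SC3), and faithfully promoting the combinatorial matching of five $\H$-components produced by Lemma~\ref{lem. consecutive components}(c) to the full group-theoretic identity $YRY^{-1}=R'$ in (b). This requires carefully tracking the $X$-letter subpaths $r, s, t, u$ and $x_i, y_i$ at the boundaries of the matched region, and iterating the matching argument along the entire words $R$ and $R'$ past the first five components.
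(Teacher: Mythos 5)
The paper cites this lemma from \cite[Lemma 3.16 (a)]{chifan2023small} without reproducing a proof, so there is no in-paper argument to compare your proposal against. Evaluating the proposal on its own terms, two steps do not hold up.

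Most seriously, clause (a) of $C_1(\eps,\mu,\rho)$ requires every word of $\mathcal R$ to be $\Gamma(G,\A)$-\emph{geodesic}, whereas the $W(\xi,\sigma)$ hypotheses only give $(4,1)$-quasi-geodesics via Lemma~\ref{lem. consecutive components}(a). Your proposed remedy --- replace each word of $\W$ by a geodesic with the same endpoints on the grounds that this ``does not affect $\ll\mathcal R\rr$'' --- does not prove the lemma as stated: the conclusion is that the symmetrization of $\W$ \emph{itself} satisfies $C_1(\eps,\mu,\rho)$, and the $C_1$ condition is an intrinsic property of the set of words, not of their normal closure. Worse, passing to an arbitrary geodesic representative obliterates the (SC1)--(SC4) letter structure that the rest of your argument (the matching of $\H$-components and the uniqueness coming from (SC3)) depends on, so the replacement cannot even be used as a harmless normalization.

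The preprocessing you suggest for (W1) and for the $h_1 x h_2$ clause of (W3) --- merging consecutive $X$-letters into new letters, ``possibly enlarging $X$'' --- changes the alphabet $\A$ and therefore the Cayley graph $\Gamma(G,\A)$, the relative metrics $\widehat d_\lambda$, and the meaning of both $W(\xi,\sigma)$ and $C_1(\eps,\mu,\rho)$. A proof carried out over a different alphabet does not establish the statement, which is about a fixed $\A$. Beyond these two issues, the core plan --- invoke Lemma~\ref{lem. consecutive components}(c) to force five consecutive $\H$-components of the two overlapping subwords to be connected, then use (SC3) to deduce that the two words come from the same position of the same $W\in\W$ --- is the right idea and presumably tracks the cited proof; but you yourself flag the bookkeeping that turns the component-matching into the identity $YRY^{-1}=R'$ as ``the main obstacle,'' and that step is left unaddressed.
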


Combining these results we obtain:

\begin{lemma}\label{lem. sc}
    For any $r\geq 1$, there exist $\xi,\sigma>0$ such that the following holds: Let $\W\subset \A^\ast$ be any finite set of words that satisfies $W(\xi,\sigma)$, let $\bg:=G/\ll \W \rr$ and let $\pi\colon G\to \bg$ be the natural homomorphism. Then the following hold:

    \begin{enumerate}
    \item The restriction of $\pi$ to the set
     \[
     B_r= \{ g\in G\mid |g|_\A\leq r\}
     \]
     is injective. In particular, the restriction of $\pi$ to $\bigcup_{\lambda\in\Lambda} H_\lambda$ is injective.
     \item $\{\pi(H_\lambda)\}_{\lambda\in \Lambda}\hookrightarrow_h (\bg, \pi(X))$.
     \item For every $g\in G$ with $|g|_{\A}\leq r$, it holds that $C_{\bg}(\pi(g))=\pi(C_G(g))$.
     \item For each $\bar g \in \bg$ of finite order, there exists $g\in G$ of finite order such that $\bar g=\pi(g)$.
    \end{enumerate}
\end{lemma}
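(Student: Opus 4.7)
The plan is to combine the three preceding lemmas in a straightforward way, since the $W(\xi,\sigma)$ condition is specifically designed to imply the small cancellation condition $C_1(\eps, \mu, \rho)$ by Lemma \ref{lem. W->C}, which in turn is at least as strong as the plain $C(\eps, \mu, \rho)$ condition. The acylindricity of the action on $\Gamma(G, \A)$ supplies the hypotheses needed to invoke Lemmas \ref{lem. hull} and \ref{lem. centralizer 0} (in particular, the hyperbolic embedding $\{H_\lambda\}_{\lambda \in \Lambda} \hookrightarrow_h (G, X)$ that is implicit in this subsection); once these are available, the four conclusions of the lemma are just the aggregate of their conclusions applied to the symmetrization of the set $\W$.

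I would fix $r \geq 1$ and then choose parameters in three stages. First, apply Lemma \ref{lem. hull} with input $r$ to obtain constants $\eps_0, \mu_0, \rho_0 > 0$ such that any finite symmetric $\mathcal R \subset \A^\ast$ satisfying $C(\eps_0, \mu_0, \rho_0)$ yields conclusions (a), (b) and (c) of that lemma, which are precisely the desired conclusions (1), (2) and (4). Second, apply Lemma \ref{lem. centralizer 0} with the same $r$ to obtain constants $\eps_1, \rho_1 > 0$ such that any $\mathcal R$ satisfying $C_1(\eps_1, 1/100, \rho_1)$ produces the centralizer equality of conclusion (3). Third, set $\eps := \max(\eps_0, \eps_1)$, $\mu := \min(\mu_0, 1/100)$, and $\rho := \max(\rho_0, \rho_1)$; by inspection of the definitions of $C$ and $C_1$, the condition $C_1(\eps, \mu, \rho)$ simultaneously implies both $C(\eps_0, \mu_0, \rho_0)$ and $C_1(\eps_1, 1/100, \rho_1)$.

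I would then feed $\eps, \mu, \rho$ into Lemma \ref{lem. W->C} to obtain $\xi, \sigma > 0$ such that any finite $\W \subset \A^\ast$ satisfying $W(\xi, \sigma)$ has symmetrization satisfying $C_1(\eps, \mu, \rho)$. Since the symmetrization has the same normal closure in $G$ as $\W$, the quotient $\bg = G/\ll \W \rr$ and the map $\pi$ are unchanged, so all four conclusions follow directly from the two cited lemmas. The proof is essentially a bookkeeping exercise, and I do not foresee a genuine obstacle; the only point to verify by hand is that strengthening the parameters in the expected direction (larger $\eps$, smaller $\mu$, larger $\rho$) really does strengthen the $C$ and $C_1$ conditions, which is immediate from the definitions since larger $\eps$ and smaller $\mu$ enlarge the class of overlaps triggering the conclusion, and larger $\rho$ is simply a stricter lower bound on the lengths of the relators.
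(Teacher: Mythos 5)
Your proof is correct and follows essentially the same route as the paper's: the paper likewise picks constants from Lemma \ref{lem. centralizer 0} and Lemma \ref{lem. hull}, merges them via $\eps=\max$, $\mu=\min$, $\rho=\max$, and then feeds the result into Lemma \ref{lem. W->C}, leaving the monotonicity of the $C$ and $C_1$ conditions implicit where you spell it out.
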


\begin{proof}
     Fix $r\geq 1$. Let $\eps_1,\rho_1>0$ be the constants given by \Cref{lem. centralizer 0}. Let $\eps_2,\mu_2,\rho_2>0$ be the constants given by \Cref{lem. hull}. Let $\eps=\max\{\eps_1,\eps_2\},\mu=\min\{\mu_2,1/100\},\rho=\max\{\rho_1,\rho_2\}$, and let $\xi,\sigma>0$ be the constants given by \Cref{lem. W->C}.
\end{proof}

\subsection{Dehn filling and Cohen--Lyndon property}

The most general setting of (group theoretic) Dehn filling consists of a group $G$, a subgroup $H\leq G$ and a normal subgroup $N\lhd H$. The Dehn filling process produces a quotient $G/\ll N \rr_G$, where $\ll\cdot\rr_G$ indicates the normal closure of a subset in $G$. To prove useful results, in practice it is often assumed that $G$ and $H$ satisfy certain negative-curvature conditions, such as $G$ being hyperbolic relative to $H$ \cite{osin2007peripheral,groves2008dehn} or more generally $H$ being hyperbolically embedded \cite{dahmani2017hyperbolically}.

\begin{theorem}[{\cite[Theorem 1.1]{osin2007peripheral}}]\label{thm. osin df}
    Let $G$ be a group that is hyperbolic relative to a subgroup $H$. Then there exists a finite subset $F\subset H\smallsetminus\{1\}$ such that if a normal subgroup $N\lhd H$ satisfies $N\cap F=\emptyset$, then $\pi$ maps $H$ to a subgroup of $\pi(G)$ isomorphic to $H/N$ and $\pi(G)$ is hyperbolic relative to $\pi(H)$, where $\pi\colon G\to G/\ll N \rr_G$ is the natural quotient map.
\end{theorem}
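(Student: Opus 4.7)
The plan is to apply the small cancellation machinery of Lemma \ref{lem. sc} to a set of relators derived from $N$, with $F$ chosen as a sufficiently large ball in the relative metric $\widehat d$ on $H$. Informally, once $N \cap F = \emptyset$ every nontrivial element of $N$ is long in $\widehat d$, so killing its normal closure becomes a small cancellation quotient that preserves the relatively hyperbolic structure.

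Concretely, since $(G, \{H\})$ is relatively hyperbolic, there is a finite $X \subset G$ with $H \hookrightarrow_h (G, X)$ and $\Gamma(G, \A)$ hyperbolic, where $\A = X \sqcup H$. By Theorem \ref{thm. acylindrical action} applied with $Y = X$ finite, we may assume the action $G \curvearrowright \Gamma(G, \A)$ is acylindrical. Fix $r \geq 1$ and apply Lemma \ref{lem. sc} to obtain constants $\xi, \sigma > 0$; by properness of $(H, \widehat d)$ the set
\[
F = \{ h \in H \smallsetminus \{1\} : \widehat d(1, h) \leq \xi \}
\]
is finite, and we declare it our candidate. Given $N \lhd H$ with $N \cap F = \emptyset$, a natural choice of small cancellation relators is $\W = \{ x^k n x^{-k} : n \in N_+\}$, where $N_+$ is a selector set picking one of $\{n, n^{-1}\}$ for each $n \in N \smallsetminus \{1\}$, $x \in X \smallsetminus H$ is fixed, and $k$ is chosen so that $2k + 1 \geq \sigma$. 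Then (SC1) holds vacuously (the unique $H$-letter $n$ is flanked by $X$-letters), (SC2) follows from $N \cap F = \emptyset$, (SC3) holds since each $n \in N$ appears in at most one word of $\W$, and (SC4) holds by construction.

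By Lemma \ref{lem. sc}, the quotient map $\pi\colon G \to \bar G = G / \ll \W \rr_G = G / \ll N \rr_G$ yields a hyperbolic embedding $\pi(H) \hookrightarrow_h (\bar G, \pi(X))$ with $\pi(X)$ finite, which by definition means $(\bar G, \pi(H))$ is a relatively hyperbolic pair. Combining Lemma \ref{lem. sc}(1) (injectivity on balls) with the almost malnormality of $H$ in $G$ (Lemma \ref{lem. he -> malnormal}), a Van Kampen diagram argument forces $\ker(\pi|_H) = N$, hence $\pi(H) \cong H/N$.

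The main obstacles are twofold. First, $N$ may be infinite so $\W$ is infinite, whereas Lemma \ref{lem. sc} is stated for finite $\W$: one bypasses this either by invoking the infinite-set version of the underlying small cancellation theorems in \cite{chifan2023small}, or by a direct limit argument over cofinal finite normal subsets of $N$. Second, establishing the precise equality $\ker(\pi|_H) = N$ (rather than merely $N \subseteq \ker(\pi|_H)$) requires the injectivity on balls from Lemma \ref{lem. sc}(1) together with the malnormality of $H$, arranged via a careful analysis of how the relators $x^k n x^{-k}$ can interact in a diagram whose boundary lies inside $H$.
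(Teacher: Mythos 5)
This theorem is not proved in the paper; it is cited verbatim from \cite[Theorem 1.1]{osin2007peripheral} and used as a black box. Your task is therefore to supply an independent argument, and the one you propose is incorrect for a structural reason. You invoke Lemma \ref{lem. sc}, whose conclusion (1) -- inherited from Lemma \ref{lem. hull}(a) -- is precisely that $\pi$ is injective on $\bigcup_\lambda H_\lambda$. If your $\W$ really satisfied $W(\xi,\sigma)$, that lemma would force $\pi|_H$ to be injective, which contradicts your intended conclusion $\pi(H)\cong H/N$ for nontrivial $N$ (since $N\subseteq\ll N\rr_G$, so $N\subseteq\ker\pi|_H$). Small cancellation over a hyperbolically embedded family is engineered to preserve the $H_\lambda$ injectively; it is the wrong tool for Dehn filling, which deliberately kills a piece of $H$.

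The concrete breakdown is cyclic geodesicity, which condition (a) of $C_1(\eps,\mu,\rho)$ imposes on the symmetrization (and the symmetrization must include cyclic conjugates, or the piece condition (b) is toothless). The cyclic conjugates of $x^{k}nx^{-k}$ are not even reduced -- there is an $x^{-1}x$ cancellation at the seam -- and the cyclically reduced core of $x^{k}nx^{-k}$ is just $n$, a single $\H$-letter of length $1$, far too short for (SC4). Padding by $x^{\pm k}$ cannot manufacture length, since cyclic reduction undoes it: a relator that is conjugate into $H$ is, for the purposes of this small cancellation theory, effectively an element of $H$ and can never pass the tests. Osin's actual proof analyses relative van Kampen diagrams and the relative Dehn function directly, with a Greendlinger-type lemma tailored to fillers coming from $N\lhd H$, rather than forcing the filling relators into the $W(\xi,\sigma)$ framework. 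Your subsidiary worries (infinite $\W$; the asserted-but-unproved identity $\ker\pi|_H=N$) would also need real work, but the central obstruction above makes the plan unsalvageable as written.
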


Combining this with \cite[Corollary 2.41]{osin2006relatively}, we obtain:

\begin{corollary}\label{cor. osin df}
    Let $G$ be a group that is hyperbolic relative to a subgroup $H$. Then there exists a finite subset $F\subset H\smallsetminus\{1\}$ such that if a normal subgroup $N\lhd H$ satisfies that $N\cap F=\emptyset$ and $H/N$ is hyperbolic, then $\pi(G)$ is hyperbolic.
\end{corollary}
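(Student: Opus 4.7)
The plan is to obtain this as a direct combination of \Cref{thm. osin df} and the cited \cite[Corollary 2.41]{osin2006relatively}. The latter states that if a group is hyperbolic relative to a (finite) collection of hyperbolic subgroups, then it is itself hyperbolic. So the task reduces to producing a Dehn filling setup in which both the ambient group is hyperbolic relative to the image of $H$, and this image is itself hyperbolic.

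First I would apply \Cref{thm. osin df} to the relatively hyperbolic pair $(G,H)$ to obtain a finite subset $F \subset H \setminus \{1\}$ with the following property: for any normal subgroup $N \lhd H$ with $N \cap F = \emptyset$, the natural quotient $\pi \colon G \to G/\ll N \rr_G$ restricts on $H$ to an isomorphism $H/N \xrightarrow{\cong} \pi(H)$, and $\pi(G)$ is hyperbolic relative to $\pi(H)$. I claim this same $F$ works for the corollary.

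Indeed, suppose $N \lhd H$ satisfies $N \cap F = \emptyset$ and $H/N$ is hyperbolic. By the previous paragraph, $\pi(H) \cong H/N$ is a hyperbolic group, and $\pi(G)$ is hyperbolic relative to the single subgroup $\pi(H)$. Since $\pi(H)$ is hyperbolic, \cite[Corollary 2.41]{osin2006relatively} applies and yields that $\pi(G)$ is a hyperbolic group, as required.

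Since the proof is just a concatenation of two black-box results, there is no real obstacle — the only thing to notice is that the finite subset $F$ in the conclusion of \Cref{thm. osin df} is exactly the finite subset needed here, with the additional hypothesis "$H/N$ is hyperbolic" playing the single role of feeding Osin's combination theorem for relatively hyperbolic groups with hyperbolic peripherals.
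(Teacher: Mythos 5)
Your proof is correct and is exactly the argument the paper intends: the corollary is stated there as an immediate consequence of combining Theorem \ref{thm. osin df} with \cite[Corollary 2.41]{osin2006relatively}, precisely as you do.
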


Our method to control homology is via the notion of a Cohen--Lyndon triple, which was first studied by Cohen and Lyndon for free groups \cite{cohen1963free}, hence the name.

\begin{definition}
    Let $G\geq H\rhd N$ be groups. The triple $(G,H,N)$ is called a \emph{Cohen--Lyndon triple} if there exists a left transversal $T$ of $H\ll N \rr_G$ in $G$ such that $\ll N \rr_G$ decomposes as a free product:

    \[\ll N \rr_G = \Asterisk_{t\in T} tNt^{-1}.\]
\end{definition}

\begin{theorem}[{\cite[Theorem 2.5]{sun2018cohomologyi}}]\label{thm. sun cl}
    Let $G$ be a group with a hyperbolically embedded subgroup $H$. Then there exists a finite subset $F\subset H\smallsetminus\{1\}$ such that if a normal subgroup $N\lhd H$ satisfies $N\cap F=\emptyset$, then $(G,H,N)$ is a Cohen--Lyndon triple.
\end{theorem}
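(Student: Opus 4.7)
The plan is to argue geometrically in the Cayley graph $\Gamma(G,\mathcal{A})$, where $\mathcal{A}=X\sqcup H$ and $\{H\}\hookrightarrow_h (G,X)$. By \Cref{thm. acylindrical action} I may enlarge $X$ so that the action of $G$ on $\Gamma(G,\mathcal{A})$ is acylindrical. Let $D$ be the constant from \Cref{prop. total length of i.c. in g.p.} and set
\[
F := \{ h \in H \setminus \{1\} : \widehat{d}_H(1, h) \leq 50D \},
\]
which is finite since $(H,\widehat{d}_H)$ is proper. After possibly enlarging $F$ using \Cref{thm. osin df}, I may assume that any $N\lhd H$ with $N\cap F=\emptyset$ also satisfies $\ll N\rr_G\cap H=N$. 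Fix such an $N$ and a left transversal $T$ of $H\ll N\rr_G$ in $G$ containing $1$.

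Consider the natural homomorphism $\phi\colon \Asterisk_{t\in T}tNt^{-1}\to \ll N\rr_G$. Surjectivity is routine: decomposing each $g\in G$ as $g=thk$ with $t\in T$, $h\in H$, $k\in\ll N\rr_G$ and inducting on an expression of $k$ as a product of conjugates (using $N\lhd H$ and $\ll N\rr_G\lhd G$) expresses every generator $gng^{-1}$ of $\ll N\rr_G$ as an element of $\Asterisk_{t\in T}tNt^{-1}$. For injectivity, suppose for contradiction that $w=\prod_{i=1}^{k}t_in_it_i^{-1}=1$ in $G$ is a reduced expression of \emph{shortest} possible length, with $k\geq 1$, $n_i\in N\setminus\{1\}$, and $t_i\neq t_{i+1}$. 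Realize $w$ as a closed path $p$ in $\Gamma(G,\mathcal{A})$ consisting of geodesic arcs for the $t_i^{\pm 1}$ alternating with single $H$-edges for the $n_i$; this is a $3k$-gon and each $n_i$-edge is a single-letter $H$-component.

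I claim every $n_i$-component is isolated. With $P_i:=\prod_{\ell<i}t_\ell n_\ell t_\ell^{-1}\in \ll N\rr_G$, the starting vertex of the $n_i$-edge is $P_it_i$. Connectedness of the $n_i$- and $n_j$-components ($i<j$) would give $(P_it_i)^{-1}(P_jt_j)\in H$. Using $\ll N\rr_G\lhd G$, this rewrites as $t_i^{-1}t_j\in H\ll N\rr_G$, forcing $t_i=t_j$ in $T$. If $j=i+1$ this contradicts reducedness. If $j>i+1$, then $P_i^{-1}P_j\in \ll N\rr_G\cap t_iHt_i^{-1}=t_iNt_i^{-1}$ (using $\ll N\rr_G\cap H=N$), say equal to $t_i\tilde n t_i^{-1}$; telescoping $P_i^{-1}P_j=t_in_it_i^{-1}\cdot\prod_{\ell=i+1}^{j-1}t_\ell n_\ell t_\ell^{-1}$ then produces the relation $\prod_{\ell=i+1}^{j-1}t_\ell n_\ell t_\ell^{-1}\cdot t_i(\tilde n^{-1}n_i)t_i^{-1}=1$, which is reduced of length $j-i<k$ (reducedness uses $t_{j-1}\neq t_j=t_i$, and $\tilde n\neq n_i$ else the subword already gives a shorter relation), contradicting minimality of $k$. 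With all components isolated, \Cref{prop. total length of i.c. in g.p.} applied to the $3k$-gon yields the contradiction
\[
50Dk<\sum_{i=1}^{k}\widehat{d}_H(1,n_i)\leq 3Dk.
\]
Hence $\phi$ is an isomorphism and $(G,H,N)$ is a Cohen--Lyndon triple.

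The main obstacle is verifying that the surgery in the non-adjacent case $t_i=t_j$, $j>i+1$ produces a genuinely reduced and strictly shorter relation; the key observations are that $t_{j-1}\neq t_j=t_i$ comes for free from the original reducedness, and that $\tilde n=n_i$ would already imply $y=\prod_{\ell=i+1}^{j-1}t_\ell n_\ell t_\ell^{-1}=1$ in $G$, a shorter relation. A minor subtlety is that $F$ must be chosen in a coordinated way so that both the isolated-component estimate and the Dehn-filling injectivity $\ll N\rr_G\cap H=N$ hold simultaneously, but both only require finite enlargements.
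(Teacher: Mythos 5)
The paper does not prove this result; it cites \cite[Theorem 2.5]{sun2018cohomologyi}, where the argument proceeds through the structure theory of group-theoretic Dehn filling (the windmill construction of rotating families), rather than the direct Cohen--Lyndon-style geodesic-polygon argument you attempt. Your approach is genuinely different, and closer in spirit to the original Cohen--Lyndon argument for free groups, so it is worth examining on its own terms.

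There are, however, two genuine gaps in the injectivity step, both concerning the hypotheses of Proposition~\ref{prop. total length of i.c. in g.p.}, which requires the relevant sides of the polygon to be \emph{isolated $H$-components} of $p$. First, there is no reason a priori for the $n_i$-edge to be an $H$-component at all: if the geodesic side for $t_i$ ends with an $H$-edge, or the geodesic side for $t_i^{-1}$ begins with one, then the $n_i$-edge is properly contained in a longer $H$-subpath, hence is not a component, and the proposition does not apply. You can partially repair this by choosing the transversal $T$ to consist of $\mathcal{A}$-shortest coset representatives, which forces every geodesic from $1$ to $t\in T\setminus\{1\}$ to end with an $X$-letter (and its reverse to begin with one), so the $n_i$-edge is sandwiched between $X$-edges whenever $t_i\neq 1$. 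But $1$ must belong to $T$, and whenever $t_i=1$ the sides for $t_i^{\pm1}$ are degenerate, so the $n_i$-edge abuts the sides for $t_{i-1}^{-1}$ and $t_{i+1}$; you have no control over whether the former ends, or the latter begins, with an $H$-letter, because a coset representative of $H\ll N\rr_G$ can be shortened on the right (by right-multiplying by $H$) but not on the left.

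Second, even granting that the $n_i$-edges are components, your isolation check only rules out connections between two $n_i$-edges; you must also exclude the possibility that an $n_i$-edge is connected to an $H$-component lying \emph{inside} one of the geodesic sides for some $t_j^{\pm1}$. This is not automatic and is not addressed; without it, the hypothesis of Proposition~\ref{prop. total length of i.c. in g.p.} is simply not verified. These are precisely the points at which the published proofs take a different route, building $\ll N\rr_G$ inductively over growing balls rather than relying on a single polygon estimate. (Your surjectivity sketch is also rather compressed, but that part is indeed routine and repairable.)
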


\begin{lemma}[{\cite[Lemma 4.22]{chifan2023wreath}}]\label{lem. cl transitive}
    Let $G\geq H\geq K$ be groups. Suppose that $(G,H, \ll K \rr_H)$ and $(H,K,K)$ are Cohen--Lyndon triples. Then $(G,K,K)$ is a Cohen--Lyndon triple.
\end{lemma}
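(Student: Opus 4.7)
The plan is to combine the two Cohen--Lyndon decompositions by substitution, using $T := T_1 T_2$ as the candidate transversal, and then to verify that $T$ is a left transversal of $\ll K \rr_G$ in $G$. Unpacking the hypotheses: from $(G, H, \ll K \rr_H)$ Cohen--Lyndon, and noting $\ll \ll K \rr_H \rr_G = \ll K \rr_G$, we obtain a left transversal $T_1$ of $H \ll K \rr_G$ in $G$ with $\ll K \rr_G = \Asterisk_{t \in T_1} t \ll K \rr_H t^{-1}$; from $(H, K, K)$ Cohen--Lyndon, a left transversal $T_2$ of $\ll K \rr_H$ in $H$ with $\ll K \rr_H = \Asterisk_{s \in T_2} s K s^{-1}$. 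Without loss of generality, $1 \in T_1 \cap T_2$. Conjugating the second free product by each $t \in T_1$ and plugging into the first yields
\[
\ll K \rr_G \;=\; \Asterisk_{t \in T_1} \Asterisk_{s \in T_2} (ts) K (ts)^{-1} \;=\; \Asterisk_{r \in T} r K r^{-1},
\]
so the free product side of Cohen--Lyndon for $(G, K, K)$ is automatic.

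\medskip

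Existence of coset representatives chains the two decompositions. Given $g \in G$: write $g = t m$ using $T_1$ with $t \in T_1$ and $m \in H \ll K \rr_G$; then $m = h x$ with $h \in H$ and $x \in \ll K \rr_G$; and finally $h = s n$ using $T_2$ with $s \in T_2$ and $n \in \ll K \rr_H \subseteq \ll K \rr_G$. Collecting gives $g \in ts \cdot \ll K \rr_G$, so $T$ meets every $\ll K \rr_G$-coset.

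\medskip

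The main obstacle is uniqueness. Suppose $(ts) \ll K \rr_G = (t's') \ll K \rr_G$ with $t, t' \in T_1$ and $s, s' \in T_2$. Using normality of $\ll K \rr_G$ in $G$ together with $s, s' \in H$, one obtains $t'^{-1} t \in H \ll K \rr_G$, whence $t = t'$ by the $T_1$-transversal property. This reduces uniqueness to showing that $s^{-1} s' \in H \cap \ll K \rr_G$ forces $s = s'$, i.e. to the key identity
\[
H \cap \ll K \rr_G \;=\; \ll K \rr_H.
\]
The inclusion $\supseteq$ is immediate. For $\subseteq$, I would analyze the reduced normal form of any $h \in H \cap \ll K \rr_G$ in the free product $\ll K \rr_G = \Asterisk_{t \in T_1} t \ll K \rr_H t^{-1}$ (with $1 \in T_1$), arguing that such a form must concentrate in the factor at $t = 1$, namely $\ll K \rr_H$. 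Equivalently, this is the injectivity of the natural map $H / \ll K \rr_H \to G / \ll K \rr_G$ induced by a Cohen--Lyndon triple, which is a standard structural consequence of the Cohen--Lyndon property already implicit in \cite[Theorem 2.5]{sun2018cohomologyi}. Granted this identity, $s^{-1} s' \in \ll K \rr_H$, and the $T_2$-transversal property then gives $s = s'$, completing the proof.
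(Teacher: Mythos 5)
The paper cites \cite[Lemma 4.22]{chifan2023wreath} for this lemma rather than proving it, so there is no internal proof to compare against. Your strategy of chaining the two Cohen--Lyndon decompositions via $T := T_1 T_2$ is the natural one, and most of it is sound: the substituted free product decomposition, the argument that $T$ meets every left coset of $\ll K \rr_G$, and the reduction of uniqueness to the identity $H \cap \ll K \rr_G = \ll K \rr_H$ are all correct.

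The gap is in the proof of that identity, which you only sketch. The claim that the reduced normal form of $h \in H \cap \ll K \rr_G$ ``must concentrate in the factor at $t = 1$'' does not follow merely from $h$ belonging to $H$; it requires using $\ll K \rr_H \lhd H$ together with the self-normalization of nontrivial free factors. Concretely: first normalize so that $1 \in T_1$ (replace $T_1$ by $t_0^{-1}T_1$, where $t_0 \in T_1$ represents the coset $H\ll K \rr_G$; this is again a left transversal, and conjugating the free product by $t_0^{-1}$ shows the decomposition persists). Then $\ll K \rr_H$ is the free factor of $\ll K \rr_G = \Asterisk_{t \in T_1} t \ll K \rr_H t^{-1}$ at $t = 1$. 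Any $h \in H$ satisfies $h \ll K \rr_H h^{-1} = \ll K \rr_H$, and since a nontrivial free factor of a free product with at least two nontrivial factors is self-normalizing, $h \in \ll K \rr_G$ forces $h \in \ll K \rr_H$; the degenerate cases $\ll K \rr_H = \{1\}$ and $|T_1| = 1$ are immediate. Also, your citation to \cite[Theorem 2.5]{sun2018cohomologyi} is misplaced: that result \emph{establishes} the Cohen--Lyndon property under a hyperbolic-embedding hypothesis, whereas the fact that a Cohen--Lyndon triple $(G,H,N)$ satisfies $H \cap \ll N \rr_G = N$ is a consequence of the definition alone, as above.
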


The Cohen--Lyndon property allows strong control of the geometry of the corresponding quotient. In the following theorem, if $(G, H, N)$ is a Cohen--Lyndon triple, we denote by $\bg \coloneqq G/\ll N \rr_G$ and $\bh \coloneqq H/N$.

\begin{theorem}[{\cite[Theorem 1.12]{petrosyan2024l2}}]
\label{thm. df space}
    Let $(G,H,N)$ be a Cohen--Lyndon triple. Let $BG$ (resp. $BH,B\bh$) be a $K(G,1)$ (resp. $K(H,1),K(\bh,1)$) CW-complex. Let $\phi\colon BH\rightarrow BG$ be a cellular map induced by the inclusion $H\hookrightarrow G$. Let $\psi\colon BH\rightarrow B\bh$ be a cellular map induced by $H\twoheadrightarrow \bh$. Let $X$ be the CW-complex obtained by gluing the mapping cylinders $M_\phi$ and $M_\psi$ along their common subcomplex $BH$. Then $X$ is a $K(\bg,1)$.
\end{theorem}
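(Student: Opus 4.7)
The plan is to show (a) $\pi_1(X) \cong \bg$ by van Kampen's theorem, and (b) the universal cover $\widetilde X$ of $X$ is contractible. For (a), applying Seifert--van Kampen to the decomposition $X = M_\phi \cup_{BH} M_\psi$ (after suitable thickening) and using $M_\phi \simeq BG$, $M_\psi \simeq B\bh$ gives $\pi_1(X) \cong G \ast_H \bh$, where the amalgamation is via the inclusion $H \hookrightarrow G$ and the quotient $H \twoheadrightarrow \bh = H/N$. This pushout of groups simplifies to $G/\ll N \rr_G = \bg$.

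For (b), let $p \colon \widetilde X \to X$ be the universal cover, and analyze the preimages of $BG$, $B\bh$, $BH$. The Cohen--Lyndon property has two key consequences. First, $H \cap \ll N \rr_G = N$, so the map $\bh \to \bg$ is injective; hence $p^{-1}(B\bh)$ is a disjoint union of contractible universal covers $\widetilde{B\bh}$, indexed by $\bg/\bh$, and $p^{-1}(BH)$ is a disjoint union of $K(N,1)$'s, indexed by the same set. Second, $\ll N \rr_G = \Asterisk_{t\in T} tNt^{-1}$, so the connected space $p^{-1}(BG)$, which is a $K(\ll N \rr_G, 1)$, is homotopy equivalent to the wedge $\bigvee_T BN$.

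Counting cosets shows $|\bg/\bh| = |T|$, and the natural bijection matches each component of $p^{-1}(BH)$ with exactly one wedge summand of $p^{-1}(BG)$. Each component of $p^{-1}(BH)$ is then attached on the $M_\psi$-side to a lift of $M_\psi$ whose $\widetilde{B\bh}$-end is contractible, so this attachment cones off the corresponding wedge summand. Thus, up to homotopy, $\widetilde X$ is obtained from $\bigvee_T BN$ by filling in each wedge summand with a contractible space, which yields a wedge of contractible CW complexes and is therefore contractible.

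The main obstacle will be executing the above analysis rigorously at the CW-complex level rather than merely tracking homotopy types, since one must keep track of all attachments and identifications coherently. A clean way to organize the argument is to model $p^{-1}(BG)$ as a graph of spaces over the Bass--Serre tree of the free product $\Asterisk_T tNt^{-1}$; attaching the contractible $M_\psi$-lifts along the components of $p^{-1}(BH)$ then deformation retracts $\widetilde X$ onto this tree, which is contractible.
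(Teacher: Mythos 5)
You should first note a structural point: the paper does not actually prove this theorem. It is imported verbatim from \cite[Theorem~1.12]{petrosyan2024l2}, and the only place the paper uses it is in deriving Corollary~\ref{cor. classifying space} (the case $H=N\cong\Z$). So there is no ``paper's own proof'' to compare against; I can only assess your argument on its own.

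Your sketch is essentially correct, and it is the natural way to prove this statement. Part (a) is standard: van Kampen on the thickened decomposition $X=M_\phi\cup_{BH}M_\psi$ gives the pushout $G\ast_H\bh$, and since $\phi_*$ is injective and $\psi_*$ is the quotient $H\twoheadrightarrow H/N$, this pushout is $G/\ll N\rr_G=\bg$ (indeed $G\to G/\ll N\rr_G$ and $H/N\to G/\ll N\rr_G$ satisfy the universal property). For (b) your description of the preimages is right, and the key combinatorial fact — that the index set $\bg/\bh$ parametrizing the components of $p^{-1}(BH)$ and $p^{-1}(B\bh)$ is canonically identified with the transversal $T$ parametrizing the free factors of $\ll N\rr_G$, with matching inclusions on $\pi_1$ — is correct: both are $G/H\ll N\rr_G$, and the component $C_t$ includes into $p^{-1}(BG)$ inducing the free-factor inclusion $tNt^{-1}\hookrightarrow\ll N\rr_G$.

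Two remarks on rigor. First, the assertion $H\cap\ll N\rr_G=N$, which you invoke to get $\bh\hookrightarrow\bg$, is stated without justification; it does follow from the Cohen--Lyndon decomposition but deserves a line. Taking $1\in T$, $N$ is a free factor of $\ll N\rr_G$; if $h\in H\cap\ll N\rr_G$ then $hNh^{-1}=N$ (since $N\lhd H$), and malnormality of free factors of a free product then forces $h\in N$ (the case $N=\{1\}$ being trivial). Second, the ``wedge of contractible CW complexes'' conclusion is right in spirit but slightly loose, since the attached contractible pieces are glued along the subspaces $C_t\simeq BN$, not literally along the wedge summands $BN_t$. The cleanest way to close your acknowledged gap is homological rather than via Bass--Serre trees: Mayer--Vietoris for $\widetilde X=p^{-1}(M_\phi)\cup p^{-1}(M_\psi)$ with intersection $p^{-1}(BH)$ has the maps $\bigoplus_T H_*(N)\to\bigoplus_T H_*(N)\oplus 0$ being isomorphisms in each positive degree (precisely because $C_t\to p^{-1}(BG)$ induces the split inclusion of the $t$-th direct summand), giving $\widetilde H_*(\widetilde X)=0$; combined with simple connectivity from (a), Hurewicz and Whitehead give contractibility. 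This avoids having to organize the gluing at the CW level.
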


We will only be concerned with an especially easy case.

\begin{corollary}
\label{cor. classifying space}
    Let $(G, H, H)$ be a Cohen--Lyndon triple such that $H \cong \Z$. Then a $K(\bg, 1)$ can be obtained from a $K(G, 1)$ by attaching a 2-cell along a generator of $H$.
\end{corollary}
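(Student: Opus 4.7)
The plan is to apply Theorem \ref{thm. df space} directly, with $N = H$. In this setting $\bh = H/N$ is trivial, so we may choose $B\bh$ to be a single point. Since $H \cong \Z$, we may take $BH = S^1$, the standard circle with one $0$-cell and one $1$-cell.

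Next I would unpack the two mapping cylinders appearing in the theorem. The map $\phi\colon BH = S^1 \to BG$ induced by the inclusion $H \hookrightarrow G$ is, up to homotopy, a loop representing a chosen generator of $H$ inside $G$. Its mapping cylinder $M_\phi$ deformation retracts onto $BG$, and we may assume (after a cellular approximation) that $\phi$ sends the basepoint of $S^1$ to a $0$-cell of $BG$ and that its image is a subcomplex. The map $\psi\colon BH = S^1 \to B\bh = \mathrm{pt}$ is constant, so its mapping cylinder $M_\psi$ is the cone on $S^1$, which is homeomorphic to the $2$-disk $D^2$ with boundary the chosen $S^1$.

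Gluing $M_\phi$ and $M_\psi$ along their common subcomplex $BH = S^1$ therefore produces a space homotopy equivalent to $BG$ with a single $2$-cell attached along $\phi$, i.e.\ along a loop representing a generator of $H$. By Theorem \ref{thm. df space} this space is a $K(\bg, 1)$, which is exactly the claim.

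There is no real obstacle here; the only thing to check is that the elementary identification of $M_\psi$ with a $2$-cell, and of $M_\phi$ with $BG$ up to homotopy, interact nicely along the common subcomplex $BH$. This is a standard cellular verification and requires no new ideas beyond a careful choice of cell structures on $BH$ and on the image of $\phi$.
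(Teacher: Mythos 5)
Your proposal is correct and takes exactly the same approach as the paper's proof: apply Theorem \ref{thm. df space} with $BH$ a circle and $B\bh$ a point, observe that $M_\phi$ deformation retracts onto $BG$ while $M_\psi$ is a disk with boundary $BH$, and conclude that the glued complex is homotopy equivalent to $BG$ with a $2$-cell attached along a generator of $H$.
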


\begin{proof}
    We apply Theorem \ref{thm. df space}. Let $BG$ be a $K(G, 1)$, let $BH$ be a circle (a $K(H, 1)$) and $B\bh$ a point (a $K(\bh, 1)$). Then $\phi \colon BH \to BG$ is the loop representing the generator of $H$ in $G$, so $M_\phi$ deformation retracts onto $BG$. Moreover $\psi \colon BH \to B \bh$ is the unique map from a circle to a point, and therefore $M_\psi$ is a disk with boundary $BH$. Gluing $M_\phi$ and $M_\psi$ along their common subcomplex $BH$ gives a complex $X$ that is homotopy equivalent to the complex obtained by gluing a disk along the loop representing the generator of $H$ in $G$.
\end{proof}

\section{A small cancellation theorem}
\label{sec:inductive}

In this section we prove a small cancellation theorem, which allows us to take quotients imposing strong conditions and preserving many useful properties, in particular homological ones. This is achieved by ensuring that the relators we add are both Cohen--Lyndon and satisfy the $W(\xi, \sigma)$ small cancellation condition. We will use Theorem \ref{thm. inductive step} for the inductive step of all of our constructions.

\begin{theorem}
\label{thm. inductive step}
    Suppose that $G$ is a group with a hyperbolically embedded finite family of proper subgroups $\{H_i\}^s_{i=1}\hookrightarrow_h G$. Let $X\subset G$ be a subset such that $\{H_i\}^s_{i=1}\hookrightarrow_h(G,X)$ and the action $G\curvearrowright \Gamma(G,X\sqcup (\bigsqcup^s_{i=1}H_i))$ is acylindrical. Let $K,K'\leq G$ be suitable subgroups with respect to the action $G\curvearrowright \Gamma(G,X\sqcup (\bigsqcup^s_{i=1}H_i))$, let $g_1,\dots,g_N\in G$, and let $r\geq 1$. Then there exists a quotient $\pi\colon G\to \bg$ with the following properties.
    \begin{enumerate}[label=(\roman*)]
    \item\label{item. hyperbolicity} $\{\pi(H_i)\}^s_{i=1}\hookrightarrow_h(\bg,\bx)$. In particular, if $(G,\{H_i\}^s_{i=1})$ is a relatively hyperbolic pair, then so is $(\bg,\{\pi(H_i)\}^s_{i=1})$.
    \item\label{item. torsion element} For each $\bar g\in \bg$ with finite order, there exists $g\in G$ with finite order such that $\bar g=\pi(g)$. In particular, if $G$ is torsion-free then so is $\bg$.
    \item\label{item. injectivity} $\pi$ maps $B_{X\sqcup (\bigsqcup^s_{i=1}H_i)}(r)$ injectively onto $B_{\bx\sqcup (\bigsqcup^s_{i=1}\pi(H_i))}(r)$.
    \item\label{item. centralizer} For every $g\in G$ with $|g|_{X\sqcup (\bigsqcup^s_{i=1}H_i)}\leq r$, it holds $C_{\bg}(\pi(g))=\pi(C_G(g))$.
    \item\label{item. K absorb} $\pi(g_n) \in \pi(K)$ for $n = 1, \ldots, N$.
    \item\label{item. K non-elementary} There exists $\overline X\subset \bg$ such that $\{\pi(H_i)\}^s_{i=1}\hookrightarrow_h(\bg,\overline X)$; the action $G\curvearrowright \Gamma(\bg, \overline X\sqcup (\bigsqcup^s_{i=1}\pi(H_i)))$ is acylindrical; and $\pi(K)$ and $ \pi(K')$ are suitable with respect to this action. If $|X|<\infty$, then we can take $\overline X=\pi(X)$.
    \item\label{item. gd} A $K(\bg,1)$ can be obtained from a $K(G,1)$ by attaching $n$ 2-cells, and therefore $\gd(\bg)\leq \max\{2,\gd(G)\}$.
    \item\label{item. homology} For all $\bg$-modules $A$ and all $j \geq 3$ the induced maps $H_j(G; A) \to H_j(\bg; A)$ and $H^j(\bg;A)\to H^j(G;A)$ are isomorphisms; $H_2(G; A) \to H_2(\bg;A)$ is injective; and $H^2(\bg; A) \to H^2(G; A)$ is surjective.
    \end{enumerate}
\end{theorem}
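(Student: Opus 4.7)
We set $\bar G \coloneqq G/\ll \W\rr$, where $\W = \{W_1,\dots,W_N\}\subset \A^\ast$ is a symmetric family of words, each of the form $W_n\equiv g_n\cdot S_n$ with $S_n$ a word in loxodromic elements of $K$. (After possibly enlarging $X$ by the finite set $\{g_n^{\pm 1}\}$ via Lemma \ref{lem. change relative generating set}, we may treat each $g_n$ as a single alphabet letter.) Property \ref{item. K absorb} is then immediate, since $\pi(W_n)=1$ forces $\pi(g_n)=\pi(S_n^{-1})\in \pi(K)$. We will arrange $\W$ to satisfy the $W(\xi,\sigma)$ small cancellation condition, so that Lemma \ref{lem. sc} yields properties \ref{item. hyperbolicity}--\ref{item. centralizer}. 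We will also arrange that each element $r_n\in G$ represented by $W_n$ generates an infinite cyclic subgroup for which $(G,\langle r_n\rangle,\langle r_n\rangle)$ is a Cohen--Lyndon triple; then Corollary \ref{cor. classifying space}, iterated along intermediate quotients via Lemma \ref{lem. cl transitive}, yields \ref{item. gd}, and \ref{item. homology} follows from Lemma \ref{lem. compute cohomology}. Finally, \ref{item. K non-elementary} is addressed by exhibiting independent loxodromic elements of $\pi(K),\pi(K')$ surviving the quotient (using \ref{item. injectivity} and \ref{item. centralizer}, plus Theorem \ref{thm. acylindrical action} to upgrade to an acylindrical action when $X$ is infinite).

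\textbf{Building the relators.} Using the suitability of $K$ and Theorem \ref{thm. criterion for he}, first enlarge the hyperbolically embedded family by the elementary closures $E(a),E(b)$ of two loxodromic elements $a,b\in K$ chosen with $E(a)\cap E(b)=\{1\}$; reserve further independent loxodromics in both $K$ and $K'$ to witness suitability of $\pi(K),\pi(K')$ downstream. Then, for each $n$, let
\[
W_n \equiv g_n\cdot a^{p_n^1}b^{q_n^1}\cdots a^{p_n^L}b^{q_n^L},
\]
with length $L$ and exponents $p_n^k,q_n^k$ large and pairwise distinct across all pairs $(n,k)$. Large exponents give (SC2) and (SC4); distinctness of letters in $\H$ gives (SC3); and $E(a)\cap E(b)=\{1\}$ gives (SC1). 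Thus $\W$ satisfies $W(\xi,\sigma)$ for any prescribed constants, provided $L$ and the exponents are taken large enough.

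\textbf{Cohen--Lyndon and the classifying space.} A further application of Theorem \ref{thm. criterion for he}, with condition (C3) verified from the explicit syllable structure of $W_n$ and the independence of $E(a), E(b)$, shows that the family $\{\langle r_n\rangle\}_n$ can be adjoined to the hyperbolically embedded collection. Theorem \ref{thm. sun cl} then supplies a finite set $F_n\subset \langle r_n\rangle\setminus\{1\}$ whose avoidance yields Cohen--Lyndon. Since the elements of $F_n$ have bounded relative length, we can force $F_n=\emptyset$ by making $\widehat d(1,r_n)$ sufficiently large---again achieved by taking $L$ and the exponents large. This gives $(G,\langle r_n\rangle,\langle r_n\rangle)$ Cohen--Lyndon. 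We then build $\bar G$ inductively as $G_n\coloneqq G_{n-1}/\ll W_n\rr$; the hyperbolic embedding and the Cohen--Lyndon thresholds persist at each step by Lemma \ref{lem. sc}, and each step attaches a single 2-cell to the classifying space by Corollary \ref{cor. classifying space}. Gluing these Cohen--Lyndon triples through the induction via Lemma \ref{lem. cl transitive} yields \ref{item. gd} after $N$ steps, and \ref{item. homology} then follows from Lemma \ref{lem. compute cohomology}.

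\textbf{Main obstacle.} The delicate point is the simultaneous bookkeeping of interdependent lower bounds on $L$ and on the exponents: (a) $W(\xi,\sigma)$ at the level dictated by Lemma \ref{lem. sc} for the given $r$; (b) the joint hyperbolic embedding of $\{\langle r_n\rangle\}$ via (C1)--(C3) of Theorem \ref{thm. criterion for he}; (c) the vanishing $F_n=\emptyset$ required to kill $\langle r_n\rangle$ outright with Cohen--Lyndon; and (d) the preservation of suitability of $\pi(K),\pi(K')$. All these thresholds depend on the ambient hyperbolic geometry and, in the inductive construction, on the parameters of earlier quotients. The most technical verifications are (C3) for the cyclic family $\{\langle r_n\rangle\}$ and the fact that $\widehat d(1,r_n)$ can be driven to infinity in $L$; both reduce to standard estimates on geodesic polygons in the relative Cayley graph via Proposition \ref{prop. total length of i.c. in g.p.} and Lemma \ref{lem. consecutive components}, combined with the prescribed syllable structure of $W_n$.
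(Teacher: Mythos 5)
Your outline correctly identifies the skeleton of the argument: treat the $g_n$ as letters, build relators $W_n$ that satisfy the $W(\xi,\sigma)$ condition so that Lemma~\ref{lem. sc} yields items~\ref{item. hyperbolicity}--\ref{item. centralizer}; arrange the Cohen--Lyndon property so that Corollary~\ref{cor. classifying space} and Lemma~\ref{lem. compute cohomology} give items~\ref{item. gd}--\ref{item. homology}; and check survival of suitability for item~\ref{item. K non-elementary}. However, there is a genuine gap precisely at the Cohen--Lyndon step, which is the crux that makes this theorem non-routine (and is flagged as such in the introduction).

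You propose to adjoin $\langle r_n\rangle$ to the hyperbolically embedded collection and then apply Theorem~\ref{thm. sun cl} with $H=N=\langle r_n\rangle$, claiming that the finite set $F_n\subset\langle r_n\rangle\setminus\{1\}$ can be forced to be empty by making $\widehat d(1,r_n)$ large. This is circular. The set $F_n$ provided by Theorem~\ref{thm. sun cl} depends on the hyperbolic embedding $\langle r_n\rangle\hookrightarrow_h G$ --- in particular on the geometry of the Cayley graph $\Gamma\bigl(G,\,X\sqcup\H\sqcup\langle r_n\rangle\bigr)$ --- and this hyperbolic embedding changes with $r_n$. When you lengthen $r_n$ to push $\widehat d(1,r_n)$ up, you simultaneously change the subgroup whose relative geometry determines the threshold below which elements must be avoided; there is no uniform control showing that the threshold stays bounded. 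Since $N=H$ forces $N\cap F_n=F_n$, the theorem applies only if $F_n=\emptyset$, and you have not established that. This is exactly the obstruction the paper circumvents, and it is not a bookkeeping nuisance but a structural one.

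The paper's resolution is to hyperbolically embed a \emph{free group} $F_8$ (generated by products $h_m$ of long powers of independent loxodromics, with one $g_n$ slipped into $h_8$) rather than a cyclic group. Now Theorem~\ref{thm. sun cl} is applied with $H=F_8$ and $N=\ll R\rr_{F_8}$ a \emph{proper} normal subgroup of the free group; the finite set $F\subset F_8$ is fixed once $F_8\hookrightarrow_h G$ is fixed (i.e.\ once $u$ is chosen), and making the relator $R$ longer inside $F_8$ makes $\ll R\rr_{F_8}$ avoid $F$, which is a legitimate Dehn filling move. The crucial second ingredient, absent from your proposal, is that $R$ is chosen so that $\langle R\rangle$ is a \emph{free factor} of $F_8$ (Nielsen-transform $\{h_1,\dots,h_7,R\}$ into a basis). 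For a free factor, $(F_8,\langle R\rangle,\langle R\rangle)$ is Cohen--Lyndon automatically, and then Lemma~\ref{lem. cl transitive} yields $(G,\langle R\rangle,\langle R\rangle)$ Cohen--Lyndon. Without this free-group intermediary and the free-factor observation, the Cohen--Lyndon step does not go through, and items~\ref{item. gd} and~\ref{item. homology} remain unproved.
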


This entire section is devoted to the proof of Theorem \ref{thm. inductive step}. We will prove the theorem for $n=1$; the general case follows by induction. Let $X_1=X\cup\{g_1,g^{-1}_1\}$. Note that we have $\{H_i\}^s_{i=1}\hookrightarrow_h(G,X_1)$ by \Cref{lem. change relative generating set} and the action $G\curvearrowright\Gamma(G,X_1\sqcup\{H_i\}^s_{i=1})$ is acylindrical by \cite[Proposition 4.5]{abbott2017hyperbolic}.

By \cite[Lemma 5.6]{hull2013small}, the suitable subgroup $K$ contains pairwise non-commensurable loxodromics $k_q:q=1,\dots, 25$ such that $E(k_q)=\langle k_q \rangle$. At least twenty four of these elements, say $k_1,\dots,k_{24}$, are not commensurable with $g_1$. Similarly, we can find non-commensurable loxodromics $k'_1,k'_2\in K'$, each of which is not commensurable with any one of $g_1,k_1,\dots, k_{24}$, such that $E(k'_1)=\langle k'_1 \rangle, E(k'_2)=\langle k'_2 \rangle$.

\begin{lemma}
We have a hyperbolic embedding
    \begin{equation}\label{eq. he 1}
        \{H_1,\dots,H_s,\langle k_1 \rangle,\dots,\langle k_{24}\rangle, \langle k'_1\rangle ,\langle k'_2\rangle\}\hookrightarrow_h(G,X_1).
    \end{equation}
\end{lemma}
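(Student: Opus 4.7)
The plan is to apply Theorem \ref{thm. criterion for he} to the existing hyperbolic embedding $\{H_i\}_{i=1}^s \hookrightarrow_h (G, X_1)$, enlarging the family by the $26$ cyclic subgroups $Q_q = \langle k_q \rangle$ for $q = 1, \ldots, 24$ and $Q_{24+j} = \langle k'_j \rangle$ for $j = 1, 2$. Thus I need to verify the three conditions (C1), (C2), (C3) for this new family. Note that the ambient hyperbolic embedding with respect to $X_1$ is in place (via Lemma \ref{lem. change relative generating set}) and the action of $G$ on $\Gamma(G, X_1 \sqcup \mathcal{H})$ is acylindrical, so we may freely use the geometry of this Cayley graph and the fact that each $k_q$ and $k'_j$ is a loxodromic element for this action.

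Condition (C1) is immediate since each $Q_i$ is cyclic, with finite generating set $\{k_q\}$ or $\{k'_j\}$. For (C2), a loxodromic element $k$ of an acylindrical action on a hyperbolic graph has positive stable translation length, and so the orbit map $n \mapsto k^n$ is a quasi-isometric embedding $\mathbb{Z} \hookrightarrow \Gamma(G, X_1\sqcup\mathcal{H})$. Applying this to each of the finitely many elements $k_1,\ldots,k_{24},k'_1,k'_2$ and taking uniform constants gives the required inequality $|h|_{Y_i} \leq \mu |h|_{\A_1} + c$ for all $h \in Q_i$, where $\A_1 = X_1 \sqcup \mathcal{H}$.

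The crux is (C3), the bounded coset intersection property. Fix $\eps > 0$; I must find $R > 0$ so that $\diam(Q_i \cap (g Q_j)^{+\eps}) \geq R$ forces $i = j$ and $g \in Q_i$. The contrapositive splits into two cases. First, if $i = j$ but $g \notin Q_i$, then because $E(k_q) = \langle k_q \rangle$ (and similarly for the $k'_j$), the element $g$ lies outside the elementary closure of the generator, so the cosets $Q_i$ and $gQ_i$ are ``independent'' quasi-axes in the sense of Section 6 of \cite{dahmani2017hyperbolically}; standard results on loxodromic cyclic subgroups in acylindrical actions then give that such cosets have uniformly bounded $\eps$-neighborhood intersection. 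Second, if $i \neq j$, then the two generators are pairwise non-commensurable loxodromic elements (by the choice made immediately before the lemma), hence their quasi-axes have disjoint pairs of limit points in the Gromov boundary of $\Gamma(G, \A_1)$, which again forces bounded intersection of any $\eps$-neighborhoods.

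The main obstacle is the verification of (C3), which is where the careful choice of the $k_q$ and $k'_j$ becomes essential: without the assumptions $E(k_q) = \langle k_q \rangle$, $E(k'_j) = \langle k'_j \rangle$, and pairwise non-commensurability, the relevant cosets could share arbitrarily long fellow-traveling portions. Once (C3) is established from the geometry of loxodromics under acylindricity, Theorem \ref{thm. criterion for he} yields the hyperbolic embedding \eqref{eq. he 1} in a single stroke.
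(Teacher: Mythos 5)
Your proof takes essentially the same approach as the paper: apply Theorem \ref{thm. criterion for he} to the existing hyperbolic embedding $\{H_i\}_{i=1}^s \hookrightarrow_h (G, X_1)$ and verify (C1)--(C3) for the $26$ cyclic subgroups, with the crux being (C3) and its reliance on the choices $E(k_q)=\langle k_q\rangle$, $E(k'_j)=\langle k'_j\rangle$, and pairwise non-commensurability. The paper disposes of (C3) by pointing directly to the verification carried out in the proof of \cite[Theorem 6.8]{dahmani2017hyperbolically}, whereas you sketch the underlying geometric argument (acylindricity plus independence of quasi-axes); your sketch is a correct account of what that reference does.
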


\begin{proof}
    Consider the hyperbolic embedding $\{H_i\}^s_{i=1}\hookrightarrow_h(G,X_1)$
    and the pairwise non-commensurable loxodromics $k_1,\dots,k_{24},k'_1,k'_2$. In the proof of \cite[Theorem 6.8]{dahmani2017hyperbolically}, it is verified that the family $\{\langle k_1 \rangle,\dots,\langle k_{24}\rangle, \langle k'_1\rangle ,\langle k'_2\rangle\}$ satisfies condition (C3) of \Cref{thm. criterion for he}. As this family clearly satisfies (C1) and (C2), \Cref{thm. criterion for he} provides the desired result.
\end{proof}

For some positive integer $u$ that will be specified later, let

\begin{equation}\label{eq. g1...g7}
    h_m:=k^u_{3m-2}k^u_{3m-1}k^u_{3m},\;\; m=1,\dots,7,
\end{equation}
and let

\begin{equation}\label{eq. g8}
    h_8:=k^u_{22}k^u_{23}g_1k^u_{24}.
\end{equation}

\begin{lemma}\label{lem. relative hyperbolic}
There exists $U>0$ such that if $u>U$, then the set $\{h_1,\dots,h_8\}$ freely generates a free group $F_8$ in $G$ and 

\begin{equation}\label{eq. 1}
    \{F_8,H_1,\dots,H_s,\langle k_1 \rangle,\dots,\langle k_{24}\rangle, \langle k'_1\rangle ,\langle k'_2\rangle \}\hookrightarrow (G,X_1).
\end{equation}
\end{lemma}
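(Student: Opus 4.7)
The plan is to apply Theorem \ref{thm. criterion for he} to the hyperbolic embedding \eqref{eq. he 1}, adjoining the single new subgroup $F_8$ with finite generating set $Y = \{h_m^{\pm 1}\}_{m=1}^{8}$. Set $\A_1 = X_1 \sqcup \H_1$, where $\H_1$ is the disjoint union of all peripheral subgroups appearing in \eqref{eq. he 1}, and let $D$ be the constant from Proposition \ref{prop. total length of i.c. in g.p.} applied to \eqref{eq. he 1}. Since each cyclic subgroup $\langle k_j\rangle$ is hyperbolically embedded, the relative metric $\widehat{d}_j$ is proper, so $\widehat{d}_j(1, k_j^u)\to\infty$ as $u\to\infty$. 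The first step is to fix $U$ large enough that $\widehat{d}_j(1, k_j^u) > 50D$ for every $j = 1,\ldots,24$ whenever $u > U$.

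The second step is to verify that, for such $u$ and for any reduced word $W = h_{i_1}^{\eps_1}\cdots h_{i_n}^{\eps_n}$ over $Y$, the expansion of $W$ as a word over $\A_1$ via \eqref{eq. g1...g7}--\eqref{eq. g8} satisfies conditions (W1)--(W3) of Lemma \ref{lem. consecutive components}. Condition (W1) holds because the only $X_1$-letter occurring is $g_1^{\pm 1}$ from $h_8^{\pm 1}$, and by construction it is always flanked on both sides by letters from $\langle k_{23}\rangle$ and $\langle k_{24}\rangle$. Condition (W2) holds by the choice of $U$, since every $\H_1$-letter is of the form $k_j^{\pm u}$. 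For (W3), within a single syllable $h_m^{\pm 1}$ the indices $3m-2, 3m-1, 3m$ (respectively $22, 23, 24$ when $m=8$) are pairwise distinct; at a junction $h_{i_j}^{\eps_j}h_{i_{j+1}}^{\eps_{j+1}}$ reducedness of $W$ excludes $(i_j,\eps_j) = (i_{j+1},-\eps_{j+1})$, and a direct inspection of \eqref{eq. g1...g7}--\eqref{eq. g8} shows that the index sets of the trailing and leading $\H_1$-letters at any such junction are disjoint. Part (a) of Lemma \ref{lem. consecutive components} now gives that each such $W$ labels a $(4,1)$-quasigeodesic in $\Gamma(G, \A_1)$; in particular, $W$ represents a non-trivial element whenever $n \geq 1$, proving that $h_1,\ldots,h_8$ freely generate the desired free group $F_8 \leq G$.

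The final step is to verify the three conditions of Theorem \ref{thm. criterion for he} for $Q_1 = F_8$ with generating set $Y_1 = Y$. Condition (C1) is immediate. Condition (C2) is a direct consequence of the previous paragraph: the $\A_1$-expansion of a reduced $Y$-word of length $n$ has at most $4n$ syllables and labels a $(4,1)$-quasigeodesic, so one obtains a linear bound $|h|_Y \leq \mu |h|_{\A_1} + c$ for suitable constants. The main obstacle is condition (C3). I plan to establish it using part (c) of Lemma \ref{lem. consecutive components}: if $\diam(F_8 \cap (gF_8)^{+\eps}) \geq R$ for the threshold $R$ that the lemma associates to $\eps$, one can find $f_1, f_2 \in F_8$ far apart and $h_1, h_2 \in F_8$ with $d_{\A_1}(f_i, gh_i) \leq \eps$. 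The two reduced-$Y$-word paths, from $f_1$ to $f_2$ and from $gh_1$ to $gh_2$, are $(4,1)$-quasigeodesics that fellow-travel $\eps$-closely at both endpoints, so part (c) forces five consecutive components of the first to be connected to five consecutive components of the second. Since a component of an $F_8$-path is a specific $k_j^{\pm u}$ (or a $g_1^{\pm 1}$-edge) determined by the reduced $Y$-word, connectedness forces matching peripherals along five consecutive letters; tracking through this alignment, which recovers the same $Y$-subword on each side, yields $g \in F_8$. This establishes (C3), and therefore \eqref{eq. 1}.
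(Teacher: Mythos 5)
Your overall strategy matches the paper's: fix $U$ so that the powers $k_j^u$ have large relative length, check that $\theta$-expansions of reduced $Y$-words satisfy (W1)--(W3), and verify (C1)--(C3) of Theorem \ref{thm. criterion for he} with (C3) via Lemma \ref{lem. consecutive components}(c). But the verification of (C3), which is the heart of the lemma, is not actually carried out in your proposal, and the omitted steps are exactly where the difficulty lies.

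First, a smaller issue in the freeness step: the $(4,1)$-quasigeodesic property from Lemma \ref{lem. consecutive components}(a) only forces $\theta(W) \neq 1$ once the $\A$-length of $\theta(W)$ exceeds the quasigeodesic closed-path bound. For a single syllable $W = h_m^{\pm 1}$ the expansion has $\A$-length only $3$ or $4$, which a $(4,1)$-quasigeodesic closed loop does not exclude. The paper instead invokes Lemma \ref{lem. consecutive components}(b) (all components isolated) together with Proposition \ref{prop. total length of i.c. in g.p.} to get a contradiction $3\|W\|\cdot 50D < 4\|W\|D$, which works uniformly, including for $n = 1$.

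The more serious gap is in (C3). You write that connectedness ``forces matching peripherals along five consecutive letters; tracking through this alignment, which recovers the same $Y$-subword on each side, yields $g\in F_8$.'' This glosses over the two technical obstructions that the paper spends most of the proof resolving. (1) Lemma \ref{lem. consecutive components}(c) allows the consecutive components $a_i$ of $p$ and $b_i$ of $p'$ to be separated by single $X$-letters $x_i$, $y_i$; since $g_1^{\pm 1}\in X_1$ appears inside $\theta(h_8^{\pm 1})$, the two paths could \emph{a priori} be misaligned by such an edge, and this must be excluded (Claim \ref{cl. not separated} in the paper). (2) Even once the components match up, one must argue that the prefix of $p'$ ending at the boundary of $c'_1$ is a genuine $\theta(W')$-prefix, i.e., it ends at a syllable boundary (Claim \ref{cl. p1 c1} in the paper); otherwise one cannot write $g$ as a product of two $F_8$-elements. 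Moreover, the key algebraic step is not ``recovering the same $Y$-subword'' but rather: reducedness of the $Y$-words gives $q_1\neq q_2$ for the peripherals of two consecutive matched components, so $(c_1^+)^{-1}(c_1')^+\in\langle k_{q_1}\rangle\cap\langle k_{q_2}\rangle = \{1\}$, hence $c_1^+=(c_1')^+$; combined with the two claims above, the prefixes of $p$ and $p'$ terminate at a common vertex, giving $g\in F_8$. Also note that a $g_1^{\pm 1}$-edge is an $X_1$-edge, not a component, so ``a component of an $F_8$-path is a specific $k_j^{\pm u}$ (or a $g_1^{\pm 1}$-edge)'' is not quite right.

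In short, your proposal identifies the right ingredients, but the hard part of the paper's proof (excluding the $X$-edge misalignment and forcing the matched components to produce a shared vertex) is replaced by a one-sentence assertion, and as stated that assertion does not go through without the intermediate claims.
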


\begin{proof}
    For $q=1,\dots, 24$, let $\widehat{d}_q\colon \langle k_q\rangle \times \langle k_q\rangle\rightarrow [0,\infty]$ be the relative metric corresponding to the hyperbolic embedding \eqref{eq. he 1}. Recall that these metrics are locally finite. So there exists $U>0$ such that for all $u>U$ we have
    
    \[\widehat{d}_q(1,k^u_q)>50D\]
    for $q=1,\dots,24$, where $D$ is given by Proposition \ref{prop. total length of i.c. in g.p.}. Let

    \[\B:=X_1\sqcup \langle k'_1\rangle\sqcup \langle k'_2\rangle \sqcup \left(\bigsqcup^s_{i=1}H_i\right) \sqcup \left(\bigsqcup^{24}_{q=1}\langle k_q\rangle\right).\]
    Below, we will think of $h_1,\dots,h_8$ as words over the alphabet $\B$.
    
    Let $\W$ be the set of reduced words over the alphabet $\{h^{\pm 1}_1,\dots,h^{\pm 1}_8\}$. For each $W\in\W$, by substituting each letter in $W$ by a word over the alphabet $\B$ using \eqref{eq. g1...g7} and \eqref{eq. g8}, we obtain a word $\theta(W)$. Let 
    
    \[\V=\{\theta(W)\mid W\in \W\}.\]
    Note that each word of $\V$ represents an element of $F_8$ and satisfies (W1), (W2) and (W3) of \Cref{lem. consecutive components} with respect to the hyperbolic embedding \eqref{eq. he 1}.

    \begin{claim}
       The group $F_8$ is free on basis $\{h_1,\dots,h_8\}$ for $u>U$.
    \end{claim}

    \begin{proof}[Proof of the claim]
        Let $W$ be any non-empty word in $\W$. The word $\theta(W)$ labels a path $p$ in $\Gamma(G,\B)$. Suppose that $W=1$ in $G$. Then $p$ labels a geodesic polygon with at most $4\|W\|$ sides and $3\|W\|$ components. By \Cref{lem. consecutive components}, each of these components is isolated. Then \Cref{prop. total length of i.c. in g.p.} implies

        \[3\|W\|\cdot (50D)<4\|W\|D,\]
        which is absurd. So $W\neq 1$ in $G$.
    \end{proof}

    We will apply \Cref{thm. criterion for he} to $F_8$ with respect to the Cayley graph $\Gamma(G,\B)$. The group $F_8$ obviously satisfies (C1). Let us verify (C2). For any $x\in F_8$, the path between $1$ and $x$ is labeled by a word in $\V$. By \Cref{lem. consecutive components}, this path is a $(4,1)$-quasi-geodesic, which implies (C2) for $F_8$.

    \medskip

    The rest of the proof of Lemma \ref{lem. relative hyperbolic} is devoted to verifying (C3). Fix $\eps>0$ and $g\in G$. Let $R>0$ be the constant given by \Cref{lem. consecutive components} and suppose

    \[\diam(F_8\cap (gF_8)^{+\eps})\geq R.\]

    Recall that we use $p^{\pm}$ to denote the initial and terminal vertex of  a combinatorial path $p$. The previous equation gives oriented $\eps$-close paths $p,p'\in \Gamma(G,\B)$ with $p^- = 1, (p')^- = g$ such that $\ell(p)\geq R$ and $\lab(p),\lab(p')\in \V$. By \Cref{lem. consecutive components}, there exist five consecutive components of $p$ which are respectively connected to five consecutive components of $p'$. In particular, there exist consecutive components $c_1,c_2$ of $p$ and consecutive components $c'_1,c'_2$ of $p'$ such that 
    
    \begin{enumerate}[label=(\Alph*)]
        \item $c_i$ is connected to $c'_i$ for $i=1,2$; and
        \item\label{item. terminal letter} $\lab(p_1)\in \V$, where $p_1$ is the initial subpath of $p$ such that $p^+_1=c^+_1$.
    \end{enumerate}

    Let $p_2$ be the minimal initial subpath of $p$ such that $p_2$ properly contains $p_1$ and $\lab(p_2)\in \V$. There exist words $W_1,W_2\in \W$ such that
    \[\lab(p_1)=\theta(W_1),\;\;\lab(p_2)=\theta(W_2).\]
    We can write $W_1,W_2$ as
    \[W_1=h^{\eta_1}_{m_1}\dots h^{\eta_v}_{m_v},\;\; W_2=W_1h^{\eta_{v+1}}_{m_{v+1}},\]
    with $m_1,\dots,m_{v+1}\in\{1,\dots,8\},$ and $\eta_1,\dots,\eta_{v+1}\in\{1,-1\}$. There exist $q_1,q_2\in\{1,\dots,24\}$ such that 
    \[\lab(c_1)\in \langle k_{q_1} \rangle,\;\; \lab(c_2)\in\langle k_{q_2} \rangle.\]
    As $c_1,c_2$ are consecutive components of $p$, by combining \eqref{eq. g1...g7}, \eqref{eq. g8} and Item \ref{item. terminal letter}, we obtain:
    \begin{enumerate}[label=(\Alph*)]
    \setcounter{enumi}{2}
        \item\label{item. c1 c2} $k_{q_1}$ (resp. $k_{q_2}$) is either the initial or terminal letter of $h_{m_v}$ (resp. $h_{m_{v+1}}$). Moreover, $(c_1)^+=(c_2)^-$. As $W_2$ is a reduced word, we also have $q_1\neq q_2$.
    \end{enumerate}
    Whether it is the initial or terminal letter depends on the exponents $\eta_v, \eta_{v+1} = \pm 1$. Let us stress that Lemma \ref{lem. consecutive components} only guarantees that $c_1$ and $c_2$ are consecutive in the sense that they may be separated only by a word over $X$, but the choice of our words prevents letters in $X$ from separating $c_1$ and $c_2$.
    
    Let $p''$ be the initial subpath of $p'$ such that $(p'')^+=(c'_1)^+$, let $p'_1$ be the maximal initial subpath of $p''$ such that $\lab(p'_1)\in \V$, and let $p'_2$ be the minimal initial subpath of $p'$ such that $p'_2$ properly contains $p'_1$ and $\lab(p'_2)\in \V$. Again, there exist words $W'_1,W'_2\in \W$ such that 
    \[\lab(p'_1)=\theta(W'_1),\;\; \lab(p'_2)=\theta(W'_2).\]
    We can write $W'_1,W'_2$ as

    \[W'_1=h^{\eta'_1}_{m'_1}\dots h^{\eta'_w}_{m'_w},\;\; W'_2=W'_1h^{\eta'_{w+1}}_{m'_{w+1}},\]
    with $m'_1,\dots,m'_{w+1}\in\{1,\dots,8\},$ and $\eta'_1,\dots,\eta'_{w+1}\in\{1,-1\}$. As $c_1,c_2$ and $c'_1,c'_2$ are respectively connected, we have

    \[\lab(c'_1)\in \langle k_{q_1} \rangle,\;\; \lab(c'_2)\in\langle k_{q_2} \rangle.\]

    \begin{claim}\label{cl. p1 c1}
        $(p'_1)^+=(c'_1)^+$.
    \end{claim}

    \begin{proof}[Proof of the claim]
        Suppose that $(p'_1)^+\neq (c'_1)^+$. Note that we also have $(p'_2)^+\neq (c'_1)^+$, as otherwise $p'_2$ would be the maximal initial subpath of $p''$ with $\lab(p_2)\in \V$ and thus would equal $p'_1$, contradiction. Similarly, one can prove that $p'_2$ must contain $c'_1$.

        Therefore, $k_{q_1}$ is a letter of $h_{m'_{w+1}}$, and from Item \ref{item. c1 c2} and Equations \eqref{eq. g1...g7} and \eqref{eq. g8}, we see that $k_{q_1}$ must be either the first or the last letter of $h_{m'_{w+1}}$.

        As $c'_1$ and $c'_2$ are consecutive components, the path $p'_2$ contains $c'_1$, and $(p'_2)^+\neq (c'_1)^+$, from Equations \eqref{eq. g1...g7} and \eqref{eq. g8} we get that $p'_2$ must also contain $c'_2$. So $k_{q_2}$ is also a letter of $h_{m'_{w+1}}$. Using equations \eqref{eq. g1...g7} and \eqref{eq. g8} once again, we see that $k_{q_2}$ cannot be the first or last letter of $h_{m'_{w+1}}$, which contradicts Item \ref{item. c1 c2}.
    \end{proof}

    \begin{claim}\label{cl. not separated}
        $(c'_1)^+=(c'_2)^-$.
    \end{claim}

    \begin{proof}[Proof of the claim]
        Suppose $(c'_1)^+\neq (c'_2)^-$. As $c'_1$ and $c'_2$ are consecutive components, there is an edge $p'_3$ of $p'$ such that $(p'_3)^-=(c'_1)^+, (p'_3)^+=(c'_2)^-$ and $\lab(p'_3)\in X_1$. From \eqref{eq. g1...g7} and \eqref{eq. g8} we get that $\lab(p'_3)$ is either $g_1$ or $g^{-1}_1$. When combined with \Cref{cl. p1 c1}, this implies that $g_1$ is either the first or the last letter of $h_{m'_{w+1}}$, contradiction.
    \end{proof}
    
    As $c_1$ and $c'_1$ are connected, by definition there exists an edge $e$ from $c^+_1$ to $(c'_1)^+$ labeled by an element of $\langle k_{q_1} \rangle$. Then
    \[(c^+_1)^{-1}(c'_1)^+\in \langle k_{q_1} \rangle.\]
    Similarly,
    \[(c^+_2)^{-1}(c'_2)^+\in \langle k_{q_2} \rangle.\]

    From \Cref{cl. not separated} and Item \ref{item. c1 c2}, we obtain $(c_1^+)^{-1}(c'_1)^+=(c_2^+)^{-1}(c'_2)^+$. So
    \[(c_1^+)^{-1}(c'_1)^+\in \langle k_{q_1} \rangle\cap \langle k_{q_2} \rangle.\]
    As $k_{q_1}$ and $k_{q_2}$ are non-commensurable, the latter intersection is $\{1\}$. So 
    \[p^+_1=c^+_1=(c'_1)^+=(p'_1)^+.\]
    Thus, we can concatenate the paths $p_1$ and $(p'_1)^{-1}$ to get a path from $1$ to $(p'_1)^-=g$. The word $\lab(p_1)(\lab(p'_1))^{-1}$ thus represents $g$ in $G$. Each of $\lab(p_1)$ and $\lab(p'_1)$ represents an element in $F_8$.
    
    So $g\in F_8$, which establishes (C3).
    Therefore \eqref{eq. 1} follows from \Cref{thm. criterion for he}, and this concludes the proof of Lemma \ref{lem. relative hyperbolic}.
\end{proof}

Below, we will fix an $u>U$, and the elements $h_1,\dots,h_8$ will correspond to this choice of $u$. Let $F_2<F_8$ be the free subgroup generated by $\{h_7,h_8\}$. Note that $F_8$ is hyperbolic relative to the family $\{\langle h_1 \rangle,\dots,\langle h_6\rangle,F_2\}$. For simplicity, denote
\[\mathbb{S}:=\{\langle h_1 \rangle,\dots,\langle h_6\rangle, F_2,H_1,\dots,H_s, \langle k_1\rangle,\dots,\langle k_{24}\rangle,\langle k'_1\rangle ,\langle k'_2\rangle \}\]
and let $\S = \sqcup_{S \in \mathbb{S}} S$.

By Lemma \ref{lem. relative hyperbolic}, relative hyperbolicity of $F_8$ and \cite[Proposition 4.35]{dahmani2017hyperbolically}, there exists a finite set $X_2$ such that $\mathbb{S} \hookrightarrow_h (G, X_1 \cup X_2)$. By Lemma \ref{lem. change relative generating set}, we deduce:
\begin{equation}\label{eq. he for sc}
    \mathbb{S} \hookrightarrow_h (G,X_1).
\end{equation}
Let $\A \coloneqq X_1 \sqcup \S$. For some integer $\ell$ that will be specified later, consider the following word over $\S$

\[R \coloneqq h_8h^{\ell}_7\cdot\prod^{2\ell}_{t=\ell+1}\left(\prod^6_{m=1}h^\ell_m\right).\]

\begin{lemma}\label{lem. small cancellation}
    For any $\xi,\sigma>0$, there exists $L>0$ such that if $\ell>L$, the word $R$ satisfies the $W(\xi,\sigma)$ small cancellation condition with respect to the hyperbolic embedding \eqref{eq. he for sc} and the alphabet $\A$.
\end{lemma}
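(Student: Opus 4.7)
The plan is to verify the four conditions (SC1)--(SC4) of Definition \ref{def. W small cancellation} for $\W = \{R\}$, with respect to the alphabet $\A = X_1 \sqcup \S$ and the hyperbolic embedding \eqref{eq. he for sc}. First I would parse the initial factor $h_8 h_7^\ell$ as a single letter of $F_2 \in \mathbb{S}$ (legitimate since $h_7, h_8 \in F_2$), so that the letters of $R$, read cyclically, alternate between $F_2$ (occurring exactly once) and the cyclic subgroups $\langle h_1 \rangle, \ldots, \langle h_6 \rangle$ cycled through in order.

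For (SC1), any two cyclically consecutive letters of $R$ lie in distinct subgroups from the list $\{F_2, \langle h_1 \rangle, \ldots, \langle h_6 \rangle\}$, and by Lemma \ref{lem. relative hyperbolic} the elements $h_1, \ldots, h_8$ freely generate $F_8$ in $G$. Any two members of this list therefore intersect trivially, so (SC1) is automatic and requires no lower bound on $\ell$.

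The main obstacle is (SC2). The hyperbolic embedding \eqref{eq. he for sc} equips each subgroup $S \in \mathbb{S}$ with a proper relative metric, so the ball of $\widehat d_S$-radius $\xi$ contains only finitely many elements. The letters of $R$ are either of the form $h_m^e$ for some positive exponent $e$ growing with $\ell$, or the element $h_8 h_7^\ell \in F_2$. As $\ell \to \infty$, these elements escape every finite subset of their respective subgroups, so $\widehat d_{\langle h_m \rangle}(1, h_m^e) \to \infty$ and $\widehat d_{F_2}(1, h_8 h_7^\ell) \to \infty$. For $\ell$ above some threshold $L_2$, every letter of $R$ then satisfies the $\xi$-lower bound required by (SC2).

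For (SC3), the leading letter $h_8 h_7^\ell$ is the only letter of $R$ in $F_2$, and its inverse cannot appear elsewhere since no other letter lies in $F_2$. The letters in each $\langle h_m \rangle$ all have positive exponents, so their inverses never appear in $R$, and distinct occurrences take distinct values in view of the indexing on $t$. Condition (SC4) is immediate once $\ell$ is large enough that the total number of letters in $R$ exceeds $\sigma$. Taking $L$ to be the maximum of the thresholds coming from (SC2) and (SC4) yields the conclusion; the one delicate input is properness of the relative metric, which is the only reason the bound on $\ell$ is needed.
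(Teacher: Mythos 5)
Your argument is correct and follows the same route as the paper: (SC1) and (SC3) are essentially structural, (SC4) is immediate for large $\ell$, and (SC2) rests on properness of the relative metrics coming from the hyperbolic embedding~\eqref{eq. he for sc}, so that the letters $h_m^{(\cdot)}$ and $h_8h_7^{\ell}$ escape to infinity as $\ell\to\infty$. You spell out (SC1) and (SC3) more explicitly than the paper, which simply declares them ``obvious,'' but the substance is the same.

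One thing worth flagging: your justification of (SC3) --- ``distinct occurrences take distinct values in view of the indexing on $t$'' --- implicitly reads the displayed definition of $R$ with inner exponent $t$ rather than $\ell$. As printed, $R = h_8h_7^{\ell}\prod_{t=\ell+1}^{2\ell}\bigl(\prod_{m=1}^{6}h_m^{\ell}\bigr)$ has the same block $h_1^{\ell}\cdots h_6^{\ell}$ repeated $\ell$ times, so the letter $h_1^{\ell}$ (say) would occur $\ell$ times and (SC3) would fail. The intended word clearly has varying exponents (e.g.\ $h_m^{t}$), which is what both you and the paper's ``(SC3) is obvious'' must be assuming; with that reading your argument goes through exactly as you wrote it. It would be cleaner to state this correction explicitly rather than leave it implicit in the phrase ``the indexing on $t$.''
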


\begin{proof}
    Conditions (SC1) and (SC3) are obvious. By taking $\ell$ large enough, we can guarantee (SC4). For $m=1,\dots,8$, the order of $h_m$ is infinite. Let $\widehat{d}_{\langle h_m\rangle}\colon \langle h_m\rangle\times \langle h_m\rangle\rightarrow [0,\infty]$ for $m=1,\dots, 6$ and let $\widehat{d}_{F_2}\colon F_2\times F_2\rightarrow [0,\infty]$ be the corresponding relative metrics. The hyperbolic embedding \eqref{eq. he for sc} implies $\lim_{\ell\rightarrow\infty}\widehat{d}_{\langle h_m\rangle}(1,h^{\ell}_m)=\infty$ for $m=1,\dots, 6$ and $\lim_{\ell\rightarrow\infty}\widehat{d}_{F_2}(1,h_8h^\ell_7)=\infty$. Therefore, we can ensure (SC2) by taking $\ell$ large enough.
\end{proof}

\begin{lemma}\label{lem. cl}
    There exists $L>0$ such that if $\ell>L$, then $(G,\langle R \rangle, \langle R \rangle)$ is a Cohen--Lyndon triple.
\end{lemma}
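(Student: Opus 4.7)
The plan is to deduce the Cohen--Lyndon property for $(G,\langle R \rangle,\langle R \rangle)$ from the transitivity Lemma \ref{lem. cl transitive} applied to the chain $G \geq F_8 \geq \langle R \rangle$. It therefore suffices to establish that both $(G, F_8, \ll R \rr_{F_8})$ and $(F_8, \langle R \rangle, \langle R \rangle)$ are Cohen--Lyndon triples.

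For $(G, F_8, \ll R \rr_{F_8})$, I apply Theorem \ref{thm. sun cl} to the hyperbolic embedding $F_8 \hookrightarrow_h G$, which follows from \eqref{eq. he for sc} by restricting to the single subgroup $F_8$ and absorbing the remaining peripheral subgroups into the relative generating set (cf.\ \cite[Proposition 4.35]{dahmani2017hyperbolically}). This yields a finite subset $\mathcal F \subset F_8 \smallsetminus \{1\}$ such that any normal subgroup of $F_8$ disjoint from $\mathcal F$ produces a Cohen--Lyndon triple with $G$ and $F_8$. Since $\mathcal F$ is finite, its elements have uniformly bounded length $r$ in the $\A$-metric. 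By Lemma \ref{lem. sc}, there exist $\xi, \sigma > 0$ such that any finite set of words satisfying $W(\xi,\sigma)$ produces a quotient injective on the ball $B_\A(r)$; by Lemma \ref{lem. small cancellation}, for $\ell$ large enough the singleton $\{R\}$ satisfies $W(\xi,\sigma)$. For such $\ell$, the projection $G \to G / \ll R \rr_G$ is injective on $\mathcal F$, whence $\ll R \rr_{F_8} \subseteq \ll R \rr_G$ is disjoint from $\mathcal F$, and the first Cohen--Lyndon triple follows.

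For $(F_8, \langle R \rangle, \langle R \rangle)$, I invoke the classical Cohen--Lyndon theorem \cite{cohen1963free}: if $r$ is an element of a free group $F$ that is not a proper power, then $\ll r \rr_F = \Asterisk_{t \in T} t \langle r \rangle t^{-1}$, where $T$ is a left transversal of $\langle r \rangle \ll r \rr_F$ in $F$. It remains to verify that $R$ is not a proper power in $F_8$. For this, observe that the free generator $h_8$ appears exactly once (with exponent $+1$) in the reduced expression of $R$, so the homomorphism $F_8 \to \Z$ sending $h_i \mapsto \delta_{i,8}$ sends $R$ to $1$. If $R = S^k$ for some $S \in F_8$ and $k \geq 2$, then $1$ would be divisible by $k$, a contradiction.

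Combining the two triples via Lemma \ref{lem. cl transitive} produces the desired Cohen--Lyndon triple. The delicate point is the first step: converting the finite obstruction from Theorem \ref{thm. sun cl} into a quantitative condition on $\ell$ through the small cancellation machinery of Lemmas \ref{lem. small cancellation} and \ref{lem. sc}. The second step, by contrast, reduces to an elementary exponent-sum check once the classical Cohen--Lyndon theorem is invoked.
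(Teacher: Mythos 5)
Your proposal follows the same overall strategy as the paper: deduce the triple $(G,\langle R\rangle,\langle R\rangle)$ from the two triples $(G,F_8,\ll R\rr_{F_8})$ and $(F_8,\langle R\rangle,\langle R\rangle)$ via Lemma~\ref{lem. cl transitive}, and establish the first via Theorem~\ref{thm. sun cl}. So the skeleton matches. There are, however, two genuine differences in the details, plus one small citation issue worth flagging.

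For the first triple, both you and the paper need $\ll R\rr_{F_8}$ to miss the finite obstruction set furnished by Theorem~\ref{thm. sun cl}. The paper simply says ``for large $\ell$'', the implicit point being that nontrivial elements of $\ll R\rr_{F_8}$, viewed as reduced words in $F_8$, grow arbitrarily long as $\ell\to\infty$; since the obstruction set is fixed and finite, it is eventually avoided. You instead route through the ambient small cancellation machinery: choose $r$ bounding the $\A$-length of the obstruction set, use Lemma~\ref{lem. sc} to get injectivity of $G\to G/\ll R\rr_G$ on $B_\A(r)$ once $R$ satisfies $W(\xi,\sigma)$, and use Lemma~\ref{lem. small cancellation} to get that small cancellation condition for $\ell$ large. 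This works because the ambient small cancellation quotient kills no element of the obstruction set, hence neither does the subquotient $\ll R\rr_{F_8}\subseteq\ll R\rr_G$. It is a valid argument and arguably more self-contained than the paper's ``for large $\ell$'', at the cost of invoking heavier machinery than is strictly necessary.

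For the second triple, you invoke the classical Cohen--Lyndon theorem for elements that are not proper powers in a free group, and verify that $R$ is not a proper power via its exponent sum in $h_8$. The paper instead observes the stronger fact that $\langle R\rangle$ is a \emph{free factor} of $F_8$ — precisely because $h_8$ appears exactly once in $R$ with exponent $+1$, so $\{R,h_1,\dots,h_7\}$ is a free basis — from which the Cohen--Lyndon decomposition of $\ll R\rr_{F_8}$ is immediate from Bass--Serre theory, with no need for the harder classical theorem. Your observation about the exponent sum is exactly what proves the free-factor property, so you have all the pieces; the paper just uses them to reach a stronger intermediate statement, which shortcuts the argument.

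One minor inaccuracy: you assert that $F_8\hookrightarrow_h G$ follows from~\eqref{eq. he for sc} ``by restricting to the single subgroup $F_8$''. But $F_8$ does not appear in the family~$\mathbb S$ of~\eqref{eq. he for sc}; what appears there is its refinement $\{\langle h_1\rangle,\dots,\langle h_6\rangle,F_2\}$, and one cannot pass from a hyperbolic embedding of the pieces to one of the ambient group by ``absorbing subgroups into the generating set''. The correct reference is~\eqref{eq. 1} in Lemma~\ref{lem. relative hyperbolic}, where $F_8$ itself is in the hyperbolically embedded family (combined with~\cite[Remark 4.26]{dahmani2017hyperbolically} to drop the other members). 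This is a misdirected citation rather than a gap, since the needed fact is established by the paper; you should cite that lemma instead.
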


\begin{proof}
    By \Cref{lem. relative hyperbolic} and \cite[Remark 4.26]{dahmani2017hyperbolically}, we have $\{ F_8 \} \hookrightarrow_h G$. So by \Cref{thm. sun cl}, for large $\ell$, the triple $(G,F_8,\ll R \rr_{F_8})$ is Cohen--Lyndon. The key property of $\langle R\rangle$ is that it is a free factor of $F_8$. Therefore the triple $(F_8,\langle R \rangle, \langle R \rangle)$ is Cohen--Lyndon. The desired result then follows from \Cref{lem. cl transitive}.
\end{proof}

Fix $r\geq 1$. Let $\xi,\sigma>0$ be the constant given by \Cref{lem. sc} with respect to $r$. Let $L_1>0$ be the constant given by \Cref{lem. small cancellation} with respect to $\xi,\sigma$. Let $L_2>0$ be the constant given by \Cref{lem. cl}. Below, we fix an $\ell>\max\{L_1,L_2\}$ and the word $R$ will correspond to this choice of $\ell$. Let
\[\bg:=G/\ll R \rr_G\]
and let $\pi\colon G\to \bg$ be the natural homomorphism. For simplicity, denote
\[\pi(\mathbb{S}):=\{\pi(S)\mid S\in\mathbb{S}\},\;\; \pi(\S) := \bigsqcup\limits_{S \in \mathbb{S}} \pi(S),  \;\; \pi(\A) := \{\pi(a)\mid a\in \A\}.\]
By \Cref{lem. sc} and our choice of $\ell$, the following hold:

\begin{enumerate}[label=(\alph*)]
    \item\label{item. s injective}$\pi$ maps $B_{\A}(r)$ injectively onto $B_{\pi(\A)}(r)$. In particular, $\pi$ restricts to an injective map on each $S\in\mathbb{S}$.
    \item\label{item. s he} $\pi(\mathbb{S})\hookrightarrow_h(\bg,\pi(X_1))$.
    \item\label{item. s centeralizer} For any $g\in G$ with $|g|_{\A}\leq r$, we have $C_{\bg}(\pi(g))=\pi(C_G(g))$.
    \item\label{item. s torsion} For each $\bar g\in \bg$ with finite order, there exists $g\in G$ with finite order such that $\bar g=\pi(g)$.
\end{enumerate}

As $\pi(X)\sqcup(\bigsqcup^s_{i=1}\pi(H_i))\subset \pi(\A)$, assertions \ref{item. injectivity} and \ref{item. centralizer} follow from items \ref{item. s injective} and \ref{item. s centeralizer}, respectively.

As $X$ is a relative generating set of $G$ with respect to $\mathbb S$, the set $\pi(X)$ is a relative generating set of $\bg$ with respect to $\pi(\mathbb S)$. By combining \ref{item. s he} with \Cref{lem. change relative generating set} we get that
\begin{equation}\label{eq. 3}
    \pi(\mathbb{S})\hookrightarrow_h(\bg, \pi(X)).
\end{equation}
Item \ref{item. s injective} implies that, for $S\in \mathbb{S} \smallsetminus\{H_i\}^s_{i=1}$, the group $\pi(S)$ is free. Assertion \ref{item. hyperbolicity} follows by combining this with \eqref{eq. 3} and \cite[Lemma 5.14]{abbott2017hyperbolic}.

Assertion \ref{item. torsion element} is exactly Item \ref{item. s torsion}. Assertion \ref{item. K absorb} follows by construction. By Item \ref{item. s he} and \Cref{thm. acylindrical action}, there exists a subset $\pi(X)\subset\overline Y\subset \bg$ such that 

\[\pi(\mathbb{S})\hookrightarrow_h(\bg,\overline Y)\]
and the action $\bg\curvearrowright \Gamma(\bg,\overline Y\sqcup \pi(\S))$ is acylindrical. As $\langle k_1 \rangle \cap \langle k_2 \rangle=\{1\}$, Item \ref{item. s injective} implies

\begin{equation}\label{eq. 4}
    \langle \pi(k_1) \rangle \cap \langle \pi(k_2) \rangle =\{1\}.
\end{equation}

Moreover, as $\langle \pi(k_1) \rangle\cup \langle \pi(k_2) \rangle\subset \pi(K)$, by combining \eqref{eq. 4} and \cite[Lemma 3.24]{chifan2023small} we get that $\pi(K)$ is suitable with respect to the action $\Gamma(\bg,\overline Y\sqcup \pi(\S))$. Similarly, from $\langle k'_1 \rangle \cap \langle k'_2 \rangle = \{1\}$ we get that $\pi(K')$ is suitable. Let 

\[\overline X:= \overline Y\sqcup \left(\bigsqcup_{S\in\mathbb S\smallsetminus\{H_i\}^s_{i=1}}\pi(S)\right).\]
Then \cite[Remark 4.26]{dahmani2017hyperbolically} implies that $\{\pi(H_i)\}^s_{i=1}\hookrightarrow_h (\bg,\overline X)$. This proves the first half of assertion \ref{item. K non-elementary}.

If $|X|<\infty$, then the action $\bg\curvearrowright \Gamma(\bg,\bx\sqcup \pi(\S))$ is already acylindrical by \Cref{thm. acylindrical action}, so we can take $\overline Y=\pi(X)$. The above argument yields that $\pi(K)$ and $\pi(K')$ are suitable with respect to the relatively hyperbolic pair $(\bg,\pi(\mathbb S))$. By \Cref{lem. suitable for rh}, each of $\pi(K)$ and $\pi(K')$ is not virtually cyclic, contains a hyperbolic element of the pair $(\bg, \pi(\mathbb S))$, and does not normalize any non-trivial finite normal subgroup of $\bg$. The same holds true even if $(\bg, \pi(\mathbb S))$ is replaced by $(\bg, \{\pi(H_i)\}^s_{i=1})$. Assertion \ref{item. K non-elementary} then follows by another application of \Cref{lem. suitable for rh}.

Finally, by Lemma \ref{lem. cl}, assertion \ref{item. gd} follows from \Cref{cor. classifying space}, and then assertion \ref{item. homology} follows from this and Lemma \ref{lem. compute cohomology}. This concludes the proof of Theorem \ref{thm. inductive step}.

\section{Relative torsion-free Tarski monsters}
\label{sec. relative}

In this section we construct relative torsion-free Tarski monsters with homological control. This proves Theorem \ref{intro:thm:relative}, but the statement below gives much more information, in a form that can be used for the applications in the next section.

\begin{theorem}[Theorem \ref{intro:thm:relative}]
\label{thm. embedding}
    Let $H_i: i\geq 1$ be countable torsion-free groups, let $L_i : i \geq 1$ be torsion-free non-elementary hyperbolic groups, and let $B_i \subset L_i: i\geq 1$ be finite subsets. Then there exists a group $M$ such that the following hold.
    \begin{enumerate}[label=(\roman*)]
        \item\label{item. simple} $M$ is a finitely generated torsion-free simple group.
        \item\label{item. Hi malnormal} $\{H_i\}_{i\geq 1}$ embeds as a malnormal family of subgroups of $M$.
        \item\label{item. tarski like} Each proper subgroup of $M$ is either cyclic or conjugate into some $H_i$.
        \item\label{item. quotients Li} There are quotient maps $ L_i \to M$ which are injective on $B_i$.
        \item\label{item. gd <} $\gd(M) \leq \max\{2, \sup_i\gd(H_i), \sup_i \gd(L_i)\}$.
        \item\label{item. homology relative} For every $M$-module $A$ and every $n \geq 3$, there are isomorphisms
        \[H_n(M;A)\cong \left( \bigoplus_{i\geq 1} H_n(H_i;A) \right) \oplus \left( \bigoplus_{i\geq 1} H_n(L_i;A) \right),\] 
        \[H^n(M;A)\cong \left( \prod_{i\geq 1} H^n(H_i;A) \right) \times \left( \prod_{i\geq 1} H^n(L_i;A) \right),\]
        as well as an injective map
        \[\left( \bigoplus_{i\geq 1} H_2(H_i;A) \right) \oplus \left( \bigoplus_{i\geq 1} H_2(L_i;A) \right) \hookrightarrow H_2(M;A),\]
        and a surjective map
        \[H^2(M; A) \twoheadrightarrow \left( \prod_{i\geq 1} H^2(H_i;A) \right) \times \left( \prod_{i\geq 1} H^2(L_i;A) \right).\]
    \end{enumerate}
\end{theorem}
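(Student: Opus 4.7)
I construct $M$ as the direct limit of a tower of quotients $G_0\twoheadrightarrow G_1\twoheadrightarrow\cdots$, starting with $G_0:=F=\langle a,b\rangle$ free of rank $2$. At stage $n$, assuming that $G_n$ has a hyperbolically embedded family $\{H_1,\dots,H_n,L_1,\dots,L_n\}$ (with each $H_i$ malnormal), form the free product $\tilde G_{n+1}:=G_n*H_{n+1}*L_{n+1}$. By standard combination results, the enlarged family $\{H_1,\dots,H_{n+1},L_1,\dots,L_{n+1}\}$ is hyperbolically embedded in $\tilde G_{n+1}$ with finite relative generating set $\{a,b\}$. Then apply Theorem \ref{thm. inductive step} to $\tilde G_{n+1}$ with this peripheral family, adding finitely many new relations from a fixed diagonal bookkeeping to obtain $G_{n+1}$.

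The bookkeeping is designed to accomplish three tasks as $n\to\infty$. The first is \emph{finite generation}: for every generator of each $H_i$ and $L_i$ (all countably many), force its image into the subgroup $\langle a,b\rangle$. This subgroup is suitable in every $G_n$ because it is non-elementary, $G_n$ is torsion-free by Theorem \ref{thm. inductive step}(ii), and it contains independent loxodromics (via Lemma \ref{lem. suitable for rh}). The second is the \emph{Tarski-like condition}: for every pair $(g_1,g_2)\in G_n$ such that $\langle g_1,g_2\rangle$ is non-cyclic and not conjugate into any $\pi(H_i)$, verify via Lemma \ref{lem. suitable for rh} that $\langle g_1,g_2\rangle$ is suitable and force $a,b\in\langle g_1,g_2\rangle$, which promotes this subgroup to all of $G_{n+1}$. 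The third is \emph{simplicity}: for every pair $(x,y)$ of nontrivial elements of $G_n$, choose a conjugate $hxh^{-1}$ such that $K:=\langle x,hxh^{-1}\rangle$ is suitable and force $y\in K\subseteq\ll x\rr_{G_n}$. A standard diagonal enumeration ensures every task is handled eventually.

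Setting $M:=\varinjlim G_n$: finite generation, torsion-freeness, and simplicity follow from the three tasks combined with Theorem \ref{thm. inductive step}(ii); malnormality of $\{H_i\}$ in each $G_n$ follows from Theorem \ref{thm. inductive step}(i) and Lemma \ref{lem. he -> malnormal}, and persists in the limit; the embeddings $H_i\hookrightarrow M$ and the homomorphisms $L_i\to M$ are provided by construction, with injectivity of the latter on $B_i$ arranged by choosing the radius $r$ in Theorem \ref{thm. inductive step}(iii) large enough at each stage. The Tarski-like conclusion \ref{item. tarski like} reduces to pairs because, for a proper non-cyclic $H\le M$, any two elements of $H$ generate a subgroup conjugate into some $H_i$, and torsion-free malnormality of $\{H_i\}$ forces all of $H$ into a common conjugate $gH_ig^{-1}$ (using that a power $c^m$ of a loxodromic $c$ lying in a malnormal subgroup forces $c$ itself into that subgroup). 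The dimension bound \ref{item. gd <} follows by iterating Theorem \ref{thm. inductive step}(vii), noting $\gd(\tilde G_{n+1})\le\max\{\gd(G_n),\gd(H_{n+1}),\gd(L_{n+1})\}$ from the free-product classifying space. The (co)homology formulas of \ref{item. homology relative} follow by iterating Theorem \ref{thm. inductive step}(viii) and passing to the appropriate limits, starting from the Mayer--Vietoris decomposition for free products of $H_\ast(\tilde G_{n+1};A)$ and $H^\ast(\tilde G_{n+1};A)$.

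The main obstacle will be verifying, at each stage and each task, that the target subgroup into which elements are absorbed is indeed suitable in the sense required by Theorem \ref{thm. inductive step}. For $\langle a,b\rangle$ this is essentially automatic, but for $\langle g_1,g_2\rangle$ and $\langle x,hxh^{-1}\rangle$ one must ensure they contain sufficiently independent loxodromic elements and do not collapse into the peripheral structure. This will require a careful case analysis via Lemma \ref{lem. suitable for rh} combined with genericity arguments for the choice of conjugating elements $h$, and forms the technical heart of the construction.
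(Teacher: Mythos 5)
Your plan captures the overall strategy of the paper (direct limit of small cancellation quotients built via Theorem \ref{thm. inductive step}, diagonal bookkeeping), but there is a structural mistake in how you treat the $L_i$, and it is fatal to Item \ref{item. quotients Li}.

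You propose to keep the $L_i$ as part of the hyperbolically embedded peripheral family $\{H_1,\dots,H_n,L_1,\dots,L_n\}$ at every stage. Theorem \ref{thm. inductive step}\ref{item. hyperbolicity} then guarantees that each $L_i$ remains hyperbolically embedded, and hence (by Lemma \ref{lem. he -> malnormal}) a \emph{proper, almost malnormal} subgroup at every stage and hence a proper subgroup of the limit $M$. Absorbing the generators of $L_i$ into $\langle a,b\rangle$ (your Task~1) does not change this: the image of $L_i$ stays a proper subgroup. But Item \ref{item. quotients Li} demands a \emph{quotient map} $L_i\to M$, i.e.\ a surjection. With your setup the map $L_i\to M$ can never be onto, and your Tarski task cannot rescue this either, since a hyperbolically embedded subgroup is by definition never non-elementary with respect to the peripheral structure and so is never promoted. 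The paper's proof avoids this entirely by keeping \emph{only} the $H_i$ in the peripheral family, while treating each $L_{i+1}$ as a \emph{suitable} subgroup in the free product $G_i\ast H_{i+1}\ast L_{i+1}$; then two successive applications of Theorem \ref{thm. inductive step} force the generators of $L_{i+1}$ into $\phi_i(G_0)$ and then force $\phi_i(X_0)$ back into the image of $L_{i+1}$, arranging $\pi_{i,i+1/2}(L_{i+1})=\pi_{i,i+1/2}\circ\phi_i(G_0)$. This ``two-way absorption'' is what makes $L_i\to M$ surjective. Your plan omits the second direction entirely, and in fact cannot perform it under your peripheral-family convention.

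Two secondary remarks. First, your Task~3 (directly enumerating pairs $(x,y)$ and forcing $y$ into a suitable subgroup of $\ll x\rr$) is a valid but redundant route to simplicity: the paper derives simplicity cheaply from malnormality of $\{H_i\}$ plus the Tarski property (a proper nontrivial normal subgroup would have to be cyclic, but then $\phi_i(G_0)$ would normalize a cyclic group, contradicting non-elementarity). Dropping Task~3 removes the need to verify suitability of $\langle x,hxh^{-1}\rangle$, which you correctly identify as technically delicate. Second, ``passing to the appropriate limits'' for the cohomology formulas in \ref{item. homology relative} needs more than iterating Theorem \ref{thm. inductive step}\ref{item. homology}: group cohomology does not commute with direct limits of groups. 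The paper instead builds one explicit $K(M,1)$ as an increasing union $Z=\bigcup Z_i$, observes that $Z$ is obtained from a $K\bigl((\Asterisk_{i\geq 1} H_i\ast L_i)\ast G_0,\,1\bigr)$ by attaching $2$-cells, and applies Lemma \ref{lem. compute cohomology} \emph{once} to the single quotient $(\Asterisk_{i\geq 1} H_i\ast L_i)\ast G_0 \twoheadrightarrow M$; this yields the product formula for cohomology directly without any inverse-limit argument.
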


This entire section is devoted to the proof of Theorem \ref{thm. embedding}. Let $G_0$ be a free group of rank $2$ on basis $X_0$, let $\phi_0$ be the identity map on $G_0$, and let $C_0=\emptyset$. For each $i\geq 1$, let $X_i$ be a finite generating set of $L_i$. Enumerate all $2$-generated subgroups of $G_0$ as $K_1, K_2, K_3, \ldots$ For each $i$, enumerate elements of $H_i$ as $g_{i,i},g_{i,i+1},g_{i,i+2},\ldots$ (if $H_i=\{1\}$, set $g_{i,j}=1$ for all $j\geq i$).  We inductively construct the following data:

\begin{itemize}
    \item a sequence of homomorphisms $ \phi_i \colon G_0\to G_i$;
    \item for $j<i$, a homomorphism $\pi_{j,i}\colon G_j\ast H_{j+1}\ast L_{j+1}\to G_i$; and
    \item a sequence of subsets $C_i\subset G_i$,
\end{itemize}
such that the following hold for all $i\geq 0$.
    \begin{enumerate}[label=(\Alph*)]
        \item\label{item. quotient}
        \parbox{\linewidth}{\centering $\pi_{i,i+1}\circ\phi_i(G_0) = \pi_{i,i+1}(L_{i+1}),$}
        \begin{equation}\label{eq. def of phi}
            \phi_{i+1}=\pi_{i,i+1}\circ \phi_i \colon G_0 \to G_{i+1},
        \end{equation}
        and
        \begin{equation}\label{eq. def of pi}
            \pi_{j,i+1}=\pi_{i,i+1}\circ \pi_{j,i}\colon G_j\ast H_{j+1}\ast L_{j+1}\to G_{i+1} \text{ for }j<i.
        \end{equation}
        
        \item\label{item. i torsion free} 
        $G_i$ is torsion-free.
        
        \item\label{item. injectivity relative} 
        $\pi_{i,i+1}$ is injective on $C_i\cup H_{i+1} \cup B_{i+1}$ and

        \begin{equation}\label{eq. def of C}
            C_{i+1} = \pi_{i,i+1}(C_i\cup H_{i+1} \cup B_{i+1}).
        \end{equation}
        
        \item\label{item. i hyperbolic} 
        $\phi_i(X_0)$ is a relative generating set of $G_i$ with respect to $\{\pi_{j,i}(H_{j+1})\}_{j<i}$ and $\{\pi_{j,i}(H_{j+1})\}_{j<i}\hookrightarrow_h (G_i,\phi_i(X_0))$. In particular, $(G_i,\{\pi_{j,i}(H_{j+1})\}_{j<i})$ is a relatively hyperbolic pair, and $\{\pi_{j,i}(H_{j+1})\}_{j<i}$ is a malnormal family of subgroups of $G_i$ by \Cref{lem. he -> malnormal}.
        
        \item\label{item. i suitable} 
        $\phi_i(G_0)$ is a suitable subgroup with respect to the relatively hyperbolic pair $(G_i,\{\pi_{j,i}(H_{j+1})\}_{j<i})$.
        
        \item\label{item. i centralizer} 
        For any $g\in \phi_i(G_0)$ with $|g|_{\phi_i(X_0)}\leq i$, it holds $C_{G_{i+1}}(\pi_{i,i+1}(g))=\pi_{i,i+1}(C_{G_i}(g))$.
        
        \item\label{item. i absorb} 
        Either $\phi_{i+1}(K_{i+1})$ is \emph{elementary} with respect to the relatively hyperbolic pair $(G_i,\{\pi_{j,i}(H_{j+1})\}_{j<i})$ (i.e., $\phi_{i+1}(K)$ is either cyclic or conjugates into some $\pi_{j,i}(H_{j+1})$), or $\phi_{i+1}(X_0)\subset \phi_{i+1}(K_{i+1})$.
        
        \item\label{item. i even} 
        For all $j < i$, we have $\pi_{j,i}(g_{j+1,i})\in \phi_i(G_0)$.
        
        \item\label{item. classifying} Given any $K(G_i\ast H_{i+1} \ast L_{i+1}, 1)$ CW-complex, one can obtain a $K(G_{i+1}, 1)$ CW-complex by attaching $2$-cells to it.
    \end{enumerate}

We proceed with the construction. Each $G_{i+1}$ will be obtained from $G_i$ in four steps. Each of these steps will involve an application of \Cref{thm. inductive step} with a sufficiently large parameter $r$. Suppose that $G_i$ has been constructed for some $i \geq 0$. For simplicity, denote
        \[\mathbb H_i:= \{\pi_{j,i}(H_{j+1})\}_{j<i}\sqcup \{H_{i+1}\},\;\; \A_i=\phi_i(X_0)\sqcup X_{i+1}\sqcup \left(\bigsqcup_{H\in \mathbb H_i}H\right).\]
We have $\{G_i,H_{i+1},L_{i+1}\}\hookrightarrow_h (G_i\ast H_{i+1}\ast L_{i+1},\emptyset)$ by \cite[Example 4.12 (c)]{dahmani2017hyperbolically}, $\{1\}\hookrightarrow_h (L_{i+1},X_{i+1})$ by definition, and $\{\pi_{j,i}(H_{j+1})\}_{j<i}\hookrightarrow_h (G_i,\phi_i(X_0))$. So \cite[Proposition 4.35]{dahmani2017hyperbolically} gives a hyperbolic embedding
\begin{equation}\label{eq. 9}
    \mathbb H_i\hookrightarrow_h(G_i\ast H_{i+1}\ast L_{i+1},\phi_i(X_0) \sqcup X_{i+1}).
\end{equation}
The action 
\begin{equation}\label{eq. 10}
    G_i \ast H_{i+1} \ast L_{i+1} \curvearrowright\Gamma\left(G_i\ast H_{i+1}\ast L_{i+1},\A_i\right)
\end{equation}
is acylindrical by \Cref{thm. acylindrical action}. By the inductive hypothesis, $\phi_i(G_0)$ is suitable with respect to the relatively hyperbolic pair $(G_i,\mathbb H_i\smallsetminus\{H_{i+1}\})$. By \Cref{lem. suitable for rh}, $\phi_i(G_0)$ is not virtually cyclic, contains a hyperbolic element with respect to $(G_i,\mathbb H_i\smallsetminus\{H_{i+1}\})$, and does not normalize any non-trivial finite normal subgroup of $G_i$. As $\phi_i(X_0)$ and $X_{i+1}$ are finite, the hyperbolic embedding \eqref{eq. 9} implies that $G_i \ast H_{i+1}\ast L_{i+1}$ is hyperbolic relative to $\mathbb H_i$. The above implies that $\phi_i(G_0)$ contains a hyperbolic element with respect to the relatively hyperbolic pair $(G_i\ast H_{i+1}\ast L_{i+1},\mathbb H_i)$ and does not normalize any non-trivial finite normal subgroup of $G_i\ast H_{i+1}\ast L_{i+1}$. Combining this with the fact that $\phi_i(G_0)$ is not virtually cyclic and using \Cref{lem. suitable for rh}, we see that $\phi_i(G_0)$ is suitable with respect to the action \eqref{eq. 10}. Similarly, $L_{i+1}$ is suitable with respect to the action \eqref{eq. 10}.

Apply \Cref{thm. inductive step} to the hyperbolic embedding \eqref{eq. 9} and the action \eqref{eq. 10}, with $K=\phi_i(G_0),K'=L_{i+1}$ and $\{g_n\}^N_{n=1}=X_{i+1}$. This results in a quotient map $\pi_{i,i+1/4}\colon G_i\ast H_{i+1}\ast L_{i+1}\to G_{i+1/4}$ such that $\pi_{i,i+1/4}(L_{i+1})\leq \pi_{i,i+1/4}\circ\phi_i(G_0)$, and $\pi_{i,i+1/4}(L_i)$ and $\pi_{i,i+1/4}\circ \phi_i(G_0)$ are suitable subgroups with respect to the action 
\begin{equation}\label{eq. 11}
    G_{i+1/4}\curvearrowright \Gamma\left(G_{i+1/4},  \pi_{i,i+1/4}(\A_i)\right).
\end{equation}

For simplicity, we will write $\pi_{i,i+1/4}(\mathbb H_i)$ for the family $\{\pi_{i,i+1/4}(H)\}_{H\in \mathbb H_i}$, and we will use similar notation later. Apply \Cref{thm. inductive step} to the hyperbolic embedding
\[\pi_{i,i+1/4}(\mathbb H_i)\hookrightarrow_h (G_{i+1/4}, \pi_{i,i+1/4}(\A_i))\]
and the action \eqref{eq. 11}, with $K=\pi_{i,i+1/4}(L_{i+1}), K'=\pi_{i,i+1/4} \circ \phi_i(G_0)$ and $\{g_n\}^N_{n=1}=\pi_{i,i+1/4}\circ \phi_i(X_0)$. This results in a quotient map $\pi_{i,i+1/2}\colon G_i\ast H_{i+1}\ast L_{i+1}\to G_{i+1/2}$, which factors through $\pi_{i,i+1/4}$, such that
\[\pi_{i,i+1/2}\circ\phi_i(G_0)=\pi_{i,i+1/2}(L_{i+1})\]
is a suitable subgroup with respect to the action
\begin{equation}\label{eq. 12}
    G_{i+1/2}\curvearrowright \Gamma(G_{i+1/2},\pi_{i,i+1/2}(\A_i)).
\end{equation}

Now, if $\pi_{i,i+1/2}\circ \phi_i(K_{i+1})$ is elementary with respect to the relatively hyperbolic pair $(G_{i+1/2},\pi_{i,i+1/2}(\mathbb H_i))$, simply let $\pi_{i,i+3/4}:=\pi_{i,i+1/2}$ and $G_{i+3/4}:=G_{i+1/2}$.

Suppose that $\pi_{i,i+1/2}\circ \phi_i(K_{i+1})$ is non-elementary. We claim that $\pi_{i,i+1/2}\circ\phi_i(K_{i+1})$ is a suitable subgroup of the relatively hyperbolic pair $(G_{i+1/2},\pi_{i,i+1/2}(\mathbb H_i))$. By assumption, $\pi_{i,i+1/2}\circ \phi_i(K_{i+1})$ is a non-trivial subgroup of the torsion-free group $G_{i+1/2}$, so has an infinite order element $k_1$. We are done if $k_1$ is hyperbolic with respect to the relatively hyperbolic pair $(G_{i+1/2},\pi_{i,i+1/2}(\mathbb H_i))$ by \Cref{lem. suitable for rh}. If $k_1$ is not hyperbolic, then there exists $k_3\in G_{i+1/2}$ and $H'_1\in \mathbb H_i$ such that $k_3k_1k_3^{-1}\in H'_1$. As $\pi_{i,i+1/2}\circ \phi_i(K_{i+1})$ does not conjugate into $H'$, it has an element $k_2$ such that $k_3k_2k_3^{-1}\not\in H'_1$. \cite[Lemma 4.4]{osin2006elementary} then implies that for some large $\ell$ the element $k_3k_2k^\ell_1k_3^{-1}$ is a hyperbolic element of the relatively hyperbolic pair $(G_{i+1/2},\pi_{i,i+1/2}(\mathbb H_i))$. Hence so is $k_2k^\ell_1\in \pi_{i,i+1/2}\circ \phi_i(K_{i+1})$. \Cref{lem. suitable for rh} then implies that $\pi_{i,i+1/2}\circ \phi_i(K_{i+1})$ is a suitable subgroup.

Apply \Cref{thm. inductive step} to the hyperbolic embedding
\begin{equation}\label{eq. 13} 
\pi_{i,i+1/2}(\mathbb H_i)\hookrightarrow_h (G_{i+1/2}, \pi_{i,i+1/2}(\A_i))
\end{equation}
and the action \eqref{eq. 12}, with $K=\pi_{i,i+1/2}\circ\phi_i(K_{i+1}), K'=\pi_{i,i+1/2}(L_{i+1})$ and $\{g_n\}^N_{n=1}=\pi_{i,i+1/2}\circ\phi_i(X_0)$. This yields a quotient map $\pi_{i,i+3/4}\colon G_i\ast H_{i+1}\ast L_{i+1}\to G_{i+3/4}$, which factors through $\pi_{i,i+1/2}$, such that $\pi_{i,i+3/4}(L_{i+1})$ is suitable with respect to the action
\begin{equation}\label{eq. 22}
    G_{i+3/4}\curvearrowright \Gamma(G_{i+3/4},\pi_{i,i+3/4}(\A_i)).
\end{equation}

Finally, apply \Cref{thm. inductive step} to the hyperbolic embedding 
\[\pi_{i,i+3/4}(\mathbb H_i)\hookrightarrow_h (G_{i+3/4}, \pi_{i,i+3/4}(\A_i))\]
and the action \eqref{eq. 22}, with $K=K'=\pi_{i+3/4}\circ\phi_i(G_0)$ and the family $\{g_n\}^N_{n=1} = \{g_{j+1,i+1}\}_{j\leq i}$. This yields a quotient $\pi_{i,i+1}\colon G_i\ast H_{i+1}\ast L_{i+1}\to G_{i+1}$. Use \eqref{eq. def of phi} (resp. \eqref{eq. def of pi}, \eqref{eq. def of C}) as the definition of $\phi_{i+1}$ (resp. $\pi_{j,i+1},C_{i+1}$). This completes the inductive construction.

\medskip

    All items except \ref{item. i hyperbolic} and \ref{item. i suitable} automatically follow from Theorem \ref{thm. inductive step}, upon choosing a sufficiently large $r$. To see \ref{item. i hyperbolic}, note that we have a hyperbolic embedding $\mathbb H_i\hookrightarrow_h(G_{i+1}, \phi_{i+1}(X_0)\sqcup \pi_{i,i+1}(X_{i+1}))$. Note also that $\phi_{i+1}(X_0)$ is a relative generating set of $G_{i+1}$ with respect to $\mathbb H_i$, as $\pi_{i,i+1}(X_{i+1})\subset \phi_{i+1}(G_0)$, and thus can be written as a product of elements in $\phi_{i+1}(X_0^{\pm 1})$. Then \ref{item. i hyperbolic} follows from \Cref{lem. change relative generating set}, because all sets involved are finite. And \ref{item. i suitable} follows because by Lemma \ref{lem. suitable for rh} the notion of a suitable subgroup of a relatively hyperbolic pair does not depend on the choice of the finite relative generating set. This completes the construction of the groups $G_i : i \geq 0$.

\medskip
    
    These groups fit into a directed system. Let $M:=\varinjlim G_i$ and let $\phi\colon G_0\rightarrow M$
    be the natural homomorphism. For each $i\geq 0$, let $\pi_i\colon H_{i+1}\ast L_{i+1}\to M$ be the natural homomorphism. Note that, by construction, $M$ is generated by $\phi(X_0)$, and so it is finitely generated. For each $i\geq 0$, as $\pi_{i,i+1}(L_{i+1})=\phi_{i+1}(G_0)$, we see that $\pi_i$ maps $L_{i+1}$ onto $M$. As $\pi_{i,i+1}$ is injective on $H_{i+1}\cup B_{i+1}$ for all $i$, the map $\pi_i$ is injective on $H_{i+1}\cup B_{i+1}$. This ensures that Item \ref{item. quotients Li} holds, and moreover that each $H_i$ embeds into $M$. Moreover $M$ is torsion-free as a limit of torsion-free groups $G_i$, which ensures that Item \ref{item. simple} holds, except for simplicity.

    Let $Z_0$ be a $K(G_0,1)$ CW-complex. For $i\geq 1$, let $Y_i$ (resp. $Y'_i$) be a $K(H_i,1)$ (resp. $K(L_i,1)$) CW-complex. From these we inductively construct a $K(G_i,1)$ CW-complex $Z_i$ for $i\geq 0$. Suppose that $Z_i$ has been constructed for some $i\geq 0$. By \ref{item. classifying}, we can construct a $K(G_{i+1},1)$ CW-complex $Z_{i+1}$ from $Z_i\vee Y_{i+1}\vee Y'_{i+1}$ by attaching $2$-cells. We have a chain of inclusions
    \[Z_0\subset Z_1\subset Z_2 \subset \cdots\]
    Let $Z:=\bigcup_{i\geq 0} Z_i$, seen as a CW-complex endowed with the weak topology. Then $\pi_1(Z)=M$. Moreover, $Z$ is aspherical as every map from a sphere to $Z$ has compact image, and therefore factors through some $Z_i$. This shows that $Z$ is a $K(M,1)$ and gives \ref{item. gd <}.

    The space $Z$ can be alternatively be constructed as follows. First take the wedge sum of $Z_0, Y_i$ and $Y'_i$ for $i\geq 1$, and then attach $2$-cells. The first step yields a $K((\Asterisk_{i\geq 1} H_i\ast L_i) \ast G_0, 1)$. Then \ref{item. homology relative} follows from \Cref{lem. compute cohomology}.

\medskip

    We have already seen that  each $\pi_i|_{H_{i+1}}$ is injective. So the following completes the proof of \ref{item. Hi malnormal}.

    \begin{lemma}
        $\{\pi_i(H_{i+1})\}_{i\geq 0}$ is a malnormal family of subgroups of $M$.
    \end{lemma}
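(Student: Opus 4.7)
The plan is to reduce to the finite-stage malnormality which is already guaranteed by the inductive construction. Suppose for contradiction that some non-identity element $\pi_j(k)$ of $\pi_j(H_{j+1})$ equals a conjugate $g\pi_i(h)g^{-1}$ for some $g\in M$, $h\in H_{i+1}\setminus\{1\}$, $k\in H_{j+1}\setminus\{1\}$, and that it is not the case that $i=j$ and $g\in \pi_i(H_{i+1})$. Since $M=\varinjlim G_n$ is a directed colimit of groups, I can pick $N>\max\{i,j\}$ large enough that $g$ lifts to some $\tilde g\in G_N$ and the equality $\tilde g\,\pi_{i,N}(h)\,\tilde g^{-1}=\pi_{j,N}(k)$ already holds in $G_N$.

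Before applying malnormality at level $N$, I need to know that $\pi_{i,N}(h)$ and $\pi_{j,N}(k)$ are still non-trivial. This follows by a short induction using item \ref{item. injectivity relative}: the inclusion $\pi_{m,n}(H_{m+1})\subseteq C_n$ is immediate from \eqref{eq. def of C}, and $\pi_{n,n+1}$ is injective on $C_n$, so each composition $\pi_{m,N}$ restricts to an injection on $H_{m+1}$. In particular $\pi_{i,N}(h)\neq 1$ and $\pi_{j,N}(k)\neq 1$.

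Therefore I have a non-trivial witness for
\[\tilde g\,\pi_{i,N}(H_{i+1})\,\tilde g^{-1}\cap \pi_{j,N}(H_{j+1})\neq\{1\}\]
inside $G_N$. By item \ref{item. i hyperbolic}, the family $\{\pi_{m,N}(H_{m+1})\}_{m<N}$ is a malnormal family of subgroups of $G_N$, so $i=j$ and $\tilde g\in \pi_{i,N}(H_{i+1})$. Pushing this containment forward via the canonical map $G_N\to M$ yields $g\in \pi_i(H_{i+1})$, contradicting the assumption.

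The main subtlety is the direct-limit lifting step in the first paragraph: the relation $g\pi_i(h)g^{-1}=\pi_j(k)$ is a priori only known in $M$, and one has to argue that it already holds at some finite level $N$. This is a standard feature of directed colimits of groups, but worth being explicit about, since everything downstream is an entirely finite-stage argument that simply invokes the malnormality already built into the inductive construction.
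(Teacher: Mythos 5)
Your proposal is correct and follows essentially the same strategy as the paper: lift the conjugacy relation from the direct limit $M$ to a finite stage $G_N$, check that the images of $h$ and $k$ remain non-trivial there (which you justify carefully via the chain $\pi_{m,n}(H_{m+1})\subseteq C_n$ and the injectivity of $\pi_{n,n+1}$ on $C_n$ from \ref{item. injectivity relative}), invoke the finite-stage malnormality from \ref{item. i hyperbolic}, and push forward to $M$. The paper's version lifts $g$ to $G_0$ and leans on the previously established injectivity of $\pi_i|_{H_{i+1}}$ rather than re-deriving it, but the substance is identical.
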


    \begin{proof}
         Let $h\in M$ and $0\leq i\leq\ell$ such that $h\pi_i(H_{i+1})h^{-1}\cap \pi_\ell(H_{\ell+1})\neq \{1\}$, and let $k\in G_0$ be a preimage of $h$. There exists $j>\ell$ such that 
        
        \[\phi_j(k)\pi_{i,j}(H_{i+1})\phi_j(k^{-1})\cap \pi_{\ell,j}(H_{\ell+1})\neq \{1\}.\]
        
         Since $\{\pi_{p,j}(H_{p+1})\}_{p<j} < G_j$ is a malnormal family in $G_j$ by Item \ref{item. i hyperbolic}, we get $i=\ell$ and $\phi_j(k)\in\pi_{i,j}(H_{i+1})$, and so $h\in \pi_i(H_{i+1})$.
    \end{proof}

    Next, we prove \ref{item. tarski like}:

    \begin{lemma}\label{cl. cyclic}
        Each proper subgroup of $M$ is either cyclic or conjugate into some $\pi_i(H_{i+1})$.
    \end{lemma}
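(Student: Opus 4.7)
The plan is to combine the ``absorption dichotomy'' recorded in Item \ref{item. i absorb} with the malnormality of the family $\{\pi_i(H_{i+1})\}_{i\geq 0}$ established in the previous lemma. Let $K\leq M$ be a proper subgroup that is not cyclic, and fix $a,b\in K$ generating a non-cyclic subgroup. Since $M$ is generated by $\phi(X_0)$ and $G_0$ is free on $X_0$, the homomorphism $\phi\colon G_0\to M$ is surjective, so one can lift $a,b$ to $\tilde a,\tilde b\in G_0$. The $2$-generated subgroup $\langle \tilde a,\tilde b\rangle\leq G_0$ equals $K_{i+1}$ in the enumeration for some $i\geq 0$, so the argument is reduced to controlling $\phi(K_{i+1})\leq K$.

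At stage $i+1$ of the construction, Item \ref{item. i absorb} supplies a dichotomy: either $\phi_{i+1}(X_0)\subseteq \phi_{i+1}(K_{i+1})$, or $\phi_{i+1}(K_{i+1})$ is elementary, hence cyclic or conjugate into some $\pi_{j,i+1}(H_{j+1})$. In the first alternative, applying the direct limit gives $\phi(X_0)\subseteq \phi(K_{i+1})\subseteq K$, forcing $K=M$ and contradicting properness. In the second alternative, the surjection $\phi_{i+1}(K_{i+1})\twoheadrightarrow \phi(K_{i+1})=\langle a,b\rangle$ rules out the cyclic case (since $\langle a,b\rangle$ is not cyclic), so $\langle a,b\rangle$ is conjugate into some $\pi_j(H_{j+1})$ in $M$. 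Write $\langle a,b\rangle\subseteq h\pi_j(H_{j+1})h^{-1}$ for some $h\in M$.

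To finish, I show $c\in h\pi_j(H_{j+1})h^{-1}$ for every $c\in K$, which gives $K\subseteq h\pi_j(H_{j+1})h^{-1}$ and concludes. Apply the same dichotomy, run on the pair $(a,c)\in K\times K$. If $\langle a,c\rangle$ is cyclic, then $c$ centralizes $a$; since $M$ is torsion-free and $\langle a,b\rangle$ is non-cyclic, $a\neq 1$, and so the nontrivial element $a$ lies in $h\pi_j(H_{j+1})h^{-1}\cap c\cdot h\pi_j(H_{j+1})h^{-1}\cdot c^{-1}$, forcing $c\in h\pi_j(H_{j+1})h^{-1}$ by malnormality. If $\langle a,c\rangle$ is non-cyclic, the dichotomy yields either $K=M$ (impossible) or $\langle a,c\rangle\subseteq g\pi_{j'}(H_{j'+1})g^{-1}$ for some $g\in M$ and $j'\geq 0$; then $a\in h\pi_j(H_{j+1})h^{-1}\cap g\pi_{j'}(H_{j'+1})g^{-1}$ is nontrivial, so malnormality forces $j=j'$ and $g\pi_j(H_{j+1})g^{-1}=h\pi_j(H_{j+1})h^{-1}$, which contains $c$.

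The main obstacle I anticipate is bookkeeping between the finite stages and the limit: one must verify that being cyclic and being conjugate into $\pi_{j,i+1}(H_{j+1})$ are both preserved under the quotient $G_{i+1}\to M$ (for the ``elementary'' side), and that the absorption inclusion $\phi_{i+1}(X_0)\subseteq \phi_{i+1}(K_{i+1})$ descends to $M$ (for the ``absorption'' side). Both checks are routine but essential, and once they are made, the malnormal dichotomy argument runs cleanly.
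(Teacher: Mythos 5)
Your non-abelian argument matches the paper's in spirit, and the ``finishing'' step using the $\langle a,c\rangle$ dichotomy together with malnormality is essentially the same device the paper uses. But there is a genuine gap at the very start: from ``$K\leq M$ is proper and not cyclic'' you cannot in general ``fix $a,b\in K$ generating a non-cyclic subgroup.'' A non-cyclic group can be \emph{locally cyclic} (every $2$-generated subgroup cyclic); the torsion-free locally cyclic groups are exactly the subgroups of $\mathbb{Q}$, and $\mathbb{Z}[1/2]\leq\mathbb{Q}$ is a standard example of a non-cyclic one. Nothing in the construction rules out \emph{a priori} that $M$ contains such a subgroup, and your argument never gets off the ground if it does: the absorption dichotomy applied to any $2$-generated $\langle a,c\rangle\leq K$ only tells you that $\langle a,c\rangle$ is cyclic, which gives you no conjugate of any $\pi_j(H_{j+1})$ to pin $K$ inside of, and hence no home for the malnormality argument.

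This is precisely the case that the paper's proof handles separately, using the ``centralizer'' property \ref{item. i centralizer} rather than the absorption property \ref{item. i absorb}. For an abelian $K$: if some non-trivial $h\in K$ conjugates into a $\pi_j(H_{j+1})$, malnormality forces $K\leq C_M(h)$ into that conjugate; otherwise one picks a preimage $g\in G_0$ of $h$ of minimal $X_0$-length $i$, notes $\phi_i(g)$ is hyperbolic in the relatively hyperbolic pair $(G_i,\{\pi_{k,i}(H_{k+1})\}_{k<i})$ so that $C_{G_i}(\phi_i(g))$ is cyclic, and then uses the inductive centralizer preservation to show every $h'\in K$ lies in the (cyclic) image of $C_{G_i}(\phi_i(g))$ in $M$. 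Your proof, as written, entirely omits the mechanism coming from \ref{item. i centralizer}, and the absorption dichotomy alone cannot substitute for it in the locally cyclic case. To repair the argument, you should first dispose of the abelian case along those lines, and then run your dichotomy-plus-malnormality argument on the non-abelian case, where some non-commuting $a,b$ are guaranteed to exist.
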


    \begin{proof}
        Let $H<M$ be a proper subgroup. First suppose that $H$ is abelian. Let $1\neq h\in H$ be a non-trivial element. If $h$ conjugates into some $\pi_k(H_{k+1})$, then malnormality implies that $H\leq C_M(h)$ conjugates into $\pi_k(H_{k+1})$. So let us assume $h$ does not conjugate into any of $\pi_{k,i}(H_{k+1})$. Let $i=|h|_{\phi(X_0)}$ and let $g\in G_0$ be a preimage of $h$ such that $|g|_{X_0}=i$. Consider an arbitrary element $h'\in H$ and let $g'\in G_0$ be a preimage of $h'$. In $M$, it holds $[h,h']=1$. So there exists some $j\geq i$ such that $[\phi_j(g),\phi_j(g')]=1$, i.e., $\phi_j(g')\in C_{G_j}(\phi_j(g))$. An induction using \ref{item. i centralizer} shows that $C_{G_j}(\phi_j(g))$ is the image of $C_{G_i}(\phi_i(g))$. So $h'$ belongs to the image of $C_{G_i}(\phi_i(g))$ in $M$. Therefore, $H$ is contained in the image of $C_{G_i}(\phi_i(g))$ in $M$. Note that $\phi_i(g)$ does not conjugate into any of $\pi_{k,i}(H_{k+1})$ for $k<i$. So $\phi_i(g)$ is a hyperbolic element of the torsion-free relatively hyperbolic pair $(G_i,\{\pi_{k,i}(H_{k+1})\}_{k<i})$ and therefore $C_{G_i}(\phi_i(g))$ is cyclic. Hence so is $H$.
        
        Next, suppose that $H$ is non-abelian. Then $H$ contains elements $h_1$ and $h_2$ with $[h_1,h_2]\neq 1$. Let $g_1$ (resp. $g_2$) be a preimage of $h_1$ (resp. $h_2$) in $G_0$. Let $K=\langle g_1,g_2\rangle\leq G_0$. If for all $i$, the group $\phi_i(K)$ is non-elementary for the relatively hyperbolic pair $(G_i,\{\pi_{k,i}(H_{k+1})\}_{k<i}$, then by \ref{item. i absorb}, there exists $i$ such that $\phi_i(X_0)\subset \phi_i(K)$. Then $\phi(X_0)\subset H$, and therefore $H=M$, a contradiction. 
        
        So there exists $i$ such that $\phi_i(K)$ is elementary. As $[h_1,h_2]\neq 1$, the group $\phi_i(K)$ is non-abelian, and so it conjugates into some $\pi_{k,i}(H_{k+1})$. That is, there exists $g\in G_0$ such that $\phi_i(gKg^{-1})\leq \pi_{k,i}(H_{k+1})$, and thus $\phi(gKg^{-1})\leq \pi_k(H_{k+1})$. In particular,
        \begin{equation}\label{eq. 17}
            \phi(gg_1g^{-1})\in \pi_k(H_{k+1}).
        \end{equation}

        We prove that $\phi(g)H\phi(g^{-1})\leq \pi_k(H_{k+1})$. Let $h_3$ be any non-trivial element of $H$. If $[h_1,h_3]=1$, then $[\phi(g)h_1\phi(g^{-1}),\phi(g)h_3\phi(g^{-1})]=1$. Malnormality implies that $\phi(g)h_3\phi(g^{-1})\in \pi_k(H_{k+1})$. If $[h_1,h_3]\neq 1$, let $g_3\in G_0$ be a preimage of $h_3$ and let $K'=\langle g_1,g_3 \rangle\leq G_0$. From the above argument with $h_3$ in place of $h_2$ we conclude that there exist $g'\in G_0$ and $k'\geq 0$ such that $\phi(g'K'(g')^{-1})\subset \pi_{k'}(H_{k'+1})$. That is,
        \begin{equation}\label{eq. 18}
            \phi(g'g_1(g')^{-1})\in \pi_{k'}(H_{k'+1});
        \end{equation}
        \begin{equation}\label{eq. 19}
            \phi(g'g_3(g')^{-1})\in \pi_{k'}(H_{k'+1}).
        \end{equation}

        By \eqref{eq. 18}, we have
        \[\phi((g'g^{-1})(gg_1g^{-1})(g'g^{-1})^{-1})\in \pi_{k'}(H_{k'+1}).\]
        Combining this with \eqref{eq. 17} and malnormality, we get $k=k'$ and $\phi(g'g^{-1})\in\pi_k(H_{k+1})$, which, when combined with \eqref{eq. 19}, gives $\phi(gg_3g^{-1})\in \pi_k(H_{k+1})$.
        
        Therefore, in all cases, $\phi(g)$ conjugates $h_3$ into $\pi_k(H_{k+1})$, and since $h_3$ was arbitrary this concludes the proof.
    \end{proof}  

    Finally, we prove that $M$ is simple, which gives the only remaining Item: \ref{item. simple}. Let $H\lhd M$ be a proper non-trivial normal subgroup. As each $\pi_i(H_{i+1})$ is malnormal in $M$, the group $H$ cannot conjugate into any of $\pi_i(H_{i+1})$. By \Cref{cl. cyclic}, we get that $H$ is cyclic. Let $h\in H$ be a generator, and let $g\in G_0$ be a preimage of $h$. As $H$ is normal in $M$, there exists $i$ such that $\phi_i(xgx^{-1})\in \{\phi_i(g),\phi_i(g^{-1})\}$ for all $x\in X_0$. So $\phi_i(G_0)$ is contained in $N_{G_i}(\phi_i(g))$, the normalizer of $\phi_i(g)$ in $G_i$. Note that $\phi_i(g)$ cannot conjugate into $\pi_{j,i}(H_{j+1})$ for any $j<i$, as otherwise $H$ would conjugate into $\pi_j(H_{j+1})$. As $(G_i,\{\pi_{j,i}(H_{j+1})\}_{j<i})$ is a torsion-free relatively hyperbolic pair, the element $\phi_i(g)$ is hyperbolic, and so $N_{G_i}(\phi_i(g))$ is cyclic. So $\phi_i(G_0)$ is cyclic, which contradicts that $\phi_i(G_0)$ is non-elementary. This concludes the proof of Theorem \ref{thm. embedding}.

\begin{remark}
\label{rem:fp}
    Groups constructed this way can never be finitely presented. Indeed, suppose that $\{ G_i \}_{i \geq 1}$ is a directed system where $G_i \to G_{i+1}$ is an epimorphism, and $M$ is the direct limit. If $M$ is finitely presented, then all the relations defining $M$ must appear after finitely many steps, and therefore $M = G_i$ for $i$ large enough. This is not possible if the $G_i$ are acylindrically hyperbolic, and $M$ has some property that is incompatible with acylindrical hyperbolicity, such as simplicity.
\end{remark}

\begin{remark}
    \label{rem:fp2}
    The group $M$ constructed in the proof of \Cref{thm. embedding} has infinite-dimensional $H_2(M;\Q)$, so $M$ is not even of type $FP_2(\Q)$. Keep the notation from the proof, and choose $Z_0$ (the $K(G_0, 1) = K(F_2, 1)$) to be a wedge of two circles, and $Y_i'$ (the $K(L_i, 1)$) to be a Rips complex for $L_i$, so that $Y_i'$ has exactly $|X_i|$ 1-cells.

    The quotient $G_i \ast H_{i+1} \ast L_{i+1} \to G_{i+1/4}$ introduces $|X_{i+1}|$ relations that ensure that the images of the generators of $L_{i+1}$ belong to the image of $G_0$. Then the quotient $G_{i+1/4} \to G_{i+1/2}$ introduces two relations that ensure that the images of the generators of $G_0$ belong to the image of $L_{i+1}$. So the quotient $G_i \ast H_{i+1} \ast L_{i+1} \to G_{i+1/2}$ introduces a total of $(|X_{i+1}| + 2)$ relations, that only involve $G_i \ast L_{i+1}$, and not $H_{i+1}$.

    This yields an alternative construction of $Z$ (the $K(M, 1)$), by building separately the part that involves the $L_i$ and then the part that involves the $H_i$. More precisely, we define $T_i$ inductively by setting $T_0 = Z_0$ and letting $T_{i+1}$ be the wedge $T_i \vee Y_{i+1}'$ to which we attach the $2$-cells corresponding to the relators added in the quotient $G_i \ast H_{i+1} \ast L_{i+1} \to G_{i+1/2}$. We let $T = \bigcup_{i \geq 0} T_i$ with the weak topology. The passage from $T_i$ to $T_{i+1}$ introduces $|X_{i+1}|$ $1$-cells and $(|X_{i+1}| + 2)$ $2$-cells. Computing the Mayer--Vietoris sequence of this amalgam shows that $H_2(T_i; \Q) \to H_2(T_{i+1}; \Q)$ is injective, and moreover $b_2(T_{i+1}; \Q) \geq b_2(T_i; \Q) + 2$. Since homology commutes with direct limits, we see that $H_2(T; \Q)$ is infinite-dimensional.
    
    Now take the wedge of $T$ with all of the $Y_i$, and attach the $2$-cells corresponding to the relators added in the quotient $G_{i+1/2} \to G_{i+1}$, for all $i \geq 0$. The result is $Z$. Because $Z$ is obtained from $T\vee \left(\bigvee_{i\geq 1}Y_i\right)$ by adding $2$-cells, by Lemma \ref{lem. compute cohomology} there is an injective map $H_2(T; \Q) \to H_2(Z; \Q)$. So the previous paragraph implies that $H_2(M; \Q) = H_2(Z; \Q)$ is infinite-dimensional.
\end{remark}

It is an interesting problem to find finitely presented groups that satisfy a version of \Cref{thm. embedding}, at least in special cases. Note that already in the case of torsion-free Tarski monsters, there is no known finitely presented example.

\section{Applications}
\label{sec:applications}

\subsection{Finitely generated simple groups}

The easiest consequences of Theorem \ref{thm. embedding} are Theorem \ref{intro:thm:simple:embedding} and Corollary \ref{intro:cor:simple}. Let us note that embedding into infinite-dimensional finitely generated simple groups was already achieved e.g. by Thompson \cite{embeddingsimple:thompson}, so we focus here on the finite-dimensional case.

\begin{theorem}[Theorem \ref{intro:thm:simple:embedding}]
\label{thm:simple:embedding}
    Let $G$ be a countable group of geometric dimension at most $n \geq 2$. Then there exists a finitely generated simple group $M$ with $\hd_R(M) = \cd_R(M) = \gd(M) = n$, for every commutative unital ring $R$, into which $G$ embeds as a malnormal subgroup.
\end{theorem}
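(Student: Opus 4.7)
The plan is to derive this as a direct corollary of Theorem~\ref{thm. embedding}, feeding in $G$ as the only non-trivial $H_i$ and using copies of the fundamental group of a closed orientable hyperbolic $n$-manifold as the $L_i$ to ensure that $H_n(M;R) \neq 0$ for every commutative unital ring $R$.

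First I observe that $G$ is automatically torsion-free: any non-trivial finite subgroup has infinite cohomological dimension over $\Z$, contradicting $\cd_\Z(G) \leq \gd(G) \leq n < \infty$. Second, since $n \geq 2$, a closed orientable hyperbolic $n$-manifold $N$ exists, and I set $L \coloneqq \pi_1(N)$. Then $L$ is torsion-free (it acts freely on $\mathbb{H}^n$), non-elementary, word-hyperbolic, of geometric dimension $n$, and Poincar\'e duality yields $H_n(L;R) \cong R$ for every commutative unital ring $R$.

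Now I apply Theorem~\ref{thm. embedding} with $H_1 \coloneqq G$, $H_i \coloneqq \{1\}$ for $i \geq 2$, $L_i \coloneqq L$ for every $i \geq 1$, and $B_i \coloneqq \emptyset$. This yields a finitely generated torsion-free simple group $M$ into which $G$ embeds as a malnormal subgroup (items~\ref{item. simple} and~\ref{item. Hi malnormal}) and satisfies $\gd(M) \leq \max\{2,\gd(G),\gd(L)\} = n$ by item~\ref{item. gd <}.

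For the lower bound on dimension I use item~\ref{item. homology relative}. For $n \geq 3$, it gives an isomorphism
\[
H_n(M;R) \;\cong\; H_n(G;R) \oplus \bigoplus_{i \geq 1} H_n(L;R) \;\cong\; H_n(G;R) \oplus \bigoplus_{i \geq 1} R,
\]
which is nonzero for every $R$. For $n = 2$, the same item provides an injection $\bigoplus_{i \geq 1} R \cong \bigoplus_{i \geq 1} H_2(L;R) \hookrightarrow H_2(M;R)$, also nonzero for every $R$. Therefore $\hd_R(M) \geq n$, and combined with $\hd_R(M) \leq \cd_R(M) \leq \gd(M) \leq n$ the three dimensions all equal $n$. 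All the genuine difficulty is packaged into Theorem~\ref{thm. embedding}; beyond its invocation there is essentially no obstacle here.
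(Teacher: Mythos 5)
Your proposal is correct and follows essentially the same route as the paper: take $H_1 = G$, let $L_1 = L = \pi_1$ of a closed orientable hyperbolic $n$-manifold, feed everything into Theorem~\ref{thm. embedding}, and use items~\ref{item. gd <} and~\ref{item. homology relative} to pin down the dimension. The only cosmetic difference is that the paper sets $L_i$ to be free groups for $i \geq 2$, whereas you duplicate $L$ across all indices; both choices keep $\sup_i \gd(L_i) = n$ and guarantee a nonzero $H_n$, so the argument goes through identically. Your explicit case split between $n \geq 3$ (isomorphism in degree $n$) and $n = 2$ (injection in degree $2$) is slightly more careful than the paper's phrasing but matches what Theorem~\ref{thm. embedding}\ref{item. homology relative} actually provides.
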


\begin{proof}
    Let $H_1=G$, which is torsion-free as having finite geometric dimension; let $L_1$ be the fundamental group of a closed orientable hyperbolic $n$-manifold; let $H_i=\{1\}$ for $i\geq 2$; and let $L_i : i \geq 2$ be free groups. We can apply Theorem \ref{thm. embedding} to obtain a finitely generated simple group $M$ into which $G$ embeds malnormally. It has geometric dimension at most $n$. Moreover $H_n(M; R) \supset H_n(L; R) \neq 0$ and therefore $n \leq \hd_R(M) \leq \cd_R(M) \leq \gd(M) \leq n$.
\end{proof}

\begin{corollary}[Corollary \ref{intro:cor:simple}]
\label{cor:simple}
    For all $n \geq 3$, there exist continuum many pairwise non-isomorphic finitely generated simple (torsion-free) groups $G$ with $\hd_R(G) = \cd_R(G) = \gd(G) = n$.
\end{corollary}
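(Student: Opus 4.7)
The plan is to apply Theorem \ref{thm:simple:embedding} to a continuum-sized family of pairwise non-isomorphic inputs, and to distinguish the outputs by an intrinsic invariant of the resulting simple groups. Fix $n \geq 3$. A convenient family of inputs is provided by the rank-$1$ torsion-free abelian groups that are not isomorphic to $\Z$: by Baer's classification by types, there are continuum many such groups up to isomorphism, and each one is countable, torsion-free, non-cyclic, with geometric dimension exactly $2$ (hence at most $n$). Denote this family by $\{H_\alpha\}_{\alpha \in 2^{\aleph_0}}$ and apply Theorem \ref{thm:simple:embedding} with the chosen $n$ to each $H_\alpha$, obtaining a finitely generated simple torsion-free group $M_\alpha$ with $\hd_R(M_\alpha) = \cd_R(M_\alpha) = \gd(M_\alpha) = n$, into which $H_\alpha$ embeds as a malnormal subgroup.

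To show that the $M_\alpha$'s are pairwise non-isomorphic, I will argue that $H_\alpha$ can be recovered from $M_\alpha$ up to abstract isomorphism as the common isomorphism type of any maximal non-cyclic proper subgroup. In the proof of Theorem \ref{thm:simple:embedding}, the family $\{H_i\}$ used in Theorem \ref{thm. embedding} consists of $H_1 = H_\alpha$ together with $H_i = \{1\}$ for $i \geq 2$. Item \ref{item. tarski like} of Theorem \ref{thm. embedding} then asserts that every proper subgroup of $M_\alpha$ is either cyclic or conjugate into $H_\alpha$. Combined with the fact that $H_\alpha$ is non-cyclic and malnormal in $M_\alpha$, this forces the conjugates of $H_\alpha$ to be precisely the maximal non-cyclic proper subgroups of $M_\alpha$, yielding a single conjugacy class that is an isomorphism invariant of $M_\alpha$. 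Therefore $M_\alpha \cong M_\beta$ implies $H_\alpha \cong H_\beta$, and the pairwise non-isomorphism of the inputs carries over to the outputs.

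The only step requiring a short verification, which I expect to be the main (minor) obstacle, is the maximality assertion. Suppose $H_\alpha \leq H' < M_\alpha$ with $H'$ non-cyclic. By Item \ref{item. tarski like} there exists $g \in M_\alpha$ with $H' \leq gH_\alpha g^{-1}$, hence $H_\alpha \cap gH_\alpha g^{-1} \supseteq H_\alpha \neq \{1\}$; malnormality forces $g \in H_\alpha$, so $gH_\alpha g^{-1} = H_\alpha$ and $H' = H_\alpha$. An arbitrary maximal non-cyclic proper subgroup $K < M_\alpha$ must similarly satisfy $K \leq gH_\alpha g^{-1}$ for some $g$, and by maximality $K = gH_\alpha g^{-1}$. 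Together with Theorem \ref{thm:simple:embedding} providing finite generation, simplicity, torsion-freeness, and the equality of all dimensions, this completes the proof, with no appeal to $L^2$-Betti numbers or cardinality considerations.
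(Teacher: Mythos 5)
Your proof is correct, but it takes a genuinely different route from the paper's. The paper starts from continuum many pairwise non-isomorphic \emph{finitely generated} groups of dimension $n$ (Camm's $2$-dimensional simple groups, free-producted with the fundamental group of a closed aspherical $n$-manifold) and distinguishes the resulting simple groups $M_i$ by a counting argument: a countable group has only countably many finitely generated subgroups, so if fewer than continuum many isomorphism types arose among the $M_i$, some single $M_i$ would contain continuum many pairwise non-isomorphic finitely generated subgroups, a contradiction. You instead start from Baer's continuum of non-cyclic rank-$1$ torsion-free abelian groups and distinguish the outputs by an explicit, intrinsic invariant: using Items \ref{item. Hi malnormal} and \ref{item. tarski like} of Theorem \ref{thm. embedding} (with $H_1 = H_\alpha$, $H_i = \{1\}$ for $i \geq 2$), the conjugates of $H_\alpha$ are precisely the maximal non-cyclic proper subgroups of $M_\alpha$, so $H_\alpha$ is recoverable from $M_\alpha$ up to isomorphism. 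Both arguments are sound; yours requires the additional (short) verification of maximality that you supply, but in return it pinpoints a concrete isomorphism invariant rather than relying on a cardinality argument, and it works with non-finitely-generated inputs, whereas the paper's counting trick crucially needs the inputs to be finitely generated. One minor point worth stating explicitly in your write-up: a non-cyclic subgroup of $\Q$ has $\gd = 2$ because it is a direct limit of copies of $\Z$, and the mapping telescope of the corresponding circles gives a $2$-dimensional $K(\cdot,1)$; for your purposes only $\gd \leq n$ is needed, which holds since $n \geq 3$.
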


Note that the case of $n = 2$ was already covered by Camm \cite{camm}.

\begin{proof}
    Let $\{G_i\}_{i \in I}$ be a family of continuum many pairwise non-isomorphic finitely generated groups with $\gd(G_i) = n$. For example, we can start with $n = 2$ \cite{camm} and then take free products with the fundamental group of a closed aspherical $n$-manifold. Theorem \ref{thm:simple:embedding} produces a family $\{ M_i \}_{i \in I}$ of finitely generated simple torsion-free groups of the right dimension, such that $M_i$ contains $G_i$. Since every countable group contains only countably many finitely generated subgroups, there must be continuum many pairwise non-isomorphic groups in $\{ M_i \}_{i \in I}$.
\end{proof}

\subsection{Finite-dimensional torsion-free Tarski monsters}

Theorem \ref{thm. embedding}, applied with $L_1$ the fundamental group of a closed oriented hyperbolic $n$-manifold easily gives torsion-free Tarski monsters of dimension $n$. Here we use the additional control to produce infinitely many, for $n \geq 3$, and continuum many, for $n \geq 4$, proving Theorem \ref{intro:thm:fd:gap}.

\begin{corollary}[Theorem \ref{intro:thm:fd:gap}, first part]
\label{cor. 3 tarski}
    For all $n \geq 3$, there exist infinitely many pairwise non-isomorphic torsion-free Tarski monsters $M$ such that $\hd_R(M) = \cd_R(M) = \gd(M) = n$ for all commutative unital rings $R$.
\end{corollary}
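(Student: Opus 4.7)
The plan is to apply Theorem \ref{thm. embedding} with all $H_i$ trivial, varying only the number of copies of the fundamental group of a closed orientable hyperbolic $n$-manifold among the $L_i$ in order to control the top homology and thereby distinguish isomorphism types.

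Fix $n \geq 3$ and a closed orientable hyperbolic $n$-manifold $N$ (which exists by standard arithmetic constructions). For each $k \geq 1$, apply Theorem \ref{thm. embedding} with $H_i = \{1\}$ and $B_i = \emptyset$ for all $i \geq 1$; with $L_i = \pi_1(N)$ for $1 \leq i \leq k$; and with $L_i = F_2$ for $i > k$. Each $L_i$ is a torsion-free non-elementary hyperbolic group of geometric dimension at most $n$, so the hypotheses are satisfied; denote the resulting group $M_k$. By Items \ref{item. simple} and \ref{item. tarski like}, together with the triviality of every $H_i$, the group $M_k$ is a finitely generated torsion-free simple group all of whose proper subgroups are cyclic, hence either trivial or infinite cyclic. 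Thus $M_k$ is a torsion-free Tarski monster.

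To pin down the dimension, Item \ref{item. gd <} yields $\gd(M_k) \leq \max\{2, n\} = n$, while Item \ref{item. homology relative} computes
\[H_n(M_k; R) \cong \bigoplus_{i \geq 1} H_n(L_i; R) \cong R^k\]
for every commutative unital ring $R$, since each $\pi_1(N)$ contributes an $R$ via its fundamental class and the $F_2$ factors contribute nothing in degree $n \geq 2$. Hence $H_n(M_k; R) \neq 0$ for every non-zero $R$, and the discussion in the preliminaries forces $\hd_R(M_k) = \cd_R(M_k) = \gd(M_k) = n$ for all commutative unital rings $R$. Finally, the groups $\{M_k\}_{k \geq 1}$ are pairwise non-isomorphic because $\dim_{\Q} H_n(M_k; \Q) = k$ is an isomorphism invariant.

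There is essentially no genuine obstacle to this plan: once Theorem \ref{thm. embedding} is in hand, the corollary reduces to choosing appropriate inputs and reading off the conclusions. The only external fact required is the classical existence of closed orientable hyperbolic $n$-manifolds for all $n \geq 2$. The reason why this approach does not immediately upgrade to continuum many for $n = 3$ is that the invariant $\dim_{\Q} H_n(M_k; \Q)$ is countable, so refining to uncountably many isomorphism classes would require a more delicate invariant (and, as noted in the remark on $\{0,1,3\}$, appears to hinge on the availability of $3$-dimensional hyperbolic groups with prescribed torsion in $H^3$).
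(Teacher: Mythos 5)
Your proposal is correct and matches the paper's own proof essentially verbatim: same choice of $H_i = \{1\}$, same mixture of $k$ copies of a closed hyperbolic $n$-manifold group with free groups as the $L_i$, same use of Items \ref{item. gd <} and \ref{item. homology relative} to pin the dimension, and the same invariant $\dim_\Q H_n(M_k;\Q)=k$ to distinguish the resulting groups.
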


\begin{proof}
    We let $H_i = \{ 1 \}$ for $i\geq 1$, let $ L_i$ be the fundamental group of a closed oriented hyperbolic $n$-manifold for $1\leq i\leq k$, and let $L_i$ be a free group for $i>k$. Theorem \ref{thm. embedding} then produces a torsion-free Tarski monster $M$ such that $\gd(M) \leq n$ and
    \[H_n(M;R) \cong \bigoplus_{i = 1}^k H_n(L_i;R) \cong R^k.\]
    This gives $H_n(M;R) \neq 0$ and thus $n \leq \hd_R(M) \leq \cd_R(M) \leq \gd(M) \leq n$. Varying $k$, we obtain infinitely many examples.
\end{proof}

It is clear that such an argument cannot produce continuum many groups with different homology. We achieve this for $n \geq 4$, by using torsion in $H^3$. The main ingredient for the construction is the following.

\begin{lemma}\label{lem. torsion building block}
    For each prime $p$, there exists a hyperbolic group $G$ with $\gd(G) \leq 4$ such that $\tor H^3(G;\Z)$ is a direct sum of $\Z/p$.
\end{lemma}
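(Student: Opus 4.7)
My plan is to realize $G$ as the fundamental group of a closed aspherical $4$-dimensional complex obtained by hyperbolizing a $4$-manifold that already carries $p$-torsion in $H^3$.

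First I produce a seed closed aspherical $4$-manifold $N$ with $\mathbb{Z}/p$-torsion in $H^3$. Starting from a hyperbolic knot complement in $S^3$, Thurston's hyperbolic Dehn surgery theorem yields a closed orientable hyperbolic $3$-manifold $L$ with $H_1(L;\mathbb{Z}) \cong \mathbb{Z}/p$ for an appropriately chosen slope (for instance, $p/1$-filling of the figure-eight knot complement when $p$ is sufficiently large; for the remaining small primes a different hyperbolic knot or slope works). By Poincar\'e duality $H^2(L;\mathbb{Z}) \cong \mathbb{Z}/p$, and the K\"unneth formula applied to $N \coloneqq L \times S^1$ gives $H^3(N;\mathbb{Z}) \cong \mathbb{Z} \oplus \mathbb{Z}/p$. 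The fundamental group $\pi_1(N) = \pi_1(L) \times \mathbb{Z}$ contains $\mathbb{Z}^2$ and is not hyperbolic, so a further step is needed.

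Next I apply the Charney--Davis strict hyperbolization to a PL triangulation of $N$. This outputs a closed aspherical $4$-dimensional piecewise-hyperbolic pseudomanifold $h(N)$ whose fundamental group $G \coloneqq \pi_1(h(N))$ is word hyperbolic, together with a natural ``collapse'' map $c \colon h(N) \to N$. A central feature of the construction is that $c$ splits integral cohomology, identifying $H^\ast(N;\mathbb{Z})$ with a direct summand of $H^\ast(h(N);\mathbb{Z})$. Consequently the $\mathbb{Z}/p$-summand of $H^3(N;\mathbb{Z})$ embeds as a direct summand of $\tor H^3(G;\mathbb{Z})$, and asphericity of $h(N)$ yields $\gd(G) \leq 4$.

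The remaining task, which I expect to be the main obstacle, is to verify that $\tor H^3(G;\mathbb{Z})$ is entirely a direct sum of copies of $\mathbb{Z}/p$ rather than containing torsion of other orders. The Charney--Davis procedure glues copies of a fixed hyperbolic ``template'' along cells of $N$, so any extra torsion introduced into $H^\ast(h(N);\mathbb{Z})$ is governed by the cohomology of the template via the Mayer--Vietoris sequence of the gluing. By choosing the template to be a closed hyperbolic manifold whose positive-degree integral cohomology is an $\mathbb{F}_p$-vector space (which can be arranged by an auxiliary Dehn-surgery construction tailored to $p$), one can ensure that the full torsion subgroup $\tor H^3(G;\mathbb{Z})$ has exponent $p$ and hence takes the required form. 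Should this fine control through Charney--Davis prove awkward, an alternative would be to build $G$ via the small-cancellation/Cohen--Lyndon machinery of Section~\ref{sec:inductive}, starting from the seed $N$ and using quotients that attach $2$-cells along $p$-th powers of a carefully chosen class to introduce exactly $\mathbb{Z}/p$-torsion in $H^3$.
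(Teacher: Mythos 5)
Your overall architecture is recognizably parallel to the paper's (build an aspherical $4$-manifold with prescribed $p$-torsion in $H^3$, then pass to a hyperbolic group), but your concrete route is different and has a real gap at the crucial point — controlling the torsion that hyperbolization introduces.

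The paper does not use Charney--Davis hyperbolization at all. It starts from Ivan\v{s}i\'{c}'s link $L \subset S^4$ of five tori whose complement is cusped hyperbolic, performs a $4$-dimensional Dehn filling (in the sense of Fujiwara--Manning) along carefully chosen slopes $qa_i + pc_i$ to obtain a closed aspherical locally CAT$(0)$ manifold $X$ with $H_1(X;\mathbb{Z}) \cong (\mathbb{Z}/p)^5$ and $b_2(X) = 0$ — the Mayer--Vietoris and Euler characteristic bookkeeping gives \emph{exact} control. Then a second, group-theoretic Dehn filling of the peripheral $\mathbb{Z}^2$'s (using Osin's theorem plus the cohomology-preserving result of \cite{sun2019cohomologyii}) yields a hyperbolic group $G$ with $\gd(G) \le 4$, and a spectral sequence computation pins down $\tor H^3(G;\mathbb{Z}) \cong (\mathbb{Z}/p)^5$.

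In your proposal, the step ``choosing the template to be a closed hyperbolic manifold whose positive-degree integral cohomology is an $\mathbb{F}_p$-vector space'' does not match how Charney--Davis hyperbolization actually works: the hyperbolizing cell (their Williams complex / hyperbolized cube) is a \emph{fixed} piece, not a free parameter you may tune prime by prime, and it is a manifold with corners, not a closed hyperbolic manifold. Moreover, even if such a choice were available, a Mayer--Vietoris computation assembling pieces whose cohomology has exponent $p$ can still produce $\mathbb{Z}/p^2$ (an extension of $\mathbb{Z}/p$ by $\mathbb{Z}/p$ need not split), so the claim that $\tor H^3(G;\mathbb{Z})$ has exponent $p$ would need a separate argument. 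This is exactly the part you flag as ``the main obstacle'' and it is, in fact, the substance of the lemma; without it the proof is incomplete. Your fallback suggestion (small cancellation attaching $2$-cells along $p$-th powers) also does not obviously give a \emph{hyperbolic} group of $\gd \le 4$ with the stated torsion — the paper gets hyperbolicity by filling the peripheral abelian subgroups, not by free small cancellation over $G$ itself. The parts of your proposal that do work — using a closed hyperbolic $3$-manifold $L$ with $H_1(L;\mathbb{Z}) \cong \mathbb{Z}/p$, Poincar\'{e} duality to transfer the torsion to $H^3$, and the degree-$1$ map argument showing cohomology of the seed is a direct summand — are sound, but they do not close the gap.
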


\begin{proof}
    The desired groups will be constructed via two Dehn fillings. The first one is a variant of the Dehn filling in \cite{ratcliffe2005some}. By \cite[Theorem 4.3]{ivansic2004hyperbolic}, there is a link $L$ in $S^4$ with five components, each of which is homeomorphic to $S^1\times S^1$, such that $S^4 - L$ is an orientable cusped hyperbolic manifold. The link $L$ has a closed tubular neighborhood $N(L)$ in $S^4$, which has five connected components. Each of these connected components is homeomorphic to $S^1\times S^1\times D^2$. Let $Y$ be the complement of the interior of $N(L)$ in $S^4$. Then $\partial Y$ has five connected components, each of which is homeomorphic to $S^1\times S^1\times S^1$.
    
    By the Mayer--Vietoris sequence, we have
\[0\to \bigoplus^5_{i=1} H_1(S^1\times S^1\times S^1;\Z) \to \bigoplus^5_{i=1} H_1(S^1\times S^1\times D^2;\Z)\oplus H_1(Y;\Z)\to 0.\]

So $H_1(Y;\Z)\cong \Z^5$. For $i=1,\dots,5$, let $\{a_i,b_i,c_i\}$ be a basis of the fundamental group of the $i$-th $3$-torus, such that $c_i$ is the boundary of the $D^2$ factor of the $i$-th solid $4$-torus. Below, we will abuse notation and use $a_i,b_i,c_i$ to represent their images in other groups, such as $\pi_1(Y)$ and $H_1(Y;\Z)$. Then $\{c_1,\dots,c_5\}$ is a basis of $H_1(Y;\Z)$.

Fix a prime $p$. Consider a large prime $q\neq p$. Glue, for each $i$, a solid $4$-torus $S^1\times S^1\times D^2$ to the $i$-th $3$-torus boundary of $Y$ such that the boundary of $D^2$-factor of the solid torus is mapped to $qa_i+pc_i$. By \cite[Theorem 2.7]{fujmann2010}, as long as $q$ is large enough, the resulting space will be a manifold $X$ with a locally CAT(0) metric. In particular, $X$ is aspherical. Moreover, we have 
\[H_1(X;\Z)=H_1(Y;\Z)/\langle qa_i+pc_i,\;\; i=1,\dots,5 \rangle \cong (\Z/p)^5,\]
where the last equality follows from the fact that, for $i=1,\dots,5$, we have $a_i=0$ in $H_1(Y;\Z)$. The manifold $X$ is closed and orientable, as it is obtained by gluing two orientable manifolds along boundaries. Therefore, Poincar\'e duality gives $H^3(X;\Z) \cong H_1(X;\Z)\cong (\Z/p)^5$. The manifold $X$ has the same Euler characteristic as $S^4$, and from the above computation we have $b_1(X)=b_3(X)=0$ and $b_4(X)=1$. So $b_2(X)=0$. As $X$ is aspherical, we have $H^3(\pi_1(X);\Z)\cong (\Z/p)^5$ and $b_2(\pi_1(X))=0$.

By \Cref{thm. osin df}, as long as $q$ is large, the group $\pi_1(X)$ will be hyperbolic relative to five copies of $\Z^2$, one for each component of $L$. A basis for the $i$-th copy of these $\Z^2$ is given by $\{b_i, rc_i-sa_i\}$, where $r,s$ are integers such that $rp-qs=1$. Consider large coprime integers $m,n$ and the group

\[G:=\pi_1(X)/\ll mb_i+n(rc_i-sa_i),\;\; i=1,\dots,5 \rr_{\pi_1(X)}\]

By \Cref{cor. osin df} and \cite[Theorem 4.10]{sun2019cohomologyii}, as long as $m,n$ are large enough, the group $G$ will be hyperbolic and satisfy $\cd_{\Z}(G)\leq 4$, and therefore $\gd(G) \leq 4$ \cite{eilenbergganea}. Moreover there will be a spectral sequence

\[E^{p,q}_2=
\begin{cases}
    H^p(\Z;H^q(\Z;\Z)),& q\geq 1\\
    H^p(G;\Z),& q=0
\end{cases}
\Rightarrow H^{p+q}(\pi_1(X);\Z).
\]

There is a differential $d\colon H^1(\Z;H^1(\Z;\Z))\to H^3(G;\Z)$. Upon inspection of $E^{p,q}_2$, we see that $\coker d=H^3(\pi_1(X);\Z)=(\Z/p)^5$. We have $H^1(\Z;H^1(\Z;\Z))=\Z$ because the inside $H^1(\Z;\Z)$ is computed with respect to the trivial action, and $\Z$ has the trivial action on $H^1(\Z;\Z)$ as $\Z^2$ is abelian. Combining this with $b_2(X)=0$, we get that $\ker d=0$. So $\tor H^3(G;\Z)$ is a direct sum of $\Z/p$.
\end{proof}

\begin{theorem}[Theorem \ref{intro:thm:fd:gap}, second part]
\label{thm:fd:continuum}
    For each integer $n\geq 4$, there exist continuum many pairwise non-isomorphic torsion-free Tarski monsters $M$ such that $\hd_R(M)=\cd_R(M)=\gd(M)=n$ for all commutative unital rings $R$.
\end{theorem}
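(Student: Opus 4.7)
The plan is to apply \Cref{thm. embedding} once per subset $S$ of primes, using the hyperbolic groups $G_p$ from \Cref{lem. torsion building block} to inject prescribed $p$-torsion into $H^3(M_S;\Z)$. Since there are continuum many such subsets and $\tor H^3(M_S;\Z)$ will recover $S$, this yields the desired family of torsion-free Tarski monsters of dimension $n$.

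First I would fix $L$, the fundamental group of a closed oriented hyperbolic $n$-manifold, and let $P$ denote the (finite) set of primes dividing $|\tor H^3(L;\Z)|$. Each $G_p$ from \Cref{lem. torsion building block} is torsion-free (having $\gd \leq 4 \leq n$) and non-elementary (as $H^3(G_p;\Z)$ is non-trivial), so fits the hypotheses of \Cref{thm. embedding}. For each subset $S$ of primes disjoint from $P$, I would enumerate $S = \{p_1, p_2, \dots\}$ and apply \Cref{thm. embedding} with $H_i = \{1\}$ for all $i$, $L_1 = L$, $L_{i+1} = G_{p_i}$ for $p_i \in S$, and finitely generated non-abelian free groups padding the remaining $L_i$ in case $S$ is finite. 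Items \ref{item. simple} and \ref{item. tarski like} guarantee that the resulting $M_S$ is a torsion-free Tarski monster; Items \ref{item. gd <} and \ref{item. homology relative} give $\gd(M_S) \leq n$ and $H_n(M_S; R) \supseteq H_n(L; R) = R$ for every commutative unital ring $R$ (using that $L$ is the fundamental group of a closed oriented aspherical $n$-manifold), forcing $\hd_R(M_S) = \cd_R(M_S) = \gd(M_S) = n$.

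To distinguish the $M_S$ up to isomorphism, I would compute $\tor H^3(M_S;\Z)$. Item \ref{item. homology relative} gives $H^3(M_S;\Z) \cong \prod_i H^3(L_i;\Z)$, with the free $L_i$ contributing zero. The key observation is that while this is a genuine infinite direct product in general, its torsion subgroup still reduces to the direct sum whenever the torsion exponents of the factors diverge: a torsion element needs a uniform annihilator, forcing all but finitely many coordinates to vanish. Since the primes $p_i$ are distinct, I obtain
\[\tor H^3(M_S;\Z) \;\cong\; \tor H^3(L;\Z)\;\oplus\;\bigoplus_{p\in S}\tor H^3(G_p;\Z).\]
As $S$ is disjoint from $P$, the set of primes with non-trivial $p$-part in this group is exactly $P \sqcup S$. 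So $S$ is recovered as an isomorphism invariant of $M_S$, and since there are $2^{\aleph_0}$ subsets of primes disjoint from $P$, we obtain the desired uncountable family.

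The main conceptual step, beyond routine verifications that the construction satisfies the hypotheses of \Cref{thm. embedding}, is replacing the rank-based invariant used in \Cref{cor. 3 tarski} (which only distinguishes countably many groups) by a torsion-based invariant whose values can be prescribed essentially arbitrarily. This is exactly what \Cref{lem. torsion building block} makes possible, and it is why the result requires $n \geq 4$: the construction of groups with prescribed torsion in $H^3$ via two successive Dehn fillings inherently needs a $4$-dimensional ambient manifold.
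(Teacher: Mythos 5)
Your proof is correct and follows essentially the same approach as the paper: use \Cref{lem. torsion building block} to produce torsion-free non-elementary hyperbolic building blocks $G_p$ with prescribed $p$-torsion in $H^3$, feed them together with $L$ into \Cref{thm. embedding}, and distinguish the resulting Tarski monsters by which primes appear in the torsion of $H^3(M_S;\Z)$. The paper streamlines the bookkeeping slightly by restricting to infinite index subsets (so no free-group padding is needed) and by detecting the invariant via the statement that $H^3(M_S;\Z)$ \emph{contains} $p$-torsion, rather than computing $\tor H^3(M_S;\Z)$ outright; your extra observation that the torsion subgroup of the infinite product $\prod_i H^3(L_i;\Z)$ reduces to a direct sum because the primes $p_i$ are distinct is correct, but not strictly necessary for distinguishing the groups.
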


\begin{proof}
    Let $L_1$ be the fundamental group of an orientable hyperbolic $n$-manifold. Since $H^3(L_1;\Z)$ is a finitely generated abelian group, there exists an integer $p_1$ such that $H^3(L_1;\Z)$ does not have $p$-torsion for all $p > p_1$. Let $p_1<p_2<\dots$ be a sequence of primes. For each $i\geq 2$, there is a hyperbolic group $L_i$ with $\gd(L_i)\leq 4$ and $\tor H^3(L_i;\Z)$ being a direct sum of $\Z/p_i$, by \Cref{lem. torsion building block}.

    For each infinite subset $1\in S\subset \N$ containing $1$, apply \Cref{thm. embedding} to the sequences $\{H_i=\{1\}\}_{i\in S}$ and $\{L_i\}_{i\in S}$. The resulting torsion-free Tarski monster $M_S$ satisfies that $H^3(M_S;\Z)$ contains $p$-torsion for some $p>p_1$ if and only if $p\in S$, which distinguishes the $M_S$ up to isomorphism. Moreover    
    \[\gd(M_S)\leq \max\{2,n,4\}=n,\]
    \[H_n(M_S;R)=\bigoplus_{i \in S} H_n(L_i;R) \neq 0.\] 
    Therefore $n \leq \hd_R(M)\leq\cd_R(M)\leq\gd(M) \leq n$.
\end{proof}

\begin{remark}
\label{rem:3dim}
    It would be good to extend Theorem \ref{thm:fd:continuum} to dimension $3$. This would need an analog of Lemma \ref{lem. torsion building block} for $3$-dimensional groups: it suffices to construct a family of $3$-dimensional hyperbolic groups with prescribed torsion in $H^3$ (by the Universal Coefficient Theorem, this is equivalent to constructing a family of $3$-dimensional hyperbolic groups with prescribed torsion in $H_2$).

    One promising approach would be to construct a $3$-dimensional hyperbolic group $A$ with a $2$-dimensional subgroup $B$ such that $H_2(A;\Z) = 0$ and $\ker(H_1(B;\Z) \to H_1(A; \Z))$ contains $p$-torsion. Then the double $G = A \ast_{B} A$ is $3$-dimensional and contains $p$-torsion in $H^3$. If moreover $B$ is quasiconvex and malnormal, $G$ is hyperbolic \cite{bestvina1992combination}.
    Elaborations of the Kahn--Markovic construction \cite{kahn:markovic} allow for such examples \cite{vhtorsion1, vhtorsion2}, except, crucially, for malnormality, which is necessary for the double to be hyperbolic.
\end{remark}

\subsection{Infinite-dimensional torsion-free Tarski monsters \\ and measure equivalence}

Theorem \ref{thm. embedding}, applied with $L_i$ fundamental groups of closed oriented hyperbolic manifolds of larger and larger dimension, easily gives an infinite-dimensional torsion-free Tarski monster. However, to distinguish these analogously to Theorem \ref{thm:fd:continuum} we would need control the $n$-th homology of \emph{all} the $L_i$, for some fixed $n \geq 3$. Thanks to partial results on the Singer Conjecture, it is easier to keep track of $L^2$-Betti numbers, and this will also allow to distinguish the torsion-free Tarski monsters up to measure equivalence.

\begin{theorem}[Theorems \ref{intro:thm:jump} and \ref{intro:thm:ME}]
\label{thm. infdim tarski}
    There exist continuum many pairwise non-measure equivalent finitely generated torsion-free Tarski monsters $M$ such that $\hd_R(M) = \cd_R(M) = \gd(M) = \infty$ for all commutative unital rings $R$.
\end{theorem}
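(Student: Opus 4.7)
The plan is to use $L^2$-Betti numbers as a fingerprint: by Theorem \ref{thm. gaboriau}, measure equivalent groups have proportional $L^2$-Betti numbers, so it suffices to produce continuum many torsion-free Tarski monsters whose profiles of $L^2$-Betti numbers cannot be made proportional. For each $k \geq 2$, fix a torsion-free non-elementary hyperbolic building block $\Lambda_k$ which is of type $F$ and virtually locally indicable, with $b^{(2)}_k(\Lambda_k) =: r_k > 0$ and $b^{(2)}_n(\Lambda_k) = 0$ for $n \neq k$. One may take $\Lambda_k$ to be the fundamental group of a closed orientable arithmetic hyperbolic $2k$-manifold of simplest type, using Dodziuk's theorem on the $L^2$-Betti number concentration in the middle dimension and Bergeron--Haglund--Wise's virtual compact specialness (which implies virtual local indicability via right-angled Artin groups).

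For each infinite $S\subseteq \N_{\geq 2}$, apply Theorem \ref{thm. embedding} with $H_i = \{1\}$ for all $i\geq 1$ and $\{L_i\}_{i\geq 1}$ an enumeration of $\{\Lambda_k : k\in S\}$, each appearing exactly once. The resulting group $M_S$ is a finitely generated torsion-free simple group whose proper subgroups are all cyclic, i.e., a torsion-free Tarski monster. Since $\gd(\Lambda_k) = 2k$ is unbounded on $S$, and $H_{2k}(M_S;R) \supseteq H_{2k}(\Lambda_k;R) \neq 0$ by item (vi) of Theorem \ref{thm. embedding}, all dimensions $\hd_R(M_S), \cd_R(M_S), \gd(M_S)$ equal $\infty$ for every commutative unital ring $R$.

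To estimate $L^2$-Betti numbers, apply item (vi) of Theorem \ref{thm. embedding} with coefficients in the $M_S$-module $\mathcal{N}(M_S)$ and take von Neumann dimension (which commutes with countable direct sums): for $n \geq 3$,
\[ b^{(2)}_n(M_S) = \sum_{k \in S} b^{(2)}_n(\Lambda_k; M_S). \]
Each $\Lambda_k$ surjects onto $M_S$ by item (iv) of Theorem \ref{thm. embedding}, so Theorem \ref{thm. l2 betti approximation} applies: for any $\delta>0$, if the finite set $\mathcal{F}^{\Lambda_k}_{n,\delta}$ maps injectively into $M_S$, then $|b^{(2)}_n(\Lambda_k; M_S) - b^{(2)}_n(\Lambda_k)|<\delta$. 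We modify the inductive construction underlying Theorem \ref{thm. embedding} by invoking item (iii) of Theorem \ref{thm. inductive step} at each step with sufficiently large parameter $r$, so as to preserve preassigned finite sets $\mathcal{F}^{\Lambda_k}_{n,\delta_{k,n}}$ injectively through all subsequent quotients (diagonal bookkeeping, with $\delta_{k,n}$ chosen so that $\sum_k \delta_{k,n} < r_n/100$). Summing the estimates yields $b^{(2)}_n(M_S) \in [\tfrac{99}{100} r_n, \tfrac{101}{100} r_n]$ if $n\in S$, and $b^{(2)}_n(M_S) \in [0, \tfrac{r_n}{100}]$ if $n \notin S$.

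Now if $M_S$ and $M_{S'}$ were measure equivalent with proportionality constant $c$ for some $S \neq S'$, then $c = b^{(2)}_n(M_S)/b^{(2)}_n(M_{S'})$ for every $n$ for which both $L^2$-Betti numbers are positive. Pick $n\in S\setminus S'$ (WLOG non-empty): the bounds above force $c \geq 99$. If $S'\setminus S$ is also non-empty, pick $m$ there and obtain $c \leq 1/99$, a contradiction; otherwise $S'\subseteq S$ and any $m\in S'\subseteq S$ gives $c \leq (101/100)/(99/100) < 2$, again contradicting $c\geq 99$. Thus $M_S$ and $M_{S'}$ are non-measure-equivalent whenever $S \neq S'$, and there are continuum many infinite subsets of $\N_{\geq 2}$. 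The main obstacle is the diagonal bookkeeping to simultaneously control all the finite sets $\mathcal{F}^{\Lambda_k}_{n,\delta_{k,n}}$ throughout the inductive construction of $M_S$; a secondary issue is verifying virtual local indicability for the high-dimensional hyperbolic building blocks, which is known for arithmetic hyperbolic manifolds of simplest type but open in full generality.
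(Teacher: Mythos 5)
Your approach is essentially the paper's: arithmetic hyperbolic $2k$-manifold building blocks with $L^2$-Betti numbers concentrated in the middle dimension, virtual compact specialness for virtual local indicability, the $L^2$-approximation result (Theorem~\ref{thm. l2 betti approximation}), and Gaboriau's proportionality theorem (Theorem~\ref{thm. gaboriau}), followed by a contradiction from mismatched ratios. Two points worth clarifying.

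First, the ``main obstacle'' you flag --- diagonal bookkeeping to preserve the finite sets $\mathcal F^{\Lambda_k}_{n,\delta}$ through the inductive construction --- is not an obstacle at all: it is already built into the \emph{statement} of Theorem~\ref{thm. embedding}. Item~\ref{item. quotients Li} says that the quotients $L_i \to M$ are injective on prescribed finite sets $B_i$, so you simply set $B_i = \mathcal F^{\Lambda_i}_{2i,\delta_i} \cup \{1\}$ before invoking the theorem and you are done; there is no need to reopen its proof or re-invoke item~\ref{item. injectivity} of Theorem~\ref{thm. inductive step}. Note also that you only need one finite set per $L_i$ (not one for each pair $(k,n)$): Theorem~\ref{thm. l2 betti approximation} with parameter $2i$ controls all degrees $n \le 2i$ simultaneously, and for $n > 2i$ both $b^{(2)}_n(L_i)$ and $b^{(2)}_n(L_i;M_S)$ vanish because $\gd(L_i) = 2i$, as the paper notes.

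Second, your estimates $b^{(2)}_n(M_S) \in [\frac{99}{100}r_n, \frac{101}{100}r_n]$ (for $n \in S$) require $\sum_k \delta_k < r_n/100$ uniformly in $n$, which only works if $\inf_n r_n > 0$. This is not automatic from Chern--Gauss--Bonnet alone, so you need, as the paper does, to pass to deep enough finite-index subgroups of the $L_i$ (using residual finiteness and Corollary~\ref{cor. fi l2 betti}) to make each $b^{(2)}_i(L_i)$ larger than a fixed constant; the paper uses $b^{(2)}_i(L_i) > 3$ with additive error $1$, which gives a cleaner numerical argument than your multiplicative $1/100$. Your contradiction step is otherwise fine, and is slightly more streamlined than the paper's (which fixes $3 \in S \cap S'$ and normalizes against $b^{(2)}_3$); you compare indices in $S \setminus S'$ and $S' \setminus S$ (or $S' \subseteq S$) directly, which avoids the anchoring convention.
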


\begin{proof}
    For each integer $i\geq 1$, let $L_i$ be a cocompact arithmetic lattice of $SO(2i,1)$ of the simplests type. By Selberg's lemma we may assume that $L_i$ is torsion-free. Then $L_i$ is the fundamental group of a hyperbolic $2i$-manifold. By \cite[Theorem 2.3]{jost2000vanishing}, $b^{(2)}_n(L_i)=0$ for all $n\neq i$ and $b^{(2)}_i(L_i)=\chi(L_i)$, which is non-zero by the Chern--Gauss--Bonnet formula. Note that each $L_i$ is residually finite as it is linear. Using \Cref{cor. fi l2 betti} and passing to a deep enough finite index subgroup of $L_i$, we may assume that $b^{(2)}_i(L_i)>3$. By \cite[Theorem 1.8]{bergeron2011hyperplane}, the group $L_i$ is virtually compact special, and thus is virtually locally indicable (see e.g., \cite[Proposition 4.12 (3)]{petrosyan2024l2}). For each $i$, let $B'_i\subset L_i\smallsetminus\{1\}$ be the finite set given by \Cref{thm. l2 betti approximation} with respect to the group $L_i$ and the constants $k = 2i$ and $\delta=2^{-i}$. Let also $B_i=B'_i\cup \{1\}$.

    For each subset $S\subset \N_{\geq 3}$ that contains $3$, apply \Cref{thm. embedding} with the sequences $\{H_i=\{1\}\}_{i\in S}$ and $\{L_i\}_{i\in S}$ and the finite sets $\{B_i\}_{i\in S}$. The resulting group $M_S$ is a common quotient of $\{L_i\}_{i\in S}$ such that for each $i\in S$, the quotient map $L_i\to M_S$ is injective on $B_i$. As $B_i=B'_i\cup \{1\}$, the kernel $\ker(L_i\to M_S)$ has trivial intersection with $B'_i$. The group $M_S$ is also a torsion-free Tarski monster and satisfies

    \[H_n(M_S; \mathcal{N}(M_S))\cong \bigoplus_{i\in S}H_n(L_i;\mathcal{N}(M_S)),\;\; n\geq 3.\]
    For all $n \in S$, we have

    \begin{align*}
        \left|b^{(2)}_n(M_S)-b^{(2)}_n(L_n)\right| &= \left| \left( \sum_{i\in S} b^{(2)}_n(L_i;M_S) \right) -b^{(2)}_n(L_n)\right|\\
        &= \left|\sum_{i\in S}b^{(2)}_n(L_i;M_S)-\sum_{i\in S} b^{(2)}_n(L_i)\right|\\
        &\leq \sum_{i\in S} \left|b^{(2)}_n(L_i;M_S)- b^{(2)}_n(L_i)\right|<1.
    \end{align*}
    To see the last inequality, we note that by the choice of $B_i$, for all $n \leq 2i$ it holds $|b^{(2)}_n(L_i;M_S)- b^{(2)}_n(L_i)| < 2^{-i}$; while for $n > 2i$ it holds $b^{(2)}_n(L_i;M_S) = b^{(2)}_n(L_i) = 0$, because $L_i$ is $2i$-dimensional. Therefore
    \begin{equation}\label{eq. 5}
        2<b^{(2)}_n(L_n)-1<b^{(2)}_n(M_S)<b^{(2)}_n(L_n)+1,\;\; \text{for } n\in S.
    \end{equation}
    Similarly, for $n \notin S$, we have
    \begin{equation}\label{eq. 6}
        b^{(2)}_n(M_S) = \sum_{i\in S} b^{(2)}_n(L_i;M_S)<1.
    \end{equation}

    Consider two subsets $S\neq S'\subset \N_{\geq 3}$, both of which contain $3$. Assume that $M_S$ is measure equivalent to $M_{S'}$. Then for all $n$ we have $b^{(2)}_n(M_S)/b^{(2)}_3(M_S)=b^{(2)}_n(M_{S'})/b^{(2)}_3(M_{S'})$ by \Cref{thm. gaboriau} (note that as both $S$ and $S'$ contain $3$, these ratios are well-defined). Let $n\in S\Delta S' \neq \emptyset$. Then by \eqref{eq. 5} and \eqref{eq. 6}, one of $b^{(2)}_n(M_S)/b^{(2)}_3(M_S)$ and $b^{(2)}_n(M_{S'})/b^{(2)}_3(M_{S'})$ is larger than $2/(b^{(2)}_3(L_3)+1)$ and the other one is smaller than $1/(b^{(2)}_3(L_3)-1)$. So
    \[2/(b^{(2)}_3(L_3)+1)<1/(b^{(2)}_3(L_3)-1);\]
    which is impossible since $b^{(2)}_3(L_3)>3$.
\end{proof}

\begin{corollary}[Corollary \ref{intro:cor:smith}]
\label{cor:smith}
    Let $M$ be a group as in Theorem \ref{thm. infdim tarski}. Then every admissible action of $M$ on a finite-dimensional contractible CW-complex has a global fixed point.
\end{corollary}

The proof will use Kropholler's hierarchy $\HF$, we refer the reader to Subsection \ref{ss. hierarchy} where this is treated in detail.

\begin{proof}
    Suppose by contradiction that $M$ has an admissible action on a finite-dimensional contractible CW-complex without a global fixed point. Then every stabilizer has to be either trivial or isomorphic to $\Z \in \HF$. By definition, this implies that $M \in \HF$. But a theorem of Petrosyan \cite[Theorem 3.2]{petrosyan:jump1} states that a torsion-free group in $\HF$ cannot have a jump in the cohomological dimension of subgroups, and we reach a contradiction.
\end{proof}

\subsection{Dimension spectra}

Now we prove Theorem \ref{intro:thm:spectrum}. Let us make the definition from Question \ref{q:spectrum} more precise.

\begin{definition}
\label{def. spectrum}
    For a group $G$, let
    \[S_{\gd}(G)=\{\gd(H)\mid H\leq G\}.\]
    Define similarly $S_{\hd_R}$ and $S_{\cd_R}$ for a commutative unital ring $R$.
\end{definition}

\begin{definition}
\label{def. realization}
    We say that a subset $S\subset \N\cup\{\infty\}$ is \emph{realized} by a group $G$ if $S=S_{\hd_R}(G) = S_{\cd_R}(G) = S_{\gd}(G)$ for every commutative unital ring $R$. We say that it is \emph{sharply realized} by $G$ if moreover $\hd_R(G) = \cd_R(G) = \gd(G)$ is only attained by $G$ itself.
\end{definition}

Let us start by excluding some basic cases:

\begin{lemma}
\label{lem. jump exceptions}
    Suppose that $G$ realizes $S$. Then exactly one of the following holds:
    \begin{itemize}
        \item $S = \{ 0 \}$, equivalently $G$ is trivial;
        \item $S = \{ 0, \infty \}$, in which case $G$ is an infinite torsion group;
        \item $S = \{0, 1\}$, equivalently $G$ is a non-trivial free group;
        \item $S$ is a finite set properly containing $\{0, 1\}$;
        \item $S$ is an infinite set containing $\{0, 1, \infty\}$.
    \end{itemize}
\end{lemma}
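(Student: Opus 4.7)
The plan is to argue by case analysis on whether $S$ contains $1$, whether $S$ is finite, and so on. Throughout I will use the fact (built into the definition of realization) that on every subgroup the three notions of dimension agree, together with the monotonicity of $\gd$ (and $\hd_R$, $\cd_R$) under taking subgroups. In particular, $\gd(G)$ itself lies in $S$ and is an upper bound for $S$, while $0 \in S$ always because the trivial subgroup contributes~$0$. The five listed cases are pairwise disjoint by inspection, so it suffices to show that one of them always holds.

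First I handle the small cases. If $S = \{0\}$ then $\gd(G) = 0$, so $G$ is trivial. Assume now $G$ is nontrivial. If $1 \notin S$, then $G$ contains no infinite cyclic subgroup, hence every element of $G$ has finite order and $G$ is torsion; picking a cyclic subgroup of prime order shows $\infty \in S$, and since every nontrivial subgroup of a torsion group is torsion (hence of infinite $\gd$), we conclude $S = \{0, \infty\}$. To rule out finite nontrivial $G$ I invoke the realization hypothesis at $R = \Q$: for every finite group $H$, Maschke's theorem gives $\Q H$ semisimple and therefore $\hd_\Q(H) = 0$; if $G$ were finite every subgroup would be finite, forcing $S_{\hd_\Q}(G) = \{0\} \neq S$, a contradiction. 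Hence $G$ is an infinite torsion group. If instead $S = \{0, 1\}$, then $\cd_\Z(G) \leq 1$, so by Stallings--Swan $G$ is free, and nontrivial since $1 \in S$; conversely Nielsen--Schreier shows that a nontrivial free group realizes $\{0, 1\}$.

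The remaining case is $\{0, 1\} \subsetneq S$. If $S$ is finite we are in the fourth case. If $S$ is infinite, then $\gd(G)$ cannot be a finite integer (otherwise it would bound $S$), so $\gd(G) = \infty$ and thus $\infty \in S$, placing us in the fifth case. The main subtlety I anticipate is the $\{0, \infty\}$ case: \emph{a priori} any nontrivial finite group has $S_{\gd} = \{0, \infty\}$, and it is the strong realization requirement (agreement of spectra across all rings, in particular for $R = \Q$) that forces $G$ to be infinite. Everything else is a straightforward combination of Stallings--Swan, Nielsen--Schreier, and the monotonicity of dimension under subgroups.
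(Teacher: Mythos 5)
Your proof is correct and follows essentially the same route as the paper's: reduce to $0 \in S$ always, split on whether $1 \in S$, use the torsion-free-ness of finite-dimensional groups to handle the $1 \notin S$ branch, and rule out finite nontrivial $G$ via the failure of $S_{\cd_{\Q}}$ to match. The one place you deviate is the $S=\{0,1\}$ case, where you invoke Stallings--Swan to pass from $\cd_{\Z}(G)\le 1$ to freeness; the paper sidesteps this by observing that the realization hypothesis already forces $S=S_{\gd}(G)$, and a group with a $1$-dimensional $K(G,1)$ is free by the elementary fact that it is $\pi_1$ of a graph. Both are valid, but the paper's version avoids the heavier theorem.
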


\begin{proof}
    Clearly $S = \{0\}$ if and only if $G$ is trivial, and every $S$ must contain $0$. Since groups of finite cohomological dimension are torsion-free, as soon as some $n \in \N \setminus \{0 \}$ belongs to $S$, also $1 \in S$ as $G$ contains an infinite cyclic group. So if $S = \{0, \infty\}$ then $G$ must be a non-trivial torsion group. If $G$ is non-trivial and finite, then the different spectra do no match, indeed $S_{\cd_{\Z}} = \{0, \infty\}$ but $S_{\cd_{\Q}} = \{ 0 \}$, which does not fit our definition of realizability. Finally $S = \{0, 1 \}$ if and only if $G$ is free (this is clear for $S_{\gd}$, and our definition demands in particular that $S = S_{\gd}(G)$).

    Now suppose that $S$ is not of the form above. Then $\{0, 1\} \subsetneq S$, and if $S$ is finite then we are in the fourth case. If $S$ is infinite, then $G$ itself must be infinite-dimensional and thus $\infty \in S$.
\end{proof}

\begin{remark}
\label{rem:realize:torsion}
    In our definition of realization, we require that the dimension spectra coincide for all notions of dimension, so for example in the proof of Lemma \ref{lem. jump exceptions} we saw that non-trivial finite groups do not realize a single dimension spectrum. This makes realizing $\{ 0, \infty \}$ a non-trivial task. One could approach this analogously to Theorem \ref{thm. infdim tarski}, building an infinite group $G$ such that every proper subgroup is finite (in the spirit of \cite{tarskip, rf:monsters}) but such that $H_n(G; R) \neq 0$ for infinitely many $n$, and all commutative unital rings $R$. Such a method could also be used to build groups $G_n$ such that $S_{\cd_{\Q}}(G_n) = \{0, n\}$, for all $n \geq 2$.

    An example in this direction is Grigorchuk's group $G$ \cite{grigorchuk:group}. Since this is an infinite $2$-group, it is easy to see that $S_{\hd_R}(G) = S_{\cd_R}(G) = S_{\gd}(G) = \{0, \infty\}$ whenever $2$ is not invertible in $R$. Suppose instead that $2$ is invertible in $R$. Finite subgroups of $G$ will have $\hd_R = \cd_R = 0$. The same argument as in \cite[Section 4]{branch1} shows that every finitely generated infinite subgroup of $G$ has $\hd_R = \cd_R = \infty$. However, $G$ contains infinite locally finite subgroups \cite{rozhkov}. Every such subgroup will have $\hd_R = 0$, but $\cd_R = 1$. So again this group does not realize a single dimension spectrum.
    
    In Definition \ref{def. realization}, one could strengthen the requirement on $G$ as follows: let us say that $G$ \emph{exactly realizes $S$} if $\hd_R(H) = \cd_R(H) = \gd(H)$ for all $H \leq G$ and one of the dimension spectra (and therefore every dimension spectrum) is equal to $S$. Theorem \ref{thm. realization} constructs groups with this stronger property. However, $\{0, \infty\}$ cannot be exactly realized in this sense, because groups with $S_{\cd_{\Z}} = \{0, \infty\}$ must contain finite subgroups, which have $\cd_{\Z} = \infty$ and $\cd_{\Q} = 0$.
\end{remark}

Note that $S = \{ 0, 1 \}$ is not realized by a simple group, by the above, and similarly for $S = \{0\}$ - depending on whether or not one considers the trivial group to be simple. $S = \{0, \infty\}$ is discussed in Remark \ref{rem:realize:torsion}. We show that the remaining two cases are realized by finitely generated simple torsion-free groups.

\begin{theorem}[Theorem \ref{intro:thm:spectrum}]
\label{thm. realization}

Let $S\subset \N\cup\{\infty\}$ satisfy either one of the following:
    ~
    \begin{enumerate}[label=(\roman*)]
    \item\label{item. finite} $S$ is finite and properly contains $\{0, 1\}$;
    \item\label{item. infinite} $S$ is infinite and contains $\{0, 1, \infty\}$.
    \end{enumerate}
Then there exists a finitely generated torsion-free simple group that exactly sharply realizes $S$.
\end{theorem}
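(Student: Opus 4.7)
The plan is to build the desired $M$ by induction, using Theorem \ref{thm. embedding} as the inductive engine. That theorem produces a finitely generated simple torsion-free group with highly controlled subgroup structure: every proper subgroup is either cyclic or conjugate into one of the prescribed malnormal subgroups $H_i$. Thus, once the $H_i$'s are chosen so that their collective subgroup dimensions realize $S \setminus \{\dim M\}$, and the $L_i$'s are chosen so that $\dim M$ equals the top of $S$, the full dimension spectrum of $M$ is forced to be exactly $S$.

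For case \ref{item. finite}, I induct on $|S|$. For the base case $S = \{0, 1, n\}$ with $n \geq 2$, I apply Theorem \ref{thm. embedding} with all $H_i = \{1\}$ and all $L_i$ equal to the fundamental group of a closed orientable hyperbolic $n$-manifold. The resulting $M$ is a torsion-free Tarski monster, and the bound $\gd(M) \leq n$ combined with $H_n(M;R) \supseteq H_n(L_1;R) \cong R$ (nonzero for every commutative unital ring $R$, by orientability) sandwiches all three notions of dimension to $n$. For the inductive step, write $S = \{0, 1, t_1 < \dots < t_k\}$ and $S' = S \setminus \{t_k\}$, use the inductive hypothesis to obtain a group $H_{S'}$ that exactly sharply realizes $S'$, and apply Theorem \ref{thm. embedding} with $H_1 = H_{S'}$, $H_i = \{1\}$ for $i \geq 2$, and all $L_i$ equal to the fundamental group of a closed orientable hyperbolic $t_k$-manifold. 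By the same homological computation, $\dim M = t_k$; every proper subgroup of $M$ is cyclic or conjugate into $H_{S'}$, so the subgroup spectrum of $M$ is precisely $S' \cup \{t_k\} = S$, and only $M$ attains the top dimension.

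For case \ref{item. infinite}, write $T = S \setminus \{0, 1, \infty\} \subseteq \N_{\geq 2}$. For each $t \in T$, case \ref{item. finite} provides a group $H_t$ exactly sharply realizing $\{0,1,t\}$. Let $\{L_i\}_{i\geq 1}$ be fundamental groups of closed orientable hyperbolic manifolds of dimensions $n_i$, with $n_i \to \infty$. Apply Theorem \ref{thm. embedding} with these families (padding the $H_i$ list with trivial groups if $T$ is empty or finite). The homology formula then gives $H_{n_i}(M; R) \neq 0$ for every $i$ and every $R$, so $\hd_R(M) = \cd_R(M) = \gd(M) = \infty$; proper subgroups are cyclic or conjugate into some $H_t$, hence have finite dimension, so the subgroup spectrum of $M$ is exactly $\{0, 1\} \cup T \cup \{\infty\} = S$ and only $M$ attains $\infty$.

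The main obstacle, common to both cases, is to ensure that the three notions of dimension agree on \emph{every} subgroup, not just on $M$: this is the "exactly" in exact realization (in the sense of Remark \ref{rem:realize:torsion}). For cyclic subgroups it is automatic, and for subgroups of the $H_i$ it is guaranteed by the inductive hypothesis (or the base case). For $M$ itself it relies on the fact that the $L_i$ are fundamental groups of closed orientable manifolds, since only then is $H_{n_i}(L_i; R) \cong R$ nonzero for \emph{every} commutative unital ring $R$, rather than merely for $\Q$. In particular, the integral control over homology provided by Theorem \ref{thm. embedding} \ref{item. homology relative} is what makes the argument uniform across all choices of coefficient ring.
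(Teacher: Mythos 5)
Your approach is essentially the paper's: apply Theorem \ref{thm. embedding} with $H_i$'s chosen so that their subgroups realize the lower part of $S$, and $L_i$'s chosen so that the homology of $M$ in top degree is nonzero. Your organizational choices differ from the paper's in two benign ways. In case \ref{item. finite}, you induct on $|S|$ and pack all the recursively constructed lower spectra into a single subgroup $H_1 = H_{S'}$, whereas the paper instead plants one group $H_i$ realizing $\{0,1,i\}$ for \emph{each} intermediate $i \in S$ simultaneously. Your inductive version is a clean alternative and also avoids citing Ol'shanski\u{\i} for the $n=2$ base case, since a hyperbolic surface group works directly in Theorem \ref{thm. embedding}. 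In case \ref{item. infinite}, you take $L_i$ to be hyperbolic $n_i$-manifold groups with $n_i \to \infty$ rather than free groups; the paper's version instead makes the dimension of $M$ infinite by observing that $\hd_R(M) \ge \hd_R(H_i) = i$ for the infinitely many $i \in S$. Both work, but yours has the (small) advantage of not leaning on the infinitude of $T$.

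However, there is a genuine gap. Case \ref{item. finite} of the theorem covers all \emph{finite} $S$ properly containing $\{0,1\}$, including those with $\infty \in S$ (e.g.\ $S = \{0,1,\infty\}$ or $\{0,1,3,\infty\}$). Your induction breaks there: both the base case and the inductive step call for the fundamental group of a closed hyperbolic $t_k$-manifold with $t_k = \sup S$, which does not exist when $t_k = \infty$. The paper handles exactly this situation as a separate subcase of case \ref{item. finite}, taking $L_i$ to be a hyperbolic $i$-manifold group for $i \ge 2$ so that $H_i(M;R) \neq 0$ in all degrees. You have the machinery to fix this — your case \ref{item. infinite} argument, with the ``padding'' remark, applies verbatim when $T$ is finite — but you explicitly scope that argument to ``$S$ infinite,'' leaving $S$ finite with $\infty \in S$ unaddressed. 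You should either route that subcase through your case \ref{item. infinite} construction, or add the missing subcase to your induction in case \ref{item. finite}.
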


\begin{proof}
\emph{(i)} First assume $|S|=3$. If $\sup S=2$, then the result is covered by Ol'shanski\u{\i}'s torsion-free Tarski monsters \cite{noetheriangroup,ol2012geometry}. If $\sup S\geq 3$, then the result follows from \Cref{cor. 3 tarski}.
Let us assume $|S|\geq 4$. For each $i\in S$ with $1<i<\sup S$, by the above paragraph there exists a finitely generated torsion-free group $H_i$ that exactly realizes $\{0,1,i\}$. For all other $i\in \N$, let $H_i=\{1\}$.

Suppose first that $n \coloneqq \sup S<\infty$, and note that $n \geq 3$ because $|S| \geq 4$. Let $L_1$ be the fundamental group of a closed orientable hyperbolic $n$-manifold, and for $i\geq 2$ let $L_i$ be the free group on two generators. \Cref{thm. embedding}, applied to this data, yields a finitely generated torsion-free simple group $M$ such that

\begin{itemize}
    \item $\gd(M)\leq n$,
    \item $H_n(M;R)= H_n(L_1;R) = R$ for all commutative unital rings $R$, 
    \item each $H_i$ embeds as a subgroup of $M$, and
    \item each proper subgroup of $M$ is either cyclic or conjugate into some $H_i$.
\end{itemize}
These properties ensure that $M$ exactly sharply realizes $S$.

Now suppose that $\sup S=\infty$. Let $L_1$ be a non-abelian free group, and for $i\geq 2$ let $L_i$ be the fundamental group of a closed orientable hyperbolic $i$-manifold. \Cref{thm. embedding}, applied to this data, yields a finitely generated torsion-free simple group $M$ such that

\begin{itemize}
    \item $R\cong H_i(L_i;R)\hookrightarrow H_i(M;R)$ for all $i\geq 2$ and all commutative unital rings $R$,
    \item each $H_i$ embeds as a subgroup of $M$, and
    \item each proper subgroup of $M$ is either cyclic or conjugate into some $H_i$.
\end{itemize}
These properties ensure that $M$ exactly sharply realizes $S$.

\medskip

\emph{(ii)} For each $i\in S$ with $1<i< \infty$, by the above there exists a finitely generated torsion-free group $H_i$ that exactly realizes $\{0,1,i\}$. For all other $i\in \N$, let $H_i=\{1\}$. For all $i\geq 1$, let $L_i$ be the free group on two generators. \Cref{thm. embedding}, applied to this data, yields a finitely generated torsion-free simple group $M$ such that

\begin{itemize}
    \item each $H_i$ embeds as a subgroup of $M$, and
    \item each proper subgroup of $M$ is either cyclic or conjugate into some $H_i$.
\end{itemize}

For all commutative unital rings $R$, as $\hd_R(M) \geq \hd_R(H_i)=i$ for all $i\in S\smallsetminus\{0,1,\infty\}$, we have $\hd_R(M)=\infty$, whence $\hd_R(M)=\cd_R(M)=\gd(M)=\infty$. The above two properties ensure that $M$ exactly sharply realizes $S$.
\end{proof}

\subsection{Kropholler's hierarchy}
\label{ss. hierarchy}

Let $\X$ be a class of groups. We define by transfinite induction a hierarchy of classes of groups. We set $\HX[0] \coloneqq \X$, and for every ordinal $\alpha$:
\begin{itemize}
    \item If $\alpha$ is a successor ordinal, we let $\HX[\alpha]$ be the class of groups $G$ that have an admissible action on a finite dimensional contractible CW-complex with stabilizers in $\HX[\alpha - 1]$.
    \item If $\alpha$ is a limit ordinal, then we set $\HX[\alpha] = \bigcup\limits_{\beta < \alpha} \HX[\beta]$.
\end{itemize}
We write $\HX$ for the union of all $\HX[\alpha]$. From now on we will assume that $\X$ is subgroup-closed. In particular $\X$ contains the trivial group. As usual, we assume that if a group is in $\X$ then every group isomorphic to it is also in $\X$.

Recall that given a group $G$ and a collection of subgroups $\{K_j\}_{j \in J}$, we say that the \emph{cohomological dimension} $\cd(G, \{K_j\}_{j \in J}) \leq N$ if the restriction
\[H^n(G; A) \to \prod\limits_{j \in J} H^n(K_j; A)\]
is an isomorphism for all $n > N$ and an epimorphism for $n = N$, all $G$-modules $A$.

\begin{lemma}
\label{lem relative cd}
    Suppose that $\cd(G, \{K_j\}_{j \in J}) < \infty$, and $K_j \in \HX[\alpha]$ for all $j \in J$. Then $G \in \HX[\alpha+1]$.
\end{lemma}

\begin{proof}
    This follows from Alonso's relative version of the Eilenberg--Ganea Theorem \cite[Theorem 3]{alonso:relative}. The statement gives an \emph{acyclic} complex, but this can be replaced by a \emph{contractible} complex in this case, see \cite[Remark (2) after Theorem 3 in Section 4]{alonso:relative}.
\end{proof}

This allows to prove that our relative Tarski monster construction, under certain conditions, preserves the class $\HX$, with control on the complexity.

\begin{corollary}
\label{cor monsters HX}
    With the notation of Theorem \ref{thm. embedding}, suppose that $\sup_i \cd(L_i) < \infty$, and that $H_i \in \HX[\alpha]$ for all $i \geq 1$. Then $M \in \HF[\alpha+1]$.
\end{corollary}

\begin{proof}
    Theorem \ref{thm. embedding}\ref{item. homology relative} gives an isomorphism
    \[H^n(M;A) \cong \prod\limits_{i \geq 1} H^n(H_i; A)\]
    for all $n > \sup_i \cd(L_i)$ and all $M$-modules $A$. By naturality, this isomorphism is induced by the restriction, and so $\cd(M; \{H_i\}_{i \geq 1}) < \infty$. We conclude by Lemma \ref{lem relative cd}.
\end{proof}

The rest of this subsection is devoted to the proof of Theorem \ref{intro:thm:hierarchy}, which we recall for the reader's convenience.

\begin{theorem}[Theorem \ref{intro:thm:hierarchy}]
\label{thm. hierarchy}
    Let $\X$ be a subgroup-closed class of groups. Suppose that there exists a countable torsion-free group in $\HX[1] \minus \X$. Then, for every countable ordinal $\alpha\geq 1$, there exists a finitely generated torsion-free simple group in $\HX[\alpha+1] \minus \HX[\alpha]$.
\end{theorem}

For $j \geq 1$, let $G_j$ be the fundamental group of a closed orientable hyperbolic $(j+1)$-manifold. For each countable ordinal $\alpha \geq 1$, consider the following condition on a countable group $M_\alpha$.

\begin{enumerate}[label=($\ast$)]
    \item\label{item. cohomology hierarchy} $M_\alpha$ has two malnormal families of subgroups $\{M_{\alpha, j}\}_{j\geq 1}$ and $\{M'_{\alpha,j}\}_{j\geq 1}$ such that the following hold:
    
    \begin{enumerate}[label=(\roman*)]
        \item For all $j$, it holds that $M'_{\alpha,j}\in \HX[\alpha]$.
        \item\label{item. limit ordinal} If $\alpha$ is a limit ordinal, then $M_{\alpha, j}\in \HX[\beta_j + 1] \minus \HX[\beta_j]$ where $\sup\{ \beta_j : j \geq 1 \} = \alpha$.
        \item\label{item. non-limit ordinal} If $\alpha$ is a successor ordinal, then either $M_{\alpha,j}\in \HX[\alpha]\minus\HX[\alpha-1]$ for all $j$, or $M_{\alpha,j_0}\in \HX[\alpha+1]\minus \HX[\alpha]$ for some $j_0\geq 1$ and for all $j\neq j_0$ it holds that $M_{\alpha,j}\in \HX[1]$.
        \item For each $j\geq 1$, there is a surjection $G_j\to M_{\alpha,j}$ and an inclusion $M'_{\alpha,j}\to M_{\alpha,j}$.
        \item\label{item. monster hierarchy} Each proper subgroup of $M_\alpha$ is either isomorphic to $\Z$ or conjugate into some $M_{\alpha,j}$. For each $j\geq 1$, each proper subgroup of $M_{\alpha,j}$ is either isomorphic to $\Z$ or conjugate into $M'_{\alpha,j}$.
        \item\label{item. detail cohomology hierarchy} For every $M_\alpha$-module $A$, $n \geq 3$ and $j\geq 1$, the above inclusions and surjections induce isomorphisms
        \begin{align*}
            &H^n(M_\alpha; A) \cong \prod_{i \geq 1} H^n(M_{\alpha, j}; A),\\
            &H_n(M_\alpha; A) \cong \bigoplus_{j \geq 1} H_n(M_{\alpha, j}; A),\\
            &H^n(M_{\alpha, j}; A)\cong H^n(M'_{\alpha,j};A)\times H^n(G_j;A),\\
            &H_n(M_{\alpha, j}; A)\cong H_n(M'_{\alpha,j};A)\times H_n(G_j;A).
        \end{align*}
    \end{enumerate}
\end{enumerate}

Our first goal is to show that this more technical statement implies what we want.

\begin{lemma}
\label{lem cohomology implies hierarchy}
    Let $\alpha \geq 1$ be a countable ordinal, and suppose that $M_\alpha$ is a countable group satisfying \ref{item. cohomology hierarchy}. Then $M_\alpha \in \HX[\alpha+1] \minus \HX[\alpha]$.
\end{lemma}

\begin{proof}
Suppose first that $\alpha$ is successor ordinal and satisfies the second half of \cref{item. non-limit ordinal}. Then, for all $n\geq 3$ and all $M_\alpha$-modules $A$, \cref{item. detail cohomology hierarchy} gives a natural isomorphism
\[H^n(M_\alpha;A)\cong \left(\prod_{j\neq j_0}H^n(M_{\alpha,j};A)\right)\times H^n(M'_{\alpha,j_0};A)\times H^n(G_{j_0};A).\]
\Cref{lem relative cd}; applied with $G=M_\alpha, K_{j_0}=M'_{\alpha,j_0}$ and $K_j=M_{\alpha,i}:j\neq j_0$; yields $M_\alpha\in \HX[\alpha+1]$. With a subgroup $M_{\alpha,j_0}\in \HX[\alpha+1]\minus \HX[\alpha]$, the group $M_\alpha$ cannot belong to $\HX[\alpha]$.

Now suppose that $\alpha$ is either a limit ordinal or a successor ordinal that satisfies the first half of \cref{item. non-limit ordinal}. By applying \Cref{lem relative cd}; with $G=M_\alpha$ and $K_j=M_{\alpha,j}:j\geq 1$; and using \cref{item. detail cohomology hierarchy}, we obtain $M_\alpha\in \HX[\alpha+1]$.

So it remains to show that $M_\alpha\not\in \HX[\alpha]$. Suppose, to the contrary, that there exists a contractible $M_\alpha$-CW-complex $X$ witnessing that $M_\alpha\in \HX[\alpha]$. Let $\beta<\alpha$ be an ordinal such that all isotropy groups of $M_\alpha \curvearrowright X$ lie in $\HX[\beta]$. For each $j \geq 1$, let 

\[
T_j:=
\begin{cases}
    M_{\alpha,j},&\text{if } M_{\alpha,j}\in \HX[\beta]\\
    M'_{\alpha,j},&\text{otherwise}
\end{cases}
.
\]

If $\alpha$ is a successor ordinal that satisfies the first half of \cref{item. non-limit ordinal}, then $T_j=M'_{\alpha,j}$ for all $j$. On the other hand, if $\alpha$ is a limit ordinal, then the set $\{j \geq 1 \mid T_j=M'_{\alpha,j}\} = \{ j \geq 1 \mid M_{\alpha, j} \notin \HX[\beta] \}$ is infinite by \cref{item. limit ordinal}. So in both cases $\{j \geq 1 \mid T_j=M'_{\alpha,j}\}$ is infinite. Combining this with \cref{item. detail cohomology hierarchy}, we get that
\begin{equation}\label{eq. infinite dimension}
    H_n(M_\alpha,\{T_j\}_{j\geq 1}; \Z)\neq 0 \text{ for infinitely many } n.
\end{equation}
Consider the $M_\alpha$-module $\bigoplus_{j\geq 1} \Z[ M_\alpha/T_j]$. There is an augmentation $\bigoplus_{j\geq 1} \Z [M_\alpha/T_j]\to \Z$ sending each coset $gT_j$ to $1$. Let $\Delta$ be the kernel of this augmentation. According to \cite{bieri1978relative}, we have

    \begin{equation}\label{eq. 2}
        H_n(M_\alpha,\{T_j\}_{j\geq 1};\Z)=H_{n-1}(M_\alpha;\Delta).
    \end{equation}

As $X$ is contractible, \cite[VII Proposition 7.3]{brown1982cohomology} yields a natural isomorphism $H^{M_\alpha}_\ast(X;\Delta)\cong H_\ast(M_\alpha;\Delta)$. Let $X_p$ be the set of $p$-simplices of $X$ and let $\Sigma_p$ be a set of representatives of $M_\alpha\backslash X_p$. \cite[VII (7.7)]{brown1982cohomology}, combined with \eqref{eq. 2}, yields a spectral sequence
\[E^1_{p,q}=\bigoplus_{\sigma\in \Sigma_p} H_q(\mathrm{Stab}_{M_\alpha}(\sigma); \Delta) \Rightarrow H_{p+q+1}(M_\alpha,\{T_j\}_{j\geq 1};\Z).\]
Note that, in the notation of \cite[VII (7.7)]{brown1982cohomology}, $\Delta_\sigma = \Delta$, because $M_\alpha \curvearrowright X$ is admissible. 

We will now prove that $E^1_{p,q}=\{0\}$ whenever $q>1$. Combining this with $E^1_{p,q}=\{0\}$ whenever $p>\dim X$, we will get that $H_k(M_\alpha,\{T_j\}_{j\geq 1};\Z)=\{0\}$ whenever $k>\dim X+2$, which contradicts \eqref{eq. infinite dimension}. We need to prove that
    \begin{equation}\label{eq. vanishing hierarchy}
        H_q(\mathrm{Stab}_{M_\alpha}(\sigma); \Delta)=\{0\}\text{  for all $q>1$ and all }\sigma\in \Sigma_p.
    \end{equation}

Fix $q>1$ and $\sigma\in \Sigma_p$. If $\mathrm{Stab}_{M_\alpha}(\sigma)\cong \Z$, then \eqref{eq. vanishing hierarchy} clearly holds. Otherwise, by \cref{item. monster hierarchy}, the group $\mathrm{Stab}_{M_\alpha}(\sigma)$ must conjugate into some $M_{\alpha,j}$. By \cref{item. monster hierarchy} again, because $\mathrm{Stab}_{M_\alpha}(\sigma) \in \HX[\beta]$, the group $\mathrm{Stab}_{M_\alpha}(\sigma)$ must conjugate into some $T_j$ by definition of $T_j$. That is, there exists $g_0\in M_\alpha$ and $j_0\geq 1$ such that 

    \[g_0 \mathrm{Stab}_{M_\alpha}(\sigma) g^{-1}_0 \leq T_{j_0}.\]

Consider the short exact sequence

    \begin{equation*}
        0\to \Delta \to \bigoplus_{j\geq 1} \Z [M_\alpha/T_j] \to \Z \to 0.
    \end{equation*}
By \cite[III Proposition 6.1]{brown1982cohomology}, we have a long exact sequence
            \[\cdots\to H_q(\mathrm{Stab}_{M_\alpha}(\sigma);\Delta)\to H_q\left(\mathrm{Stab}_{M_\alpha}(\sigma);\bigoplus_{j\geq 1}\Z [M_\alpha/T_j]\right)\xrightarrow{\phi_q} H_q(\mathrm{Stab}_{M_\alpha}(\sigma); \Z)\to \cdots\]
For $j\neq j_0$, because the $\{T_j\}_{j \geq 1}$ form a malnormal family, we have that $\Z [M_\alpha/T_j]$ is a free $\mathrm{Stab}_{M_\alpha}(\sigma)$-module, with a free basis given by any set of double coset representatives of  
\[\mathrm{Stab}_{M_\alpha}(\sigma)\backslash M_\alpha/ T_j.\]
Let $R_0$ be a set of double coset representatives of
\[\mathrm{Stab}_{M_\alpha}(\sigma)\backslash M_\alpha/ T_{j_0}\]
such that $g_0\in R_0$. Because $T_{j_0}$ itself is malnormal, $\Z [M_\alpha/T_{j_0}]$ is a direct sum of a free $\mathrm{Stab}_{M_\alpha}(\sigma)$-module, generated by $tT_{j_0}: t\in R_0\smallsetminus\{g_0\}$, and a trivial $\mathrm{Stab}_{M_\alpha}(\sigma)$-module, generated by $g_0 T_{j_0}$.

This shows that
\[H_q\left(\mathrm{Stab}_{M_\alpha}(\sigma); \bigoplus_{j\geq 1} \Z[M_\alpha/T_j] \right) \cong H_q(\mathrm{Stab}_{M_\alpha}(\sigma); \Z),\]
so the map $\phi_q$ in the above long exact sequence is an isomorphism for all $q > 1$, and \eqref{eq. vanishing hierarchy} holds in this case as well.
\end{proof}

\begin{proof}[Proof of Theorem \ref{thm. hierarchy}]
For all countable ordinals $\alpha \geq 1$, we construct finitely generated torsion-free simple groups in $\HX[\alpha+1] \minus \HX[\alpha]$, by transfinite induction. By assumption, there exists a countable torsion-free group $M_0\in \HX[1]\minus \X$. For the inductive step, assuming that, for some countable ordinal $\alpha\geq 1$ and each ordinal $\beta<\alpha$, we have built a countable torsion-free group $M_\beta \in \HX[\beta+1] \minus \HX[\beta]$, we will then build a finitely generated torsion-free simple group $M_\alpha$ that satisfies \ref{item. cohomology hierarchy}. By \Cref{lem cohomology implies hierarchy}, this group must also lie in $\HX[\alpha+1] \minus \HX[\alpha]$, completing the induction step.

Suppose first that $\alpha$ is a countable limit ordinal. Enumerate the set of ordinals $\beta < \alpha$ as $\beta_1,\beta_2,\dots$ Set $M'_{\alpha,j}=M_{\beta_j}$, given by the induction hypothesis. Apply Theorem \ref{thm. embedding} with $H_1 = M'_{\alpha, j}$ and $L_1 = G_j$ -- and $H_i = \{ 1 \}, L_i = F_2: i \geq 2$ -- to build $M_{\alpha, j}$. Apply Theorem \ref{thm. embedding} to $H_i = M_{\alpha, i}, L_i = F_2 : i \geq 1$ to obtain $M_\alpha$. Then all items follow from Theorem \ref{thm. embedding} and the induction hypothesis.

Now suppose that $\alpha$ is a successor ordinal, and let $M_{\alpha-1} \in \HX[\alpha] \minus \HX[\alpha-1]$ be a countable torsion-free group, given by the induction hypothesis. For each $j \geq 1$, apply Theorem \ref{thm. embedding} with $H_1 = M_{\alpha-1}, L_1 = G_j$ -- and $H_i = \{ 1 \}, L_i = F_2 : i \geq 2$ -- to construct a group $N_{\alpha,j}$. Because $M_{\alpha-1} \in \HX[\alpha] \minus \HX[\alpha-1]$, we have $N_{\alpha, j} \in \HX[\alpha+1]$ by Corollary \ref{cor monsters HX}, and $N_{\alpha, j} \notin \HX[\alpha-1]$. For clarity, denote the copy of $M_{\alpha-1}$ in $N_{\alpha,j}$ as $N'_{\alpha,j}$.

Suppose first that, for some $j_0$, it holds that $N_{\alpha,j_0}\in \HX[\alpha+1]\minus \HX[\alpha]$. Then we will construct $M_\alpha$ as follows. For $j\neq j_0$, let $M'_{\alpha,j}=\{1\}$ and apply Theorem \ref{thm. embedding} with $H_1 = M'_{\alpha, j} = \{ 1 \}$ and $L_1 = G_j$ -- and $H_i = \{ 1 \}, L_i = F_2 : i \geq 2$ -- to construct a group $M_{\alpha,j}$. Let also $M_{\alpha,j_0}=N_{\alpha,j_0}, M'_{\alpha,j_0}=N'_{\alpha,j_0}$. Apply Theorem \ref{thm. embedding}; with $H_i=M_{\alpha,i},L_i = F_2 : i \geq 1$ to obtain a group $M_\alpha$. Then it follows directly from Theorem \ref{thm. embedding} that $M_\alpha$ is a finitely generated torsion-free simple group satisfying all items of \ref{item. cohomology hierarchy}, except for \ref{item. non-limit ordinal}. For $j\neq j_0$, Theorem \ref{thm. embedding} yields that $\cd(M_{\alpha,j})<\infty$, and so $M_{\alpha,j}\in \HX[1]$, which is \ref{item. non-limit ordinal}.

Suppose that $N_{\alpha,j}\in \HX[\alpha]$ for every $j$. Then we let $M_{\alpha,j}=N_{\alpha,j}$ and $M'_{\alpha,j}=N'_{\alpha,j}$. Apply Theorem \ref{thm. embedding}; with $H_i=M_{\alpha,i},L_i=F_2 : i \geq 1$ to obtain a group $M_\alpha$. Then it follows directly from Theorem \ref{thm. embedding} that $M_\alpha$ is a finitely generated torsion-free simple group satisfying \ref{item. cohomology hierarchy}.
\end{proof}

\subsection{Further constructions}
\label{ss. further}

We end by proving Theorems \ref{intro:thm:verballycomplete}, \ref{intro:thm:conjugacyclass} and \ref{intro:thm:characteristicquotients}. The constructions are completely analogous to that of \cite{hull2013small, characteristicquotients}, except that Hull's small cancellation theorem \cite[Theorem 7.1]{hull2013small} is replaced by our Theorem \ref{thm. inductive step} to gain additional homological control.

\begin{theorem}[Theorem \ref{intro:thm:verballycomplete}]
\label{thm:verballycomplete}
    Let $n \geq 2$. Let $G$ be a countable acylindrically hyperbolic group with $\gd(G) \leq n$. Then there exists a finitely generated quotient $M$ of $G$ that is verbally complete and such that $\hd_R(M) = \cd_R(M) = \gd(M) = n$.
\end{theorem}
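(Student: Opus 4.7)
The plan is to mimic Hull's construction of verbally complete quotients of acylindrically hyperbolic groups \cite{hull2013small}, replacing the underlying small cancellation theorem with our \Cref{thm. inductive step} (or rather with the proof technique behind it) in order to retain the homological control afforded by the Cohen--Lyndon property.

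First I would make a preliminary reduction. After passing to $G/K(G)$ if needed, I would replace $G$ by $G\ast L$, where $L$ is the fundamental group of a closed orientable aspherical $n$-manifold; this preserves torsion-freeness, countability, and acylindrical hyperbolicity, upgrades the geometric dimension to exactly $n$, and ensures $H_n(G\ast L;R)\ne 0$ for every commutative unital ring $R$. The eventual $M$ will start as a quotient of $G\ast L$, but since we arrange for the generators of $L$ to be absorbed into the image of a fixed hyperbolically embedded rank-$2$ free subgroup $\langle a,b\rangle \le G$, it will end up a quotient of $G$ itself.

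Then I would enumerate all pairs $\{(w_i,g_i)\}$, with $w_i \in F_\infty\smallsetminus\{1\}$ a non-trivial word in variables $x_1,\ldots,x_{k_i}$ and $g_i$ running through a fixed countable generating set of $G\ast L$, together with the generating set $\{y_j\}$ itself. I would inductively build a chain $G\ast L \twoheadrightarrow G_1 \twoheadrightarrow G_2 \twoheadrightarrow \cdots$ alternating two kinds of steps. An \emph{absorption step} for $y_j$ is a direct application of \Cref{thm. inductive step} with $K=\pi(\langle a,b\rangle)$ and $g_1=\pi(y_j)$, whose item \ref{item. K absorb} forces $\pi(y_j)\in \pi(\langle a,b\rangle)$. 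An \emph{equation-solving step} for $(w_i, g_i)$ imitates the proof of \Cref{thm. inductive step} but replaces the specific relator $R$ constructed there with $r \coloneqq w_i(x_1,\ldots,x_{k_i}) g_i^{-1}$, where the $x_j$ are chosen as sufficiently generic, long, pairwise non-commensurable words in a large collection of loxodromic elements of the suitable subgroup $\pi(\langle a,b\rangle)$, so that $r$ becomes a free generator of a hyperbolically embedded free group $F_{k_i+7}\hookrightarrow_h G_{i-1}$ of large rank; the quotient $G_{i-1}/\ll r\rr$ is then $G_i$, and in it $(x_1,\ldots,x_{k_i})$ is a solution to $w_i=g_i$.

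The main obstacle is this equation-solving step: two conditions must be verified for $r$. First, the $W(\xi,\sigma)$ small cancellation condition with respect to the ambient hyperbolic embedding, which follows by a genericity and length computation parallel to \Cref{lem. small cancellation}. Second, the Cohen--Lyndon property for the triple $(G_{i-1}, \langle r\rangle, \langle r\rangle)$, obtained by exhibiting $r$ as one generator of a free basis of $F_{k_i+7}\hookrightarrow_h G_{i-1}$ and invoking \Cref{thm. sun cl} together with the trivial Cohen--Lyndon property of free factors of free groups, combined via \Cref{lem. cl transitive}; this step makes use of the fact that $g_i$ has already been absorbed into $\pi(\langle a,b\rangle)$ in a prior step, so that $g_i$ can be expressed in the same alphabet as the candidate free generators and an appropriate choice of $x_j$'s can make $r$ a primitive element of $F_{k_i+7}$. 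Once these are in place, the rest of the machinery of Section \ref{sec:inductive} applies verbatim to produce $G_i$.

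Taking the limit $M \coloneqq \varinjlim G_i$, the absorption steps imply $M$ is generated by the images of $a,b$, hence finitely generated and a quotient of $G$; the equation-solving steps give verbal completeness; and torsion-freeness is preserved throughout by item \ref{item. torsion element} at each step. By \Cref{cor. classifying space}, each $K(G_i,1)$ is obtained from $K(G_{i-1},1)$ by attaching a single $2$-cell, so $\gd(M)\le\max\{2,n\}=n$. Finally, \Cref{lem. compute cohomology} gives an isomorphism $H_n(G\ast L;R)\cong H_n(M;R)$ in degrees $\ge 3$ (and an injection in degree $2$), and by the reduction the left-hand side is non-zero, so $n\le \hd_R(M)\le \cd_R(M)\le \gd(M)\le n$ for every commutative unital ring $R$, which yields the required equalities.
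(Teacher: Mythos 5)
Your proposal has a genuine gap in the equation-solving step, and that gap is precisely what the amalgamated-free-product step in Hull's construction (and in the paper's proof) is designed to avoid.

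You propose to add a single relator $r = w_i(x_1, \ldots, x_{k_i}) g_i^{-1}$ with the $x_j$ chosen as generic, long words in loxodromics of the suitable subgroup, and you assert that the $W(\xi,\sigma)$ condition for $r$ ``follows by a genericity and length computation parallel to Lemma~\ref{lem. small cancellation}.'' This is not the case. Condition (SC3) of Definition~\ref{def. W small cancellation} forbids a letter $a \in \H$ from occurring more than once (in either sign) in the relator set. If a variable $x_j$ appears more than once in $w_i$ --- say $w_i = x_1^2$, or $w_i = [x_1, x_2]$, which is the generic situation --- then the subword representing $x_j$ appears multiple times in $r$, and every $\H$-letter used to build $x_j$ appears multiple times, with the same sign if the occurrences have the same sign and with opposite signs otherwise. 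No genericity or length increase of the $x_j$ can remove this: the repetition is forced by the shape of $w_i$. Passing to the weaker $C_1(\eps,\mu,\rho)$ condition does not help either, since the repeated occurrences of $x_j$ are pieces (in the sense of Definition~\ref{def. W small cancellation}, condition (c)) of length a fixed positive fraction of $\|r\|$, not a $\mu$-fraction for the required small $\mu$. By contrast, the relator $R$ in the proof of Theorem~\ref{thm. inductive step} is carefully engineered so that every $\S$-letter occurring in it is distinct, and this is what Lemma~\ref{lem. small cancellation} relies on.

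The paper works around this by following the two-step structure of Hull's construction. One first forms $G(i+\tfrac12)$ as the amalgamated free product of $G(i)$ and a free group $J = F(x_1,\ldots,x_{k_i})$ identifying $g_i$ with $w_i(x_1,\ldots,x_{k_i})$; this introduces the solution variables as genuinely new free generators and solves the equation by fiat, with no small cancellation needed. It also adds one 1-cell per new generator and a single 2-cell to the $K(\cdot,1)$. Only then does one apply Theorem~\ref{thm. inductive step} (in place of Hull's Theorem~7.1) to absorb the new generators into a suitable subgroup --- which is exactly the kind of task, ``impose $g_n \in K$,'' that Theorem~\ref{thm. inductive step} is built to do, via a relator whose $\S$-letters are all distinct. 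Your absorption step and your final dimension calculation are fine; it is the attempt to bypass the amalgamated free product that breaks down.
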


\begin{proof}
    Let $L$ be the fundamental group of a closed orientable hyperbolic $n$-manifold. We apply Theorem \ref{thm. inductive step} to the free produt $G \ast L$, the suitable subgroup $G$ and a finite generating set of $L$, to obtain an acylindrically hyperbolic group $G(0)$ such that $H_n(L;R)$ embeds into $H_n(G(0); R)$ for all commutative unital rings $R$, and $\gd(G(0)) \leq n$, in particular $G(0)$ is torsion-free. Then we follow the proof of \cite[Theorem 7.8]{hull2013small}. This gives a sequence of torsion-free acylindrically hyperbolic groups $G(i)$ with quotients $G(i) \twoheadrightarrow G(i+1)$ factoring through an intermediate group $G(i+\frac{1}{2})$ such that the direct limit $M$ is finitely generated and verbally complete.

    The group $G(i+\frac{1}{2})$ is defined as a free product of $G(i)$ and a free group $J$ amalgamated over a cyclic group (the second case in the proof does not occur since $G(i)$ is torsion-free). So a $K(G(i + \frac{1}{2}), 1)$ can be obtained from any $K(G(i), 1)$ by wedging circles and attaching a single $2$-cell. Next, $G(i+1)$ is obtained by an application of \cite[Theorem 7.1]{hull2013small}, imposing that certain elements are absorbed by certain suitable subgroups. By applying instead Theorem \ref{thm. inductive step}, we can ensure that a $K(G(i+1), 1)$ can be obtained from a $K(G(i+\frac{1}{2}), 1)$ by attaching $2$-cells.

    This guarantees that a $K(M, 1)$ can be obtained from a $K(G(0), 1)$ wedged with infinitely many circles by attaching $2$-cells. This implies that $\gd(M) \leq n$, and by Lemma \ref{lem. compute cohomology} that $H_n(L; R) \neq 0$ embeds into $H_n(M;R)$, so that $\hd_R(M) \geq n$. So $\hd_R(M) = \cd_R(M) = \gd(M) = n$ and we conclude.
\end{proof}

\begin{theorem}[Theorem \ref{intro:thm:conjugacyclass}]
\label{thm:conjugacyclass}
    Let $n \geq 2$. Let $G$ be a countable acylindrically hyperbolic group with $\gd(G) \leq n$. Then there exists a quotient $M$ of $G$ that is finitely generated, has exactly two conjugacy classes, and such that $\hd_R(M) = \cd_R(M) = \gd(M) = n$.
\end{theorem}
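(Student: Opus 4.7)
The plan is to adapt the proof of Theorem \ref{thm:verballycomplete}, substituting the two-conjugacy-class construction of Osin \cite{osin2010small} for Hull's verbally complete one. The initial setup is identical: apply Theorem \ref{thm. inductive step} to the free product $G \ast L$, where $L$ is the fundamental group of a closed orientable hyperbolic $n$-manifold, with suitable subgroup $G$ and absorbed elements a finite generating set of $L$. This produces a torsion-free acylindrically hyperbolic quotient $G(0)$ of $G$ with $\gd(G(0)) \leq n$, such that $H_n(L; R) \hookrightarrow H_n(G(0); R)$ is nonzero for every commutative unital ring $R$.

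Next, I would execute Osin's inductive procedure. Using a diagonal enumeration that handles both the pairs $(a_i, b_i)$ of non-trivial elements to be made conjugate and the elements of $G(0)$ that must be absorbed into a fixed finitely generated subgroup, produce at each stage an epimorphism $G(i) \twoheadrightarrow G(i+1)$ factoring through an intermediate group $G(i+\frac{1}{2})$. This intermediate group is obtained from $G(i)$ by an HNN extension along the cyclic subgroups $\langle a_i \rangle$ and $\langle b_i \rangle$ (introducing a stable letter $t_i$ conjugating $a_i$ to $b_i$), possibly after first taking a free product with an auxiliary finite-rank free group; a $K(G(i+\frac{1}{2}), 1)$ is thus obtained from $K(G(i), 1)$ by attaching finitely many $1$-cells and $2$-cells. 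Then apply Theorem \ref{thm. inductive step} to $G(i+\frac{1}{2})$, choosing the absorbed elements so as to push both $t_i$ and the enumerated elements of $G$ into a fixed finitely generated subgroup while preserving the conjugacy of $a_i$ and $b_i$; this step attaches only $2$-cells.

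Setting $M = \varinjlim G(i)$, the limit is a finitely generated (thanks to the absorption scheme) torsion-free quotient of $G$ with exactly two conjugacy classes. A $K(M, 1)$ is built from $K(G(0), 1)$ by attaching countably many $1$-cells and $2$-cells, so $\gd(M) \leq \max\{2, n\} = n$. By Lemma \ref{lem. compute cohomology}, the inclusion $K(G(0), 1) \hookrightarrow K(M, 1)$ induces an injection $H_n(G(0); R) \hookrightarrow H_n(M; R)$ for $n \geq 2$, hence $H_n(M; R) \neq 0$ and $\hd_R(M) = \cd_R(M) = \gd(M) = n$.

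The main obstacle is the inductive step: one must construct $G(i+\frac{1}{2})$ in such a way that acylindrical hyperbolicity is preserved (which typically requires arranging that $\langle a_i \rangle$ and $\langle b_i \rangle$ behave like malnormal cyclic subgroups, perhaps after a preliminary small-cancellation step), and then verify that Osin's absorption relations can be replaced by Cohen--Lyndon $W(\xi, \sigma)$-small-cancellation relations of the form produced by Theorem \ref{thm. inductive step}. The argument is entirely parallel to the verbally-complete case treated in Theorem \ref{thm:verballycomplete}, with \cite{osin2010small} in place of \cite[Theorem 7.8]{hull2013small}.
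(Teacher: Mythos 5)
Your proposal follows the same overall structure as the paper's proof: start with a quotient $G(0)$ of $G \ast L$ so that $H_n(L;R) \hookrightarrow H_n(G(0);R)$ and $\gd(G(0)) \le n$; alternate an HNN step (to merge conjugacy classes, attaching finitely many $1$- and $2$-cells) with an application of Theorem \ref{thm. inductive step} (to absorb the new stable letter and enumerated elements, attaching only $2$-cells); and pass to the direct limit $M$, using Lemma \ref{lem. compute cohomology} to keep $H_n(M;R) \ne 0$ while $\gd(M) \le n$. The one substantive difference is that you cite Osin's original construction \cite{osin2010small}, whereas the paper follows Hull \cite[Theorem 7.9]{hull2013small}, which is the reformulation of that construction in the acylindrically hyperbolic / hyperbolically-embedded framework. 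This matters: Osin's version is phrased for relatively hyperbolic groups and his filling results, so one would have to separately verify that the intermediate HNN extensions are again relatively (or acylindrically) hyperbolic with the right suitable subgroup, while Hull's Theorem 7.9 already carries out exactly this bookkeeping in the generality needed here, so Theorem \ref{thm. inductive step} can be substituted for \cite[Theorem 7.1]{hull2013small} directly. Indeed, the concern you flag at the end -- preserving acylindrical hyperbolicity across the HNN step and making the absorption relations compatible with the Cohen--Lyndon condition -- is exactly what Hull's theorem resolves and what the paper relies on; so by citing Hull rather than Osin you would close the only gap in your sketch. One further minor point: when invoking Lemma \ref{lem. compute cohomology} at the end, note that $M$ is a quotient of $G(0)$ wedged with infinitely many circles (not of $G(0)$ itself), so the lemma should be applied with that free product as the source group; this does not change the conclusion since wedging circles does not affect $H_n$ for $n \ge 2$.
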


\begin{proof}
    Once again, we start with an acylindrically hyperbolic quotient $G(0)$ of $G$ such that $H_n(L;R)$ embeds into $H_n(G(0);R)$ and $\gd(G(0)) \leq n$. Then we follow the proof of \cite[Theorem 7.9]{hull2013small}. This gives a sequence of torsion-free acylindrically hyperbolic groups $G(i)$ with quotients $G(i) \twoheadrightarrow G(i+1)$ factoring through an intermediate group $G(i+\frac{1}{2})$ such that the direct limit $M$ is finitely generated and has exactly two conjugacy classes.

    The group $G(i+\frac{1}{2})$ is either equal to $G(i)$, or defined as an HNN extension of $G_i$ along infinite cyclic subgroups. So, in the non-trivial case, a $K(G(i + \frac{1}{2}), 1)$ can be obtained from any $K(G(i), 1)$ by gluing a cylinder along its two boundary components. Next, $G(i+1)$ is obtained by an application of \cite[Theorem 7.1]{hull2013small}, imposing that certain elements are absorbed by certain suitable subgroups. By applying instead Theorem \ref{thm. inductive step}, we can ensure that a $K(G(i+1), 1)$ can be obtained from a $K(G(i+\frac{1}{2}), 1)$ by attaching $2$-cells.

    Once again, this shows that $\gd(M) \leq n$, and that $H_n(L;R)$ embeds into $H_n(M; R)$, so $n \leq \hd_R(M) \leq \cd_R(M) \leq \gd(M) \leq n$ and we conclude.
\end{proof}

\begin{theorem}[Theorem \ref{intro:thm:characteristicquotients}]
\label{thm:characteristicquotients}
    Let $d \geq 2(n-1)$. Then the free group $F_n$ admits an infinite simple characteristic quotient $M$ with $\hd_{\Q}(M) = \cd_{\Q}(M) = d$.
\end{theorem}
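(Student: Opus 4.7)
The plan is to follow the template of \cite{characteristicquotients} and of Theorems \ref{thm:verballycomplete} and \ref{thm:conjugacyclass} above, using our Theorem \ref{thm. inductive step} in place of Hull's small cancellation theorem in order to pin down $\cd_\Q$. The new ingredient compared to those two theorems is that the kernel of $F_n \twoheadrightarrow M$ must be $\Aut(F_n)$-invariant, so that $M$ is a characteristic quotient of $F_n$.

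For the seed I would pick a torsion-free hyperbolic group $L$ with $\gd(L) = d$, for instance the fundamental group of a closed orientable hyperbolic $d$-manifold, form the free product $F_n \ast L$, and apply Theorem \ref{thm. inductive step} to absorb a finite generating set of $L$ into the image of $F_n$. The resulting group $G(0)$ is then a quotient of $F_n$ on its $n$ generators, torsion-free and acylindrically hyperbolic, with $\gd(G(0)) \leq d$ by conclusion \ref{item. gd}. Moreover, the inclusion $L \hookrightarrow F_n \ast L$ together with Lemma \ref{lem. compute cohomology} provides an injection $H_d(L;\Q) \hookrightarrow H_d(G(0);\Q)$, so $\cd_\Q(G(0)) = d$.

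I would then inductively build acylindrically hyperbolic torsion-free quotients $G(i) \twoheadrightarrow G(i+1)$ whose direct limit $M$ is infinite simple and a characteristic quotient of $F_n$. For simplicity I would follow the absorption scheme of \cite[Theorem 7.1]{hull2013small} and of Theorem \ref{thm:conjugacyclass} above, enumerating elements of each $G(i)$ and forcing them into a chosen non-elementary conjugacy class. To also enforce characteristicity, I would use a diagonal enumeration over pairs $(r,\phi) \in F_n \times \Aut(F_n)$: whenever a relation $r$ is imposed, viewed as a word in $F_n$ pushed down to $G(i)$, I would schedule all translates $\phi(r) \in F_n$ to be imposed in subsequent stages. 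In the limit, the kernel of $F_n \twoheadrightarrow M$ is then $\Aut(F_n)$-invariant. At each individual stage only finitely many new relations are added, so Theorem \ref{thm. inductive step} applies directly, provided the specific words chosen to realize each relation are sufficiently long and generic to satisfy the $W(\xi,\sigma)$ and Cohen--Lyndon hypotheses jointly with the previously added ones.

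Iterating conclusion \ref{item. gd}, a $K(M,1)$ can be built from a $K(G(0),1)$ by attaching $2$-cells, so $\gd(M) \leq \max\{2,d\} = d$. Iterating Lemma \ref{lem. compute cohomology} yields an injection $H_d(G(0);\Q) \hookrightarrow H_d(M;\Q)$, so $\hd_\Q(M) \geq d$, and hence $\hd_\Q(M) = \cd_\Q(M) = d$. The main obstacle is reconciling the $\Aut(F_n)$-invariance requirement with the small cancellation constraints: each added relator must be consistent with all previously imposed relators in the $W(\xi,\sigma)$ and Cohen--Lyndon sense while still fitting into a prescribed $\Aut(F_n)$-orbit. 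This constrained generic choice is precisely the technical heart of \cite{characteristicquotients}, and it is also where the dimensional hypothesis $d \geq 2(n-1)$ enters.
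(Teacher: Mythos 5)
Your dimension bookkeeping at the start and end is fine, but the mechanism you propose for achieving characteristicity is not what the paper does and, more importantly, is incompatible with Theorem \ref{thm. inductive step} as stated. That theorem does not let you impose an \emph{arbitrary} prescribed relator $r$ (or a prescribed $\Aut(F_n)$-translate $\phi(r)$). Instead, given elements $g_1,\dots,g_N$ and a suitable subgroup $K$, it constructs a very specific long word $R$ of the form $h_8 h_7^\ell \prod_t \prod_m h_m^\ell$ whose precise shape is essential for the $W(\xi,\sigma)$ small cancellation condition and the Cohen--Lyndon property. You have essentially no freedom to require that the relator lie in a prescribed $\Aut(F_n)$-orbit, so the ``diagonal enumeration over pairs $(r,\phi)$'' scheme cannot be run through Theorem \ref{thm. inductive step}. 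You flag this step as the technical heart you are outsourcing to \cite{characteristicquotients}, but the sketched mechanism is not the one used there either.

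What the paper actually does is sidestep the invariance problem entirely by working with $\Aut(F_n)$ as a group, not merely as a symmetry acting on the relators. By \cite{genevoishorbez}, $\Aut(F_n)$ is acylindrically hyperbolic with no non-trivial finite normal subgroup, and \cite[Corollary 2.11, Lemma 5.4]{characteristicquotients} produces a common acylindrically hyperbolic quotient $G_0$ of $F_n$ and $\Aut(F_n)$ with the property that \emph{every} further quotient of $G_0$ is automatically a characteristic quotient of $F_n$. Characteristicity then comes for free and the subsequent stages (absorbing $L$ and passing to a simple quotient) are unconstrained applications of Theorem \ref{thm. inductive step}. This is also where the hypothesis $d \geq 2(n-1)$ genuinely enters: the $K(M,1)$ is built from $K(\Aut(F_n),1)\vee K(L,1)$ by attaching $2$-cells, and $\cd_\Q(\Aut(F_n)) = 2(n-1)$, so the construction cannot produce a quotient of rational cohomological dimension below $2(n-1)$. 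Your proposal does not account for why the dimension floor $2(n-1)$ appears; in your setup, where $\Aut(F_n)$ never enters as a group, there is no reason for it. Note also that the paper works with $\cd_\Q$ rather than $\gd$ or $\cd_\Z$ precisely because $\Aut(F_n)$ has torsion, so the $K(\Aut(F_n),1)$ is infinite-dimensional; Lemma \ref{lem. compute cohomology} still gives the needed control in degrees $>d$ over $\Q$, but your conclusion $\gd(M) \leq d$ is not available.
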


Following \cite{characteristicquotients}, this group will arise as a quotient of $\Aut(F_n)$. We focus on rational cohomological dimension because $\Aut(F_n)$ has torsion of all orders as $n$ grows. We single out the case of the free group since it is the most interesting one in view of Wiegold's problems, but the argument could be generalized further, as in \cite{characteristicquotients}.

\begin{proof}
    We start by showing that $\cd_{\Q}(\Aut(F_n)) = 2(n-1)$. Indeed, $\Out(F_n)$ has a finite-index subgroup $H$ with $\cd_{\Z}(H) = 2n-3$ \cite{outerspace}. Moreover, $\Out(F_n)$ contains a copy of $\Z^{2n-3}$, so $\cd_{\Q}(H) = 2n-3$ as well, and thus $\cd_{\Q}(\Out(F_n)) = 2n-3$ \cite[V 5.3]{dicksdunwoody}. Since $F_n$ is of finite type, \cite[Theorem 5.5]{bieri1981homdim} implies that $\cd_{\Q}(\Aut(F_n)) = 1 + \cd_{\Q}(\Out(F_n)) = 2(n-1)$.
    
    By \cite{genevoishorbez}, the group $\Aut(F_n)$ is acylindrically hyperbolic; moreover it has no non-trivial finite normal subgroups. By \cite[Corollary 2.11, Lemma 5.4]{characteristicquotients}, there exists an acylindrically hyperbolic group $G_0$, that is a common quotient of $F_n$ and $\Aut(F_n)$, such that every quotient of $G_0$ is a characteristic quotient of $F_n$. This is proved by an application of \cite[Theorem 7.1]{hull2013small}; by applying instead Theorem \ref{thm. inductive step}, we can ensure that a  $K(G_0, 1)$ is obtained from a $K(\Aut(F_n, 1))$ by attaching $2$-cells. Let $L$ be the fundamental group of a closed orientable hyperbolic $d$-manifold. We apply Theorem \ref{thm. inductive step} to $G_0 \ast L$ and the suitable subgroup $G_0$ to obtain an acylindrically hyperbolic group $G_1$ that is a quotient of $G_0$ with $H_d(G_1; \Q) \neq 0$, and a $K(G_1, 1)$ is obtained from a $K(\Aut(F_n, 1)) \vee K(L, 1)$ by attaching $2$-cells.
    
    By \cite[Theorem 3.5]{characteristicquotients}, $G_1$ has an infinite simple quotient $M$. This is proved by constructing a sequence of acylindrically hyperbolic groups $G_k$, with maps $G_k \twoheadrightarrow G_{k+1}$ given by \cite[Theorem 7.1]{hull2013small}. By applying instead Theorem \ref{thm. inductive step}, we can ensure that a $K(M, 1)$ is obtained from a $K(G_1, 1)$ by attaching $2$-cells. Lemma \ref{lem. compute cohomology} then implies that the quotient $\Aut(F_n) \ast L \to M$ induces an isomorphism in homology in degrees at least $3$, and an injection in degree $2$. This shows that $H_d(M; \Q) \supset H_d(L; \Q) \neq 0$, and moreover $H^i(M; A) \cong H^i(\Aut(F_n) \ast L; A) = 0$ for all $i > d = \max\{d, 2(n-1)\}$ and all $\Q M$-modules $A$. So $d \leq \hd_{\Q}(M) \leq \cd_{\Q}(M) \leq d$.
\end{proof}

\footnotesize

\bibliographystyle{alpha}
\bibliography{ref}

\medskip

\normalsize

\noindent{\textsc{Department of Pure Mathematics and Mathematical Statistics, University of Cambridge, UK}}

\noindent{\textit{E-mail address:} \texttt{ff373@cam.ac.uk}}

\medskip

\noindent{\textsc{Department of Mathematics, Michigan State University, East Lansing, MI, 48824, USA}}

\noindent{\textit{E-mail address:} \texttt{sunbin1@msu.edu}}

\end{document}